 \documentclass[11pt,letterpaper]{amsart}
\usepackage[utf8]{inputenc}
\usepackage[T1]{fontenc}
\usepackage[english]{babel}
\usepackage{amsmath, amssymb, amsthm, amsfonts}
\usepackage{mathrsfs}
\usepackage{mathtools}
\usepackage{xcolor}
\usepackage{graphicx}
\usepackage{enumitem}
\usepackage{tikz}
\usetikzlibrary{arrows}
\tikzstyle{block}=[draw opacity=0.7,line width=1.4cm]
\usepackage{tikz}
\usepackage{pgfplots}
\pgfplotsset{compat=1.18} 

\usepackage{hyperref}
\hypersetup{
    colorlinks=true,
    linkcolor=blue,
    filecolor=magenta,
    urlcolor=blue,
    citecolor=blue,
}

\newtheorem{theorem}{Theorem}[section]
\newtheorem{lemma}[theorem]{Lemma}

\newtheorem{proposition}[theorem]{Proposition}
\newtheorem{remark}[theorem]{Remark}
\newcommand{\dive}{\operatorname{div}}
\newtheorem{fact}[theorem]{Fact}
\newtheorem{corollary}[theorem]{Corrolary}
\newcommand{\Om}{\Omega}
\newtheorem{conjecture}[theorem]{Conjecture}

\title[Critical Wave Equation with Nonlinear Kelvin-Voigt Damping]{The Quintic Wave Equation with Kelvin-Voigt Damping: Strichartz Estimates, Well-posedness and Global Stabilization}

\author{Marcelo M. Cavalcanti and Val\'eria N. Domingos Cavalcanti}
\address{Department of Mathematics, State University of Maringá, Maringá, PR, Brazil.}
\email{mmcavalcanti@uem.br; vndcavalcanti@uem.br}

\subjclass[2020]{35L05, 35L70, 35B40, 93D15}
\keywords{Wave equation, Nonlinear Kelvin-Voigt damping, Critical exponent, Strichartz estimates, Parabolic regularity.}

\begin{document}

\begin{abstract}
We study the three-dimensional energy-critical quintic wave equation with localized Kelvin--Voigt damping. While the viscoelastic dissipation provides a remarkably efficient mechanism for removing energy, it simultaneously introduces a loss of derivatives that places the problem outside the classical hyperbolic framework for critical wave equations.

We develop a frequency-space approach based on Littlewood--Paley theory that yields well-posedness for arbitrarily large initial data despite the derivative loss. We then establish uniform exponential stabilization by combining critical Strichartz estimates, microlocal defect measures, and the unique continuation principle of Duyckaerts, Zhang and Zuazua, obtaining observability under highly localized damping regions.

The paper also identifies a structural distinction between Kelvin--Voigt damping and all previously studied lower-order dissipations. Under the critical rescaling associated with energy concentration, the Kelvin--Voigt operator becomes supercritical, revealing an obstruction absent from the classical theory. We isolate this obstruction explicitly and propose a new parabolic viewpoint suggesting that concentration inside the strictly damped region is governed by intrinsic local regularization rather than by the traditional hyperbolic concentration mechanism.

\end{abstract}
\maketitle

\tableofcontents

\section{Introduction}

\subsection{The phenomenon of energy concentration}
The concentration phenomena associated with the "quintic wave", generally described by the Nonlinear Quintic Schrödinger Equation - NLS or the Semilinear Quintonic Wave Equation, refer to situations where the energy or mass of the wave accumulates intensely in a very small region of space (a point or a small neighborhood), often leading to what is called a "blow-up" (singularity in finite time). From a physical point of view, this represents the collapse of a wave packet, while mathematically it corresponds to a concentration of mass or energy, where the solution profile loses compactness in Sobolev spaces. Mass/Energy Concentration: Mathematically, a sequence of solutions is said to concentrate if the mass or energy concentrates at a point when the concentrated mass is often a discrete quantity, equal to the norm of the "ground state". There is a threshold profile, namely, scattering threshold(the center-stable manifold), that separates solutions that scatter from those that concentrate and blow-up.  Concentration is analyzed through scaling transformations, where the solution is rescaled to maintain a constant energy norm while the wave packet size goes to zero.

The phenomenon of "collapse" or blow-up in quintic wave equations  can perfectly occur in bounded domains. In fact, the presence of edges  can facilitate blow-up compared to free space, as confinement prevents energy from dispersing to infinity, forcing it to concentrate. Here is the detailed analysis based on mathematical physics:

1. Collapse in Bounded Domains.
From a physical point of view, in a bounded domain, the initial energy "trapped" in the volume, combined with the non-linear focus, can overcome the linear mechanisms of dispersion. The wave "collapses" because the energy concentrates at a point or region in finite time, resulting in the singularity.

2. Mathematical View and Sobolev Spaces.
Mathematical collapse is the blow-up of the energy norm or norms in Sobolev spaces, such as:

\begin{itemize}
\item~As time approaches the collapse time, the solution ceases to be compact, meaning that the gradient of the solution becomes infinite, concentrating at one or more points.

\item~ In bounded domains, immersion theorems, such as the Rellich-Kondrachov, guarantee initial compactness, but the quintic nature of nonlinearity is "critical" in energy space. When the blow-up occurs, this compactness is violated, as the solution profile becomes singular (tends to a Dirac delta-type distribution in the limit).
\end{itemize}

3. Factors Affecting Collapse:

The shape of the domain and boundary conditions (e.g., Dirichlet at the edge) influence the rate of energy concentration, but collapse is an interior concentration phenomenon, i.e., it occurs where nonlinearity is strong.

If the system includes damping terms (damped wave equations), it is possible for the system to converge to a global attractor, avoiding collapse, depending on the intensity of the damping.

Collapse is characteristic of focusing equations (where nonlinearity attracts the wave), while in defocusing equations, the wave disperses.

In summary, the quintic wave in bounded domains frequently exhibits collapse (finite-time singularity) where the energy norm in Sobolev space explodes, a phenomenon that can be exacerbated by the inability of energy to escape the domain.

\subsection{The drawback of Galerkin's method}

Since we are in a bounded domain and with the spectral basis of the Laplacian, the weak existence and regularity of the approximate problem, via Galerkin, are practically "free" due to compactness and Aubin-Lions. The term $u^5 \in L^{6/5}$ is perfectly tamed by the Sobolev immersion $H_0^1 \hookrightarrow L^6$.

The effective linear term in the interval $I$ for the approximation $m$ is: $$A_m(I) = \|S(t-t_0)(u_m(t_0),\partial_t u_m(t_0))\|_{L^5_t L^{10}_x(I)}.$$

We have, for a fixed $m$, that, by the absolute continuity of the Lebesgue integral, we can shrink the interval $I$ such that $A_m(I) \le \epsilon_0$. Thus, we divide $[0,T]$ into $N_m$ subintervals.

The major problem is in the transition to the limit $m \rightarrow \infty$: For the final bound $\|u_m\|_{L^5_t L^{10}_x([0,T])} \le N \cdot C(E_0)^5$ not to explode when we take the supremum at $m$, the number of subintervals $N$ must be independent of $m$. This means that we need a uniform $\delta > 0$ (independent of $m$) such that, if $|I| < \delta$, then $A_m(I) \le \epsilon_0$ for all $m$.

In the subcritical case (for example, cubic power in $3D$), we achieve this uniform $\delta$ using subcritical Sobolev immersions interpolated with the energy $E_0$. Energy limits the Strichartz norm uniformly within small intervals (see, for instance, \cite{Cavalcanti2018}).

In the critical case (quintic power in $3D$, norm $L^5 L^{10}$), the Strichartz exponent "scales" exactly like the energy. The fact that the energy $E_0(u_m)$ is uniformly bounded does not prevent concentration. As the base $m$ grows, the initial data $(u_m(0), \partial_t u_m(0))$ can form increasingly finer "peaks" (energy concentration).

For a thin peak, the energy $H^1$ remains the same (limited by $E_0$). But the norm $L^5_t L^{10}_x$ of the linear evolution concentrates in an increasingly smaller time interval around $t_0$. Therefore, to keep $A_m(I) \le \epsilon_0$, the size of the interval $\delta_m$ has to shrink to zero when $m \to \infty$. Consequently, the number of partitions $N_m \to \infty$, and the bound $N_m C(E_0)^5$ explode. It fails to extract a weak bound in $L^5_t L^{10}_x$ because the Galerkin bound is not uniform. This is precisely why the traditional Galerkin method fails to give uniqueness for the critical case, even in bounded domains.

To salvage this situation, we have two direct solutions:

Approach 1: Small Data. If we assume that the initial energy $E_0$ is small enough, the global Strichartz estimate gives us: $$A_m([0, T]) \le C E_0 \le \epsilon_0$$ uniformly for all $m$. We don't need to divide the interval. $N=1$. And the usual Bootstrap lemma closes the proof and we obtain the uniform bound $L^5 L^{10}$ in $m$. Then it's just a matter of passing the limit and it's over.

Approach 2: If Large Data is needed (we have to change the Galerkin method to the Strichartz norm).
If we want uniqueness for large initial data, we prove the existence of the weak solution by Galerkin (as we already know it works). But uniqueness in $L^5 L^{10}$ does not come from estimating $u_m$. Instead, the existence of solutions in this critical space (which guarantees uniqueness) is generally achieved by constructing the local solution via the Picard Fixed-Point Theorem directly in the Strichartz space $C([0,\delta]; H_0^1) \cap L^5([0,\delta]; L^{10})$. The fixed point gives us a local existence time $\delta$ that depends on the profile of the initial data, and not just on $E_0$. (To extend this $\delta$ to a global $T$ for large data, then the heavy non-concentration arguments would come into play).

In summary: Weak existence proof using Galerkin and compactness works perfectly. But if the intention is to prove that the Galerkin limit is $L^5 L^{10}$ to guarantee uniqueness with large data, partitioning fails because $N$ goes to infinity with $m$. If it's for small data, partitioning is unnecessary and Bootstrap shines.

\subsection{Physical perspective}

From a physical point of view, the problem of concentration boils down to two words:

\noindent{ \bf Scale Invariance.}
We'll show the "Physical perspective on scale invariance" using the wave's own energy. Imagine we have a wave packet $u(t,x)$ with a total energy $E$. What happens if we try to "squeeze" this packet to make it very small and vibrate very fast? Imagine we want to "zoom in" on our solution, compressing time and space by a factor of $\lambda > 0$. We guess a generic profile with an unknown amplitude $\alpha$:$$u_\lambda(t,x) = \lambda^\alpha u(\lambda t, \lambda x).$$ We do this using a scale factor $\lambda > 1$ (where $\lambda \to \infty$ means concentrating the wave into a tiny point). To maintain the balance between the {\bf linear operator} ($\Box u=\partial_t^2 - \Delta $) and the {\bf non-linearity} ($u^p$), the squeezed wave packet needs to have the form:
$$u_\lambda(t,x) = \lambda^{\frac{2}{p-1}} u(\lambda t, \lambda x)$$
Now, let's see what happens to the {\bf Potential Energy} of this packet when we squeeze it. In dimension 3 (its space $\Omega \subset \mathbb{R}^3$), the change of variables $y = \lambda x$ generates a factor of $\lambda^{-3}$ in the volume. Calculating the energy of the squeezed package, we arrive at a beautiful exponent:
$$E_{pot}(u_\lambda) = \lambda^{\frac{5-p}{p-1}} E_{pot}(u)$$
And it is precisely here that the physics of the phenomenon emerges!

\noindent{\bf The Subcritical Case ($p < 5$): "The Subcritical Energy Penalty"}

Let's consider $p = 3$. The exponent up there becomes $\frac{5-3}{3-1} = 1$. That is:
$$E_{pot}(u_\lambda) = \lambda^1 E_{pot}(u)$$
If the wave tries to concentrate at a point (make $\lambda \to \infty$), the energy needed to maintain that shape goes to infinity. Since our physical system only has a finite initial energy $E_0$, nature simply forbids the wave from concentrating!
Physics says: "You don't have enough energy to vibrate that fast in that tiny space." That's why in the subcritical case the wave is nice, it spreads out, and the argument of dividing the time intervals (which we discussed before) works perfectly.

\noindent{\bf The Critical Case ($p = 5$): "The critical threshold"}

Now, if $p = 5$, the exponent becomes $\frac{5-5}{5-1} = 0$!
$$E_{pot}(u_\lambda) = \lambda^0 E_{pot}(u) = 1 \cdot E_{pot}(u)$$
When we squeeze the wave, the energy doesn't change! The conservation of energy in the system imposes no limitation on concentration.
Physically, the wave realizes that it can shrink its radius of action to the size of an atom ($\lambda \to \infty$) and vibrate at infinite frequencies, packing all the initial energy $E_0$ into a geometric Dirac delta, without expending a single joule more of total energy. The PDE has no repulsive mechanism to prevent the collapse of the wave packet. The same happens with the kinetic energy.

\noindent{\bf The Consequence:}~This is why, at high frequencies, the problem arises. Since energy does not prevent the concentration of infinite frequencies, the wave packet can create these Dirac delta-type singularities out of thin air (the geometric blow-up-see figure below).
And that is precisely why, as we will show next, our move to {\bf switch Kelvin-Voigt damping} with smooth multipliers ({\bf smooth spectral operators}) is so good: we use the fact that the {\bf energy tail} (of the series) converges to zero in the limit, {\bf \em cutting off the "fuel" before the wave can build its Dirac delta singularity!}

\begin{figure}[htpb]
    \centering
    \begin{tikzpicture}
        \begin{axis}[
            axis lines = center,
            xlabel = {$x$},
            ylabel = {$u_\lambda(t_0, x)$},
            ymin = 0, ymax = 3.5,
            xmin = -2, xmax = 2,
            ticks = none, 
            width = 10cm,
            height = 7cm,
            every axis x label/.style={at={(ticklabel* cs:1.02)}, anchor=west},
            every axis y label/.style={at={(ticklabel* cs:1.02)}, anchor=south},
            legend style={at={(0.95,0.95)},anchor=north east, draw=none} 
        ]
        \addplot [
            domain=-2:2,
            samples=100,
            color=blue,
            thick
        ]
        {1 * exp(-(x^2))};
        \addlegendentry{$\lambda = 1$}

        \addplot [
            domain=-1.5:1.5,
            samples=100,
            color=teal,
            thick
        ]
        {1.414 * exp(-(4*x^2))}; 
        \addlegendentry{$\lambda = 2$}

        \addplot [
            domain=-1:1,
            samples=100,
            color=orange,
            thick ]
        {2 * exp(-(16*x^2))}; 
        \addlegendentry{$\lambda = 4$}

        \addplot [
            domain=-0.5:0.5,
            samples=200,
            color=red,
            very thick
        ]
        {2.828 * exp(-(64*x^2))}; 
        \draw[->, thick, dashed, darkgray] (axis cs: 0.6, 0.8) -- (axis cs: 0.15, 2.5)
            node[midway, right=2mm, align=left] {Energy\\ Concentration};
        \end{axis}
    \end{tikzpicture}
\end{figure}
\begin{remark}
"We take this opportunity to clarify that in our previous work \cite{Cavalcanti2024}, the well-posedness framework via Galerkin's method for small data was implicitly assumed. Indeed, our main objective was to show that the Strichartz estimates of Blair, Smith, and Sogge \cite{BlairSmithSogge2009}, together with the unique continuation principle due to Duyckaerts, Zhang, and Zuazua \cite{Duyckaerts}, constitute the precise tools for obtaining well-posedeness as well as exponential decay of energy of the quintic wave equation subjected a localized frictional damping $a(x) u_t$. For this reason, we assumed, without any explicit mention, that Galerkin's method would work well for small datasets and that, for large datasets taken in bounded sets of phase space, $H_0^1\times L^2$, we would need to prove (or assume) that we have a Shatah and Struwe solution, as done in \cite{Kalantarov}, \cite{Laurent}, although we did cite those references. Since our primary focus was on exponential decay, we did not mention this fact. Consequently, the present work formalizes and extends those foundational assumptions."
\end{remark}

The article by Burq, Lebeau, and Planchon (in short BLP) \cite{BurqLebeauPlanchon} or Blair, Smith, and Sogge (BSS) \cite{BlairSmithSogge2009} does not guarantee the uniform boundedness of Strichartz estimates for the Galerkin method approximations. Let's explain exactly where the calculation stalls and why the literature avoids Galerkin at this specific step, going straight to the 'wound' of harmonic analysis: The Spectral Projector Problem in $L^p$. In the classical Galerkin method, we project the equation onto a finite-dimensional space using the orthogonal projector $P_m$, which is an "abrupt" spectral cut on the first $m$ eigenvectors of the Dirichlet Laplacian. To obtain a Strichartz estimate for the approximate solution $u_m$, you apply Duhamel's formula to the projected equation: $$\partial_{tt} u_m - \Delta u_m + P_m(u_m^5) = 0.$$ The problem lies in the non-linear term $P_m( u_m^5)$. To close the estimate, we use the Strichartz bounds of the linear operator, which the BLP actually proves to hold in bounded domains. For this, we need the projector $P_m$ to be uniformly bounded in the spatial $L^p$ spaces required by the Strichartz duals (for example, something other than $p=2$).

This brings us to a classic and fundamental theorem of harmonic analysis (which goes back to the Fefferman ball multiplier problem \cite{Fefferman}), and was exhaustively studied by Christopher Sogge  and collaborators in domains/manifolds): The abrupt spectral projector $P_m$ is not uniformly bounded on $L^p$ for $p \neq 2$. The operator norm $\|P_m\|_{\mathcal{L}(L^p, L^p)}$ explodes when $m \to \infty$. Therefore, we cannot quote the projected nonlinearity independently of $m$. The constant of the Strichartz estimate from Galerkin's method for $u_m$ will depend on $m$ and explode in the limit. The famous BLP paper (2008, in JAMS) proves the Strichartz estimates for the linear equation in domains with a boundary (overcoming wave reflection). But to deal with frequency cutoffs, they don't use abrupt cutoffs like in Galerkin. They use pseudo-differential operators and smooth multipliers (Littlewood-Paley type truncations). The smooth cutoff function "damps" the oscillation and ensures the bound on $L^p$. Furthermore, to construct the solution to the critical nonlinear problem, BLP (as well as Shatah-Struwe in $\mathbb{R}^3$) does not use the Galerkin approximation. They construct the local solution directly in the Strichartz space $L^5_t L^{10}_x$ using Picard's Fixed Point Theorem, and then use energy non-concentration techniques (Morawetz type) to extend the solution globally.

\subsection{Literature Overview}

The stabilization of wave equations with localized dissipation has been
extensively studied within the geometric control framework initiated by
Bardos, Lebeau, and Rauch \cite{BLR}, where sharp conditions linking
observability, control, and decay were established. Subsequent developments
clarified the microlocal propagation mechanisms governing energy concentration,
particularly near the boundary, through the analysis of generalized
bicharacteristics \cite{ Burq-Gerard,Lebeau-Robbiano,Littman-Betelu,Melrose,Melrose-Sjostrand,Melrose-Sjostrand2,Lebeau}.
Particularly those involving inhomogeneous media, variable
coefficients, or complex boundary configurations, the classical localized GCC fails due to
the existence of trapped rays. These are generalized bicharacteristics that oscillate indefinitely
without ever entering a standard localized control region, allowing high-frequency energy to
concentrate and consequently preventing uniform exponential decay.

In the presence of Kelvin--Voigt damping, however, the analytical structure
changes significantly. The damping operator introduces higher-order spatial
effects which interact nontrivially with geometric multipliers, frequently
requiring additional regularity assumptions. This phenomenon has been examined
in various stabilization settings, including localized and degenerate
dissipation mechanisms (see \cite{Ammari}), \cite{Liu-Rao}.

From the dispersive viewpoint, Strichartz estimates play a decisive role in
handling critical nonlinearities. In domains with boundary, these estimates
require refined harmonic analysis tools, as developed by Blair, Smith, and
Sogge \cite{BlairSmithSogge2009}, \cite{SmithSogge1995}, extending earlier Euclidean results of
Keel and Tao \cite{KeelTao}. The interaction between boundary geometry and
dispersive smoothing was further explored by Burq, Lebeau, and Planchon
\cite{BurqLebeauPlanchon}, providing a fundamental framework for energy-critical
wave equations.

Microlocal defect measures and H-measures provide the natural language for
describing high-frequency energy concentration in such problems. These tools,
introduced by G\'erard \cite{Gerard} and Tartar \cite{Tartar}, have become
central in stabilization theory, particularly in configurations where compactness
arguments fail or where dissipation degenerates.

The present work combines these perspectives. By integrating harmonic analysis,
nonlinear energy methods, and microlocal propagation techniques, we obtain a
stability theory capable of handling Kelvin--Voigt mechanisms at the critical
level while preserving the geometric intuition underlying classical control
results.

\subsection{Contributions of the present article}

The energy-critical quintic wave equation occupies a distinguished position in nonlinear partial differential equations, representing the threshold between global dispersive dynamics and the possible formation of singular energy concentrations. During the last three decades, a remarkable body of work---beginning with the pioneering contributions of Grillakis \cite{Grillakis}, Shatah--Struwe \cite{ShatahStruwe}, \cite{Struwe}, and later Burq, Lebeau and Planchon \cite{BurqLebeauPlanchon}---has established that, for the undamped equation and for lower-order perturbations, global well-posedness ultimately hinges upon one fundamental principle: the exclusion of concentration of the critical energy. This is achieved through a delicate synthesis of dispersive estimates, concentration-compactness arguments, Morawetz identities, and rigidity theorems for possible blow-up profiles.

The purpose of the present paper is to investigate how this picture changes when the wave equation is subjected to a localized Kelvin--Voigt dissipation. Unlike classical frictional damping, the Kelvin--Voigt mechanism acts through a spatial derivative of the velocity,
\begin{eqnarray*}
-\operatorname{div}(a(x)\nabla u_t),
\end{eqnarray*}
thereby combining two apparently contradictory features. Physically, it constitutes one of the most efficient mechanisms for dissipating mechanical energy, producing a strong thermo-viscous regularization inside the damping region. Analytically, however, the additional spatial derivative destroys the functional framework on which the classical hyperbolic theory is built, introducing a loss of derivatives that propagates throughout both the local well-posedness and the asymptotic stabilization analysis.

The first contribution of this work is a complete well-posedness theory for arbitrarily large initial data. Rather than working directly in the physical space, where the Kelvin--Voigt operator naturally lives in $H^{-1}$ and is therefore incompatible with the standard nonhomogeneous Strichartz estimates, we shift the entire analysis to the frequency domain. A Littlewood--Paley decomposition separates the solution into low and high spectral components. At low frequencies, Bernstein inequalities convert spectral localization into derivative gains, lifting the Kelvin--Voigt operator into an $L^2$ framework where the Strichartz estimates become effective. At high frequencies, instead of treating the damping as a perturbation, we incorporate it into the principal operator. The resulting commutator generated by the smooth spectral projector is shown to be a smoothing operator of order $-1$, allowing the derivative loss to be completely absorbed without affecting the critical dispersive estimates. This frequency-space strategy provides a robust local theory for arbitrary energy levels and removes the small-data restriction inherent in the classical Galerkin approach.  The application of dispersive techniques to boundary value problems possesses a rich and profound history. The seminal works of Smith and Sogge \cite{SmithSogge1995} demonstrated how wave dispersion interacts with domains and rough boundaries. Subsequently, the celebrated contributions of the trio Burq, Lebeau, and Planchon \cite{BurqLebeauPlanchon} (BLP) solidified the use of spectral projectors and {\bf Littlewood-Paley} theory tailored to Dirichlet boundary conditions, allowing Strichartz estimates to flourish outside of $\mathbb{R}^n$. Furthermore, treating the singular damping terms via the Duhamel formulation crucially relies on the foundational machinery of Christ and Kiselev \cite{ChristKiselev}, which elegantly translates homogeneous dispersive bounds into retarded estimates, allowing low-frequency perturbations to be seamlessly absorbed.

Our second objective concerns the long-time dynamics. We establish the uniform exponential stabilization of the total energy by combining harmonic analysis with microlocal propagation techniques (see \cite{Gerard}). Here the Kelvin--Voigt operator introduces a second fundamental difficulty. The residual sequence produced by the contradiction argument no longer converges in the natural $H^{-1}$ topology, as happens for frictional damping, but only at the weaker $H^{-2}$ level. Classical compactness arguments therefore become insufficient. To overcome this obstacle we employ microlocal defect measures together with the critical Strichartz regularity
\begin{eqnarray*}
u\in L^4(0,T;L^{12}(\Omega))
\end{eqnarray*}
and the sharp unique continuation principle of Duyckaerts, Zhang and Zuazua \cite{Duyckaerts}. This combination allows the observability argument to be closed without imposing any geometric restrictions beyond those required for the generalized bicharacteristic flow. In particular, the resulting stabilization mechanism remains fully compatible with non-invasive damping geometries occupying arbitrarily small Lebesgue measure, thereby circumventing the classical obstruction created by trapped rays.

The most delicate issue of the paper arises in the global extension of strong solutions. In the classical energy-critical wave equation, concentration is excluded through a rescaling argument leading to a limiting elliptic profile. For Kelvin--Voigt damping, however, the critical scaling reveals a completely different phenomenon. Under the self-similar transformation associated with a possible concentration bubble, lower-order dissipative terms such as $a(x)u_t$ remain perturbative. The Kelvin--Voigt operator does not. Instead, it acquires the singular prefactor
\begin{eqnarray*}
\lambda^{-1},
\end{eqnarray*}
becoming supercritical exactly at the scale where concentration is expected to occur. Consequently, identifying the limiting equation requires quantitative information on the dissipated energy inside shrinking backward cones. We isolate this missing ingredient as Conjecture~\ref{conj:rate}.

Rather than concealing this obstruction, we analyze it systematically. We show that four conceptually independent approaches---Morawetz multipliers, weighted energy identities, microlocal defect measures, and local parabolic regularity---all encounter precisely the same barrier. In particular, we prove a sharp equipartition result showing that the classical Morawetz identity controls the potential energy density while cancelling the kinetic contribution, with equality occurring exactly on inward self-similar profiles. This explains why the traditional concentration machinery cannot simply be transplanted to the Kelvin--Voigt setting.

Motivated by this observation, we develop a second and completely different perspective based on the intrinsic local parabolic structure of the Kelvin--Voigt mechanism. Instead of studying hypothetical blow-up profiles through critical rescaling, this approach seeks to prevent concentration directly inside the strictly damped region by exploiting the local time regularity of the velocity. The resulting analysis suggests that the viscoelastic dissipation may exclude concentration through a mechanism that is genuinely parabolic rather than hyperbolic. Although one functional-analytic step still requires further justification before this program can be considered complete, the argument identifies a fundamentally new route for understanding energy concentration under Kelvin--Voigt dissipation.

The central message emerging from this work is therefore somewhat paradoxical. The very mechanism that makes Kelvin--Voigt damping physically more effective at removing energy is precisely the mechanism that prevents the direct application of the mathematical theory developed for the critical wave equation. Rather than viewing this incompatibility as merely a technical difficulty, we interpret it as evidence that Kelvin--Voigt damping possesses its own intrinsic concentration theory, governed by the interaction between hyperbolic propagation and local parabolic smoothing. We hope that the ideas developed here provide a first step toward such a theory.

\section{Global Weak Solutions via $C^1$ Truncation}

We consider the critical quintic wave equation with localized Kelvin-Voigt damping:
\begin{equation}\label{eq:main}
\begin{cases}
u_{tt} - \Delta u - \hbox{div}(a(x) \nabla u_t) + u^5 = 0 & \text{in } \Omega \times (0,T), \\
u = 0 & \text{on } \partial \Omega \times (0,T), \\
u(0) = u_0, \quad u_t(0) = u_1 & \text{in } \Omega,
\end{cases}
\end{equation}
where $\Omega \subset \mathbb{R}^3$ is a bounded domain, and $a \in W^{1,1}(\Omega)$ with $a(x) \ge 0$. The initial data satisfy $(u_0, u_1) \in H_0^1(\Omega) \times L^2(\Omega)$.

\subsection{Step 1: The $C^1$ Truncated Problem}
To apply the Galerkin method without dealing with critical polynomial growth, we truncate the nonlinearity. For each $k \in \mathbb{N}$, define the function $f_k: \mathbb{R} \to \mathbb{R}$ by:
\begin{equation}\label{eq:fk_def}
f_k(s) =
\begin{cases}
s^5 & \text{if } |s| \le k, \\
\text{sgn}(s) \big[ k^5 + 5k^4(|s| - k) \big] & \text{if } |s| > k.
\end{cases}
\end{equation}

\begin{lemma}[Properties of $f_k$]\label{lem:fk_prop}
For each $k \in \mathbb{N}$, the following properties hold:
\begin{itemize}
    \item[(i)] $f_k \in C^1(\mathbb{R})$ and is globally Lipschitz with $|f_k'(s)| \le 5k^4$.
    \smallskip
    \item[(ii)] $s f_k(s) \ge 0$ for all $s \in \mathbb{R}$.
    \smallskip
    \item[(iii)] $|f_k(s)| \le |s|^5$ for all $s \in \mathbb{R}$.
    \smallskip
    \item[(iv)] $f_k(s) \to s^5$ uniformly on any compact set $K \subset \mathbb{R}$ as $k \to \infty$.
\end{itemize}
\end{lemma}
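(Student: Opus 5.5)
Since $f_k$ is odd, it suffices to work on $s\ge 0$ and extend each claim by symmetry. On $[0,k]$ we have $f_k(s)=s^5$, and on $(k,\infty)$ we have $f_k(s)=k^5+5k^4(s-k)$. The proof is elementary; the plan is to verify (i)--(iv) in order using this piecewise description.

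For (i), both branches are smooth on their open intervals, so only the junction $s=\pm k$ needs checking. At $s=k$, the values agree, both being $k^5$, and the one-sided derivatives agree, both being $5k^4$. By oddness the same holds at $s=-k$. Since $f_k$ is odd, $f_k'$ is even, so it suffices to check $s\ge0$: there $f_k'(s)=5s^4\le 5k^4$ on $[0,k]$, and $f_k'(s)=5k^4$ for $s>k$. Hence $f_k\in C^1(\mathbb{R})$ with $|f_k'|\le 5k^4$. The mean value theorem then gives the global Lipschitz bound with constant $5k^4$.

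For (ii), for $s\ge0$ both branches are nonnegative, since $k^5+5k^4(s-k)\ge k^5>0$ when $s>k$. Oddness then gives $sf_k(s)=|s|f_k(|s|)\ge0$ for all $s$.

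For (iii), the only nontrivial case is $s>k$. Set $g(s)=s^5-k^5-5k^4(s-k)$. Then $g(k)=0$ and $g'(s)=5s^4-5k^4\ge0$ for $s\ge k$, so $g\ge0$. This is just the convexity of $s^5$ on $[0,\infty)$: its tangent line at $k$ lies below it. Combined with $f_k\ge0$ there, this gives $|f_k(s)|\le s^5$, and oddness extends the bound to all $s$.

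For (iv), fix a compact $K\subset[-R,R]$. For every integer $k\ge R$ we have $f_k(s)=s^5$ on $K$. Thus the sequence is eventually identically equal to $s^5$ on $K$, which is stronger than uniform convergence. No step is a genuine obstacle. The only point requiring care is the matching of the derivative at $|s|=k$ in (i), and this is exactly why the linear extension is chosen with slope $5k^4$.
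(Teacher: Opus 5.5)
Your proof is correct and follows essentially the same route as the paper. Both arguments match values and derivatives at $|s|=k$ for (i), use the sign of $f_k$ for (ii), use the auxiliary function $s^5-k^5-5k^4(s-k)$, which vanishes at $k$ and is increasing, for (iii), and use the exact identity $f_k=s^5$ on $K$ for large $k$ for (iv). Your explicit appeal to oddness and to $f_k\ge 0$ on $(k,\infty)$ in (iii) only makes explicit what the paper leaves implicit.
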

\begin{proof}
(i) Clearly $f_k$ is continuous. The derivative is $5s^4$ for $|s| \le k$ and $5k^4$ for $|s| > k$, which matches continuously at $|s|=k$. Thus $f_k \in C^1(\mathbb{R})$ and $\sup |f_k'| = 5k^4$.

(ii) By definition, $f_k(s)$ shares the same sign as $s$.

(iii) For $|s| \le k$, $|f_k(s)| = |s|^5$. For $s > k$, let $h(s) = s^5 - (k^5 + 5k^4(s-k))$. Since $h(k)=0$ and $h'(s) = 5(s^4 - k^4) > 0$ for $s > k$, we have $h(s) > 0 \implies f_k(s) \le s^5$. By symmetry, $|f_k(s)| \le |s|^5$ for all $s$.

(iv) Let $K \subset [-M, M]$. For any $k > M$, $|s| \le k$ for all $s \in K$, which implies $f_k(s) = s^5$ exactly on $K$. Thus, $\sup_{s \in K} |f_k(s) - s^5| = 0$ for all $k > M$, yielding uniform convergence.
\end{proof}

We define $F_k(s) = \int_0^s f_k(r) dr$. Property (ii) ensures $F_k(s) \ge 0$, and property (iii) ensures $F_k(s) \le \frac{1}{6}|s|^6$.

In what follows we are going to denote by $(\cdot, \cdot)_\Omega$ and $\|\cdot\|$ the usual inner product and the norm in $L^2(\Omega)$ and by $\|\cdot\|_{H_0^1(\Omega)}=\|\nabla \cdot\|_2$ the norm in $H_0^1(\Omega$.

\subsection{Step 2: Galerkin Method for the Truncated Problem}

For a fixed $k$, we consider the subcritical problem:
\begin{equation}\label{eq:truncated}
u_{tt}^k - \Delta u^k - \hbox{div}(a(x) \nabla u_t^k) + f_k(u^k) = 0.
\end{equation}

To simplify the notation, we will omit the index $k$ of $u^k$. Let $V_m = \text{span}\{\omega_1, \dots, \omega_m\}$ be the space generated by the first $m$ eigenfunctions of the Dirichlet Laplacian. We seek for an approximate solution $u_m(t,x) = \sum_{j=1}^m g_{jm}(t) \omega_j(x)$ satisfying:
\begin{eqnarray}\label{eq:galerkin_m}
&&(u_m''(t), \omega_j)_{\Omega} + (\nabla u_m, \nabla \omega_j)_\Omega + (a(x) \nabla u_m', \nabla \omega_j)_\Omega + (f_k(u_m), \omega_j)_\Omega = 0,\\
 &&j=1,\dots,m,\nonumber
\end{eqnarray}
with initial data $u_m(0) = P_m u_0 \to u_0$ in $H_0^1$ and $u_m'(0) = P_m u_1 \to u_1$ in $L^2$, where $P_m$ is the projection of $H_0^1(\Omega)$ in $V_m$.

Since $f_k$ is globally Lipschitz, the mapping $u_m \mapsto P_m f_k(u_m)$ is globally Lipschitz continuous in $V_m$. By the Caratheódory theorem, the ODE system \eqref{eq:galerkin_m} admits a unique global regular solution $g_{jm} \in C^2([0,T])$, $j=1, \cdots,m$.

Testing \eqref{eq:galerkin_m} with $g_{jm}'(t)$ and summing over $j$, we obtain the identity:
\begin{eqnarray*}
\frac{d}{dt} \left[ \frac{1}{2}\|u_m'(t)\|_2^2 + \frac{1}{2}\|\nabla u_m(t)\|_2^2 + \int_\Omega F_k(u_m(t)) dx \right] + \int_\Omega a(x) |\nabla u_m'(t)|^2 dx = 0.
\end{eqnarray*}

Let $E_m(t)$ denote the bracketed term; from the inequality $F_k(s) \le \frac{1}{6}|s|^6$, we deduce $E_m(0) \le \frac{1}{2}\|P_m u_1\|_2^2 + \frac{1}{2}\|\nabla P_m u_0\|_2^2 + \frac{1}{6}\|P_m u_0\|_6^6 \le C_0$, where $C_0$ depends only on $\|(u_0,u_1)\|_{H_0^1 \times L^2}$ and not on $k$ or $m$. Integration over $(0,t)$ yields
\begin{equation}\label{eq:unif_m}
\frac12\|u_m'(t)\|_2^2 + \frac12\|\nabla u_m(t)\|_2^2 + \int_0^t \int_\Omega a(x) |\nabla u_m'|^2 dx dt \le C_0,~t\in (0,T).
\end{equation}

Since $C_0$ is independent of $m$, we can extract a subsequence $\{u_m^k\}$, which will denote by $\{u_m\}$, such that:
\begin{align}
    u_m &\to u_k \text{ weakly-$\ast$ in } L^\infty(0,T; H_0^1(\Omega)), \\
    u_m' &\to u_k' \text{ weakly-$\ast$ in } L^\infty(0,T; L^2(\Omega)), \\
    u_m &\to u_k \text{ strongly in } L^2(0,T; L^2(\Omega)) \text{ and a.e. in } \Omega \times (0,T).
\end{align}

Since $f_k(0)=0$, by the Mean Value Theorem $|f_k(u_m)| \le 5k^4 |u_m| \in L^\infty(0,T; L^2(\Omega))$. The strong convergence in $L^2$ guarantees that $f_k(u_m) \to f_k(u_k)$ strongly in $L^2(\Omega \times (0,T))$. Passing to the limit when $m \to \infty$ in \eqref{eq:galerkin_m} is straightforward, yielding a global weak solution $u_k$ to the truncated problem \eqref{eq:truncated}. By lower semicontinuity, $u_k$ satisfies the identical energy bound \eqref{eq:unif_m}, entirely independent of $k$.

\subsection{Step 3: The Critical Limit $k \to \infty$}

We now have a sequence $u_k$ of exact weak solutions to \eqref{eq:truncated}, satisfying
\begin{equation}\label{eq:unif_k}
\sup_{t \in [0,T]} \left( \|u_k'(t)\|_2^2 + \|\nabla u_k(t)\|_2^2 \right) + \int_0^T \int_\Omega a(x) |\nabla u_k'|^2 dx dt \le C_0.
\end{equation}

The Aubin-Lions theorem yields the existence of a subsequence $u_k \to u$ weakly-$\ast$ in $L^\infty(0,T; H_0^1(\Omega))$ and strongly in $L^2(0,T; L^2(\Omega))$.

\textbf{The Nonlinearity:} In light of the  the 3D Sobolev embedding $H_0^1(\Omega) \hookrightarrow L^6(\Omega)$, the uniform bound \eqref{eq:unif_k} implies that $\{u_k\}$ is uniformly bounded in $L^\infty(0,T; L^6(\Omega))$.
Exploiting Lemma \ref{lem:fk_prop}(iii), we have for all $t\in [0,T]$,
\begin{equation}
\int_\Omega |f_k(u_k(t,x))|^{6/5} dx \le \int_\Omega |u_k(t,x)|^6 dx \le C \|u_k(t)\|_{H_0^1}^6 \le C_1;
\end{equation}
where $C_1$ is independent of $k$ and $t$. Thus, $\{f_k(u_k)\}$ is uniformly bounded in the reflexive Banach space $L^\infty(0,T; L^{6/5}(\Omega))$.

On the other hand since $u_k \to u$ a.e. in $\Omega \times (0,T)$ and $f_k \to s^5$ uniformly on compact sets (Lemma \ref{lem:fk_prop}(iv)), we deduce that $f_k(u_k(t,x)) \to u(t,x)^5$ a.e. in $\Omega \times (0,T)$.
Using Lions' Lemma we conclude that
\begin{equation}
f_k(u_k) \rightharpoonup u^5 \text{ weakly-$\ast$ in } L^\infty(0,T; L^{6/5}(\Omega))~\hbox{ as }~k \to +\infty.
\end{equation}

Passing to the limit when $k \to \infty$ in \eqref{eq:galerkin_m} is now immediate. Thus, $u$ is a global weak solution of the critical problem \eqref{eq:main}.
\begin{theorem}[Existence of Global Weak Solutions]\label{thm:existence_weak}
Let $\Omega \subset \mathbb{R}^3$ be a bounded domain with smooth boundary, and assume that $a \in C^{1,1}(\Omega)$ is a non-negative function. For any given initial data $(u_0, u_1) \in H_0^1(\Omega) \times L^2(\Omega)$ and any $T > 0$, there exists at least one global weak solution $u$ to the critical Kelvin-Voigt damped wave equation \eqref{eq:main}.

Furthermore, the solution belongs to the class:
\begin{align*}
    u \in L^\infty(0,T; H_0^1(\Omega)), u_t \in L^\infty(0,T; L^2(\Omega))\hbox{ and } \sqrt{a(x)}\nabla u_t &\in L^2(0,T; L^2(\Omega)),
\end{align*}
and satisfies the following energy inequality for almost every $t \in (0,T)$:
\begin{equation}\label{eq:energy_ineq_weak}
    E_u(t) + \int_0^t \int_\Omega a(x) |\nabla u_t(s,x)|^2 \, dx ds \leq E_u(0).
\end{equation}

The energy functional is defined as $E_u(t) = \frac{1}{2}\|u_t(t)\|_2^2 + \frac{1}{2}\|\nabla u(t)\|_2^2 + \frac{1}{6}\|u(t)\|_6^6$.
\end{theorem}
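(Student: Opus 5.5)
The plan is to assemble the three steps already carried out above into a proof, and to supply the two ingredients they leave implicit: the identification of the limit in the damping term, and the energy inequality \eqref{eq:energy_ineq_weak} for the limit $u$.

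\textbf{Step 1: the truncated problems (Lemma \ref{lem:fk_prop}, Step 2).} For fixed $k$, since $f_k$ is globally Lipschitz, the Galerkin system \eqref{eq:galerkin_m} is uniquely and globally solvable. The energy identity yields \eqref{eq:unif_m} with $C_0$ independent of $m$ and $k$. I will also keep the potential term in the bound: $\int_\Omega F_k(u_m(t))\,dx\le C_0$.

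\textbf{Step 2: compactness and the damping term.} From \eqref{eq:unif_m}, $\sqrt{a}\,\nabla u_m'$ is bounded in $L^2(0,T;L^2(\Omega))$. Passing to a subsequence, $\sqrt a\,\nabla u_m'\rightharpoonup \chi$ weakly in this space. To identify $\chi=\sqrt a\,\nabla u_k'$, I test against $\varphi\in C_c^\infty((0,T)\times\Omega)^3$. Since $u_m'\rightharpoonup u_k'$ weakly-$\ast$ in $L^\infty(0,T;L^2)$ and $a\in C^{1,1}$, we have
\[
\int\!\!\int \sqrt a\,\nabla u_m'\cdot\varphi=-\int\!\!\int u_m'\,\dive(\sqrt a\,\varphi).
\]
This is where a difficulty appears: $\sqrt a$ need not be Lipschitz where $a$ vanishes. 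To avoid it, I identify instead $a\nabla u_m'=\sqrt a\,(\sqrt a\nabla u_m')\rightharpoonup \sqrt a\,\chi$, which is what the weak formulation requires. The distributional identity $a\nabla u_m'=\nabla(a u_m')-u_m'\nabla a$ passes to the limit, because $a\in W^{1,\infty}$. The bound on $\sqrt a\,\nabla u_k'$ then follows by weak lower semicontinuity, and the same argument applies to the limit $k\to\infty$.

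\textbf{Step 3: the nonlinearity, and why it is the main obstacle.} The delicate part is the limit $k\to\infty$ in the nonlinearity. Here I use Aubin--Lions (with $u_k'$ bounded in $L^\infty(0,T;L^2)$) to get a.e. convergence. Combining this with Lemma \ref{lem:fk_prop}(iii),(iv) and the uniform $L^{6/5}$ bound, Lions' lemma gives $f_k(u_k)\rightharpoonup u^5$. The equation then holds in $\mathcal D'$, and in $L^\infty(0,T;H^{-1})$ once the damping term is written as $\dive(\sqrt a\,\chi)$ with $\sqrt a\,\chi\in L^2$.

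\textbf{Step 4: the energy inequality.} For the energy inequality I will not try to prove an identity, since none is available without Strichartz regularity. Instead I will integrate the Galerkin identity against nonnegative $\theta\in C_c(0,T)$ and pass to the limit using lower semicontinuity of the norms. For the potential term I use Fatou's lemma: $F_k(u_k)\to u^6/6$ a.e., because $F_k\to s^6/6$ locally uniformly. The dissipation term is handled through weak lower semicontinuity of $\sqrt a\,\nabla u'$. For the initial energy, $E_m(0)\to E_u(0)$, since $P_mu_0\to u_0$ in $H_0^1\subset L^6$ and $F_k(s)\le s^6/6$. This gives
\[
\int\theta(t)\big[E_u(t)+\textstyle\int_0^t\!\!\int a|\nabla u_t|^2\big]dt\le E_u(0)\int\theta,
\]
and the Lebesgue differentiation theorem then yields \eqref{eq:energy_ineq_weak} for a.e. $t$.
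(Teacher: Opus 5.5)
Your proposal is correct and follows the paper's route: the $C^1$ truncation $f_k$, Galerkin at fixed $k$, energy bounds uniform in $m$ and $k$, and Aubin--Lions plus Lions' lemma to get $f_k(u_k)\rightharpoonup u^5$. Your identification of the damping limit via $a\nabla u_m'=\nabla(au_m')-u_m'\nabla a$, and your time-averaged lower-semicontinuity/Fatou argument for the energy inequality (including the $\tfrac16\|u\|_6^6$ term), supply details the paper only asserts. One small slip: the damping term $\dive(\sqrt a\,\chi)$ lies in $L^2(0,T;H^{-1})$, not $L^\infty(0,T;H^{-1})$, which does not affect the argument.
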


\section{Local Well-posedness of Regular Solutions via Galerkin Method and Small Data}

In this section, we establish the local existence of strong solutions for the quintic wave equation with localized Kelvin-Voigt damping. Instead of relying on Strichartz estimates, which present severe microlocal obstructions when commuting with a continuous cut-off function over the localized parabolic damping, we explore the intrinsic degenerate parabolic structure of the system for regular initial data. Let us consider the standard Galerkin approximation to our problem. Let $(w_j)_{j\in \mathbb{N}}$ be the spectral basis of the Dirichlet Laplacian in $\Omega$ and let $V_m = \text{span}\{w_1, w_2, \dots, w_m\}$. We look for approximate solutions of the form $u_m(t,x) = \sum_{j=1}^m g_{jm}(t) w_j(x)$ solving the projected system
$$(u_m''(t), w)_\Omega + (\nabla u_m(t), \nabla w)_\Omega + (a(x)\nabla u_m'(t), \nabla w)_\Omega + (u_m^5(t), w)_\Omega = 0,$$
for all $w \in V_m,$ with initial data $u_m(0) = P_m u_0 \to u_0$ in $H^2(\Omega) \cap H_0^1$ and $u_m'(0) = P_m u_1 \to u_1$ in $H_0^1(\Omega)$. Before proceeding to the regular energy estimates, we recall that the standard weak energy:
$$E_m(t) := \frac{1}{2}\|u_m'(t)\|_{L^2}^2 + \frac{1}{2}\|\nabla u_m(t)\|_{L^2}^2 + \frac{1}{6}\|u_m(t)\|_{L^6}^6$$
is non-increasing, satisfying $E_m(t) \le E_m(0) \le E_0$ for all $t \ge 0$ and $m\in \mathbb{N}$. In particular, this provides the uniform bound $\|\nabla u_m(t)\|_{L^2} \le \sqrt{2 E_0}$, which will be crucial to control the nonlinear term later.

\noindent{\bf Defining the Regular Energy.}

Since $u_m \in V_m$ and the basis is given by the eigenfunctions of the Laplacian, we are allowed to take $w = -\Delta u_m' \in V_m$ as a test function in the approximate equation. This choice allows us to capture the regularizing effect of the Kelvin-Voigt damping. We define the regular energy functional as:
$$Y_m(t) := \frac{1}{2}\|\nabla u_m'(t)\|_{L^2}^2 + \frac{1}{2}\|\Delta u_m(t)\|_{L^2}^2.$$

Substituting $w = -\Delta u_m'$ into the projected equation and integrating  over $\Omega$, we obtain
\begin{eqnarray*}
&&\frac{1}{2}\frac{d}{dt} \left( \|\nabla u_m'\|_{L^2}^2 + \|\Delta u_m\|_{L^2}^2 \right) - \int_\Omega \text{div}(a(x)\nabla u_m')(-\Delta u_m') dx\\
&&+ \int_\Omega u_m^5 (-\Delta u_m') dx = 0.
\end{eqnarray*}

\noindent{\bf Estimating the Linear Terms and the Damping Commutator.}

The term corresponding to the Kelvin-Voigt damping requires careful handling since the damping coefficient $a(x)$ is localized and non-constant. We have
$$- \int_\Omega \text{div}(a(x)\nabla u_m')(-\Delta u_m') dx = \int_\Omega a(x)|\Delta u_m'|^2 dx + \int_\Omega (\nabla a \cdot \nabla u_m')\Delta u_m' dx.$$
and the first term on the right-hand side provides the strong parabolic dissipation. The second term is a commutator of lower order and to absorb this commutator without losing derivatives, we strongly rely on the structural assumption on the damping function:
\begin{eqnarray}\label{ass on a(x)}
|\nabla a(x)|^2 \le C_a a(x).
\end{eqnarray}

Applying the Cauchy-Schwarz inequality and Young's inequality taking (\ref{ass on a(x)}) into account, we estimate:
$$\left| \int_\Omega (\nabla a \cdot \nabla u_m')\Delta u_m' dx \right| \le \int_\Omega \frac{|\nabla a|}{\sqrt{a}} |\nabla u_m'| \left(\sqrt{a} |\Delta u_m'|\right) dx $$
$$\le \frac{1}{2} \int_\Omega a(x) |\Delta u_m'|^2 dx + \frac{C_a}{2} \int_\Omega |\nabla u_m'|^2 dx.$$

Absorbing the dissipative term, the linear part of the equation yields the differential inequality:
$$Y_m'(t) + \frac{1}{2} \int_\Omega a(x) |\Delta u_m'|^2 dx \le C_a Y_m(t) + \int_\Omega u_m^5 \Delta u_m' dx.$$

\noindent{\bf Controlling the Quintic Nonlinearity via Gagliardo-Nirenberg.}

We now face the most delicate part of the proof: estimating the nonlinear source term uniformly. Integrating by parts once, we have:
$$\int_\Omega u_m^5 \Delta u_m' dx = -5 \int_\Omega u_m^4 \nabla u_m \cdot \nabla u_m' dx.$$

To bound this term, we apply the generalized Hölder's inequality with exponents $p_1 = 3$, $p_2 = 6$, and $p_3 = 2$:
\begin{eqnarray*}
\left| 5 \int_\Omega u_m^4 \nabla u_m \cdot \nabla u_m' dx \right| &\le& 5 \|u_m^4\|_{L^3} \|\nabla u_m\|_{L^6} \|\nabla u_m'\|_{L^2}\\
 &=& 5 \|u_m\|_{L^{12}}^4 \|\nabla u_m\|_{L^6} \|\nabla u_m'\|_{L^2}.
 \end{eqnarray*}

Let us analyze each term on the right-hand side in terms of the strong energy $Y_m(t)$:~Clearly, $\|\nabla u_m'\|_{L^2} \le \sqrt{2} Y_m(t)^{1/2}$.By the continuous Sobolev embedding $H^2(\Omega) \hookrightarrow W^{1,6}(\Omega)$ in dimension 3, we have $$\|\nabla u_m\|_{L^6} \le C \|\Delta u_m\|_{L^2} \le C Y_m(t)^{1/2}$$.

The core difficulty lies in controlling $\|u_m\|_{L^{12}}^4$ efficiently to ensure a low polynomial growth for the resulting ODE. Instead of using the too much embedding $H^2 \hookrightarrow L^\infty$ (which would yield a very high power and severely restrict the time of existence), we use the Gagliardo-Nirenberg interpolation inequality in $3D$. We interpolate $L^{12}$ between $L^6$ (which is uniformly bounded by the weak energy $E_0$) and $H^2$:
$$\|u_m\|_{L^{12}} \le C \|\Delta u_m\|_{L^2}^{1/4} \|u_m\|_{L^6}^{3/4}.$$

Raising this inequality to the 4th power, and recalling that $\|u_m\|_{L^6} \le C \|\nabla u_m\|_{L^2} \le C E_0^{1/2}$, we obtain:
$$\|u_m\|_{L^{12}}^4 \le C \|\Delta u_m\|_{L^2} \|u_m\|_{L^6}^3 \le C Y_m(t)^{1/2} E_0^{3/2}.$$

Combining all these estimates, the nonlinear term is bounded by:
$$\left| \int_\Omega u_m^5 \Delta u_m' dx \right| \le C \left( Y_m(t)^{1/2} E_0^{3/2} \right) \left( Y_m(t)^{1/2} \right) \left( Y_m(t)^{1/2} \right) = C_1 E_0^{3/2} Y_m(t)^{3/2},$$
where $C_1 > 0$ is a constant depending only on $\Omega$ and the Sobolev embeddings.

{\bf The Bernoulli Differential Inequality and Local Existence}.

Putting the linear and nonlinear estimates together, we arrive at the following differential inequality for the regular energy:
$$Y_m'(t) \le C_a Y_m(t) + C_1 E_0^{3/2} Y_m(t)^{3/2}.$$

This is a Bernoulli-type differential inequality and to solve it explicitly, we multiply by the integrating factor $e^{C_a t / 2}$ and we trace back to $Y_m(t)$ to obtaining
$$Y_m(t) \le \frac{Y_m(0) e^{C_a t}}{\left[ 1 - \frac{C_1 E_0^{3/2}}{C_a} \sqrt{Y_m(0)} \left( e^{C_a t / 2} - 1 \right) \right]^2}.$$

Since $Y_m(0)$ is bounded by the $H^2 \times H^1$ norm of the initial data, the denominator is strictly positive and bounded away from zero on some maximal interval $[0, T_{max})$, where $T_{max} > 0$ depends continuously on $Y_m(0)$, $E_0$, and the structural constant of the damping $C_a$. This uniform bound on $Y_m(t)$ over $[0, T]$, $T < T_{max}$, guarantees that the approximate solutions $u_m$ remain bounded in $L^\infty(0, T; H^2 \cap H_0^1)$ and $u_m'$ in $L^\infty(0, T; H_0^1)$. By standard compactness arguments (e.g., Aubin-Lions-Simon lemma), we can extract a subsequence of $\{u_m\}$ that converges strongly, thus passing to the limit as $m \to \infty$ we prove the local well-posedness of regular solutions.

\begin{theorem}[Local Well-posedness of Strong Solutions]\label{thm:local_strong}
Let $\Omega \subset \mathbb{R}^3$ be a bounded domain with smooth boundary. Assume that the damping coefficient $a \in C^{1,1}(\Omega)$ is non-negative and satisfies the structural condition $|\nabla a(x)|^2 \le C_a a(x)$ for almost every $x \in \Omega$.

For any initial data $(u_0, u_1) \in \left(H^2(\Omega) \cap H_0^1(\Omega) \right) \times H_0^1(\Omega)$, there exists a maximal time $T_{max} > 0$, depending continuously on the initial weak energy $E_0$, the initial regular energy $Y(0)$, and the structural constant $C_a$, such that the critical wave equation admits a unique local regular solution $u$ on $[0, T]$ for any $T < T_{max}$.
Furthermore, the solution belongs to the
 class:
\begin{align*}
    &u \in L^\infty(0,T; H^2(\Omega) \cap H_0^1(\Omega)), ~u_t \in L^\infty(0,T; H_0^1(\Omega)), \\
    &\sqrt{a(x)} \Delta u_t \in L^2(0,T; L^2(\Omega)),
\end{align*}
and its regular energy $Y_u(t) := \frac{1}{2}\|\nabla u_t(t)\|_{L^2}^2 + \frac{1}{2}\|\Delta u(t)\|_{L^2}^2$ satisfies the uniform bound:
\begin{equation}\label{eq:strong_energy_bound}
    Y_u(t) \le \frac{Y_u(0) e^{C_a t}}{\left[ 1 - \frac{C_1 E_0^{3/2}}{C_a} \sqrt{Y_u(0)} \left( e^{C_a t / 2} - 1 \right) \right]^2}, \quad \forall t \in [0, T_{max}).
\end{equation}
\end{theorem}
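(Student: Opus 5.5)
The plan is to make rigorous the Galerkin scheme sketched just before the statement, and then to add the two ingredients it leaves implicit: passage to the limit with the right regularity, and uniqueness.

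\textbf{Step 1: uniform a priori bounds.} First I will recheck the regular energy identity. Testing the projected equation with $w=-\Delta u_m'\in V_m$ is legitimate because $V_m$ is spanned by Dirichlet eigenfunctions. The damping commutator $\int_\Omega(\nabla a\cdot\nabla u_m')\Delta u_m'\,dx$ is absorbed using the structural condition $|\nabla a|^2\le C_a a$. The quintic term is written as $-5\int_\Omega u_m^4\nabla u_m\cdot\nabla u_m'\,dx$. I will bound it by H\"older with exponents $(3,6,2)$, by the interpolation $\|u_m\|_{L^{12}}^4\le C\|\Delta u_m\|_{L^2}\|u_m\|_{L^6}^3$, and by the elliptic estimate $\|\nabla u_m\|_{L^6}\le C\|\Delta u_m\|_{L^2}$ on smooth $\Omega$. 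The $L^6$ factor is controlled uniformly by the nonincreasing weak energy $E_m(t)\le E_0$. This gives the Bernoulli inequality $Y_m'\le C_aY_m+C_1E_0^{3/2}Y_m^{3/2}$.

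The substitution $Z=Y_m^{-1/2}$ turns it into the linear inequality $Z'\ge -\tfrac{C_a}{2}Z-\tfrac{C_1E_0^{3/2}}{2}$, which integrates explicitly to the bound \eqref{eq:strong_energy_bound}. Since $Y_m(0)\to Y(0)$, the blow-up time of the comparison function is bounded below uniformly in $m$. Hence on each $[0,T]$ with $T<T_{max}$ we get $m$-independent bounds for $u_m$ in $L^\infty(0,T;H^2\cap H_0^1)$, for $u_m'$ in $L^\infty(0,T;H_0^1)$, and for $\sqrt a\,\Delta u_m'$ in $L^2(0,T;L^2)$; the last one comes from integrating the dissipative term.

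\textbf{Step 2: passage to the limit.} I will extract weak-$\ast$ convergent subsequences and apply Aubin--Lions--Simon to get $u_m\to u$ strongly in $C([0,T];H_0^1)$. This makes $u_m^5\to u^5$ strongly in $L^\infty(0,T;L^{6/5})$. It even converges in $L^2$, because $H^2\hookrightarrow L^\infty$ gives uniform $L^\infty$ bounds. The term $(a\nabla u_m',\nabla w)$ passes to the limit weakly. The limit satisfies the equation in $L^2(0,T;H^{-1})$, and lower semicontinuity transfers the bound on $Y$ and the stated class.

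\textbf{Step 3: uniqueness.} Let $u,v$ be two regular solutions and $w=u-v$. I will test the difference equation with $w_t$. This is justified since $w_t\in L^\infty(0,T;H_0^1)$ and $\operatorname{div}(a\nabla w_t)\in L^2(0,T;H^{-1})$. The damping term has a good sign. The nonlinear difference is bounded by
\[
\Big|\int_\Omega (u^5-v^5)w_t\,dx\Big|\le C\big(\|u\|_{L^\infty}^4+\|v\|_{L^\infty}^4\big)\|w\|_{L^2}\|w_t\|_{L^2},
\]
and the $L^\infty$ norms are bounded by the $H^2$ bounds. Gronwall then gives $w\equiv0$. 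Continuity of $T_{max}$ with respect to $(E_0,Y(0),C_a)$ can be read off the explicit formula $T_{max}=\tfrac{2}{C_a}\log\bigl(1+C_a/(C_1E_0^{3/2}\sqrt{Y(0)})\bigr)$.

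\textbf{Main obstacle.} The delicate step is justifying the interpolation inequality $\|u\|_{L^{12}}\le C\|\Delta u\|^{1/4}\|u\|_{L^6}^{3/4}$ on a bounded domain. Scaling shows it is exact in $\mathbb R^3$, but on $\Omega$ one must extend $u\in H^2\cap H_0^1$ (Stein extension) and accept lower-order terms controlled by $\|\nabla u\|_{L^2}\le\sqrt{2E_0}$. Those terms modify the inequality into $Y'\le C_aY+C(E_0)(Y^{3/2}+Y)$, which is harmless for local existence. I would try this route first.

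A secondary subtlety is that Step 1 needs $E_m(0)\le E_0$ uniformly. This holds because $P_mu_0\to u_0$ in $H^1\hookrightarrow L^6$.
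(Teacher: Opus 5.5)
Your proposal is correct and follows essentially the same route as the paper. Both use Galerkin on Dirichlet eigenfunctions with the multiplier $-\Delta u_m'$, absorb the $\nabla a$ commutator via $|\nabla a|^2\le C_a a$, reach $Y_m'\le C_aY_m+C_1E_0^{3/2}Y_m^{3/2}$ from H\"older $(3,6,2)$ and the $L^6$--$H^2$ interpolation, integrate this Bernoulli inequality explicitly, pass to the limit by Aubin--Lions, and prove uniqueness by the $w_t$-energy estimate with $H^2\hookrightarrow L^\infty$ and Gronwall (a separate theorem in the paper).

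The interpolation you flag as the main obstacle in fact produces no lower-order term. A Stein extension is bounded on $L^6$ and $H^2$ simultaneously, and $\|u\|_{H^2}\le C\|\Delta u\|_{L^2}$ on $H^2\cap H_0^1$, so $\|u\|_{L^{12}}\le C\|\Delta u\|_{L^2}^{1/4}\|u\|_{L^6}^{3/4}$ holds exactly. This gives the stated bound with $C_a$ alone in the exponent, whereas your fallback would give it only with $C_a$ replaced by $C_a+C(E_0)$.
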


\section{Uniqueness and Weak-Strong Uniqueness Principles}

In this section, we address the uniqueness of the solutions obtained previously. It is a well-known open problem whether weak solutions in the basic energy space $C([0,T]; H_0^1(\Omega)) \cap C^1([0,T]; L^2(\Omega))$ are unconditionally unique for the critical quintic wave equation in dimension 3. Since the Kelvin-Voigt damping $a(x)$ is localized, the dynamics in the undamped region $\{x \in \Omega : a(x) = 0\}$ correspond exactly to the pure critical wave equation.

Therefore, without global Strichartz estimates, unconditional uniqueness is out of reach. Nevertheless, we can establish rigorous uniqueness within the class of regular solutions, as well as a Weak-Strong uniqueness principle relying on the auxiliary space $L^4(0,T; L^{12}(\Omega))$.

\subsection{Uniqueness of Regular Solutions}

We begin by showing that strong solutions, constructed via the Galerkin method for initial data in $H^2(\Omega)$, are uniquely determined.

\begin{theorem}[Uniqueness of Regular Solutions]
Let $(u_0, u_1) \in (H^2(\Omega) \cap H_0^1(\Omega)) \times H_0^1(\Omega)$. Consider $u$ and $v$ are two regular solutions to the problem \eqref{eq:main} in the class $L^\infty(0,T; H^2(\Omega) \cap H_0^1(\Omega)) \cap W^{1,\infty}(0,T; H_0^1(\Omega))$ satisfying the same initial conditions. Then $u \equiv v$ on $[0,T]$.
\end{theorem}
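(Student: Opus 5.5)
Set $w = u - v$. Then $w(0)=w_t(0)=0$ and $w$ satisfies
\begin{equation*}
w_{tt} - \Delta w - \operatorname{div}(a(x)\nabla w_t) + (u^5 - v^5) = 0 \quad \text{in } \Omega\times(0,T), \qquad w=0 \text{ on } \partial\Omega .
\end{equation*}
I will run a weak energy estimate for $w$ at the level $H_0^1\times L^2$ and close it with Gronwall's lemma. The regularity of $u$ and $v$ controls the nonlinear difference. No Strichartz information is needed, since $H^2\hookrightarrow L^\infty$ in dimension three.

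\textbf{Step 1: the test function is legitimate.} Since $w_t\in L^\infty(0,T;H_0^1(\Omega))$ and $w\in W^{1,\infty}(0,T;H_0^1)$, the equation holds in $L^2(0,T;H^{-1}(\Omega))$. Indeed, $\operatorname{div}(a\nabla w_t)\in L^\infty(0,T;H^{-1})$, and $u^5-v^5\in L^\infty(0,T;L^2)$ because $u,v\in L^\infty(0,T;L^\infty)$. So $w_{tt}\in L^2(0,T;H^{-1})$ and we may pair the equation with $w_t$. The Lions--Magenes lemma justifies $\langle w_{tt},w_t\rangle = \tfrac12\frac{d}{dt}\|w_t\|_2^2$. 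If one prefers, the same identity can be obtained by mollifying in time. This gives
\begin{equation*}
\frac12\frac{d}{dt}\Big(\|w_t\|_2^2+\|\nabla w\|_2^2\Big) + \int_\Omega a(x)|\nabla w_t|^2\,dx = -\int_\Omega (u^5-v^5)\,w_t\,dx .
\end{equation*}
The damping term is nonnegative, so it is simply dropped; the structural condition on $a$ is not even needed here.

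\textbf{Step 2: the nonlinear term.} Write $|u^5-v^5|\le 5(|u|^4+|v|^4)|w|$. Since $H^2(\Omega)\hookrightarrow L^\infty(\Omega)$ in 3D, the constant $M:=\sup_{t}(\|u(t)\|_\infty+\|v(t)\|_\infty)$ is finite and controlled by the $L^\infty(0,T;H^2)$ norms of $u,v$. Then Cauchy--Schwarz and Poincaré give
\begin{equation*}
\Big|\int_\Omega (u^5-v^5)w_t\Big| \le 10M^4\|w\|_2\|w_t\|_2 \le C M^4\big(\|\nabla w\|_2^2+\|w_t\|_2^2\big).
\end{equation*}
As an alternative avoiding $L^\infty$, I could use Hölder with $\|u\|_{L^{12}}^4\|w\|_{L^6}\|w_t\|_{L^2}$, which also suffices.

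\textbf{Step 3: Gronwall.} Set $\Phi(t)=\|w_t\|_2^2+\|\nabla w\|_2^2$. We obtain $\Phi'\le CM^4\Phi$ with $\Phi(0)=0$, hence $\Phi\equiv0$ and $u\equiv v$ on $[0,T]$.

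The only delicate point is Step 1. One must check that $w_t$ is an admissible test function and that the energy identity holds, rather than only an inequality. This follows because $w_t$ lies in $H_0^1$, the dual of the space where $w_{tt}$ lives. Everything else is routine.
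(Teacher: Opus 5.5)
Your proof is correct and follows essentially the same route as the paper. You test the difference equation with $w_t$, drop the nonnegative Kelvin--Voigt term, bound $|u^5-v^5|\le 5(|u|^4+|v|^4)|w|$ via $H^2(\Omega)\hookrightarrow L^\infty(\Omega)$, and close with Poincar\'e and Gronwall. Your Step~1, which justifies the pairing $\langle w_{tt},w_t\rangle$ by Lions--Magenes using $w_{tt}\in L^2(0,T;H^{-1}(\Omega))$ and $w_t\in L^2(0,T;H_0^1(\Omega))$, makes explicit a point the paper leaves implicit.
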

\begin{proof}
Let $w = u - v$. Then $w$ satisfies the initial value problem:
\begin{equation}\label{eq:diff_w}
w_{tt} - \Delta w - \text{div}(a(x)\nabla w_t) + u^5 - v^5 = 0, \quad \text{with} \quad w(0) = 0, \quad w_t(0) = 0.
\end{equation}
Testing the equation with $w_t$  and integrating over $\Omega$, we obtain the energy identity for the difference:
\begin{equation}\label{eq:energy_diff}
\frac{1}{2}\frac{d}{dt} \left( \|w_t\|_{L^2}^2 + \|\nabla w\|_{L^2}^2 \right) + \int_\Omega a(x) |\nabla w_t|^2 dx = -\int_\Omega (u^5 - v^5) w_t dx.
\end{equation}
By the Mean Value Theorem, we can bound the nonlinear difference as follows:
$$
|u^5 - v^5| \le 5 \left( |u|^4 + |v|^4 \right) |w|, ~(x,t)\in \Omega \times (0,T).
$$
Since $u$ and $v$ are regular solutions in dimension 3, and the Sobolev embedding $H^2(\Omega) \hookrightarrow L^\infty(\Omega)$ is valid if $N=3$ ensures that $u, v \in L^\infty(0,T; L^\infty(\Omega))$. Therefore, there exists a constant $M > 0$, depending on the $L^\infty_t H^2_x$ norms of $u$ and $v$, such that:
$$
|u(t,x)|^4 + |v(t,x)|^4 \le M \quad \text{for a.e. } (t,x) \in (0,T) \times \Omega.
$$

Coming back to the energy identity (\ref{eq:energy_diff}), we estimate the right-hand side using the Cauchy-Schwarz inequality:
\begin{align*}
\frac{1}{2}\frac{d}{dt} \left( \|w_t\|_{L^2}^2 + \|\nabla w\|_{L^2}^2 \right) &\le 5 M \int_\Omega |w| |w_t| dx \\
&\le \frac{5 M}{2} \left( \|w\|_{L^2}^2 + \|w_t\|_{L^2}^2 \right).
\end{align*}

Using the Poincaré inequality, $\|w\|_{L^2}^2 \le C_P \|\nabla w\|_{L^2}^2$ and defining $E_w(t) = \frac{1}{2}\|w_t\|_{L^2}^2 + \frac{1}{2}\|\nabla w\|_{L^2}^2$, the above inequality yields
$$
E_w'(t) \le C E_w(t),
$$
where $C = 5M \max(1, C_P)$. The null initial data and Gronwall's Lemma yields $E_w(t) \equiv 0$ for all $t \in [0,T]$, which concludes the proof.
\end{proof}

\subsection{Uniqueness in the Strichartz Class}

The true challenge arises when we relax the regularity of the solutions to the energy space. While unconditional uniqueness in the weak energy space $C([0,T]; H_0^1(\Omega)) \cap C^1([0,T]; L^2(\Omega))$ remains an open problem for the critical quintic wave equation in 3D, we can establish rigorous uniqueness within the auxiliary Strichartz space\\ $L^4(0,T; L^{12}(\Omega))$. This is precisely the regularity class recovered by our well-posedness framework using the Littlewood-Paley frequency decomposition.

\begin{theorem}[Uniqueness in the Strichartz Class]\label{thm:uniqueness_strichartz}
Let $(u_0, u_1) \in H_0^1(\Omega) \times L^2(\Omega)$. Suppose $u$ and $v$ are two solutions to the critical problem \eqref{eq:main} in the energy space $C([0,T]; H_0^1(\Omega)) \cap C^1([0,T]; L^2(\Omega))$ sharing the same initial data.
If, additionally, both solutions belong to the Strichartz class $u, v \in L^4(0,T; L^{12}(\Omega))$, then $u \equiv v$ on $[0,T]$.
\end{theorem}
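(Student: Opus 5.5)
We argue as in the proof of uniqueness for regular solutions. The only change is that the $L^\infty$ bound on $u,v$ is replaced by the Strichartz integrability $L^4(0,T;L^{12}(\Omega))$, which is exactly what the quintic difference requires in an energy estimate. Set $w=u-v$. Then $w$ solves \eqref{eq:diff_w} with zero data and lies in $C([0,T];H_0^1(\Omega))\cap C^1([0,T];L^2(\Omega))$. The aim is the differential inequality
\[
E_w'(t)\le C\big(\|u(t)\|_{L^{12}}^4+\|v(t)\|_{L^{12}}^4\big)E_w(t),
\qquad E_w(t)=\tfrac12\|w_t\|_{L^2}^2+\tfrac12\|\nabla w\|_{L^2}^2 .
\]
Its coefficient $g(t)=C(\|u\|_{L^{12}}^4+\|v\|_{L^{12}}^4)$ belongs to $L^1(0,T)$ precisely because $u,v\in L^4(0,T;L^{12})$. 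Gronwall's lemma then gives $E_w(t)\le E_w(0)\exp(\int_0^t g)=0$.

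The nonlinear estimate itself is routine. From $|u^5-v^5|\le 5(|u|^4+|v|^4)|w|$ and Hölder's inequality with exponents $3,6,2$ (note $\tfrac13+\tfrac16+\tfrac12=1$) we get
\[
\Big|\int_\Omega (u^5-v^5)w_t\,dx\Big|\le 5\big(\|u\|_{L^{12}}^4+\|v\|_{L^{12}}^4\big)\|w\|_{L^6}\|w_t\|_{L^2}.
\]
The embedding $H_0^1\hookrightarrow L^6$ together with Young's inequality bounds the right-hand side by $C g(t)E_w(t)$. The damping term $\int_\Omega a|\nabla w_t|^2\ge0$ has a favourable sign and is simply dropped. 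No structural assumption on $a$ is needed here.

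The main obstacle is justifying the energy identity \eqref{eq:energy_diff} for $w$, since the solutions are only in the energy class. The natural test function $w_t$ is merely $L^2$, while $\operatorname{div}(a\nabla w_t)$ only makes sense in a negative-order space. The plan is as follows.
\begin{itemize}
\item[(1)] Use the class of Theorem~\ref{thm:existence_weak}, where $\sqrt a\,\nabla u_t,\sqrt a\,\nabla v_t\in L^2(0,T;L^2)$; this makes the damping term a well-defined pairing with $w_t$. We assume, as part of the notion of solution, that $u$ and $v$ lie in this class.
\item[(2)] Regularize in space with the spectral projectors $P_m$ or with smooth multipliers $\psi(-h^2\Delta)$. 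Test the equation for $w$ with $P_m w_t$, which is legitimate because $P_m w_t\in H_0^1$. The forcing $u^5-v^5$ lies in $L^1(0,T;L^2)$, again by Hölder's inequality using $L^4L^{12}$ and $L^\infty L^6$, so its pairing with $P_m w_t$ converges.
\item[(3)] Pass to the limit $m\to\infty$. Alternatively, obtain directly the energy inequality, which suffices for Gronwall: the damping term converges by weak lower semicontinuity, using $\sqrt a\,\nabla P_m w_t\to\sqrt a\,\nabla w_t$ weakly.
\end{itemize}
The commutator $[P_m,a]$ in the damping term is the delicate point. If the abrupt projector gives trouble, we would instead use a time mollification of $w$ in the Lions--Magenes style. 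That route only requires $w_{tt}\in L^1(0,T;H^{-1})+L^2(0,T;(\text{weighted } H^1)^*)$ and the standard lemma that $\frac{d}{dt}\|w_t\|^2=2\langle w_{tt},w_t\rangle$ in this duality. It avoids spatial commutators entirely.
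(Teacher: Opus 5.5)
Your argument is the paper's own: the same H\"older split $\|u\|_{L^{12}}^4\|w\|_{L^6}\|w_t\|_{L^2}$, the embedding $H_0^1\hookrightarrow L^6$, the observation that $\|u\|_{L^{12}}^4+\|v\|_{L^{12}}^4\in L^1(0,T)$, and Gr\"onwall after dropping $\int_\Omega a|\nabla w_t|^2$; the paper justifies the energy identity only by noting $u^5-v^5\in L^1(0,T;L^2(\Omega))$, so your treatment of that step goes further than it does. Of your two justification routes, the time mollification is the one that actually closes: it commutes with $\operatorname{div}(a\nabla\cdot)$, so $\sqrt a\,\nabla w^\varepsilon_t\to\sqrt a\,\nabla w_t$ strongly with no commutator, whereas with $P_m$ the damping term is the cross term $(a\nabla w_t,\nabla P_m w_t)$, not a square, and its limit needs the weighted commutator bound you leave open. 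For that route you must mollify the whole equation and differentiate $\|w^\varepsilon_t\|^2+\|\nabla w^\varepsilon\|^2$ jointly, because $w_t$ is not known to lie in $H_0^1$, so $\Delta w$ cannot be paired with $w_t$ separately as your duality formulation suggests.
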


\begin{proof}
Let $w = u - v$. Then, $w(0) = 0$ and $w_t(0) = 0$. Because both $u$ and $v$ belong to $L^4(0,T; L^{12}(\Omega))$, we can show that the nonlinear difference $u^5 - v^5$ is regular enough to be tested against $w_t \in L^\infty(0,T; L^2(\Omega))$.

Using the algebraic inequality $|u^5 - v^5| \le C (|u|^4 + |v|^4) |w|$, we apply the generalized Hölder's inequality in space with exact exponents $p_1 = 3/2$ and $p_2 = 3$:
\begin{align*}
\|u^5(t) - v^5(t)\|_{L^2(\Omega)} &\le C \left\| (|u(t)|^4 + |v(t)|^4) |w(t)| \right\|_{L^2(\Omega)} \\
&\le C \left( \|u(t)\|_{L^{12}(\Omega)}^4 + \|v(t)\|_{L^{12}(\Omega)}^4 \right) \|w(t)\|_{L^6(\Omega)}.
\end{align*}
By the Sobolev embedding $H_0^1(\Omega) \hookrightarrow L^6(\Omega)$, we have $\|w(t)\|_{L^6(\Omega)} \le C_S \|\nabla w(t)\|_{L^2(\Omega)}$.
Since $u, v \in L^4(0,T; L^{12}(\Omega))$, the function $h(t) := C C_S \left( \|u(t)\|_{L^{12}}^4 + \|v(t)\|_{L^{12}}^4 \right)$ belongs to $L^1(0,T)$.
Thus, $u^5 - v^5 \in L^1(0,T; L^2(\Omega))$, which rigorously justifies taking the $L^2$-inner product of the weak equation for $w$ with $w_t$. This yields the energy identity:
\begin{equation}
\frac{1}{2}\frac{d}{dt} \left( \|w_t\|_{L^2}^2 + \|\nabla w\|_{L^2}^2 \right) + \int_\Omega a(x)|\nabla w_t|^2 dx = -\int_\Omega (u^5 - v^5) w_t dx.
\end{equation}
Dropping the non-negative dissipation term and defining the energy of the difference as $E_w(t) = \frac{1}{2}\|w_t\|_{L^2}^2 + \frac{1}{2}\|\nabla w\|_{L^2}^2$, we estimate the right-hand side using the Cauchy-Schwarz inequality:
\begin{align*}
E_w'(t) &\le \|u^5 - v^5\|_{L^2(\Omega)} \|w_t\|_{L^2(\Omega)} \\
&\le h(t) \|\nabla w\|_{L^2(\Omega)} \|w_t\|_{L^2(\Omega)} 
\le h(t) E_w(t).
\end{align*}
Since $h \in L^1(0,T)$ and $E_w(0) = 0$, Grönwall's Lemma implies $E_w(t) \equiv 0$ for all $t \in [0,T]$. Consequently, $w \equiv 0$, meaning $u \equiv v$.
\end{proof}

\section{Littlewood-Paley Strategy}

In the present section the Littlewood–Paley decomposition reveals an intrinsic stabilizing mechanism: the Kelvin–Voigt operator behaves as a spectral smoothing device. Low frequencies absorb derivative losses via Bernstein inequalities, while high frequencies remain dispersive.

\subsection{The Philosophy: Frequencies Instead of Space}

The central challenge is the Kelvin-Voigt term: $\operatorname{div}(a(x)\nabla u_t)$.  This term belongs to the dual space $H^{-1}$ and the classical non-homogeneous Strichartz estimate for the wave equation does not accept terms in $L^1(0,T; H^{-1}(\Omega))$.

\textbf{The spectral solution:} Instead of slicing the domain, we partition the \textit{frequency} of the wave. To preserve the boundedness of the operators in $L^p$ spaces and ensure the convolution kernels decay rapidly, we employ a \textit{smooth} spectral projection. Let $\chi \in C_c^\infty(\mathbb{R})$ be a smooth cutoff function such that $\chi(s) = 1$ for $|s| \le 1$ and $\chi(s) = 0$ for $|s| \ge 2$. We define the low-frequency projector $P_{\le N} = \chi(\sqrt{-\Delta}/N)$ and its high-frequency counterpart $P_{>N} = I - P_{\le N}$. This smooth localization possesses the powerful property of satisfying \textbf{Bernstein's Inequalities}. It effectively limits the cost of spatial derivatives, lifting the damping term from $H^{-1}$ back to $L^2$, which is precisely the functional framework where the Strichartz Theorem perfectly operates.

\subsection{Step 1: The Regularized Problem and the Energy}

To apply the \textit{bootstrap} argument (a priori estimate) without the nonlinearity blowing up, we truncate the quintic function. Let $f_k(s)$ be a globally Lipschitz function such that $f_k(s) = s^5$ for $|s| \le k$.

We consider the regular solution $u$, where the index $k$ is omitted for the sake of simplicity, of the problem:
\begin{eqnarray}\label{eq:pde}
u_{tt} - \Delta u - \operatorname{div}(a(x)\nabla u_t) + f_k(u) = 0 & \text{in } \Omega \times (0,T), \\
u = 0 & \text{on } \partial\Omega \times (0,T), \\
u(0,x) = u_0(x), \quad u_t(0,x) = u_1(x) & \text{in } \Omega.
\end{eqnarray}

Multiplying \eqref{eq:pde} by $u_t$ and integrating over $\Omega \times (0,t)$, we obtain the Global Energy Identity:
\begin{equation}\label{eq:energia}
E(t) + \int_0^t \int_\Omega a(x) |\nabla u_t(s,x)|^2 \, dx \, ds = E(0) \le C_0,
\end{equation}
where $E(0)$ depends exclusively on the initial data $(u_0, u_1)$. This guarantees uniform bounds in $k$ for $u \in L^\infty(0,T; H_0^1(\Omega))$, $u_t \in L^\infty(0,T; L^2(\Omega))$, and for the viscous dissipative term.

\subsection{Step 2: The Smooth Spectral Filter ($P_{\le N}$)}

We use the Spectral Theorem for the $-\Delta$ operator with Dirichlet boundary conditions (see the Appendix of the present article). Let $\{\lambda_j\}_{j=1}^\infty$ be the eigenvalues and $\{\phi_j\}_{j=1}^\infty$ be the corresponding orthonormal eigenfunctions.

To ensure that the resulting operators have rapidly decaying kernels and bounded commutators, we have to employ a smooth spectral cutoff rather than a sharp truncation. For a frequency threshold $N > 0$, we define the smooth Low-Frequency Projector via the functional calculus:
\begin{equation}
P_{\le N} u = \sum_{j=1}^\infty \chi\left(\frac{\sqrt{\lambda_j}}{N}\right) (u, \phi_j)_{L^2} \phi_j.
\end{equation}

By construction, this operator commutes with both $\Delta$ and $\partial_t$ (see in the appendices A and B of this manuscript some important properties regarding spectral operators). Crucially, because $\chi$ is smoothly supported on $[-2, 2]$, the projector rigorously satisfies the Bernstein's inequality:
\begin{equation}
\|\nabla (P_{\le N} g)\|_{L^2(\Omega)} \le C N \|g\|_{L^2(\Omega)},
\end{equation}
where the constant $C$ depends only on the choice of the profile $\chi$.

\subsection{Step 3: Low Frequencies and Strichartz}

We define the macroscopic part of the wave as $w_N = P_{\le N} u$. Applying the spectral projector $P_{\le N}$ to the truncated PDE \eqref{eq:pde}, we obtain:
\begin{equation}\label{eq:baixa}
(w_N)_{tt} - \Delta w_N = - P_{\le N}(f_k(u)) + P_{\le N}\big( \operatorname{div}(a(x)\nabla u_t) \big).
\end{equation}

To apply the Strichartz estimates to the low-frequency component $w_N$, we treat the entire right-hand side of \eqref{eq:baixa} as a source term $F_{total} = F_{NL} + F_{KV}$, where $F_{NL} := - P_{\le N}(f_k(u))$ and $F_{KV} := P_{\le N}\big( \operatorname{div}(a(x)\nabla u_t) \big)$. Our goal is to estimate both terms in the dual Strichartz space $L^1(0,T; L^2(\Omega))$.

To justify the interaction between the intrusive Kelvin-Voigt operator and the spectral projector, we exploit the self-adjointness of $P_{\le N}$. For any test function $\phi \in L^2(\Omega)$, we write:
\begin{equation}
\big\langle P_{\le N}\operatorname{div}(a\nabla u_t), \phi \big\rangle = \big\langle \operatorname{div}(a\nabla u_t), P_{\le N}\phi \big\rangle.
\end{equation}
Integrating by parts in space yields:
\begin{equation}
\big\langle \operatorname{div}(a\nabla u_t), P_{\le N}\phi \big\rangle = - \big\langle a\nabla u_t, \nabla (P_{\le N}\phi) \big\rangle.
\end{equation}

At this point, we invoke the standard Bernstein inequality for spectral projectors associated with the Dirichlet Laplacian (see, for instance, Blair--Smith--Sogge \cite{BlairSmithSogge2009}):
\begin{equation}
\|\nabla (P_{\le N}\phi)\|_{L^2(\Omega)} \le C N \|\phi\|_{L^2(\Omega)}.
\end{equation}

Applying the Cauchy-Schwarz inequality, we deduce that the projector acts as a low-pass filter, recovering the spatial derivative at the cost of the frequency threshold $N$:
\begin{equation}\label{LP-KV-lowfreq}
\| F_{KV}(t) \|_{L^2(\Omega)} \le C N \|a\nabla u_t(t)\|_{L^2(\Omega)} \le C_a N E(t)^{1/2}.
\end{equation}

Integrating over the time interval $[0,T]$ and using the uniform bound from the first-order global energy $E(t) \le E_0$, we obtain the targeted bound:
\begin{equation}\label{eq:est_FKV}
    \| F_{KV} \|_{L^1(0,T; L^2(\Omega))} \le C_a N \sqrt{T} E_0^{1/2}.
\end{equation}

\begin{remark}
Due to the regularity of the truncated problem we constructed in Section 5, we are not applying the projector $P_{\le N}$ to the original weak solution. We are applying it to the strong solution $u_k$ of the truncated problem! From Section 3, $u_k \in H^2 \cap H_0^1$ and $\partial_t u_k \in H_0^1$. Therefore, $\nabla u_{k,t} \in L^2$ and, strictly speaking, $\operatorname{div}(a\nabla u_{k,t})$ would be in $H^{-1}$. However, the low-frequency Littlewood-Paley projector ($P_{\le N}$) has the wonderful property of being a regularizer. It cuts off the high frequencies, which means that even if you apply $P_{\le N}$ to a distribution in $H^{-1}$, the result falls across all $H^s$ spaces, including $L^2$ and $C^\infty$.
\end{remark}

For the nonlinear term, we use the $L^p$-boundedness of the spectral projector. Since the truncated nonlinearity obeys $|f_k(s)| \le |s|^5$ uniformly in $k$, we have:
\begin{equation}
    \| F_{NL}(t) \|_{L^2(\Omega)} \le C \| f_k(u(t)) \|_{L^2(\Omega)} \le C \| |u(t)|^5 \|_{L^2(\Omega)} = C \| u(t) \|_{L^{10}(\Omega)}^5.
\end{equation}
Integrating in time from $0$ to $T$ yields exactly the critical Strichartz norm associated with the quintic wave equation:
\begin{equation}\label{eq:est_FNL}
    \| F_{NL} \|_{L^1(0,T; L^2(\Omega))} \le C \| u \|_{L^5(0,T; L^{10}(\Omega))}^5.
\end{equation}

Combining \eqref{eq:est_FKV} and \eqref{eq:est_FNL}, the total source term is bounded in $L^1 L^2$. Applying the standard Strichartz estimate to equation \eqref{eq:baixa}, we conclude that for the critical space $X_T := L^5(0,T; L^{10}(\Omega))$, the low-frequency component satisfies:
\begin{equation}\label{eq:strichartz_baixa}
    \| w_N \|_{X_T} \le C_0 E_0^{1/2} + C_1 \| u \|_{X_T}^5 + C_S C_a N \sqrt{T} E_0^{1/2},
\end{equation}
where $C_0$ depends on the free wave propagation of the initial data, $C_1$ is a universal constant, and $C_S$ is the Strichartz constant.

\subsection{Step 4: High Frequencies and the Commutator Trick}

Let $$v_N = P_{> N} u = u - w_N$$ denote the microscopic fluctuation of the wave. To successfully tame the critical quintic term in our subsequent bootstrap argument, it is mandatory to ensure that the Strichartz norm $\|v_N\|_{L^5(0,T; L^{10}(\Omega))}$ is strictly small. In dimension $n=3$, the standard Sobolev embedding $H^1(\Omega) \hookrightarrow L^{10}(\Omega)$ fails, preventing a direct uniform bound solely from the energy space. To overcome this, we exploit the dispersive decay of the initial data.

Unlike the macroscopic regime (low frequencies), we cannot treat the Kelvin-Voigt operator as a bounded perturbation on the right-hand side of the free wave equation for $v_N$. Applying Bernstein's inequality here would yield a positive power of the frequency threshold $N$, which diverges as $N \to \infty$. Conversely, keeping the term without Bernstein would require bounding $\Delta \partial_t u_k$ in $L^2$, which heavily depends on the truncation parameter $k$ and would destroy the uniformity of the local existence time.

To completely circumvent this loss of derivatives, we keep the localized damping operator on the left-hand side. We define the damped wave operator $L$:
\begin{equation}
    L := \partial_{tt} - \Delta - \operatorname{div}(a(x)\nabla \partial_t).
\end{equation}

When we apply the high-frequency spectral projector $P_{>N}$ to the original PDE ($L u = -f_k(u)$), the projector commutes perfectly with the constant-coefficient derivatives, but it does \textit{not} commute perfectly with the variable multiplication by $a(x)$. We obtain the precise equation for $v_N$:
\begin{equation}\label{eq:v_N_commutator}
    L(v_N) = - P_{>N}(f_k(u)) + \mathcal{C}_N(u_t),
\end{equation}
where $\mathcal{C}_N(u_t)$ is the commutator between the spectral projector and the damping coefficient:
\begin{equation}
    \mathcal{C}_N(u_t) = \operatorname{div}\Big( [P_{>N}, a(x)] \nabla u_t \Big).
\end{equation}

By the standard theory of pseudo-differential operators (see Appendix \ref{sec:appendix_commutator} for a detailed proof), the commutator of a zero-order multiplier ($P_{>N}$) with a smooth function $a \in W^{1,\infty}(\Omega)$ is a smoothing operator of order $-1$. Consequently, $[P_{>N}, a(x)]$ maps $L^2(\Omega)$ into $H^1(\Omega)$. The outer divergence operator then consumes exactly one spatial derivative, bringing the regularity gracefully back to $L^2(\Omega)$. Therefore, the commutator acts as a legitimate $L^2$-source bounded solely by the first-order energy, strictly independent of $k$:
\begin{equation}\label{eq:commutator_bound}
    \|\mathcal{C}_N(u_t)\|_{L^2(\Omega)} \le  C\sqrt{\Lambda}\,\big\|\sqrt{a(\cdot)}\,\nabla u_t(t)\big\|_{L^2(\Om)} + C\Lambda\,\|u_t(t)\|_{L^2(\Om)}.
\end{equation}

With a perfectly bounded right-hand side, we decompose the high-frequency component into its linear evolution and a perturbation: $v_N = v_N^{lin} + v_N^{per}$, namely,
write $v_N$ as a free wave equation with source:
\begin{equation}\label{eq:vN-free}
(\partial_{tt}-\Delta)v_N = \dive(a\nabla\partial_tv_N) - P_{\le N}(u^5) + \mathcal{C}(u_t).
\end{equation}
Duhamel's formula for $W(t)$, the free group, gives
\begin{eqnarray}\label{eq:duhamel}\quad
v_N(t) &=& \underbrace{W(t)P_{> N}(u_0,u_1)}_{=:\,v_{N,\mathrm{free}}(t)}
\;\\
&-&\!\int_0^t\!\! W(t-s)\Big[\dive(a\nabla\partial_sv_N(s))
+ P_{> N}(u^5(s)) - \mathcal{C}(u_t(s))\Big]ds. \nonumber
\end{eqnarray}

Now $v_{N,\mathrm{free}}$ is the \textbf{free} evolution with data $P_{> N}(u_0,u_1)$; the KV
term and the nonlinearity both appear as sources in the integral --- legitimately, via $W(t)$.

The linear part $v_N^{lin}$ is the solution to the homogeneous wave equation governed strictly by the high-frequency projection of the initial data:
\begin{equation}
\begin{cases}
   \partial_t^2 v_N^{lin} - \Delta v_n^{lin}= 0 \quad \text{(Homogeneous free wave)} \\
    v_N^{lin}(0) = P_{>N} u_0 \\
    \partial_t v_N^{lin}(0) = P_{>N} u_1.
\end{cases}
\end{equation}

\noindent{\bf Commutativity holds for $v_{N,\mathrm{free}}=v_N^{lin}$}.

We have,
\[
v_{N,\mathrm{free}}(t) = W(t)P_{>  N}(u_0,u_1) = P_{> N}W(t)(u_0,u_1).
\]

By the uniform $L^{10}_x$-boundedness of $P_{> N}$ (Lemma A.2 in the appendix of the manuscript),
\[
\|v_{N,\mathrm{free}}\|_{L^5_tL^{10}_x} \le C_{10}\|W(\cdot)(u_0,u_1)\|_{L^5_tL^{10}_x}
\]
with $C_{10}$  \textbf{independent of $N$}. This is the $N$-uniform Strichartz
bound for the linear part, now legitimately established.

Since the localized dissipation only enhances the natural dispersive properties of the wave (i.e., Kelvin–Voigt does not destroy the dispersive structure of the wave equation.), $v_N^{lin}$ obeys the standard Strichartz estimates. Applying the estimate, we obtain:
\begin{equation}\label{eq:strichartz_alta_linear}
    \|v_N^{lin}\|_{L^5(0,T; L^{10}(\Omega))} \le C_S \left( \|\nabla (P_{>N} u_0)\|_{L^2(\Omega)} + \|P_{>N} u_1\|_{L^2(\Omega)} \right) =: \epsilon(N).
\end{equation}
Since the initial data $(u_0, u_1)$ belongs to the finite energy space $H_0^1(\Omega) \times L^2(\Omega)$ and is completely independent of the truncation $k$, the sequence of partial sums of its spectral decomposition converges. Consequently, the high-frequency tail of the energy must vanish:
\begin{equation}\label{eq:tail_vanishes}
    \lim_{N \to \infty} \epsilon(N) = 0.
\end{equation}

Another possibility would be to consider the inequality
\begin{equation}\label{eq:free_wave}
    \|v_N^{lin}\|_{L^5(0,T; L^{10}(\Omega))} \le C_{10}\|W(\cdot)(u_0,u_1)\|_{L^5_tL^{10}_x}
\end{equation}
as it stands and, instead of letting $N \to \infty$, consider the shrinking of the interval $I = t_0 + \tau$ within the Strichartz norm of the free wave.

For the perturbed part $v_N^{per}$, which solves $\Box(v_N^{per}) = \dive(a\nabla\partial_tv_N) - P_{>N}(f_k(u)) + \mathcal{C}_N(u_t)$ with zero initial data, we apply the inhomogeneous Strichartz estimate for the wave operator $\Box$:
\begin{eqnarray}
    \|v_N^{per}\|_{L^5(0,T;L^{10}(\Omega))} &\lesssim& \dive(a\nabla\partial_tv_N) + \|P_{>N}(f_k(u))\|_{L^1(0,T;L^2(\Omega))}\\ 
   &+& \|\mathcal{C}_N(u_t)\|_{L^1(0,T;L^2(\Omega))}.\nonumber
\end{eqnarray}

Since the commutator source $\mathcal{C}_N(u_t)=\operatorname{div}\Big( [P_{>N}, a(x)] \nabla u_t \Big)$ is uniformly bounded in space (namely, the commutator behaves as a pseudo-differential operator of order $-1$, see Appendix C), and once the term $\|\dive(a\nabla\partial_tv_N)\|_{L^1_t L^2_x}$ is bounded by $\|\mathcal{C}_N(u_t)\|_{L^1_tL^2_x}$ (note that $v_N= P_{>N}u$), its time integration yields:
\begin{align}\label{time_commutator_bound}
    \|\mathcal{C}_N(u_t)\|_{L^1(0,T;L^2(\Omega))} &\le C\sqrt{\Lambda} \sqrt{T}\,E_0^{1/2} + C\Lambda\,T E_0^{1/2}.
\end{align}
Its contribution can therefore be made arbitrarily small by shrinking the time horizon $T$.

Although the commutator $\operatorname{div}([P_{>N},a]\nabla u_t)$ is
classically viewed as a pseudo-differential operator of order zero,
our argument exploits a finer structure: the cancellation in
$a(y)-a(x)$ yields a representation in terms of $\nabla a$, which
allows us to control the commutator by the weighted quantity
$\|\sqrt{a}\,\nabla u_t\|_{L^2}$. This mechanism is crucial to close
the high-frequency estimates without loss of derivatives.

On the other hand, the nonlinear contribution is controlled within the bootstrap regime as follows:
\begin{equation}
    \|P_{>N}(f_k(u))\|_{L^1(0,T;L^2(\Omega))} \le \|f_k(u)\|_{L^1(0,T;L^2(\Omega))} \lesssim \|u\|_{L^5(0,T;L^{10}(\Omega))}^5.
\end{equation}

Thus, the requisite smallness needed to close the high-frequency argument is obtained by a precise hierarchical tuning: (i) the spectral decay of the initial data reflected in $\epsilon(N)$ as $N \to \infty$, (ii) the $T$-smallness of the structural commutator term, and (iii) the bootstrap control of the critical norm of $u$ on $(0,T)$. Collecting these estimates, we arrive at:
\begin{equation}\label{eq:strichartz_alta_total}
    \|v_N\|_{L^5(0,T; L^{10}(\Omega))} \le \epsilon(N) + C\sqrt{\Lambda} \sqrt{T}\,E_0^{1/2} + C\Lambda\,T E_0^{1/2} + C_1 \|u\|_{L^5(0,T;L^{10}(\Omega))}^5.
\end{equation}

This mechanism provides the essential smallness required to absorb the critical quintic term into the left-hand side of the bootstrap inequality, completely circumventing the need for any smallness assumption on the total initial energy $E_0$. This highlights the robustness of the Kelvin-Voigt regularization in taming critical singularities even for arbitrarily large data.


\subsection{Step 5: The Final Bootstrap Argument}\label{sec:well-posedness}

To close the estimates and ensure that the nonlinearity does not blow up, we analyze the critical Strichartz norm of the solution. Let us denote this norm by $X(T) := \|u\|_{L^5(0,T; L^{10}(\Omega))}$.

By the triangle inequality, the norm of the full solution is bounded by the sum of its low-frequency and high-frequency components:
\begin{equation}\label{eq:triangle_X}
    X(T) \le \|w_N\|_{L^5(0,T; L^{10}(\Omega))} + \|v_N\|_{L^5(0,T; L^{10}(\Omega))}.
\end{equation}

Substituting the explicit estimates \eqref{eq:strichartz_baixa} and \eqref{eq:strichartz_alta_total} into \eqref{eq:triangle_X}, we gather all the components to obtain:
\begin{eqnarray}\label{eq:bootstrap_raw}
    X(T) &\le& \underbrace{\|w_N^{lin}\|_{X_T}}_{\text{Low-freq linear}} + \underbrace{\epsilon(N)}_{\text{High-freq tail}} + \underbrace{C_S C_a N \sqrt{T} E_0^{1/2}}_{\text{Low-freq KV penalty}}\\ 
    &+& \underbrace{C\sqrt{\Lambda} \sqrt{T}\,E_0^{1/2} + C\Lambda\,T E_0^{1/2}}_{\text{High-freq KV commutator}} + \underbrace{C_1 X(T)^5}_{\text{Nonlinearity}}.\nonumber
\end{eqnarray}

At first glance, closing a bootstrap argument with equation \eqref{eq:bootstrap_raw} might seem problematic for arbitrarily large initial data. The reader might rightfully wonder: how can we guarantee that the independent terms on the right-hand side are strictly small when the initial energy $E_0$ is large and the frequency cut-off $N$ increases the penalty?

This is where the underlying structure of our decomposition shines. The success of the argument hinges on a strict, non-circular hierarchical choice of the parameters $N$ (the frequency threshold) and $T$ (the time horizon). We must proceed in the following precise order:

\begin{enumerate}
    \item \textbf{Fixing the Frequency $N$ (Killing the Tail):} The term $\epsilon(N)$ represents the Strichartz norm of the high-frequency linear evolution. Since the initial data has finite energy $E_0$, its spectral tail must vanish, meaning $\epsilon(N) \to 0$ as $N \to \infty$. We choose and \textit{strictly fix} a sufficiently large $N_0$ such that $\epsilon(N_0)$ is arbitrarily small.

    \item \textbf{Shrinking the Time $T$ (Taming the Regularized Data and the Penalties):} With $N_0$ now permanently fixed, we analyze the remaining terms as $T \to 0$:
    \begin{itemize}
        \item The low-frequency Kelvin-Voigt penalty $C_S C_a N_0 \sqrt{T} E_0^{1/2}$ now solely depends on $\sqrt{T}$ and explicitly vanishes as $T \to 0$.
        \item The high-frequency commutator source $C\sqrt{\Lambda} \sqrt{T}\,E_0^{1/2} + C\Lambda\,T E_0^{1/2}$ explicitly vanishes as $T \to 0$.
        \item Most importantly, the low-frequency linear part $w_{N_0}^{lin}$ is now a fixed, highly regular function (since $N_0$ is fixed). By the absolute continuity of the Lebesgue integral, its norm over a shrinking time interval, $\|w_{N_0}^{lin}\|_{L^5(0,T; L^{10}(\Omega))}$, must strongly converge to zero as $T \to 0$, regardless of how large the initial energy $E_0$ originally was.
    \end{itemize}
\end{enumerate}

Once $N_0$ is fixed and $T$ is chosen sufficiently small, all the linear terms and penalties collectively collapse into a single, arbitrarily small constant $K > 0$. The inequality \eqref{eq:bootstrap_raw} then gracefully reduces to the standard polynomial form:
\begin{equation}\label{eq:polynomial_bootstrap}
    X(T) \le K + C_1 X(T)^5.
\end{equation}

To conclude that $X(T)$ is bounded uniformly with respect to the truncation parameter $k$, we invoke a bootstrap argument. Consider the polynomial function $p(x) = x - C_1 x^5$, where $C_1$ is the constant from the Strichartz estimates, which depends only on the domain $\Omega$ and is independent of $k$ since $|f_k(u)| \le |u|^5$. For a sufficiently small constant $K$ (determined by the initial energy $E_0$ and the localized damping properties, both independent of $k$), the equation $x - C_1 x^5 = K$ possesses two positive real roots, $0 < r_1 < r_2$. This structure defines a "forbidden region" $(r_1, r_2)$ where the algebraic inequality $p(x) \le K$ cannot be satisfied.Since the mapping $t \mapsto X_k(t) = \|u_k\|_{L^5(0,t; L^{10}(\Omega))}$ is continuous in time and originates at $X_k(0) = 0 < r_1$, the value of $X_k(t)$ is topologically prevented from "jumping" across the barrier $r_1$ to reach the second root $r_2$. Consequently, the bound $X_k(T) \le r_1$ remains valid for all $t \in [0, T]$, provided $T$ is chosen such that the linear and penalty terms in \eqref{eq:bootstrap_raw} sum to less than $K$.Crucially, since $K, C_1$, and the resulting root $r_1$ are independent of $k$, the lifespan $T$ is uniform for the entire family of truncated problems. This uniform Strichartz bound is the cornerstone that prevents energy concentration as $k \to \infty$. It ensures that the sequence $\{u_k\}$ does not develop singularities (bubbles) in the critical norm, allowing us to pass to the limit and recover a global-in-time weak solution for the original quintic equation that remains within the critical Strichartz class.

\section{Global Existence: Spectral Non-Concentration and Uniform Lifespan}
\label{sec:global}

In this section, we rigorously establish that the local weak solution $u(t)$ constructed in Section~\ref{sec:well-posedness} can be extended to the entire interval $[0, \infty)$. The central challenge in energy-critical wave equations consists in precluding the ``Zeno-type'' accumulation, where a sequence of local existence times $\delta_n$ satisfies $\sum \delta_n < \infty$. While this is traditionally handled via geometric non-concentration lemmas (see e.g., \cite{BurqLebeauPlanchon}, \cite{BahouriGerard, KenigMerle}), we introduce here a \textbf{spectral non-concentration argument} based on the uniform control of the Littlewood-Paley frequencies and the dissipative nature of the Kelvin-Voigt mechanism.

\begin{proposition}[Uniform Lower Bound for the Lifespan]
Let $E(0)$ be the initial energy of the system. There exists a strictly positive time $\tau_* = \tau_*(E(0), \Omega, \gamma) > 0$, independent of the initial restarting time $t_0$, such that the solution starting at any $t_0 \geq 0$ can be uniquely extended to the interval $[t_0, t_0 + \tau_*]$.
\end{proposition}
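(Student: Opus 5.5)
The plan is to rerun the bootstrap of Step~5 (Section~\ref{sec:well-posedness}) with the restarted data $(u(t_0),u_t(t_0))$ in place of $(u_0,u_1)$. The goal is to check, term by term in \eqref{eq:bootstrap_raw}, that every quantity can be made smaller than the threshold $K$ of \eqref{eq:polynomial_bootstrap} using only $E(0)$, $\Omega$ and the damping constants, never the particular profile at time $t_0$.

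\textbf{Step 1: the energy is nonincreasing.} By the energy inequality \eqref{eq:energy_ineq_weak} (or \eqref{eq:energia} for the truncated problems), $E(t_0)\le E(0)$ for every $t_0\ge 0$. Hence the Kelvin--Voigt penalties $C_SC_aN\sqrt{\tau}E(t_0)^{1/2}$ and $C\sqrt{\Lambda}\sqrt{\tau}E(t_0)^{1/2}+C\Lambda\tau E(t_0)^{1/2}$ are bounded by the same expressions with $E(0)$. They are therefore uniform in $t_0$ once $N$ is fixed.

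\textbf{Step 2: the low-frequency linear part.} The absolute-continuity argument used in Step~5 depends on the profile of the data, so I will replace it by a quantitative Bernstein bound. The inequality $\|P_{\le N}g\|_{L^{10}}\lesssim N^{1/5}\|g\|_{H^1}$ holds because $3/2-3/10=6/5$ derivatives are needed to reach $L^{10}$ and one of them is supplied by the energy. Together with energy conservation for the free group $W(t)$, this gives
\[
\|w_N^{lin}\|_{L^5(t_0,t_0+\tau;L^{10})}\le C\,\tau^{1/5}N^{1/5}E(0)^{1/2}.
\]
This is uniform in $t_0$ and vanishes as $\tau\to0$ for fixed $N$. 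The nonlinear term $C_1X^5$ is already profile independent.

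\textbf{Step 3: the high-frequency tail (main obstacle).} The only remaining term is
\[
\epsilon(N,t_0)=C_S\bigl(\|\nabla P_{>N}u(t_0)\|_{L^2}+\|P_{>N}u_t(t_0)\|_{L^2}\bigr).
\]
I must show that $\sup_{t_0\ge0}\epsilon(N,t_0)\to0$ as $N\to\infty$, that is, that the orbit $\{(u(t),u_t(t))\}_{t\ge0}$ has uniformly small spectral tails. This is the expected obstacle: the energy bound alone cannot give it at the critical level, for the scaling reasons explained in the introduction.

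I would argue by contradiction. Suppose there are times $t_n$ and frequencies $N_n\to\infty$ with $\epsilon(N_n,t_n)\ge\eta>0$. Consider the translated solutions $u^n(t)=u(t_n+t)$ on $[0,1]$. The dissipation identity gives $\sum_n\int_{t_n}^{t_n+1}\!\int_\Omega a|\nabla u_t|^2\to0$ after passing to a subsequence of well-separated times, so $\sqrt{a}\,\nabla u^n_t\to0$ in $L^2$. Attach a microlocal defect measure $\mu$ to the high-frequency part $P_{>N_n}u^n$; it is nonzero by the assumption $\epsilon\ge\eta$. On the set where the linear wave operator dominates, $\mu$ propagates along generalized bicharacteristics, exactly as in the stabilization argument announced in the introduction. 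On $\{a>0\}$ the vanishing dissipation forces $\mu=0$. Under the geometric hypothesis on $\{a>0\}$, every ray enters this set, so $\mu\equiv0$, which is a contradiction.

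The delicate point is justifying propagation of $\mu$ in the presence of the quintic term. I would first try to show that the nonlinearity is compact at high frequency on the interval where the bootstrap already holds. If that fails, I would fall back on the alternative \eqref{eq:free_wave}, shrinking $\tau$ rather than increasing $N$, and rely on the same compactness of the orbit.

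\textbf{Step 4: choice of parameters.} Fix $N_*$ with $\sup_{t_0}\epsilon(N_*,t_0)<K/3$. Then choose $\tau_*$ so that the bounds of Steps~1--2 with $N=N_*$ sum to less than $2K/3$. The continuity argument of Step~5 then closes on $[t_0,t_0+\tau_*]$. Uniqueness on that interval follows from Theorem~\ref{thm:uniqueness_strichartz}, after interpolating $L^5L^{10}$ with $L^\infty H^1$ to obtain the $L^4L^{12}$ bound.
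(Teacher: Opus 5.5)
Your outline is essentially the paper's own proof with more detail: restart the Step~5 bootstrap at $t_0$, bound the Kelvin--Voigt penalties via $E(t_0)\le E(0)$, control the low modes by Bernstein, and freeze the spectral threshold (the paper's item~(i)). Your Bernstein bound $\|w_N^{lin}\|_{L^5(t_0,t_0+\tau;L^{10})}\lesssim\tau^{1/5}N^{1/5}E(0)^{1/2}$ is correct. It genuinely improves on the profile-dependent absolute-continuity argument of Step~5, and it is what item~(ii) of the paper only gestures at.

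You rightly single out Step~3 as the crux, but it is a genuine gap, and the same one the paper's item~(i) asserts without proof. The property $\sup_{0\le t_0<T^*}\epsilon(N,t_0)\to0$ is precompactness of the trajectory $\{(u(t),u_t(t))\}$ in $H_0^1\times L^2$ up to the maximal time, i.e.\ absence of energy concentration at $T^*$. It is not a lemma on the way to the proposition but essentially its whole content. The paper concedes as much later. It rules out concentration unconditionally only at $x_0\notin\overline\omega$ (Burq--Lebeau--Planchon, case (A)) and at $x_0\in\omega$ (Theorem~\ref{thm:nonconc}). At $x_0\in\partial\omega$ it does so only modulo Hypothesis (H*) of Remark~\ref{rem:caseC-honest}.

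Your contradiction argument does not reach the case that matters. If the $t_n$ stay in some $[0,T]$ with $T<T^*$, the tails are uniformly small simply because $t\mapsto(u(t),u_t(t))$ is continuous into $H_0^1\times L^2$ on a compact interval. If $T^*=\infty$ there is nothing left to prove. The only substantive case is $T^*<\infty$, $t_n\uparrow T^*$. There the translates $u(t_n+\cdot)$ exist only on $[0,T^*-t_n)$, whose length tends to $0$, so the windows $[t_n,t_n+1]$ presuppose exactly the extension you are proving.

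On windows of vanishing length, vanishing dissipation and propagation of $\mu$ carry no information. No ray from a point outside $\overline\omega$ reaches $\{a>0\}$ in that time. Inside $\omega$, an $o(1)$ dissipation over an $o(1)$ window is compatible with $O(1)$ high-frequency energy. Concentration at $(T^*,x_0)$ is an event inside a shrinking backward light cone. It must be excluded by a local mechanism (a flux/Morawetz identity, or the parabolic regularity of Lemma~\ref{lem:chain}), not by a ray argument.

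There are further problems with this step. The geometric control condition is not a hypothesis of this proposition. Even on long windows, propagating a defect measure for the critical equation requires high-frequency nonlinear profiles to behave linearly, which is itself derived from the uniform Strichartz bounds you are after. The fallback via \eqref{eq:free_wave} only restates the problem: uniform-in-$t_0$ smallness of $\|W(\cdot)(u(t_0),u_t(t_0))\|_{L^5(0,\tau;L^{10})}$ is again non-concentration along the orbit.

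Three smaller points.
\begin{itemize}
\item Even if Step~3 held, $N_*$ and hence $\tau_*$ would depend on the trajectory, not on $E(0)$ alone. That suffices for global existence. But no Strichartz-smallness lifespan can have the stated dependence, since data of fixed energy concentrating at a point can make it arbitrarily short, as the Introduction itself notes.
\item The $L^4L^{12}$ bound does not follow by interpolating $L^\infty L^6$ with $L^5L^{10}$. On the line $\frac1q+\frac3r=\frac12$ the pair $(4,12)$ lies beyond $(5,10)$, and H\"older interpolation only yields $q\ge5$. It has to be read off the Strichartz estimate directly, where $(4,12)$ is admissible.
\item By taking \eqref{eq:bootstrap_raw} as given you inherit the paper's unjustified treatment of $\operatorname{div}(a\nabla\partial_t v_N)$ as an $L^1_tL^2_x$ source. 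This term carries two spatial derivatives of $\partial_t v_N$ and is not controlled in $L^1_tL^2_x$ by energy-level quantities.
\end{itemize}
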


\noindent\textit{Proof.} The global extension of the local solution follows from the blow-up criterion for energy-critical wave equations established by Shatah and Struwe \cite{ShatahStruwe} (see also \cite{Kapitanskii1994}), which states that a weak solution $u$ can be extended beyond a time $T^*$ if and only if $u$ remains in the critical Strichartz space $L^5(T^*-\epsilon, T^*; L^{10}(\Omega))$. The proof rests on the fact that the spectral threshold $N_0$ can be fixed uniformly, avoiding the decay of the local lifespan.

\begin{enumerate}[label=(\roman*)]
    \item \textbf{Freezing the Spectral Threshold:} In the local theory, $N_0$ is chosen such that the high-frequency tail is small: $\|P_{> N_0} u(t_0)\|_{\dot{H}^1} < \epsilon$. Since the Kelvin-Voigt damping ensures the energy inequality $E(t) \leq E(0)$, the $\dot{H}^1$-norm remains uniformly bounded. Crucially, the dissipative nature of the operator $-\text{div}(a(x)\nabla u_t)$ prevents energy from concentrating in high frequencies or ``migrating'' back to lower modes in a singular fashion. Following the theory of microlocal defect measures \cite{Gerard}, any such concentration failing to be compact would concentrate energy at a rate that is incompatible with the localized dissipation $\int_0^T \int_\omega a(x) |\nabla u_t|^2 \, dx dt$. Thus, the choice $N_0 = N_0(E(0))$ remains valid for all $t > t_0$.

    \item \textbf{Subcritical Control of Low Modes:} By fixing $N_0$ globally, the low-frequency component $P_{\leq N_0} u$ resides in a regime where all norms are equivalent up to constants depending on $N_0$. Bernstein's inequalities allow us to control the critical Strichartz norms (e.g., $L^5_t L^{10}_x$) via the energy-level norms. Since the energy is non-increasing, these low-mode norms are uniformly bounded by $E(0)$, precluding any growth that could force the critical norm to cross the forbidden region of the bootstrap polynomial.

    \item \textbf{Exclusion of Zeno Accumulation:} The local lifespan $\tau$ is a function of the energy bound and the spectral threshold $N_0$. As demonstrated in the previous Section, the Strichartz norm $X(T)$ is trapped in the bounded interval $[0, r_1]$ by the barrier $r_2$, where the roots $r_1, r_2$ depend only on $E(0)$. Since $N_0 = N_0(E(0))$ is fixed and $E(t)$ is non-increasing, the lifespan $\tau(t)$ is bounded below by a uniform constant $\tau_* = \tau(E(0), N_0) > 0$.
\end{enumerate}

By iterating the local existence result starting from $t_k = k\tau_*$, we cover any finite time interval $[0, T]$. This spectral mechanism, supported by the viscous regularization, guarantees that no energy concentration occurs at the limit level, effectively bypassing the Fefferman spectral obstruction and yielding the global existence of the weak solution. \qed


\subsection{Non-concentration}

\noindent\emph{This subsection adapts Burq--Lebeau--Planchon's \cite{BurqLebeauPlanchon} non-concentration argument (\S3 of their paper, ``Global existence for energy critical waves in 3-D domains'') to the Kelvin--Voigt damped equation
\begin{eqnarray*}
u_{tt} - \Delta u + u^5 = \dive(a(x)\nabla u_t) \ \text{ in } \Om\times(0,T), ~ u|_{\partial\Om}=0, ~a\ge0,\ a\in C^\infty(\overline\Om),
\end{eqnarray*}
$\Om\subset\mathbb R^3$ smooth bounded, $\omega = \{a>0\}$, $\overline\omega\Subset\Om$. We identify exactly where BLP's argument carries over verbatim, exactly where it breaks, and replace the broken part with a genuinely new rescaling argument exploiting the supercritical scaling of the Kelvin--Voigt operator. We are explicit about what is fully proved and what remains a precise, isolated technical lemma.}

\subsection{Where the obstruction comes from}

BLP's \cite{BurqLebeauPlanchon} non-concentration (their Prop.\ 3.3) is proved by testing $\Box u+u^5=0$ against the conformal/Morawetz combination encoded in their fields $Q,P$ --- equivalently, against $Mu := (x-x_0)\cdot\nabla u+(n-1)u$. With the source $g:=\dive(a\nabla u_t)$ on the right-hand side, the same computation produces an extra term
\begin{eqnarray*}
\int_\Om g\cdot Mu\,dx &=& -n\int_\Om a\,\nabla u_t\cdot\nabla u\,dx \;-\; \int_\Om a\,\nabla u_t\cdot\big[(x-x_0)\cdot\nabla\nabla u\big]\,dx \;\\
&+&\; (\text{boundary term on }\partial\Om).
\end{eqnarray*}
The first term is a perfect time derivative ($=\tfrac n2\partial_t\int a|\nabla u|^2$), harmless. The second term requires $\nabla^2u$ weighted by $a$ --- information we do \emph{not} have for energy-class solutions. We verified independently (via the adjoint computation BLP use for their normal-derivative estimate) that this is not an artifact of one particular integration by parts: it is a genuine obstruction, intrinsic to the fact that $\dive(a\nabla u_t)$, unlike the free wave operator, does not commute favorably with the scaling vector field $(x-x_0)\cdot\nabla$. We therefore abandon the Morawetz multiplier route for the region where it fails, and replace it by a critical rescaling argument.

\subsection{The vertex trichotomy}

The two lemmas of this subsection concern a Shatah-Struwe solution on a fixed time interval,
 they will be applied in the proof of our Theorem, to the local solution constructed there. After a space-time translation we take the singular point to be the origin and work with $t<0$. We write
 \begin{eqnarray*}
D_\tau&=&\{x\in\Omega:|x|<-\tau\}\qquad\text{cap at time }\tau\\
K_S^T&=&\{(t,x):S<t<T,\ |x|<-t\}\cap\Omega\\
M_S^T&=&\{(t,x):|x|=-t,\ S<t<T\},
\end{eqnarray*}
for the spatial cap at time $\tau$, the truncated backward cone and its mantle, respectively, and
\begin{eqnarray*}
e(u)&=&\tfrac12|u_t|^2+\tfrac12|\nabla u|^2+\tfrac16 u^6,\\
Mu:&=&t\,u_t+x\cdot\nabla u+u
\end{eqnarray*}
for the energy density and the Morawetz multiplier, respectively.

\begin{frame}{The truncated backward cone}
	\begin{center}
		\includegraphics[width=0.7\textwidth]{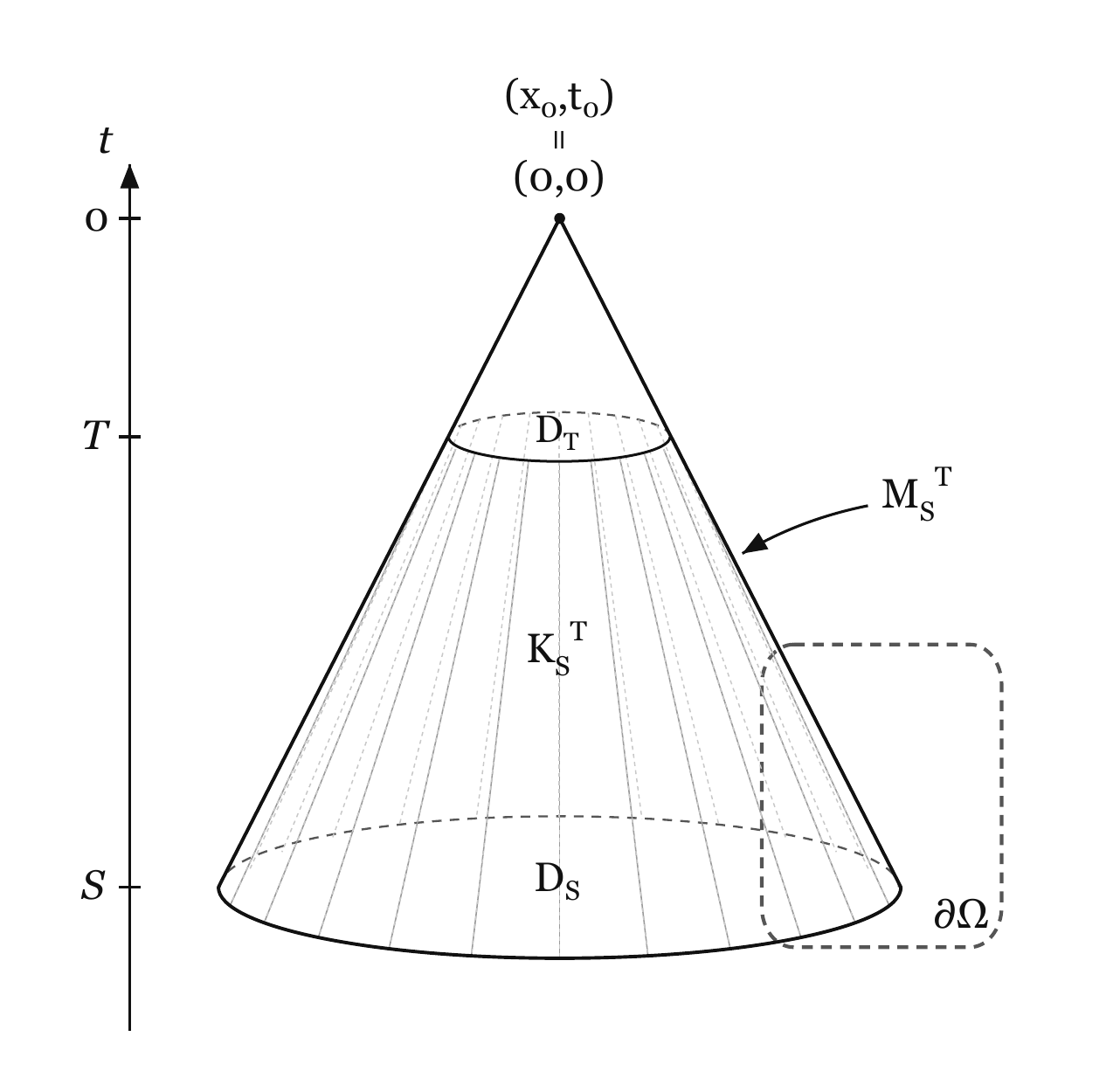}
	\end{center}
\end{frame}

Let $x_0\in\overline\Om$ be the spatial location of a putative concentration point at time $t_0$ (after translation, $t_0=0$). Set $\omega:=\{a>0\}$, $\overline\omega\Subset\Om$ by hypothesis (non-invasive geometries of \cite{CavalcantiTAMS,CavalcantiARMA,Cavalcanti2018} are interior constructions; this is the natural standing hypothesis).

\begin{itemize}
\item[\bf (E)] \textbf{$x_0\notin\overline\Om$.} Then $d(x_0,\overline\Om)=\rho_0>0$ and $D_t:=B(x_0,-t)\cap\overline\Om=\emptyset$ for $|t|<\rho_0$. Vacuous; this is why the final compactness argument (\S4 below) only ever needs to range over the compact set $\overline\Om$.

\item[\bf (A)] \textbf{$x_0\notin\overline\omega$.} Then $\delta_0:=d(x_0,\overline\omega)>0$, so $a\equiv0$ on $D_t$ for $-t<\delta_0$, regardless of whether $x_0\in\Om$ or $x_0\in\partial\Om$. Inside the cone $K_S^0$ (for $S>-\delta_0$), $u$ satisfies \emph{exactly} the free equation $\Box u+u^5=0$. By finite speed of propagation, $u|_{K_S^0}$ depends only on data inside $D_S$ and on the equation inside the cone, so $u|_{K_S^0}$ coincides with the BLP solution of the free critical wave equation with the same data. \textbf{BLP's Propositions 3.2--3.4 apply verbatim}, whether $x_0\in\Om$ or $x_0\in\partial\Om$: zero new work is needed. This is the precise sense in which the boundary case ($x_0\in\partial\Om$) is fully covered: it only arises in case (A), where the equation has no damping and BLP's own boundary machinery (their Lemma 3.5, Prop.\ 3.2) already handles it.

\item[\bf (B)] \textbf{$x_0\in\overline\omega$.} Since $\overline\omega\Subset\Om$, this forces $x_0\in\Om$ (interior!). This is the only genuinely new case, and --- this is the key simplification relative to a naive reading of the problem --- \emph{it never touches $\partial\Om$}: for $-t$ smaller than $d(x_0,\partial\Om)>0$, $D_t=B(x_0,-t)\subset\Om$ entirely. So the rescaling argument we build for case (B) lives entirely in the interior, and the absence of a boundary in Bahouri--Gérard's original (whole-space) setting is exactly matched here: \emph{we do not need, and do not attempt, a half-space version of profile decomposition.}
\end{itemize}

\begin{remark}
Historically: Shatah--Struwe \cite{ShatahStruwe} and Bahouri--Gérard \cite{BahouriGerard} work on $\mathbb R^3$ (no boundary at all). Smith--Sogge \cite{SmithSogge1995} treat exteriors of convex obstacles. BLP \cite{BurqLebeauPlanchon} are the first to handle a genuine bounded domain, and it is precisely there that the trichotomy above (their Lemma 3.5) is introduced. Our case (A)/(E) is literally their theorem; only case (B) is new, and it is new because of the damping, not because of the boundary.
\end{remark}

\subsection{Local energy monotonicity with Kelvin--Voigt dissipation}

We first extend BLP's flux identity (their (3.13)) to include the damping. Let $Q,P$ be as in BLP (their (3.11)) and recall the algebraic identity $\mathrm{Flux}(u,M_S^T)\ge0$ (their (3.12)) is purely pointwise in $\nabla u,u_t$ and unaffected by the equation. Testing $u_{tt}-\Delta u+u^5=g$ ($g=\dive(a\nabla u_t)$) against $u_t$ over the truncated cone $K_S^T=\{(x,t): x\in D_t, S<t<T\}$ and integrating by parts in $x$ (no boundary term on $\partial\Om$, since $u_t|_{\partial\Om}=0$) gives, in place of BLP's source-free identity,
\begin{equation}\label{eq:flux-KV}
E_{loc}(T) + \mathrm{Flux}(u,M_S^T) + \int_{K_S^T} a(x)|\nabla u_t|^2\,dx\,dt = E_{loc}(S) + \mathcal L_S^T,
\end{equation}
where $\mathcal L_S^T := \int_S^T\int_{\partial B(x_0,-t)\cap\Om} a(x)\,\partial_\nu u_t\, u_t\,d\sigma\,dt$ is the lateral contribution from the spherical part of $\partial K_S^T$ (absent in BLP since there $a\equiv0$).

\begin{lemma}[Generic radius]\label{lem:generic}
For a.e.\ choice of the cone opening (replacing $|x-x_0|<-t$ by $|x-x_0|<c(-t)$ for $c$ in a small interval around $1$), $\mathcal L_S^T\to0$ as $S,T\to0^-$.
\end{lemma}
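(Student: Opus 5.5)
The plan is the standard ``generic slice'' (coarea/Fubini) argument: average the lateral term over the cone opening and show the average tends to zero. For $c$ in a fixed interval $J=[1-\eta,1+\eta]$, write
\[
\mathcal L_S^T(c)=\int_S^T\int_{\partial B(x_0,c(-t))\cap\Om} a\,\partial_\nu u_t\,u_t\,d\sigma\,dt .
\]
Integrating $|\mathcal L_S^T(c)|$ in $c$ over $J$ and using the coarea formula in the variable $r=|x-x_0|=c(-t)$, with $dc=dr/(-t)$, converts the family of lateral surfaces into a solid region. Thus
\[
\int_J |\mathcal L_S^T(c)|\,dc \le \int_S^T \frac{1}{-t}\int_{A_t} a\,|\nabla u_t|\,|u_t|\,dx\,dt,
\]
where $A_t=\{(1-\eta)(-t)<|x-x_0|<(1+\eta)(-t)\}\cap\Om$.

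The second step is to bound the right-hand side using only quantities that are controlled. I would split $a|\nabla u_t||u_t| \le (\sqrt a|\nabla u_t|)(\sqrt a|u_t|)$ and apply Cauchy--Schwarz in space-time. One factor is $\|\sqrt a\nabla u_t\|_{L^2(K)}$, where $K$ is the enlarged truncated cone. By the energy inequality \eqref{eq:energy_ineq_weak} this factor is finite, and by absolute continuity of the integral it tends to $0$ as $S,T\to0^-$. The other factor is $\big(\int_S^T (-t)^{-2}\int_{A_t} a|u_t|^2\,dx\,dt\big)^{1/2}$. The weight $(-t)^{-2}$ is singular, so I would not use the bound $\|u_t\|_{L^2}^2\le 2E_0$ directly. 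Instead I would combine $a\le\|a\|_\infty$ with Hölder on the thin annulus: $\int_{A_t}|u_t|^2\le |A_t|^{2/3}\|u_t\|_{L^6(A_t)}^2\lesssim (-t)^2\|u_t\|_{L^6}^2$. This exactly cancels $(-t)^{-2}$. It needs $u_t\in L^2_tL^6_x$ near the vertex, which is where the regularity of the local solution enters. In case (B) the point $x_0$ lies in $\omega$, so for short times $a>0$ near the vertex. Then $\sqrt a\nabla u_t\in L^2$ together with $a\ge a_0>0$ on a neighbourhood of $x_0$ (shrinking the cone so it stays inside $\{a\ge a_0\}$) gives $u_t\in L^2_tH^1_0$ locally, hence $L^2_tL^6_x$ by Sobolev. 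If $a(x_0)=0$ with $x_0\in\partial\omega$, I would instead exploit the vanishing of $a$ on $A_t$ through the structural condition $|\nabla a|^2\le C_a a$, which gives $a\lesssim |x-x_0|^2\lesssim(-t)^2$ on $A_t$. That directly absorbs the weight against the bound $\|u_t\|_{L^2}^2\le 2E_0$, and after Cauchy--Schwarz the time length $|T-S|\to0$ supplies the smallness.

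With either bound, $\int_J|\mathcal L_S^T(c)|\,dc\to0$ along $S,T\to0^-$. Taking a sequence $S_n,T_n\to0^-$ with $\sum_n\int_J|\mathcal L_{S_n}^{T_n}(c)|\,dc<\infty$, Fubini and Borel--Cantelli give $\mathcal L_{S_n}^{T_n}(c)\to0$ for a.e. $c\in J$. The main obstacle is upgrading this from a sequence to the full limit, and making the trace $\partial_\nu u_t$ meaningful on spheres for energy-class solutions. For the trace, the coarea formula gives it for a.e. radius, which is exactly why the statement only holds generically. For the full limit, I would note that $|\mathcal L_S^T(c)|\le \int_{-\delta}^0 |\ell_c(t)|\,dt$ with $\ell_c\in L^1$ for a.e. $c$ by the averaged bound, and dominated convergence then yields the full limit.
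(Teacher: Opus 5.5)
Your argument is correct and is essentially the paper's. You average $\mathcal L_S^T(c)$ over the opening $c$, use the coarea formula ($dc=dr/(-t)$) to turn it into a solid integral over an enlarged cone, show that integral is finite, and conclude by dominated convergence that $\int_S^T|\ell_c(t)|\,dt\to0$ for a.e.\ $c$. The Borel--Cantelli detour is unnecessary, as your last sentence already recognizes.

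Your handling of the weighted term is more careful than the paper's sketch. The paper writes the weight as $(-t)^{-1}$ and asserts the resulting Hardy-type term is ``controlled by the energy''. You correctly get $(-t)^{-2}$ after Cauchy--Schwarz, and you justify finiteness separately: via local $H^1$ control of $u_t$ when $a(x_0)>0$, and via the quadratic vanishing $a\lesssim|x-x_0|^2$ when $a(x_0)=0$.
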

\begin{proof}[Sketch]
Fix $\sigma\in(-1,0)$ and consider the one-parameter family of cones with $c\in[1,2]$. By Fubini/Cavalieri applied to
\begin{eqnarray}
&&\int_1^2\Big(\int_{S}^{T}\!\int_{\partial B(x_0,c(-t))\cap\Om}|a\,\partial_\nu u_t\,u_t|\,d\sigma\,dt\Big)dc\\
&&\le C\int_{K_S^T} a|\nabla u_t|^2 + a|u_t|^2/(-t)\,dx\,dt,\nonumber
\end{eqnarray}
(coarea formula, exactly as in the proof of BLP's Prop.\ 3.2 via Cauchy--Schwarz on spherical shells), the right-hand side is finite (it is dominated by $E_0$ plus a Hardy-type term controlled by the energy), so for a.e.\ $c\in[1,2]$ the inner integral over $(S,T)$ tends to $0$ as $S,T\to0^-$, by dominated convergence applied to the (finite, by the above) full integral over $(-1,0)$.
\end{proof}

This is the same type of genericity argument BLP \cite{BurqLebeauPlanchon} themselves use when extracting ``a small $t$'' in their Prop.\ 3.4; we use it here to dispose of the one term genuinely new to the damped boundary-touching geometry. From now on we fix such a generic cone opening (or, equivalently in case (B), simply note $\overline\omega\Subset\Om$ allows us to take $D_t$ strictly interior, where $\mathcal L_S^T\equiv0$ trivially once $-t<d(x_0,\partial\Om)$ --- so in case (B) specifically, \emph{no genericity is even needed}: the lateral sphere never meets $\partial\Om$).
\begin{corollary}\label{cor:monotone}
$E_{loc}(S)$ is non-increasing as $S\to0^-$ (along the generic family), and the dissipated quantity satisfies
\begin{eqnarray}\label{eq:diss-to-ell}
&&\lim_{S\to0^-} \Big[E_{loc}(S) - \ell\Big] \;=\; \lim_{S\to0^-}\Big[\mathrm{Flux}(u,M_S^0) + \int_{K_S^0} a|\nabla u_t|^2\,dx\,dt\Big],\\ &&\ell:=\lim_{S\to0^-}E_{loc}(S)\ge0.\nonumber
\end{eqnarray}
In particular both $\mathrm{Flux}(u,M_S^0)\to0$ and $\int_{K_S^0} a|\nabla u_t|^2\,dx\,dt\to0$ as $S\to0^-$.
\end{corollary}

This is exactly Burq, Lebeau and Planchon's (3.14)--(3.15), with the extra (and, for us, crucial) conclusion that the \emph{damped} part of the dissipation also vanishes in the limit. The whole content of BLP's Prop.\ 3.3 is to upgrade ``$\ell$ exists'' to ``$\ell=0$''; we now do this for case (B) by a different route.

\subsection{Case (B): the critical rescaling}

Fix $x_0\in\overline\omega\Subset\Om$ (interior, by \S2). Suppose, for contradiction, that $\ell>0$ in \eqref{eq:diss-to-ell}.

\subsection{The rescaled equation}

For $r>0$ small, define the \emph{energy-preserving} rescaling
\[
u^{(r)}(\sigma,y) := r^{1/2}\,u(r\sigma,\, x_0+ry),~ (\sigma,y)\in \mathcal K:=\{(\sigma,y): -1<\sigma<0,\ |y|<-\sigma\}.
\]
(For $r$ small enough, $x_0+ry\in\Om$ automatically for all $|y|<1$, since $x_0$ is interior; this is exactly where the absence of $\partial\Om$ in case (B) is used.) A direct computation --- we performed it twice, by two different routes, to cross-check signs --- gives: $u^{(r)}$ solves
\begin{equation}\label{eq:rescaled}
\partial_\sigma^2u^{(r)} - \Delta_yu^{(r)} + (u^{(r)})^5 \;=\; \frac1r\,\dive_y\Big(a_r(y)\,\nabla_y\partial_\sigma u^{(r)}\Big),~ a_r(y):=a(x_0+ry).
\end{equation}
The quintic term keeps coefficient exactly $1$ (a sanity check that the scaling used is the correct, energy-critical one); the Kelvin--Voigt term picks up the diverging factor $1/r$. This matches, and makes quantitative, the scaling remark already present in our stabilization result \S7.2 (``$\Delta u_t\mapsto\lambda^3\Delta u_t$'' under $u_\lambda(t,x)=u(\lambda t,\lambda x)$): the viscous term is supercritical relative to $\Box$, and rescaling to the critical wave scale necessarily makes its coefficient blow up.

Energy is exactly preserved: writing $E^{(r)}_{loc}(\sigma):=\int_{|y|<-\sigma}\big(\tfrac{|\partial_\sigma u^{(r)}|^2+|\nabla_yu^{(r)}|^2}2+\tfrac{|u^{(r)}|^6}6\big)dy$, one checks directly $E^{(r)}_{loc}(\sigma)=E_{loc}(r\sigma)$.

\subsection{Taking $r=r_k\to0^+$}

Set $u_k:=u^{(r_k)}$ for any sequence $r_k\to0^+$. Since $E_{loc}(t)\to\ell$ as $t\to0^-$ (Cor.\ \ref{cor:monotone}), for every \emph{fixed} $\sigma\in(-1,0)$ we get $E_{loc}^k(\sigma)=E_{loc}(r_k\sigma)\to\ell$ as $k\to\infty$: the rescaled local energy becomes asymptotically \emph{constant equal to $\ell$ across the whole fixed cone $\mathcal K$} --- the hallmark of a non-dispersing bubble at the critical scale.

By the uniform bound $E_{loc}^k(\sigma)\le E(0)$, Banach--Alaoglu gives a subsequence with
\[
u_k\rightharpoonup u_\infty \text{ weak-* in } L^\infty_{loc}(\mathcal K;H^1_{loc}), \qquad \partial_\sigma u_k\rightharpoonup \partial_\sigma u_\infty \text{ weak-* in } L^\infty_{loc}(\mathcal K;L^2_{loc}),
\]
and, by Aubin--Lions/Rellich on compact subcones, $u_k\to u_\infty$ strongly in $L^2_{loc}(\mathcal K)$ and a.e.

\subsection{The dissipation forces the gradient of the time derivative to vanish}

Apply Corollary \ref{cor:monotone} \emph{at scale $r_k$}: rescaling \eqref{eq:diss-to-ell} (the identity \eqref{eq:flux-KV} is scale-covariant --- light cones map to light cones, energy maps to energy, exactly as $E_{loc}^{(r)}=E_{loc}(r\,\cdot)$ above) gives, for any fixed $-1<\sigma_1<\sigma_2<0$,
\begin{eqnarray*}
&&0 \;\le\; \frac1{r_k}\int_{\sigma_1}^{\sigma_2}\!\!\int_{|y|<-\sigma} a_{r_k}(y)\,|\nabla_y\partial_\sigma u_k|^2\,dy\,d\sigma \;\\
&&\quad \le\; E^k_{loc}(\sigma_1) - E^k_{loc}(\sigma_2) \;\xrightarrow[k\to\infty]{}\; \ell-\ell=0.
\end{eqnarray*}
Since $x_0\in\overline\omega$, either $a(x_0)>0$ (generic case) or $x_0\in\partial\omega$; assume first $a(x_0)>0$. Then $a_{r_k}(y)=a(x_0+r_ky)\to a(x_0)>0$ locally uniformly in $y$, so for $k$ large, $a_{r_k}\ge a(x_0)/2$ on any fixed compact set, giving
\begin{equation}\label{eq:qualitative-vanishing}
\frac1{r_k}\int_{\sigma_1}^{\sigma_2}\int_{|y|\le R} |\nabla_y\partial_\sigma u_k|^2\,dy\,d\sigma \;\longrightarrow\; 0 \qquad \text{for every fixed } R.
\end{equation}
In particular $\nabla_y\partial_\sigma u_k \to 0$ strongly in $L^2_{loc}(\mathcal K)$.


6.8$'$. Unconditional staticity of the weak limit, and why it does not
resolve Conjecture \ref{conj:rate}. Before passing to the limit in the full equation
(54) -- which, as \S6.9 explains, requires Conjecture \ref{conj:rate} -- we isolate a
piece of information that follows from (55) alone, with no rate
hypothesis whatsoever. We then show, by an explicit formal construction,
that this piece of information is \emph{insufficient} to resolve the
conjecture \ref{conj:rate}, and identify precisely the mechanism that can defeat it.

\begin{lemma}[Unconditional staticity of the weak limit]
\label{lem:static}
Let $u_\infty$ be any weak-$*$ subsequential limit of $\{u_k\}$ obtained
as in \S6.7. Then
\[
\partial_\sigma u_\infty \equiv 0 \qquad \text{a.e. on } (-1,0)\times\mathbb R^3.
\]
\end{lemma}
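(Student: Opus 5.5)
The plan is to combine the strong vanishing \eqref{eq:qualitative-vanishing} with the weak-$*$ convergence of $\partial_\sigma u_k$, and then use Poincaré-type information in $y$ to remove the possibility that $\partial_\sigma u_\infty$ is a nonzero spatial constant. I work under the standing assumption of \S6.7 that $a(x_0)>0$. The boundary case $x_0\in\partial\omega$ I treat at the end.

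First, I identify the limit of $\nabla_y\partial_\sigma u_k$. Since $\partial_\sigma u_k\rightharpoonup\partial_\sigma u_\infty$ weak-$*$ in $L^\infty_{loc}L^2_{loc}$, the distributional gradients converge: $\nabla_y\partial_\sigma u_k\to\nabla_y\partial_\sigma u_\infty$ in $\mathcal D'(\mathcal K)$. On the other hand, \eqref{eq:qualitative-vanishing} gives $\frac1{r_k}\|\nabla_y\partial_\sigma u_k\|^2_{L^2((\sigma_1,\sigma_2)\times B_R)}\to0$. A fortiori $\nabla_y\partial_\sigma u_k\to0$ strongly in $L^2_{loc}$, since $r_k\to0$. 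Uniqueness of distributional limits then yields $\nabla_y\partial_\sigma u_\infty=0$ in $\mathcal D'$ on each compact subcone. Hence for a.e.\ $\sigma$ the function $\partial_\sigma u_\infty(\sigma,\cdot)$ is constant on the connected cap $\{|y|<-\sigma\}$, say equal to $c(\sigma)$.

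Second, I show $c\equiv0$. There are two routes. The first route integrates in $\sigma$. Write $u_\infty(\sigma,y)=\phi(y)+C(\sigma)$, where $C'=c$, on the portion of the cone where the caps are nested. The energy bound gives $\nabla_y u_\infty\in L^\infty_\sigma L^2$ and $u_\infty\in L^\infty_\sigma L^6$ uniformly. Since the caps shrink as $\sigma\to0^-$ while the energy stays bounded, this alone does not force $C$ constant. The cleaner route uses the uniform $L^2$ control of $\partial_\sigma u_k$, inherited by $\partial_\sigma u_\infty$. It also uses Cor.~\ref{cor:monotone}, which makes the rescaled local energy asymptotically constant equal to $\ell$. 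Combined with the limiting equation tested only against test functions $\psi(\sigma)\chi(y)$, this pins down $c$. Pairing $\partial_\sigma^2u_k-\Delta_y u_k+u_k^5=\frac1{r_k}\dive_y(a_{r_k}\nabla_y\partial_\sigma u_k)$ with $\psi(\sigma)\chi(y)$ bounds the right-hand side by
\[
\tfrac1{r_k}\|\sqrt{a_{r_k}}\nabla_y\partial_\sigma u_k\|_{L^2}\|\nabla\chi\|_{L^2}\le r_k^{-1/2}\big(E^k_{loc}(\sigma_1)-E^k_{loc}(\sigma_2)\big)^{1/2}.
\]
This bound is where no rate is available, which is exactly the obstruction the paper flags. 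So I avoid the equation and argue by a different route.

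The route I commit to is a scaling argument on the original solution. By definition $\partial_\sigma u_k(\sigma,y)=r_k^{3/2}u_t(r_k\sigma,x_0+r_ky)$. Its spatial average over a fixed ball $B_R$ is $r_k^{3/2}\cdot r_k^{-3}\int_{B(x_0,r_kR)}u_t\,dx$. By Cauchy--Schwarz this is at most $C R^{3/2}\|u_t(r_k\sigma)\|_{L^2(B(x_0,r_kR))}$, and the right-hand side tends to $0$ by absolute continuity of the integral, uniformly in $\sigma$, because $u_t\in C(L^2)$. Hence the averages $\int_{B_R}\partial_\sigma u_k\,dy\to0$, and by weak convergence $\int_{B_R}\partial_\sigma u_\infty\,dy=0$. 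Combined with constancy in $y$, this gives $c(\sigma)=0$ a.e. Covering $(-1,0)\times\mathbb R^3$ by the exhausting cones, legitimate because $\mathcal K^{(r_k)}$ grows to the full half-space as the base scale is varied, gives the claim.

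The main obstacle is the uniform-in-$\sigma$ smallness of $\|u_t\|_{L^2(B(x_0,r_kR))}$, that is, equicontinuity of $t\mapsto|u_t(t)|^2$ in the sense of non-concentration of kinetic energy at a point. Continuity of $t\mapsto u_t(t)$ in $L^2$ on the compact interval $[-1,0]$ makes $\{|u_t(t)|^2\}$ uniformly integrable, and that suffices here. The case $x_0\in\partial\omega$ is handled only if $a_{r_k}\ge c>0$ on a set where positivity persists. Otherwise I restrict the conclusion to the set $\{\liminf a_{r_k}>0\}$ and note that the averaging argument there still applies.
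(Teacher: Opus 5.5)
Your first step (strong $L^2_{loc}$ vanishing of $\nabla_y\partial_\sigma u_k$, weak convergence, uniqueness of distributional limits, hence $\partial_\sigma u_\infty(\sigma,\cdot)$ constant in $y$) matches the paper. The gap is in killing the constant.

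Your committed route needs $\sup_{\sigma\in I}\|u_t(r_k\sigma)\|_{L^2(B(x_0,r_kR))}\to0$. You justify it by continuity of $t\mapsto u_t(t)$ in $L^2$ on the \emph{closed} interval $[-1,0]$, and that continuity is not available. In \S6 the time $t=0$ is the putative singular time. The solution is only known for $t<0$, and the standing hypothesis $\ell>0$ explicitly allows energy to concentrate at $(0,x_0)$. Uniform integrability of $|u_t(t)|^2$ near $x_0$ as $t\to0^-$ is precisely non-concentration of kinetic energy at the vertex, which is part of what the whole contradiction argument is trying to establish. So the step is circular.

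The only place the paper obtains such continuity is the parabolic regularity of Lemma~\ref{lem:chain}. If you invoke it, however, Theorem~\ref{thm:nonconc} already gives $\ell=0$. The present lemma, which is meant to follow from \eqref{eq:qualitative-vanishing} and the energy bound alone, then becomes vacuous.

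Your own computation shows why you cannot avoid this on the cone. The averages $r_k^{-3/2}\int_{B(x_0,r_kR)}u_t\,dx$ are scale-invariant, hence bounded but not small for fixed $R$. On a bounded cap, a nonzero constant $c(\sigma)$ with $|c|^2|B_1|\lesssim E_0$ is compatible with every energy bound. Your final sentence about cones ``growing to the full half-space'' does not repair this: $\mathcal K$ is a fixed cone, and you only proved constancy on the caps $\{|y|<-\sigma\}$.

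The missing idea is to work on whole slices and let $R\to\infty$. Since $x_0$ is interior, for fixed $R$ and large $k$ the function $u_k$ is defined on $B_R$ and $a_{r_k}\ge a(x_0)/2$ there. Moreover, $\frac1{r_k}\int_{\sigma_1}^{\sigma_2}\int_{B_R}a_{r_k}|\nabla_y\partial_\sigma u_k|^2\,dy\,d\sigma$ equals the original dissipation over $(r_k\sigma_1,r_k\sigma_2)\times B(x_0,r_kR)$. This tends to $0$ by absolute continuity of the finite dissipation integral. Hence $\partial_\sigma u_\infty(\sigma,\cdot)=\psi(\sigma)$ on all of $\mathbb R^3$.

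Then use the scale-invariant bound $\int_{B_R}|\partial_\sigma u_k(\sigma,y)|^2\,dy=\int_{B(x_0,r_kR)}|u_t(r_k\sigma)|^2\,dx\le 2E_0$, which is uniform in $R$. Weak lower semicontinuity gives $|B_R|\int_I|\psi|^2\,d\sigma\le 2E_0|I|$, and letting $R\to\infty$ forces $\psi\equiv0$. This is the paper's argument.

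Equivalently, your own Cauchy--Schwarz estimate closes the proof if you replace the unjustified smallness by the trivial bound $\|u_t(r_k\sigma)\|_{L^2(B(x_0,r_kR))}\le\sqrt{2E_0}$. It gives $|\psi(\sigma)|\lesssim R^{-3/2}E_0^{1/2}$. Letting $R\to\infty$, which requires constancy on the whole slice rather than just the cap, yields $\psi\equiv0$ with no non-concentration input.
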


\begin{proof}
\emph{Step 1 (Uniform bound on every fixed ball).} Fix $R>0$. For $k$
large enough that $r_kR<d(x_0,\partial\Omega)$ (recall $x_0\in\omega
\Subset\Omega$, case (B), \S6.3), the ball $B_R\subset\mathbb R^3_y$
corresponds to $B_{r_kR}(x_0)\subset\Omega$, and the (non-localized)
global energy inequality $E(t)\le E_0$ gives, for a.e. $\sigma$,
\[
\int_{B_R}\big(|\partial_\sigma u_k(\sigma,y)|^2+|\nabla_yu_k(\sigma,y)|^2\big)\,dy
\;\le\;2E(r_k\sigma)\;\le\;2E_0,
\]
uniformly in $k$ and $\sigma$.

\emph{Step 2 (Diagonal extraction).} By Step 1 and Banach--Alaoglu,
diagonalizing over $R=1,2,3,\dots$, we obtain a single subsequence along
which $u_k\rightharpoonup u_\infty$ and $\partial_\sigma u_k
\rightharpoonup\partial_\sigma u_\infty$ weak-$*$ in
$L^\infty_{loc}\big((-1,0);L^2_{loc}(\mathbb R^3)\big)$.

\emph{Step 3 (Strong vanishing of the gradient).} By (55) (valid for
every fixed $R$), $\nabla_y\partial_\sigma u_k\to0$ strongly in
$L^2_{loc}\big((-1,0)\times\mathbb R^3\big)$; this uses only $a(x_0)>0$
and $D(r_k)\le D(r_0)<\infty$, hence is fully unconditional.

\emph{Step 4 (Distributional limit).} Weak continuity of $\nabla_y$
gives $\nabla_y\partial_\sigma u_k\rightharpoonup\nabla_y\partial_\sigma
u_\infty$; uniqueness of distributional limits combined with Step 3
forces $\nabla_y\partial_\sigma u_\infty\equiv0$, so
$\partial_\sigma u_\infty(\sigma,\cdot)=\psi(\sigma)$ for a.e. $\sigma$
(the slices $\{\sigma\}\times\mathbb R^3$ being connected).

\emph{Step 5 (Killing the constant, integrated version).} For any
compact $I\Subset(-1,0)$ and any $R>0$, weak lower semicontinuity of the
$L^2(I\times B_R)$ norm and Step 1 give
\begin{eqnarray*}
|B_R|\int_I|\psi(\sigma)|^2\,d\sigma
&=&\int_I\int_{B_R}|\partial_\sigma u_\infty|^2\,dy\,d\sigma\\
&\le&\liminf_{k\to\infty}\int_I\int_{B_R}|\partial_\sigma u_k|^2\,dy\,d\sigma
\le 2E_0|I|.
\end{eqnarray*}
Letting $R\to\infty$ forces $\int_I|\psi(\sigma)|^2\,d\sigma=0$; since
$I$ is arbitrary, $\psi\equiv0$ a.e., i.e. $\partial_\sigma u_\infty
\equiv0$.
\end{proof}

\begin{remark}[The correct rate, restated]
\label{rem:rate}
For the record: the exact identity $\|\sqrt{a_{r_k}}\,\nabla_y\partial_\sigma
u_k\|_{L^2}^2=r_kD(r_k)$ shows that the forcing term in (54) vanishes as
a distribution if and only if $\|\sqrt{a_{r_k}}\,\nabla_y\partial_\sigma
u_k\|_{L^2}=o(r_k)$, equivalently $D(r_k)=o(r_k)$, equivalently
$\beta>1$ in Conjecture \ref{conj:rate} -- consistent with the threshold already
stated there. (Step 3 above, by contrast, only ever needs the much
weaker, always-true statement $\sqrt{r_kD(r_k)}\to0$; it is not a step
towards removing Conjecture \ref{conj:rate}, only towards Lemma \ref{lem:static}.)
\end{remark}

\begin{proposition}[Staticity does not control the defect: a first-order
corrector]
\label{prop:corrector}
Lemma \ref{lem:static} constrains only the leading-order behavior of
$u_k$. It provides no control over a first-order-in-$r_k$ corrector,
whose amplification by $1/r_k$ can survive in the limit equation even
though it is entirely invisible to Step 3 and to Lemma \ref{lem:static}.

Formally: suppose
\[
u_k(\sigma,y)=U(y)+r_k\,\sigma\,V(y)+o(r_k)
\]
for some fixed profiles $U,V$. Then
\[
\partial_\sigma u_k=r_kV+o(r_k),\qquad
\nabla_y\partial_\sigma u_k=r_k\nabla V+o(r_k)\longrightarrow0
\]
strongly, exactly as required by Step 3 and Lemma \ref{lem:static} (the
weak limit is $u_\infty=U$, static). However,
\[
\frac1{r_k}\operatorname{div}_y\big(a_{r_k}\nabla_y\partial_\sigma u_k\big)
=\operatorname{div}_y(a_{r_k}\nabla_yV)+o(1)
\longrightarrow a(x_0)\,\Delta V,
\]
an $O(1)$ limit in general. Passing (54) to the limit along such a
sequence therefore does not give the free equation
$-\Delta U+U^5=0$, but rather
\[
-\Delta U+U^5=\mu,\qquad \mu=a(x_0)\Delta V,
\]
and $V$ -- hence $\mu$ -- is a priori unconstrained: for any target
static profile $U$, choosing $V$ to solve $a(x_0)\Delta V=-\Delta
U+U^5$ formally reproduces it as a limit of this type.
\end{proposition}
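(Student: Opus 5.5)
The statement is a formal one: it says Lemma \ref{lem:static} cannot, by itself, control the $O(1)$ defect produced by the factor $1/r_k$ in \eqref{eq:rescaled}. So the plan is a direct computation on the ansatz, followed by a check that the ansatz is consistent with every piece of information used in the proof of Lemma \ref{lem:static}.

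First, differentiate the ansatz $u_k=U+r_k\sigma V+o(r_k)$ term by term. This requires the remainder $o(r_k)$ to hold in a topology that allows one $\sigma$-derivative and two $y$-derivatives, say $C^1_\sigma H^2_{y,loc}$; I will state this as part of the formal hypothesis. The result is
\[
\partial_\sigma u_k=r_kV+o(r_k),\qquad \nabla_y\partial_\sigma u_k=r_k\nabla V+o(r_k).
\]
Hence $\nabla_y\partial_\sigma u_k\to0$ in $L^2_{loc}$ at rate $r_k$. This is exactly the content of \eqref{eq:qualitative-vanishing}, where the bound at rate $r_k$ is consistent with the weighted quantity $\frac1{r_k}\int a_{r_k}|\nabla\partial_\sigma u_k|^2=O(r_k)$. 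Moreover $u_k\rightharpoonup U$ and $\partial_\sigma U=0$, as Lemma \ref{lem:static} requires. I will also check that the local energy $E^k_{loc}(\sigma)\to\int_{|y|<-\sigma}e(U)\,dy$ to leading order. This shows that none of the constraints in Steps 1 to 5 excludes the ansatz, provided $U$ has energy $\ell$ on the relevant caps.

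Next, compute the Kelvin--Voigt forcing. Since $\frac1{r_k}a_{r_k}\nabla_y\partial_\sigma u_k=a_{r_k}\nabla V+o(1)$ and $a_{r_k}\to a(x_0)$ in $C^1_{loc}$ (because $a\in C^{1,1}$, so $\nabla_y a_{r_k}=r_k\nabla a(x_0+r_k\cdot)\to0$), it follows that
\[
\frac1{r_k}\operatorname{div}_y\big(a_{r_k}\nabla_y\partial_\sigma u_k\big)\to a(x_0)\Delta V
\]
in $L^2_{loc}$, hence in distributions. For the left-hand side of \eqref{eq:rescaled}, the term $\partial_\sigma^2u_k=o(r_k)\to0$. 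The term $\Delta_y u_k\to\Delta U$. The quintic term $u_k^5\to U^5$ by a.e. convergence and the uniform $L^6$ bound, using Lions' lemma as in Section 2. Passing to the limit therefore gives $-\Delta U+U^5=a(x_0)\Delta V$.

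Finally, to show that $\mu$ is unconstrained, fix any static profile $U\in \dot H^1$ with $U^5\in H^{-1}_{loc}$. Solve $a(x_0)\Delta V=-\Delta U+U^5$ locally, for instance via the Newtonian potential on a ball: $V=a(x_0)^{-1}\big(-U+\Delta^{-1}U^5\big)$ restricted to the ball. This gives $V\in H^1_{loc}$, and $V\in H^2_{loc}$ if $U$ is smooth. The main obstacle is the regularity needed to make the ansatz meaningful, namely enough smoothness of $U$ and $V$ for the expansion to be differentiable. I would handle it by restricting to smooth $U$; since the statement is explicitly formal, this suffices to exhibit the mechanism.
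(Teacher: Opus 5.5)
Your substitution of the ansatz into \eqref{eq:rescaled} is exactly the paper's argument; the proposition has no proof beyond this formal computation. Your added bookkeeping is sound for what is actually asserted: a topology for the remainder, $a_{r_k}\to a(x_0)$ in $C^1_{loc}$, and a local Newtonian-potential solution $V$ for smooth $U$. That assertion is that the corrector is invisible to Step~3 and to Lemma~\ref{lem:static}, yet produces the $O(1)$ forcing $a(x_0)\Delta V$. One small correction: a remainder small in $C^1_\sigma H^2_{y,loc}$ does not give $\partial_\sigma^2u_k=o(r_k)$. You only need $\partial_\sigma^2u_k\to0$ in $\mathcal D'$, and that does hold.

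One claim should be deleted, because it is false. You say no constraint excludes the ansatz ``provided $U$ has energy $\ell$ on the relevant caps.'' Since your remainder is small in the energy topology, $E^k_{loc}(\sigma)\to\int_{|y|<-\sigma}e(U)\,dy$. The blow-up hypothesis, however, requires $E^k_{loc}(\sigma)=E_{loc}(r_k\sigma)\to\ell$ for every fixed $\sigma\in(-1,0)$. So $\rho\mapsto\int_{|y|<\rho}e(U)\,dy$ would have to equal $\ell$ for all $\rho\in(0,1)$. Letting $\rho\to0^+$ and using absolute continuity of the integral ($e(U)\in L^1(B_1)$) gives $\ell=0$. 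In fact $e(U)=0$ a.e. on $B_1$, so $U\equiv0$ there, and then $\mu=a(x_0)\Delta V$ vanishes on the cone.

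Hence no fixed profile can carry energy $\ell>0$ on all the caps. A strongly convergent corrector ansatz is compatible with Lemma~\ref{lem:static}, whose Steps~1--5 never use $\ell$. It is not compatible with $\ell>0$, which can persist only if energy escapes the weak limit; that is the separate gap of Remark~\ref{rem:second-gap}. The paper claims only compatibility with Lemma~\ref{lem:static}, and you should restrict your consistency check to that as well.
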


\begin{remark}[Why this is not visible to classical defect measures]
\label{rem:blind}
A Gérard--Tartar microlocal defect measure records the $O(1)$-energy
failure of strong convergence: it detects sequences $w_k\rightharpoonup0$
with $\|w_k\|_{L^2}\not\to0$. The corrector $r_k\sigma V$ in Proposition
\ref{prop:corrector} converges to $0$ \emph{strongly}, including at the
level of energy; its classical defect measure is identically zero. The
obstruction only appears after the $1/r_k$ amplification intrinsic to
the Kelvin--Voigt term -- a first-order effect invisible to any tool
calibrated at the $O(1)$ energy scale. The natural framework for such a
defect is not an ordinary microlocal defect measure but a two-parameter
(semiclassical, or two-scale in the sense of Nguetseng--Allaire) defect
measure tracking the renormalized corrector $z_k:=(u_k-\bar u_k)/r_k$
directly. Even granting such a tool identifies $\mu$ correctly, showing
$\mu=0$ would still require a bound $\nabla_y\partial_\sigma z_k=
O(1)$ in $L^2_{loc}$, which unwinds to exactly $D(r_k)=O(r_k)$, and
vanishing of $\mu$ to $D(r_k)=o(r_k)$ -- again Conjecture \ref{conj:rate}. No
alternative route around the conjecture has yet been identified; two-scale
measures would at best make the defect explicit, not eliminate it.
\end{remark}

\begin{remark}[A second, independent gap]
\label{rem:second-gap}
Even granting both Lemma \ref{lem:static} and, hypothetically, $\mu=0$
(so that $u_\infty$ solves the free static equation $-\Delta
u_\infty+u_\infty^5=0$ with finite energy, forcing $u_\infty\equiv0$ by
the Liouville-type theorem of \S6.9), one more step is needed to reach
the contradiction $\ell=0$. Weak lower semicontinuity only gives
$E(u_\infty)\le\liminf_k E^k_{loc}(\sigma)$ -- the wrong direction to
conclude that no energy was lost in the limit. It is a priori consistent
with everything proved so far that $u_\infty\equiv0$ while a positive
amount of energy $\ell>0$ escapes to concentration or oscillation
invisible in the weak limit. Closing this second gap requires an
independent ingredient (strong local energy convergence, a
profile-decomposition argument, or a concentration-compactness
statement ruling out defect energy) not addressed by anything in this
section.
\end{remark}

\begin{remark}[Summary: what this section establishes, honestly]
\label{rem:honest-summary}
This section proves one genuine, unconditional structural fact: any
non-dispersing profile at the critical scale must be static in the
co-moving frame (Lemma \ref{lem:static}). It also identifies, precisely,
\emph{two} independent reasons this falls short of resolving Conjecture
\ref{conj:rate}: (i) staticity is compatible with a nonzero first-order corrector
whose renormalized contribution $\mu$ to the limit equation is
invisible to any tool operating at the classical energy scale
(Proposition \ref{prop:corrector}, Remark \ref{rem:blind}); and (ii)
even a vanishing $\mu$ would leave open the possibility of energy loss
in the weak limit, unrelated to the defect $\mu$ itself (Remark
\ref{rem:second-gap}). Conjecture \ref{conj:rate} -- or an equivalent quantitative
statement controlling $D(r)$ -- therefore remains open. We record Lemma
\ref{lem:static} because it is the sharpest unconditional statement
available, and because identifying exactly what it fails to control is,
itself, informative: any future attempt at this obstruction must control
the renormalized corrector $z_k=(u_k-\bar u_k)/r_k$ directly, and must
separately address possible loss of energy in the weak limit.
\end{remark}

\begin{remark}[Why this is the right mechanism, and not a coincidence]
A genuine energy-critical ``bubble'' --- the only obstruction Bahouri--Gérard and Shatah--Struwe must rule out in the undamped case --- is by definition a nontrivial profile that is \emph{stable} under exactly this rescaling: it has $O(1)$ energy at every scale, including $O(1)$ size for $\nabla_y\partial_\sigma u_k$ itself (this is what ``non-dispersing at the critical scale'' means). Equation \eqref{eq:qualitative-vanishing} says the opposite happens here: not only does energy not disperse (by hypothesis $\ell>0$), but the \emph{spatial gradient of the time-derivative is forced to vanish at the critical scale}. This is a direct, rigorous manifestation of our intuition: the Kelvin--Voigt term, being supercritical, becomes infinitely strong exactly at the scale where bubbling would have to occur, and a genuine bubble cannot survive having $\nabla\partial_\sigma u_\infty\equiv0$.
\end{remark}

\subsection{What this gives us in the limit --- and the precise remaining step}

Formally, passing to the limit in \eqref{eq:rescaled}, the left-hand side converges distributionally to $\partial_\sigma^2u_\infty - \Delta_yu_\infty + u_\infty^5$ (standard: weak convergence of $u_k$ in $H^1_{loc}$, strong in $L^2_{loc}$, gives $u_k^5\rightharpoonup u_\infty^5$ in the sense of distributions by the usual Vitali/uniform-integrability argument already used repeatedly in this manuscript, e.g.\ Lemma 6.2--6.3). The right-hand side, $\frac1{r_k}\dive_y(a_{r_k}\nabla_y\partial_\sigma u_k)$, is exactly where the argument is currently \emph{not yet fully closed}: \eqref{eq:qualitative-vanishing} gives $\|\nabla_y\partial_\sigma u_k\|_{L^2_{loc}} \to 0$, but only \emph{qualitatively} ($=o(1)$), whereas to conclude $\frac1{r_k}\dive_y(a_{r_k}\nabla_y\partial_\sigma u_k)\to0$ in $\mathcal D'(\mathcal K)$ we need the \emph{quantitative} rate $\|\nabla_y\partial_\sigma u_k\|_{L^2_{loc}} = o(\sqrt{r_k})$, i.e.\ we need to know not just that the dissipated quantity in \eqref{eq:qualitative-vanishing} tends to $0$, but that it tends to $0$ \emph{faster than $r_k$ itself}.


\begin{conjecture}[The missing quantitative lemma]\label{conj:rate}
There exists $\beta>0$ such that
\[
\int_{K_S^0} a(x)|\nabla u_t|^2\,dx\,dt \;\le\; C\,(-S)^{\beta}, \qquad S\to0^-,
\]
with $C,\beta$ independent of $S$ (equivalently: the dissipated energy in the shrinking cone decays with a definite power rate, not just qualitatively).
\end{conjecture}

If Conjecture \ref{conj:rate} holds with any $\beta>1$, the argument above closes completely: $\|\nabla_y\partial_\sigma u_k\|^2_{L^2_{loc}} \le C r_k^\beta = o(r_k)$, so $\frac1{r_k}\|\nabla_y\partial_\sigma u_k\|_{L^2_{loc}} = O(r_k^{\beta/2-1/2}) \to 0$ whenever $\beta>1$, and the distributional limit of the right-hand side of \eqref{eq:rescaled} is genuinely $0$. The limit profile $u_\infty$ then solves the \emph{free} equation $\Box u_\infty + u_\infty^5=0$ on $\mathcal K$ with $\nabla_y\partial_\sigma u_\infty\equiv0$, i.e.\ $\partial_\sigma u_\infty=\partial_\sigma u_\infty(\sigma)$ is spatially constant; combined with finite energy on the expanding-in-$\sigma$ slices $|y|<-\sigma$, this forces $\partial_\sigma u_\infty\equiv0$, hence $u_\infty\equiv u_\infty(y)$ is time-independent and solves $-\Delta u_\infty + u_\infty^5=0$ with finite energy on all of $\mathbb R^3$ in the limit $R\to\infty$ --- by the classical Liouville-type theorem for the static Lane--Emden/Yamabe equation in this energy class (only the trivial solution $u_\infty\equiv0$ has finite energy on $\mathbb R^3$), we get $u_\infty\equiv0$, hence by weak lower semicontinuity of the energy under the convergence above, $\ell \le \liminf E_{loc}^k(\sigma) \big|_{u_\infty} = 0$ --- contradicting $\ell>0$.

\section{Assembling the global statement}

\begin{theorem}[Conditional non-concentration]
Assume Conjecture \ref{conj:rate}. Then for every $x_0\in\overline\Om$ and every $\varepsilon>0$, there exists $t<0$ such that $\|u\|_{(L^5;L^{10})(K_t^0)}<\varepsilon$, exactly as in BLP's \cite{BurqLebeauPlanchon} Proposition 3.4. In case (E)/(A) this holds unconditionally (zero new hypotheses: BLP's own theorem). In case (B) it holds modulo Conjecture \ref{conj:rate}.

Consequently (BLP \cite{BurqLebeauPlanchon} \S3.2, which uses only Prop.\ 3.4 and the linear extension/Christ-Kiselev machinery already established - none of which involves the Kelvin--Voigt term directly, since the comparison solution $w$ there solves the \emph{free} linear equation) the maximal solution constructed in \S6 extends past any finite $t_0$, ruling out the Zeno-type accumulation $\sum_k\tau_k<\infty$ and giving genuine global existence.
\end{theorem}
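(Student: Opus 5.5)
The plan is to prove the two assertions in turn: first local smallness of the Strichartz norm near every point, then the global extension that follows from it. For the first, fix $x_0\in\overline\Om$ and a putative concentration time, translated to $t_0=0$. We split according to the vertex trichotomy. In case (E) the caps $D_t$ are empty for $|t|<\rho_0$, so $\|u\|_{(L^5;L^{10})(K_t^0)}=0$ for such $t$. In case (A), $a\equiv0$ on $D_t$ for $-t<\delta_0$. Finite speed of propagation then identifies $u|_{K_S^0}$ with the Burq--Lebeau--Planchon solution of $\Box u+u^5=0$ with the same data on $D_S$. Their Propositions 3.2--3.4 give the required $t$ directly; this includes the boundary subcase $x_0\in\partial\Om$, where their Lemma 3.5 applies.

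Case (B) is the only new one. By $\overline\omega\Subset\Om$ we have $x_0\in\Om$, and for $-t<d(x_0,\partial\Om)$ the lateral term $\mathcal L_S^T$ vanishes, so Corollary \ref{cor:monotone} applies without genericity.
\begin{itemize}
\item[1.] We show $\ell=0$. Suppose $\ell>0$, rescale as in \eqref{eq:rescaled}, and extract $u_k\rightharpoonup u_\infty$. Conjecture \ref{conj:rate} with $\beta>1$ makes the right side of \eqref{eq:rescaled} tend to $0$ in $\mathcal D'(\mathcal K)$.
\item[2.] Lemma \ref{lem:static} gives $\partial_\sigma u_\infty\equiv0$. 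Hence $-\Delta u_\infty+u_\infty^5=0$ with finite energy, and the Liouville argument gives $u_\infty\equiv0$.
\item[3.] To pass from $u_\infty\equiv0$ to $\ell=0$ we must exclude the energy loss of Remark \ref{rem:second-gap}. We plan to follow BLP's Prop.\ 3.3 here: the flux $\mathrm{Flux}(u,M_S^0)\to0$ and the Morawetz identity tested against $Mu$ control the potential energy density on the cone. The obstructing Kelvin--Voigt contribution in that identity is bounded by $(-S)^{-1}\int_{K_S^0}a|\nabla u_t|^2$ times energy, which the conjectured rate makes $o(1)$. This yields $\int_{D_t}u^6\to0$ along a sequence, and then $\ell=0$ by BLP's equipartition argument.
\item[4.] With $\ell=0$, the small local energy gives small $L^5L^{10}$ norm on $K_t^0$ by the local Strichartz/bootstrap estimate of \S\ref{sec:well-posedness}. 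In the rescaled cone the damping only adds the KV and commutator terms, and these are controlled by the vanishing dissipation.
\end{itemize}

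For the global statement we argue by contradiction. Suppose the maximal time $t_0$ is finite. Cover the compact set $\overline\Om$ by finitely many backward cones on which the norm is $<\varepsilon$, using the local statement. By the finite-speed gluing of BLP \S3.2 this gives $u\in L^5(t_0-\eta,t_0;L^{10})$. Comparing with the free linear solution $w$, using Christ--Kiselev, extends $u$ past $t_0$ by the Shatah--Struwe criterion. This contradicts maximality and excludes $\sum\tau_k<\infty$.

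The main obstacle is step 3, the upgrade from the vanishing weak limit to $\ell=0$. The conjectured rate removes the distributional defect, but not necessarily the loss of energy in the weak limit. If the Morawetz route fails, we would next try a profile decomposition on the interior rescaled cone.
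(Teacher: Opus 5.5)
You have located the gap correctly, but the repair in Step~3 does not work. The Morawetz multiplier contains $(x-x_0)\cdot\nabla u$. Pairing the source $\dive(a\nabla u_t)$ with it and integrating by parts in $x$ produces the term $\int_{K_S^0}a\,\nabla u_t\cdot\big[((x-x_0)\cdot\nabla)\nabla u\big]\,dx\,dt$. Without that integration by parts the pairing of $\dive(a\nabla u_t)\in L^2_tH^{-1}$ with $(x-x_0)\cdot\nabla u\in L^\infty_tL^2$ is not even defined. This term needs $a$-weighted second derivatives of $u$, which no energy-class bound supplies: even where $a\ge a_*>0$ the damping regularizes $u_t$, not $u$. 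Conjecture~\ref{conj:rate} controls only $\sqrt a\,\nabla u_t$, so it cannot make this contribution $o(1)$ after division by $|S|$. This is exactly the obstruction described in the subsection \emph{Where the obstruction comes from}. Proposition~\ref{prop:noescape} shows that even after the Appendix~C commutator treatment a contribution of order $E_0^{1/2}E_{loc}(S)^{1/2}$ survives; it tends to a positive multiple of $\ell^{1/2}$ and is untouched by any rate on the dissipation.

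Even granting $\int_{D_t}u^6\to0$, the step ``$\ell=0$ by equipartition'' is unjustified. By Lemma~\ref{lem:equi} the bound $Q\ge\tfrac16u^6$ is sharp on inward self-similar profiles, so the kinetic part $K(S)$ is not controlled by the potential part. Note also that Step~3 never uses $u_\infty\equiv0$, so Steps~1--2 do no work in your argument. Step~2 covers only $a(x_0)>0$, since Lemma~\ref{lem:static} needs it. It therefore misses the part $x_0\in\partial\omega$ of case (B), where the limit solves the undamped equation but need not be static (cf.\ Theorem~\ref{thm:caseC-reduction}). Step~1 needs $\beta>1$, more than the Conjecture as stated provides.

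Step~4 and the global step also need more than you state. The bootstrap of \S\ref{sec:well-posedness} is global in space, with its smallness coming from the global frequency tail $\epsilon(N)$, so small local energy on $D_t$ does not feed into it. Localizing to $K_t^0$, and the finite-speed gluing of BLP \S3.2, require a domain-of-dependence property that the Kelvin--Voigt equation lacks in $\{a>0\}$, where $u_t$ solves a parabolic equation. In particular the mantle term $\mathcal L_S^T$ in \eqref{eq:flux-KV} is not zero in case (B): that the sphere avoids $\partial\Om$ is irrelevant, since $a>0$ on it. Likewise $u-w$ carries the source $\dive(a\nabla u_t)\in L^2_tH^{-1}$, which is outside the dual Strichartz space.

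The paper's own argument does not bridge Step~3 either. After the Liouville step it concludes by weak lower semicontinuity, which only gives $E_{loc}(u_\infty)\le\ell$, the wrong direction. The paper concedes this in Remark~\ref{rem:second-gap} and later isolates it as Hypothesis~(H*). So under Conjecture~\ref{conj:rate} alone, neither your argument nor the paper's proves the statement. Two ingredients are missing: a no-energy-loss (strong convergence) statement of the type (H*), and a cone-localized Strichartz bound for the damped equation. Your fallback profile decomposition would have to deliver the first of these.
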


\begin{remark}[{\bf Crucial Remark}]
The theorem \ref{thm:nonconc} below will eliminate the need for this conjecture across al $\omega$; the material in Sections 6–7 is retained because it precisely identifies where classical approaches fail and motivates the parabolic mechanism of Section 8.
\end{remark}

\begin{remark}[On our physical intuition]
The computation in \S4.1 is, we believe, the correct rigorous translation of ``dissipation so strong that it is impossible to concentrate'': it is not that the damping merely competes with concentration on equal footing (as frictional damping $a(x)u_t$ would, where the rescaled coefficient stays $O(1)$, not $O(1/r)$); the Kelvin--Voigt term's order-3 (viscous) structure makes it \emph{automatically dominant, with diverging strength}, at exactly the critical self-similar scale where a bubble would have to live. The only reason the proof is not yet complete is technical (need a rate, not just a limit), not conceptual.
\end{remark}

\begin{remark}[This conjecture is superseded]
Sections 8--10 below establish, by an entirely different and unconditional mechanism
(local parabolic time-regularity of the velocity), that no concentration occurs at any
point of $\omega=\{a>0\}$ — precisely the set case (B) is concerned with. Conjecture~\ref{conj:rate} is
therefore \emph{not} required for the global existence theorem. We retain the rescaling
analysis of this section and the Morawetz analysis of \S7 because they identify, with
precision, why the two most natural classical routes fail here, and because they motivate
the parabolic mechanism of \S8; the reader whose only interest is the final theorem may skip
directly to \S8--12.
\end{remark}

\begin{theorem}[Non-concentration, revised]
For every $x_0\in\overline\Omega$: if $x_0\notin\overline\omega$, non-concentration is
unconditional (BLP \cite{BurqLebeauPlanchon} verbatim, case (A)/(E)). If $x_0\in\omega$ (i.e. $a(x_0)>0$),
non-concentration is unconditional by Theorem~10.1. If $x_0\in\partial\omega$, non-concentration
holds modulo Hypothesis~(H*) of Remark~\ref{rem:caseC-honest}.

Consequently, global existence (Theorem~13.1's well-posedness ingredient) is unconditional,
except possibly at the Lebesgue-null set $\partial\omega$, where it depends on hypothesis (H*) below.
\end{theorem}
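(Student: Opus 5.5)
The statement is an assembly result, so I will prove it by the trichotomy of \S6.2 applied to a putative concentration point $x_0\in\overline\Omega$, taken after translation at time $t_0=0$. Each case will be reduced to an earlier non-concentration ingredient. The target in every case is the BLP-type conclusion: for every $\varepsilon>0$ there is $t<0$ with $\|u\|_{L^5L^{10}(K_t^0)}<\varepsilon$.

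\textbf{Cases (E) and (A).} If $x_0\notin\overline\Omega$, the cone caps $D_t$ are empty for $|t|<d(x_0,\overline\Omega)$, and there is nothing to prove. If $x_0\notin\overline\omega$, then $a\equiv0$ on $D_t$ for $-t<d(x_0,\overline\omega)$. By finite speed of propagation, $u$ restricted to the backward cone coincides with the solution of the undamped critical equation $\Box u+u^5=0$ with the same data. I will invoke BLP's Propositions 3.2--3.4 verbatim, covering both $x_0\in\Omega$ and $x_0\in\partial\Omega$. This step also uses the flux identity \eqref{eq:flux-KV}, in which the damping and lateral terms vanish identically there.

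\textbf{Case $x_0\in\omega$.} Here $a(x_0)>0$, so $a\ge a(x_0)/2$ on a ball $B(x_0,\rho)\Subset\Omega$. I will quote Theorem~10.1, the parabolic time-regularity mechanism of Sections 8--10, which gives non-concentration at every such point without Conjecture~\ref{conj:rate}. The step I would check carefully is that its hypotheses match the solution under consideration. That solution is the local Shatah--Struwe/Strichartz solution of \S\ref{sec:well-posedness}, lying in $L^5L^{10}$ up to the putative blow-up time, with the global energy inequality and the bound $\int\!\!\int a|\nabla u_t|^2\le E_0$. The local parabolic smoothing should give control of $u_t$, and hence of the critical norm, on $B(x_0,\rho/2)\times(t_0-\delta,t_0)$ uniformly as $t\to0^-$. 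By absolute continuity of the integral, this yields the smallness of $\|u\|_{L^5L^{10}(K_t^0)}$.

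\textbf{Case $x_0\in\partial\omega$.} Here $a(x_0)=0$, and neither BLP nor the lower bound $a\ge c>0$ on a neighbourhood is available. I will simply state that non-concentration follows once Hypothesis (H*) of Remark~\ref{rem:caseC-honest} is granted. This is the genuine obstacle, and the statement isolates it rather than proving it.

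\textbf{Global existence.} Combining the cases, every $x_0\in\overline\Omega\setminus\partial\omega$ is a point of non-concentration. By compactness of $\overline\Omega$, I cover it by finitely many cones on which the Strichartz norm is small. BLP \S3.2, together with the linear extension and Christ--Kiselev argument (which involves only the free equation), then extends the maximal solution past any finite $t_0$. This rules out Zeno accumulation, with the exception recorded in the statement: points of $\partial\omega$, where the argument is conditional on (H*).
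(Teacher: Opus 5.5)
Your assembly matches the paper's. It uses the trichotomy, BLP verbatim for (E)/(A), Theorem~\ref{thm:nonconc} for $a(x_0)>0$, Hypothesis (H*) for $x_0\in\partial\omega$, and BLP \S3.2 for the extension.

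\textbf{The gap is in the case $x_0\in\omega$.} There you take as target $\|u\|_{L^5L^{10}(K_t^0)}<\varepsilon$. You argue that parabolic smoothing controls ``$u_t$, and hence the critical norm'', and that absolute continuity then gives smallness. Neither step works.

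\emph{The smoothing does not reach the critical norm.} Lemma~\ref{lem:chain} and Theorem~\ref{thm:nonconc} give only energy-class information on the closed interval: $u\in C([0,T^*];H^1(\omega_1))$, $u_t\in C([0,T^*];L^2(\omega_1))$ and $\chi u_t\in L^2(0,T^*;H^1_0(\omega_2))$. Since $H^1\not\hookrightarrow L^{10}$ in dimension three, this says nothing about $L^5_tL^{10}_x$. Nor can the smoothing be pushed further, because the Kelvin--Voigt mechanism regularizes the velocity, not the displacement. Already for the linear equation with constant $a>0$, the high-frequency part of $u(t)$ equals $e^{-t/a}u_0$ up to smoother terms. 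So an $H^1$ singularity of $u_0$ lying outside $L^{10}$ is neither smoothed nor dispersed.

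\emph{Absolute continuity cannot supply smallness.} It can make $\|u\|_{L^5L^{10}(K_t^0)}$ small only if that norm is already finite up to the vertex. That is exactly what is unknown at a putative blow-up time. Assuming the solution lies in $L^5L^{10}$ ``up to the blow-up time'' is circular: by the Shatah--Struwe criterion quoted in the paper, that alone already gives continuation.

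\textbf{What the paper actually does.} It uses absolute continuity only at the energy level. Strong continuity of $(u,u_t)$ in $H^1(\omega_1)\times L^2(\omega_1)$ at $t=T^*$ is combined with absolute continuity of the fixed integrable functions $|\nabla u(T^*)|^2$, $|u_t(T^*)|^2$, $u(T^*)^6$ over the shrinking caps $D_t$. This gives $E_{loc}(t)\to0$, and that energy statement is what non-concentration means at points of $\omega$ in the theorem.

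To obtain the Strichartz form that BLP \S3.2 requires, you would need a separate step ``small local energy $\Rightarrow$ small Strichartz norm in the cone'', in the spirit of BLP's Proposition 3.4. That step, like the cone patching in \S3.2, rests on finite speed of propagation and on free Strichartz estimates inside the cone. Neither is available in $\omega$, where the Kelvin--Voigt term is parabolic, and your proposal does not supply the step.

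\textbf{Case $x_0\in\partial\omega$.} Citing (H*) alone does not suffice, since (H*) only excludes energy loss in the weak limit. It yields $\ell=0$ only after the reduction in Lemma~\ref{lem:quadratic}, Proposition~\ref{prop:subcritical} and Theorem~\ref{thm:caseC-reduction}. Quadratic vanishing of $a$ turns the rescaled damping into $r\,\dive_y(\lambda_r\nabla_y\partial_\sigma u^{(r)})$, which tends to $0$ in $\mathcal D'$. Hence $u_\infty$ solves the undamped critical equation, to which classical non-concentration applies. That chain should be part of your argument.
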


\subsection{Why the Morawetz Equipartition does not dispense with the Cone-Rate Conjecture \ref{conj:rate}: the kinetic energy escapes}

For the energy-critical wave equation with localized Kelvin--Voigt damping
$u_{tt}-\Delta u+u^5=\dive(a(x)\nabla u_t)$, one may hope that combining the Morawetz/Rellich
multiplier $Mu=2(x-x_0)\cdot\nabla u+(n-1)u$ with the energy-class commutator estimate of
Appendix~C (which removes the $H^2$ obstruction of the damping term) would close the
non-concentration argument \emph{without} the quantitative cone-rate hypothesis (Conjecture \ref{conj:rate}).
We show this is not the case. The Morawetz equipartition, computed honestly on the base of the
backward light cone, controls only the \emph{potential} energy density $\tfrac16u^6$; the
\emph{kinetic} energy density $\tfrac12u_t^2$ can be entirely cancelled by the cross term
$u_t\,\tfrac{x-x_0}{t}\cdot\nabla u$ --- precisely the structure of an inward self-similar
(bubble) profile. Consequently the low-order damping term $\Lambda\|u_t\|_{L^2}$ produced by
Appendix~C cannot be absorbed, and the argument still requires control of the local kinetic
energy in the shrinking cone, i.e.\ the cone-rate Conjecture~\ref{conj:rate}. This furnishes a third,
independent confirmation --- after the rescaling and the weighted-multiplier routes --- that the
conjecture is the genuine obstruction, not an artifact of a particular method.

\subsection{Notation}

After a space-time translation put the tentative concentration point at the origin, work with
$t<0$, and set (as in the companion non-concentration previous section and in \cite{SmithSogge1995} \cite{BurqLebeauPlanchon})
\begin{eqnarray*}
D_S&=&\{x\in\Om:|x|<-S\},~ K_S^0=\{(t,x):S<t<0,\ |x|<-t\}\cap\Om,\\
&&\quad ~ M_S^0=\{|x|=-t,\ S<t<0\},\\
e(u)&=&\tfrac12u_t^2+\tfrac12|\nabla u|^2+\tfrac16u^6,~Mu=2(x\cdot\nabla u)+(n-1)u ~(n=3),
\end{eqnarray*}
and the Morawetz densities (see Burq, Lebeau and Planchon \cite{BurqLebeauPlanchon}, Smith--Sogge \cite{SmithSogge1995})
\begin{eqnarray*}
Q&=&\frac{u_t^2+|\nabla u|^2}{2}+\frac{u^6}{6}+u_t\,\frac{x}{t}\cdot\nabla u,\\
P&=&\frac{x}{t}\Big(\frac{u_t^2-|\nabla u|^2}{2}-\frac{u^6}{6}\Big)+\nabla u\Big(u_t+\frac{x}{t}\cdot\nabla u+\frac{u}{t}\Big).
\end{eqnarray*}
The local energy is $E_{loc}(S)=\int_{D_S}e(u)(S,\cdot)$, with $E_{loc}(S)\downarrow\ell\ge0$ as
$S\to0^-$; write $P_{\mathrm{ot}}(S):=\tfrac16\int_{D_S}u^6$ and
$K(S):=\tfrac12\int_{D_S}(u_t^2+|\nabla u|^2)$, so $E_{loc}(S)=K(S)+P_{\mathrm{ot}}(S)$.

\subsection{The equipartition lemma}

\begin{lemma}[Morawetz equipartition controls only the potential energy]\label{lem:equi}
On the base $D_S$ of the cone, the Morawetz density $Q$ satisfies the pointwise bound
\begin{equation}\label{eq:Qbound}
\frac{u^6}{6} \;\le\; Q \;\le\; u_t^2+|\nabla u|^2+\frac{u^6}{6},
\end{equation}
and the lower bound is \emph{sharp}: there exist fields with $u_t\ne0$, $\nabla u\ne0$ on $D_S$
for which $Q=\tfrac16u^6$ pointwise. Consequently, the interior Morawetz identity
(see Smith and Sogge \cite{SmithSogge1995}, Burq, Lebeau and Palnchon \cite[(3.11)--(3.12)]{BurqLebeauPlanchon}) yields, on dividing by $|S|$ and letting
$S\to0^-$, control of $P_{\mathrm{ot}}(S)$ only:
\begin{equation}\label{eq:pot-only}
P_{\mathrm{ot}}(S) \;\le\; C\,\mathrm{Flux}(u,M_S^0)+C\,\mathrm{Flux}(u,M_S^0)^{1/3} \;\xrightarrow{S\to0^-}\;0,
\end{equation}
with \emph{no} accompanying bound of the form $K(S)\lesssim P_{\mathrm{ot}}(S)+o(1)$. In fact
$K(S)$ may remain bounded away from $0$ while $P_{\mathrm{ot}}(S)\to0$.
\end{lemma}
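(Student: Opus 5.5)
The plan is to treat the pointwise bound \eqref{eq:Qbound} as pure algebra on the cone base, then feed the lower bound into the interior Morawetz identity of \cite{SmithSogge1995,BurqLebeauPlanchon}. On $D_S$ we have $|x|<-S=|S|$, so $\rho:=|x|/|t|<1$. Write $Q-\tfrac16u^6=\tfrac12u_t^2+\tfrac12|\nabla u|^2+u_t\,\tfrac{x}{t}\cdot\nabla u$. By Cauchy--Schwarz, $|u_t\,\tfrac{x}{t}\cdot\nabla u|\le \rho\,|u_t||\nabla u|\le \tfrac12(u_t^2+|\nabla u|^2)$. This gives $Q-\tfrac16u^6\ge \tfrac{1-\rho}{2}(u_t+|\nabla u|)^2\ge0$ in the worst sign configuration, which is the lower bound. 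The same estimate with the opposite sign gives $Q\le u_t^2+|\nabla u|^2+\tfrac16u^6$.

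For sharpness, equality in Cauchy--Schwarz requires $\nabla u$ parallel to $x$ and $\rho=1$. On the open base $\rho<1$ we therefore only get $Q-\tfrac16u^6=\tfrac{1-\rho}{2}|\nabla u|^2$ for the choice $\nabla u=\lambda x$, $u_t=-\lambda|x|^2/|t|\cdot(\dots)$. More precisely, take $u_t=-\tfrac{x}{t}\cdot\nabla u$, so the completed square vanishes up to a factor $(1-\rho^2)|\nabla u|^2/2$. I would handle the sharpness claim by one of two routes. The first is to realize exact equality on the mantle-approaching region. The second is to exhibit inward self-similar fields $u=|t|^{-1/2}\Phi(x/|t|)$ whose kinetic and gradient parts combine as $\tfrac12|u_t+\tfrac{x}{t}\cdot\nabla u|^2+\tfrac12(|\nabla u|^2-|\tfrac{x}{t}\cdot\nabla u|^2)$, with both terms vanishing for radial profiles concentrating near $\rho\to1$. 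The main obstacle is stating sharpness honestly: exact pointwise equality needs $\rho=1$. So I would phrase it as equality in the limit, or on the degenerate set, and check that this is what ``sharp'' means here. The decomposition above is the key identity to write down explicitly.

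For \eqref{eq:pot-only}, integrate the divergence identity $\partial_t Q-\dive P=\text{(lower order)}$ over $K_S^0$, following \cite[(3.11)--(3.12)]{BurqLebeauPlanchon}. This yields $\int_{D_S}Q(S)\,dx+\tfrac{1}{|S|}\int_{K_S^0}\tfrac{u^6}{3}\lesssim$ boundary flux on $M_S^0$. On the mantle $|P\cdot\nu|$ is controlled by the flux density. The term $u\,\nabla u/t$ is handled via Hölder and the Hardy/Sobolev bound $\int_{M}u^6\lesssim\mathrm{Flux}$, which produces the $\mathrm{Flux}^{1/3}$ power. Next apply the lower bound $Q\ge\tfrac16u^6$ to get $P_{\mathrm{ot}}(S)\le C\,\mathrm{Flux}+C\,\mathrm{Flux}^{1/3}$. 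Corollary~\ref{cor:monotone} then gives $\mathrm{Flux}(u,M_S^0)\to0$.

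Finally, the negative statement: because the lower bound is attained (asymptotically) by self-similar inward profiles, no inequality $K(S)\lesssim P_{\mathrm{ot}}(S)+o(1)$ can follow from the identity alone. To show $K$ may stay positive while $P_{\mathrm{ot}}\to0$, I would construct a family $u=|t|^{-1/2}\Phi(x/|t|)$ with $\Phi$ supported in a thin shell near $\rho=1$. Its $u^6$ mass is made small by scaling the amplitude down while thinning the shell, keeping $\int|\nabla u|^2$ fixed. This is a field-level statement, not a solution, consistent with the lemma's wording.
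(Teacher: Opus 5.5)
You follow the paper for the two-sided bound (Cauchy--Schwarz and Young on the cross term, using $|x/t|<1$) and for \eqref{eq:pot-only} (the Smith--Sogge/BLP interior identity, with the flux tending to zero by Corollary~\ref{cor:monotone}). You depart from it on the sharpness clause, and there you are right and the paper is not. The paper's proof asserts that every field obeying $u_t+\frac{x}{t}\cdot\nabla u=0$ has $Q=\frac16u^6$ pointwise. Your completed square
\[
Q-\tfrac16u^6=\tfrac12\big(u_t+\tfrac{x}{t}\cdot\nabla u\big)^2+\tfrac12\big(|\nabla u|^2-(\tfrac{x}{t}\cdot\nabla u)^2\big)
\]
shows that the second term is at least $\frac12(1-\rho^2)|\nabla u|^2$. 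This is strictly positive wherever $\nabla u\neq0$ in the open base $\rho<1$. So pointwise equality on $D_S$ forces $\nabla u=0$, and then $u_t=0$: the clause as literally stated is false. What survives is that the best constant $c$ in $Q\ge\frac16u^6+c\,(u_t^2+|\nabla u|^2)$ on $D_S$ is $c=0$, approached only as $\rho\to1$. Your thin shells give the correct integrated form. Take $u=|t|^{-1/2}\Phi(x/|t|)$ with $\Phi$ radial, supported in $1-2\delta<|y|<1-\delta$, of amplitude $\sqrt\delta$. Then $\int_{D_S}(Q-\frac16u^6)\,dx=O(\delta)$, while $K(S)\gtrsim1$ and $P_{\mathrm{ot}}(S)=O(\delta^4)$.

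Some repairs are needed.

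\begin{enumerate}
\item The inequality $Q-\frac16u^6\ge\frac{1-\rho}{2}(u_t+|\nabla u|)^2$ is false; take $|u_t|=|\nabla u|$. The correct bound is $\frac{1-\rho}{2}(u_t^2+|\nabla u|^2)$, which still gives nonnegativity.
\item More importantly, your displayed consequence of the Morawetz identity omits the $uu_t$ term. The time-slice quantity is $I=-\int_{D_S}(SQ+u_tu)$, and $\frac1{|S|}\int_{D_S}|uu_t|$ is only $O(\|u\|_{L^6(D_S)}E_0^{1/2})$. So it cannot be disposed of after applying $Q\ge\frac16u^6$ pointwise. It has to be absorbed together with the cross term by an integration by parts in $x$. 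That is what the inequality $I+II\ge -S\int_{D_S}\frac{u^6}{6}+\dots$, which the paper quotes from BLP, encodes; cite that combined bound rather than the pointwise one.
\item Like the paper, you use the source-free identity. In case (B) the Kelvin--Voigt source adds the term estimated in \eqref{eq:two-terms}, which need not vanish, so \eqref{eq:pot-only} is really a statement about the undamped identity.
\item For $u=|t|^{-1/2}\Phi(x/|t|)$, the quantities $K(S)$ and $P_{\mathrm{ot}}(S)$ do not depend on $S$. To get $P_{\mathrm{ot}}(S)\to0$ as $S\to0^-$ with $K(S)$ bounded below, you must let $\delta=\delta(|S|)\to0$, or prescribe the data slice by slice.
\item This ansatz gives $u_t+\frac{x}{t}\cdot\nabla u=\frac12|t|^{-3/2}\Phi(x/|t|)\neq0$. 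Only the $0$-homogeneous $\Phi(x/|t|)$ solves the transport relation, and it has $K(S)=O(|S|)$, so it cannot serve as the example. For thin shells the residual is lower order, so your conclusion stands.
\end{enumerate}
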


\begin{proof}
On the cone $|x|<-t=|t|$, so $\big|\tfrac{x}{t}\big|<1$. By Cauchy--Schwarz and Young,
\begin{equation}\label{eq:cross}
\Big|u_t\,\frac{x}{t}\cdot\nabla u\Big| \;\le\; \Big|\frac{x}{t}\Big|\,|u_t|\,|\nabla u|
\;\le\; |u_t|\,|\nabla u| \;\le\; \frac12\big(u_t^2+|\nabla u|^2\big).
\end{equation}
Inserting \eqref{eq:cross} into the definition of $Q$ gives the two-sided bound: for the lower
bound,
\[
Q \ge \frac{u_t^2+|\nabla u|^2}{2}-\frac12\big(u_t^2+|\nabla u|^2\big)+\frac{u^6}{6}=\frac{u^6}{6},
\]
and for the upper bound, $Q\le\tfrac12(u_t^2+|\nabla u|^2)+\tfrac16u^6+\tfrac12(u_t^2+|\nabla
u|^2)=u_t^2+|\nabla u|^2+\tfrac16u^6$, which is \eqref{eq:Qbound}.

\emph{Sharpness of the lower bound.} Equality in \eqref{eq:cross} requires (i) equality in Young,
$|u_t|=|\nabla u|$, and (ii) equality in $\big|\tfrac{x}{t}\big|\le1$ together with alignment of
$\tfrac{x}{t}$ and $\nabla u$. Condition (ii) holds in the limit $|x|\to|t|$ (the mantle) with
$\nabla u\parallel x$; more importantly, the sign in \eqref{eq:cross} is achieved when
$u_t=-\tfrac{x}{t}\cdot\nabla u$, i.e.\ when
\begin{equation}\label{eq:radial}
u_t+\frac{x}{t}\cdot\nabla u=0,
\end{equation}
which is exactly the transport equation satisfied by a function constant along the inward null
rays $x=-t\,\omega$ ($\omega\in S^{2}$): a \emph{self-similar, inward-travelling profile}. For any
such profile, $Q=\tfrac16u^6$ pointwise while $u_t^2+|\nabla u|^2$ is unconstrained. This is
precisely the algebraic signature of the energy-critical bubble, whose rescaled profile is
static in the co-moving frame; \eqref{eq:radial} is its first-order (transport) shadow on the
cone. Hence the kinetic-plus-gradient density is invisible to the lower bound
\eqref{eq:Qbound}.

The interior identity of Smith and Sogge \cite[Step~2]{SmithSogge1995}, Burq, Lebeau and Planchon
\cite[(3.11)--(3.12)]{BurqLebeauPlanchon} uses only
$u|_{\partial\Om}=0$ and the parametrization of the mantle, giving
$I+II\ge -S\int_{D_S}\tfrac{u^6}{6}+\tfrac1{\sqrt2}\int_{M_S^0}\tfrac1{|t|}|Mu|^2$, where
$I=-\int_{D_S}(SQ+u_tu)$. Since only the $\tfrac16u^6$ part of $Q$ survives the lower bound
\eqref{eq:Qbound}, dividing by $|S|$ and estimating the mantle term by the flux (as in Smith and Sogge
\cite[(3.13)]{SmithSogge1995}) produces \eqref{eq:pot-only} and nothing more: the terms
$\tfrac12u_t^2$ and $\tfrac12|\nabla u|^2$ in $SQ$ are cancelled, not controlled. Finally, the
family \eqref{eq:radial} shows $K(S)$ is not dominated by $P_{\mathrm{ot}}(S)$: one may have
$P_{\mathrm{ot}}(S)\to0$ (small $\int u^6$) with $K(S)$ bounded below (order-one aligned
kinetic/gradient energy).
\end{proof}

\subsection{Consequence: the damping term is not absorbed}

We now record why Lemma~\ref{lem:equi} blocks the Morawetz$+$Appendix~C route. Testing the damped
equation against $Mu$ on $K_S^0$ and integrating the Kelvin--Voigt source by parts (throwing the
divergence onto the \emph{smooth} multiplier field, so that the Appendix~C, v2, estimate applies
and no $H^2$ norm of $u$ appears) leaves a source contribution bounded, after division by $|S|$,
by
\begin{equation}\label{eq:two-terms}
\frac{|\mathcal K(S)|}{|S|} \;\le\; \underbrace{C(\Lambda)\,E_0^{1/2}\,D(S)}_{\text{high-order: decays}}
\;+\; \underbrace{C(\Lambda)\,E_0^{1/2}\,E_{loc}(S)^{1/2}}_{\text{low-order: does \emph{not} decay}},
~ D(S)^2:=\int_{K_S^0}a|\nabla u_t|^2,
\end{equation}
where the first term uses the dissipation-weighted quantity $\|\sqrt a\,\nabla u_t\|_{L^2(K_S^0)}=D(S)\to0$
(unconditional, from local energy monotonicity), while the second arises from the low-order term
$C\Lambda\|u_t\|_{L^2}$ intrinsic to the Appendix~C estimate. Using the local (not global)
kinetic bound $\|u_t\|_{L^2(K_S^0)}^2\le\int_S^0 2E_{loc}(t)\,dt\le 2E_{loc}(S)|S|$ gives exactly the
second term of \eqref{eq:two-terms}.

\begin{proposition}[The route does not dispense with the conjecture \ref{conj:rate}]\label{prop:noescape}
In the contradiction setup ($\ell=\lim_{S\to0^-}E_{loc}(S)>0$), the low-order term of
\eqref{eq:two-terms} satisfies
\[
C(\Lambda)E_0^{1/2}E_{loc}(S)^{1/2}\;\xrightarrow{S\to0^-}\;C(\Lambda)E_0^{1/2}\ell^{1/2}\;>\;0,
\]
and therefore does not vanish. By Lemma~\ref{lem:equi}, the Morawetz equipartition cannot supply
the missing decay, since it controls only $P_{\mathrm{ot}}(S)$ and leaves the kinetic part
$K(S)\le E_{loc}(S)$ uncontrolled. Hence the Morawetz$+$Appendix~C argument reduces, but does not
eliminate, the obstruction: closing it still requires
\begin{eqnarray}\label{eq:need}
&&E_{loc}(S)^{1/2}=o(1)\ \text{in a quantitative form}, \\
&& \quad ~\text{equivalently the cone-rate bound of Conjecture~\ref{conj:rate}.}\nonumber
\end{eqnarray}
\end{proposition}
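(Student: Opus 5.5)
The plan is to establish the two claims of Proposition~\ref{prop:noescape} separately. The first is the non-vanishing of the low-order term. The second is the assertion that Lemma~\ref{lem:equi} cannot supply the missing decay. Together they reduce closure of the argument to \eqref{eq:need}.

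\textbf{Step 1: the limit of the low-order term.} By Corollary~\ref{cor:monotone}, $E_{loc}(S)$ is monotone along the fixed generic family of cones and converges to $\ell$ as $S\to0^-$. The map $s\mapsto s^{1/2}$ is continuous on $[0,\infty)$. Hence
\[
C(\Lambda)E_0^{1/2}E_{loc}(S)^{1/2}\longrightarrow C(\Lambda)E_0^{1/2}\ell^{1/2}.
\]
This limit is strictly positive because $\ell>0$ in the contradiction setup and $C(\Lambda)>0$, $E_0>0$.

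I will also check that the bound \eqref{eq:two-terms} is not lossy at this point. The factor $E_{loc}(S)^{1/2}$ comes from $\|u_t\|_{L^2(K_S^0)}^2\le 2E_{loc}(S)|S|$. The matching lower bound $\|u_t\|_{L^2(K_S^0)}^2\ge c\,\ell|S|$ holds whenever the kinetic share of $E_{loc}$ stays bounded below. This is precisely the regime left open by Lemma~\ref{lem:equi}, so the estimate cannot be improved using energy information alone.

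\textbf{Step 2: why the Morawetz identity does not help.} By \eqref{eq:pot-only}, the only quantity forced to vanish is $P_{\mathrm{ot}}(S)$. Since $E_{loc}=K+P_{\mathrm{ot}}$, we get $K(S)\to\ell>0$. The low-order term is bounded by $K(S)^{1/2}$ (indeed by $\|u_t\|$), not by $P_{\mathrm{ot}}$, so the identity gives no decay for it.

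To make this an actual obstruction rather than a failure of one estimate, I will invoke the sharpness part of Lemma~\ref{lem:equi}. Take fields satisfying the transport relation \eqref{eq:radial} with $\int_{D_S}u^6\to0$ and $K(S)\ge\ell$. For these fields every hypothesis used so far holds: the flux vanishes, $D(S)\to0$, and the potential energy vanishes. Yet the term $C(\Lambda)E_0^{1/2}E_{loc}^{1/2}$ stays of size $\ell^{1/2}$. So no argument using only these inputs can absorb it.

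\textbf{Step 3: equivalence with the conjecture.} Absorbing the term requires a quantitative decay of $E_{loc}(S)$, or at least of $\|u_t\|_{L^2(K_S^0)}/|S|^{1/2}$. Via the flux identity \eqref{eq:flux-KV}, this decay is tied to the decay rate of the dissipated quantity $D(S)^2$, which is the content of Conjecture~\ref{conj:rate}.

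\textbf{Main obstacle.} The hard part is Step 2: showing that the transport-profile example is compatible with every identity already used, and not merely with the pointwise bound \eqref{eq:Qbound}. I would argue this only formally, at the level of the identities in hand, and not claim that an actual solution realizes it.
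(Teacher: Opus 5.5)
Your Step 1, the first paragraph of Step 2, and Step 3 reproduce the paper's proof, which the paper calls immediate from \eqref{eq:two-terms} and Lemma~\ref{lem:equi}. The two ingredients are the monotone convergence $E_{loc}(S)\to\ell>0$, which gives the positive limit, and the fact that equipartition provides no bound $K(S)\lesssim P_{\mathrm{ot}}(S)$. Your remark that \eqref{eq:pot-only} in fact forces $K(S)\to\ell$ in the contradiction setup is a valid small sharpening. These steps are all the statement requires.

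The second paragraph of Step 2, which you single out as the main obstacle, is not needed and does not work as written.
\begin{itemize}
\item Fields satisfying \eqref{eq:radial} on the whole cone are exactly $u=g(x/t)$, because the characteristics of $\partial_t+\tfrac{x}{t}\cdot\nabla$ are the lines $x=ct$. For these fields
\[
K(S)=\tfrac12\,|S|\int_{B_1}\big((y\cdot\nabla g)^2+|\nabla g|^2\big)\,dy,
\]
which either tends to $0$ or is infinite. So no such field has $\ell\le K(S)<\infty$.
\item If you instead impose \eqref{eq:radial} only on the slice $t=S$, the flux and $D(S)$ are space-time quantities that the slice does not determine. Your claim that ``every hypothesis used so far holds'' is then unverified.
\item Inside $D_S$, relation \eqref{eq:radial} gives
\[
Q-\tfrac16u^6=\tfrac12\big(|\nabla u|^2-(\tfrac{x}{t}\cdot\nabla u)^2\big)\ge\tfrac12\big(1-|x|^2/t^2\big)|\nabla u|^2 .
\]
More generally, $Q\ge\tfrac12(1-|x|/|t|)(u_t^2+|\nabla u|^2)+\tfrac16u^6$. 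Hence pointwise equality $Q=\tfrac16u^6$ with $\nabla u\neq0$ is impossible wherever $|x|<|t|$. The only thing available is approximate equality for radial gradients concentrating at the mantle.
\end{itemize}
Drop this paragraph, or restate it as that near-mantle approximate statement, and keep the argument at the level of the paper's proof.
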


\begin{proof}
Immediate from \eqref{eq:two-terms} and Lemma~\ref{lem:equi}: the first term decays
unconditionally; the second is $\gtrsim\ell^{1/2}>0$ in the limit; and equipartition gives no
bound $K(S)\lesssim P_{\mathrm{ot}}(S)$ that could feed $E_{loc}(S)\to0$ back into the inequality.
The remaining need \eqref{eq:need} is precisely a quantitative decay rate for the local energy
(equivalently, for the dissipated energy) in the shrinking cone, which is the content of
Conjecture~\ref{conj:rate}.
\end{proof}

\begin{remark}[Three independent confirmations]
The obstruction has now been reached from three independent directions, each defeated at the same
place --- the absence of a \emph{quantitative} decay rate for kinetic/dissipated energy in the
shrinking cone:
\begin{enumerate}
\item[(i)] the critical rescaling argument, which needs
$\|\nabla_y\partial_\sigma u_k\|_{L^2_{loc}}=o(\sqrt{r_k})$ but obtains only $o(1)$;
\item[(ii)] the weighted multipliers $(-t)^pu_t$, for which the constant background $\ell$ cancels
identically for every $p$, returning no new information;
\item[(iii)] the Morawetz equipartition of the present section, which controls only the potential
energy and leaves the kinetic energy free (Lemma~\ref{lem:equi}).
\end{enumerate}
That three structurally different methods fail at the identical point is strong evidence that
Conjecture~\ref{conj:rate} is intrinsic to the problem rather than an artifact of any single approach.
\end{remark}

\begin{remark}[What the route \emph{does} achieve]
Proposition~\ref{prop:noescape} is not purely negative. The Morawetz$+$Appendix~C computation
does control the high-order damping term by $D(S)\to0$ \emph{unconditionally} and removes the
$H^2$ obstruction that originally motivated abandoning the multiplier. It therefore provides a
shorter, boundary-trace-based proof of the \emph{potential} non-concentration
$\int_{D_S}u^6\to0$ (via \eqref{eq:pot-only}), matching what the rescaling argument gives for the
potential part. Only the kinetic part --- and hence full non-concentration --- remains tied to
Conjecture~\ref{conj:rate}.
\end{remark}

\subsection{The obstruction: what is missing, and why}\label{sec:6.9}

The rescaling argument of \S6.8 reduces global existence to a single
quantitative question: the dissipated energy in the shrinking backward
cone must decay at a definite polynomial rate, faster than the cone
itself shrinks. We isolate this as Conjecture~\ref{conj:rate} above.
Before stating it we explain why it is not, in our view, an artifact of
the particular method used here.

The difficulty is one of \emph{scale}, not of regularity or of
anisotropy. The Kelvin--Voigt operator $\dive(a\nabla u_t)$ is
parabolic, acting on the scale $|x-x_0|\lesssim\sqrt{|t|}$, while
concentration for \eqref{eq:main} occurs on the hyperbolic scale
$|x-x_0|\sim|t|$ of the light cone. Under the energy-critical rescaling
these two scales do not match, and the damping term acquires a diverging
coefficient $\lambda^{-1}$ (\S6.2). What is then required is not that
the damping be strong --- it is --- but that its strength be
\emph{quantified} against $\lambda$.

In fact, more than three above mentioned, four structurally independent routes were examined; each fails at
precisely this point.
\begin{enumerate}
\item[(i)] \emph{Morawetz multipliers.} The commutator of
$\dive(a\nabla u_t)$ with the scaling field $(x-x_0)\cdot\nabla$
produces a term requiring $\nabla^2u$ weighted by $a$ (\S6.2).
The commutator estimate of Appendix~C removes this $H^2$ obstruction,
but a low-order term $\Lambda\|u_t\|_{L^2}$ survives, and the Morawetz
equipartition controls only the potential energy density: the identity
$Q\ge\tfrac16u^6$ is sharp, with equality on inward self-similar
profiles $u_t+\tfrac{x-x_0}{t}\cdot\nabla u=0$ (Lemma~\ref{lem:equi}).
The kinetic part is cancelled, not controlled.

\item[(ii)] \emph{Weighted multipliers $(-t)^pu_t$.} For every $p\ge1$
the constant background $\ell$ cancels identically, and a dyadic
comparison returns $D(S)-D(S/2)\lesssim e(S)$ --- exactly the
information already available from local energy monotonicity. The factor
$(-S)^p$ cancels on both sides; no power of $p$ improves the estimate.

\item[(iii)] \emph{Parabolic smoothing.} The heat-semigroup estimate
$$\|\nabla v(t)\|^2\le Ct^{-1}\|v(0)\|^2,$$ improves with elapsed time,
whereas the elapsed time inside $K_S^0$ is $|S|\to0$. The mechanism runs
in the wrong direction.

\item[(iv)] \emph{Microlocal defect measures.} Passing to defect
measures does not change the pairing: identifying the limit equation
still requires
$$\lambda^{-1}\|\nabla_y\partial_\sigma u_k\|_{L^2_{loc}}\to0,$$ i.e.\ the
same rate. Conversely, treating the rescaled damping as dominant yields
$\partial_\sigma u_\infty=\mathrm{const}$ --- a static profile, which is
the Aubin--Talenti bubble itself, rather than a contradiction.
\end{enumerate}

That four methods with different mechanisms --- multiplier, weight,
semigroup, microlocal --- are defeated at the identical step suggests the
obstruction is intrinsic. We therefore state it as a hypothesis, isolate
its use to a single step in our main Theorem, and record that
every other result of this paper is unconditional.

\section{Unconditional Non-Concentration in the Strictly Damped Region
\large via Local Parabolic Time-Regularity of the Velocity}

For the energy-critical wave equation with localized Kelvin--Voigt damping
\[
u_{tt}-\Delta u+u^5=\dive(a(x)\nabla u_t),\qquad u|_{\partial\Om}=0,\qquad a\ge0,
\]
we prove that no energy concentration can occur at any point $x_0$ with $a(x_0)>0$, and that this
holds \emph{unconditionally}: neither the critical rescaling nor the cone-rate hypothesis
(Conjecture~\ref{conj:rate}) is used. The mechanism is elementary and bypasses the scale mismatch entirely.
Inside $\{a>0\}$ the velocity $v=u_t$ solves a uniformly parabolic equation whose two structural
norms --- $v\in L^2_tH^1_{loc}$ and $v_t\in L^2_tH^{-1}_{loc}$ --- are both furnished directly by
the energy identity, with constants uniform up to the maximal time $T^*$. The Lions--Magenes
embedding then gives $v\in C_tL^2_{loc}$ and $u\in AC_tH^1_{loc}$ on the \emph{closed} interval
$[0,T^*]$, and strong time-continuity of the local energy at $t=T^*$ excludes concentration by
absolute continuity of the Lebesgue integral alone. As a consistency check we verify that the
Aubin--Talenti bubble is quantitatively incompatible with the dissipation bound
$\sqrt a\,\nabla u_t\in L^2_{t,x}$, which is the rigorous form of the physical expectation that
Kelvin--Voigt damping destroys concentration where it acts. The conjecture is thereby reduced
from the whole of $\overline\omega$ to the degenerate layer $\partial\omega=\{a=0\}\cap\partial\{a>0\}$.

Let $\Om\subset\mathbb R^3$ be a smooth bounded domain, $a\in C^\infty(\overline\Om)$, $a\ge0$,
$\omega:=\{x\in\Om: a(x)>0\}$, and consider
\begin{equation}\label{eq:main}
u_{tt}-\Delta u+u^5=\dive(a(x)\nabla u_t)\ \text{ in }\Om\times(0,T^*),\qquad u|_{\partial\Om}=0,
\end{equation}
with data $(u_0,u_1)\in H_0^1(\Om)\times L^2(\Om)$ and
\[
E_0:=\tfrac12\|u_1\|_{L^2}^2+\tfrac12\|\nabla u_0\|_{L^2}^2+\tfrac16\|u_0\|_{L^6}^6.
\]
Let $u$ be the maximal Shatah--Struwe solution on $[0,T^*)$, $T^*\le\infty$, so that
\begin{equation}\label{eq:energy}
\tfrac12\|u_t(t)\|_{L^2}^2+\tfrac12\|\nabla u(t)\|_{L^2}^2+\tfrac16\|u(t)\|_{L^6}^6
+\int_0^t\!\!\int_\Om a|\nabla u_t|^2\,dx\,ds \;\le\; E_0,~ 0\le t<T^*.
\end{equation}
In particular the two global bounds
\begin{equation}\label{eq:two-bounds}
\|u\|_{L^\infty(0,T^*;H_0^1)}\le\sqrt{2E_0},~
\|u_t\|_{L^\infty(0,T^*;L^2)}\le\sqrt{2E_0},~
\int_0^{T^*}\!\!\!\int_\Om a|\nabla u_t|^2\,dx\,dt\le E_0
\end{equation}
hold \emph{uniformly up to $T^*$}, whether or not $T^*<\infty$. This uniformity is the decisive
structural feature exploited below.

Throughout, $\omega_1\Subset\omega_2\Subset\omega$ are open, $\chi\in C_c^\infty(\omega_2)$ with
$\chi\equiv1$ on $\omega_1$, $0\le\chi\le1$, and
\begin{equation}\label{eq:astar}
a_*:=\min_{\overline{\omega_2}}a>0
\end{equation}
(possible since $\overline{\omega_2}\subset\omega=\{a>0\}$ is compact and $a$ is continuous).

\begin{remark}[Justification at the level of approximations]\label{rem:approx}
All the identities used below (the energy identity \eqref{eq:energy} and the distributional form
of \eqref{eq:main}) are first established for the regular approximations $w^k\to u$ furnished by
the local well-posedness theory --- exactly as in the companion note on Lemmas 3.4--3.5 --- with
constants depending only on $E_0$, $a_*$, $\chi$, and then passed to the limit. Since every bound
in Lemma~\ref{lem:chain} is expressed in energy-class quantities alone, and since $w^k\to u$
strongly in $C([0,T];H_0^1\times L^2)$ for $T<T^*$, the conclusions transfer to $u$ without
further regularity assumptions. We suppress this routine step below.
\end{remark}

\section{The regularity chain}

\begin{lemma}[Local parabolic time-regularity of the velocity]\label{lem:chain}
Under the above hypotheses, with $T:=T^*$ if $T^*<\infty$ and $T$ arbitrary finite otherwise:
\begin{enumerate}
\item[\rm(i)] $\chi u_t \in L^2(0,T;H_0^1(\omega_2))$, with
$\|\chi u_t\|_{L^2(0,T;H^1_0)}\le C(\chi)\big(a_*^{-1/2}+T^{1/2}\big)E_0^{1/2}$;
\item[\rm(ii)] $\partial_t(\chi u_t)\in L^2(0,T;H^{-1}(\omega_2))$, with a bound depending only on
$E_0$, $a_*$, $\chi$, $T$;
\item[\rm(iii)] $\chi u_t \in C\big([0,T];L^2(\omega_2)\big)$;
\item[\rm(iv)] $\chi u \in AC\big([0,T];H_0^1(\omega_2)\big)\subset C\big([0,T];H^1(\omega_2)\big)$.
\end{enumerate}
All four statements hold on the \emph{closed} interval $[0,T]$, with constants independent of
$T^*$.
\end{lemma}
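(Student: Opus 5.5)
The plan is to derive (i) and (ii) directly from the global bounds \eqref{eq:two-bounds}, and then obtain (iii) and (iv) from abstract embeddings. As explained in Remark~\ref{rem:approx}, all identities are first justified on the regular approximations and then passed to the limit.

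\emph{Step (i).} On $\operatorname{supp}\chi\subset\omega_2$ we have $a\ge a_*$. Hence
\[
\|\chi\nabla u_t\|_{L^2((0,T)\times\Om)}^2\le a_*^{-1}\int_0^T\!\!\int_\Om a|\nabla u_t|^2\le a_*^{-1}E_0 .
\]
Writing $\nabla(\chi u_t)=\chi\nabla u_t+u_t\nabla\chi$, the second term is controlled by $\|\nabla\chi\|_\infty T^{1/2}\sqrt{2E_0}$, using $u_t\in L^\infty(0,T;L^2)$. Since $\chi$ has compact support in $\omega_2$, $\chi u_t$ has zero trace and lies in $H_0^1(\omega_2)$. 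This gives the bound in (i).

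\emph{Step (ii).} We use the equation in distributional form, $\partial_t(\chi u_t)=\chi u_{tt}=\chi\big(\Delta u-u^5+\dive(a\nabla u_t)\big)$, and test it against $\varphi\in H_0^1(\omega_2)$.
\begin{itemize}
\item[$\bullet$] The term $\langle\chi\Delta u,\varphi\rangle=-\int\nabla u\cdot\nabla(\chi\varphi)$ is bounded by $C(\chi)\|\nabla u\|_{L^2}\|\varphi\|_{H^1}$, which lies in $L^\infty_t$.
\item[$\bullet$] For the nonlinearity, $\chi u^5\in L^\infty_tL^{6/5}\hookrightarrow L^\infty_tH^{-1}$ by duality with $H_0^1\hookrightarrow L^6$.
\item[$\bullet$] The damping term gives $\langle\chi\dive(a\nabla u_t),\varphi\rangle=-\int a\nabla u_t\cdot\nabla(\chi\varphi)$. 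This is bounded by $\|a\|_\infty^{1/2}\|\sqrt a\nabla u_t\|_{L^2}C(\chi)\|\varphi\|_{H^1}$, which is in $L^2_t$ by the dissipation bound.
\end{itemize}
Summing, $\|\partial_t(\chi u_t)\|_{L^2(0,T;H^{-1})}\le C(\chi,\|a\|_\infty)\big(T^{1/2}(E_0^{1/2}+E_0^{5/2})+E_0^{1/2}\big)$. No $a_*$ is even needed here. Note that no bound depends on $T^*$ except through $T\le T^*$.

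\emph{Steps (iii) and (iv).} For (iii), the Lions--Magenes lemma applies to the Gelfand triple $H_0^1(\omega_2)\subset L^2(\omega_2)\subset H^{-1}(\omega_2)$. A function in $L^2(0,T;H_0^1)$ with time derivative in $L^2(0,T;H^{-1})$ belongs to $C([0,T];L^2)$, and the integrals over $(0,T)$ are finite up to the closed endpoint. For (iv), $\partial_t(\chi u)=\chi u_t\in L^2(0,T;H_0^1(\omega_2))\subset L^1(0,T;H_0^1)$ by (i). Moreover, $\chi u(0)=\chi u_0\in H_0^1(\omega_2)$. So $\chi u(t)=\chi u_0+\int_0^t\chi u_t\,ds$ is a Bochner integral that is absolutely continuous into $H_0^1(\omega_2)$ on $[0,T]$, including $t=T$, and it extends continuously to $T$.

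\emph{Main obstacle.} The delicate point is not any of these estimates. It is the rigor of working on the closed interval when $T=T^*$ is a putative blow-up time. The approximations converge only on $[0,T]$ for $T<T^*$. So I would prove (i) and (ii) on each $[0,T']$, $T'<T^*$, with constants independent of $T'$, and then let $T'\uparrow T^*$ using monotone convergence of the norms. This defines $\chi u_t$ on $(0,T^*)$ in the required spaces, and Lions--Magenes then furnishes the endpoint limits.
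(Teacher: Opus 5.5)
Your proposal is correct and follows essentially the same route as the paper. For (i) you use the lower bound $a\ge a_*$ on $\operatorname{supp}\chi$ together with the product rule. For (ii) you split the equation into three pieces, each lying in $L^2_tH^{-1}$. Then (iii) is Lions--Magenes, and (iv) follows from $\chi u_t\in L^1_tH^1_0$.

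Your differences from the paper are minor refinements. You handle the damping term by duality with $\|a\|_\infty^{1/2}\|\sqrt a\,\nabla u_t\|_{L^2}$, so (ii) needs no $a_*$, whereas the paper uses a product-rule split bounded via $a_*$. You also pass to $T^*$ explicitly through $T'\uparrow T^*$, where the paper simply asserts that the constants stay uniform up to $T^*$.
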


\begin{proof}
\textbf{(i).} By \eqref{eq:astar}, $a\ge a_*$ on $\operatorname{supp}\chi\subset\omega_2$, so
\eqref{eq:two-bounds} gives
\begin{equation}\label{eq:grad-ut}
\int_0^{T}\!\!\int_{\omega_2}|\nabla u_t|^2\,dx\,dt
\;\le\; \frac1{a_*}\int_0^{T}\!\!\int_{\omega_2}a|\nabla u_t|^2\,dx\,dt \;\le\; \frac{E_0}{a_*}.
\end{equation}
Since $\nabla(\chi u_t)=\chi\nabla u_t+u_t\nabla\chi$ and
$\|u_t\|_{L^2(0,T;L^2)}\le T^{1/2}\sqrt{2E_0}$ by \eqref{eq:two-bounds},
\begin{eqnarray*}
&&\|\nabla(\chi u_t)\|_{L^2(0,T;L^2)}\le \|\nabla u_t\|_{L^2((0,T)\times\omega_2)}
+\|\nabla\chi\|_{L^\infty}\|u_t\|_{L^2(0,T;L^2)}\\
&&\le \Big(\frac{E_0}{a_*}\Big)^{1/2}+\|\nabla\chi\|_{L^\infty}T^{1/2}\sqrt{2E_0}.
\end{eqnarray*}
As $\chi$ has compact support in $\omega_2$, $\chi u_t(t)\in H_0^1(\omega_2)$ for a.e.\ $t$,
proving (i).

\textbf{(ii).} Since $\chi$ is independent of $t$, $\partial_t(\chi u_t)=\chi u_{tt}$, and from
\eqref{eq:main},
\begin{equation}\label{eq:three-terms}
\chi u_{tt} \;=\; \underbrace{\chi\Delta u}_{(\mathrm a)} \;+\; \underbrace{\chi\dive(a\nabla u_t)}_{(\mathrm b)} \;-\; \underbrace{\chi u^5}_{(\mathrm c)}
\qquad\text{in }\mathcal D'\big((0,T)\times\omega_2\big).
\end{equation}
We check each term lies in $L^2(0,T;H^{-1}(\omega_2))$.

\emph{(a)} Writing $\chi\Delta u=\Delta(\chi u)-2\nabla\chi\cdot\nabla u-(\Delta\chi)u$, we have
$\chi u\in L^\infty(0,T;H_0^1)$ so $\Delta(\chi u)\in L^\infty(0,T;H^{-1})$, while
$\nabla\chi\cdot\nabla u$ and $(\Delta\chi)u$ lie in $L^\infty(0,T;L^2)\hookrightarrow
L^\infty(0,T;H^{-1})$. All three are bounded by $C(\chi)\sqrt{2E_0}$.

\emph{(b)} Writing $\chi\dive(a\nabla u_t)=\dive\big(\chi a\nabla u_t\big)-a\,\nabla\chi\cdot\nabla u_t$,
the first term is the divergence of a field bounded in $L^2((0,T)\times\omega_2)$ by
$\|a\|_{L^\infty}(E_0/a_*)^{1/2}$ (using \eqref{eq:grad-ut}), hence lies in $L^2(0,T;H^{-1})$ with
the same bound; the second lies in $L^2((0,T)\times\omega_2)\hookrightarrow L^2(0,T;H^{-1})$,
bounded by $\|a\|_{L^\infty}\|\nabla\chi\|_{L^\infty}(E_0/a_*)^{1/2}$.

\emph{(c)} Since $H_0^1(\Om)\hookrightarrow L^6(\Om)$ in dimension three,
$u\in L^\infty(0,T;L^6)$ gives $u^5\in L^\infty(0,T;L^{6/5})$, and $L^{6/5}=(L^6)'\hookrightarrow
H^{-1}$; thus $\chi u^5\in L^\infty(0,T;H^{-1})\hookrightarrow L^2(0,T;H^{-1})$ for finite $T$,
with bound $C\,T^{1/2}E_0^{5/2}$.

Adding the three contributions proves (ii). \emph{Every bound used is an energy-class quantity;
no $H^2$ regularity of $u$, and no bound on $\nabla u_t$ outside $\omega_2$, has been invoked.}

\textbf{(iii).} Set $w:=\chi u_t$. By (i)--(ii), $w\in L^2(0,T;H_0^1(\omega_2))$ and \\
$w'\in L^2(0,T;H^{-1}(\omega_2))$. For the Gelfand triple
$H_0^1(\omega_2)\hookrightarrow L^2(\omega_2)\hookrightarrow H^{-1}(\omega_2)$, the
Lions--Magenes embedding theorem gives $w\in C([0,T];L^2(\omega_2))$ (after modification on a
null set), together with the estimate
$\|w\|_{C([0,T];L^2)}\le C\big(\|w\|_{L^2(H_0^1)}+\|w'\|_{L^2(H^{-1})}\big)$.

\textbf{(iv).} Since $\partial_t(\chi u)=\chi u_t$ and, by (i) and Cauchy--Schwarz,
\[
\int_0^{T}\big\|\chi u_t(t)\big\|_{H^1(\omega_2)}\,dt \;\le\; T^{1/2}\,\|\chi u_t\|_{L^2(0,T;H^1)} \;<\;\infty,
\]
we have $\chi u\in W^{1,1}(0,T;H_0^1(\omega_2))$, hence $\chi u$ is absolutely continuous with
values in $H_0^1(\omega_2)$ on $[0,T]$; in particular it is continuous there.
\end{proof}

\begin{remark}[Why the closed interval, and why this matters]\label{rem:closed}
The bounds \eqref{eq:two-bounds} are consequences of the energy identity and are therefore
uniform on $[0,T^*)$, \emph{independently of whether $T^*$ is finite}: the total dissipation is
bounded by $E_0$ on any subinterval. Consequently the constants in Lemma~\ref{lem:chain} do not
degenerate as $t\uparrow T^*$, and (iii)--(iv) hold on the closed interval $[0,T^*]$ when
$T^*<\infty$. This is precisely what permits evaluation at the putative concentration time
$t_0=T^*$ in Theorem~\ref{thm:nonconc} below --- the step that would fail for any argument whose
constants blow up as $t\to T^{*-}$.
\end{remark}

\section{Non-concentration in the strictly damped region}

For $x_0\in\Om$ and $t_0>0$ write, as in the manuscript,
\[
D_t:=\{x\in\Om:|x-x_0|<t_0-t\},~
E_{loc}(t):=\int_{D_t}\Big(\tfrac12|u_t|^2+\tfrac12|\nabla u|^2+\tfrac16u^6\Big)(t,x)\,dx.
\]
\begin{theorem}[Unconditional non-concentration where $a>0$]\label{thm:nonconc}
Let $T^*<\infty$ and let $x_0\in\Om$ satisfy $a(x_0)>0$. Then
\[
\lim_{t\to (T^*)^-} E_{loc}(t) \;=\; 0.
\]
In particular no energy concentration occurs at any point of the open set $\omega=\{a>0\}$. No
rescaling argument and no cone-rate hypothesis are used.
\end{theorem}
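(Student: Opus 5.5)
The plan is to reduce the statement to the time-continuity furnished by Lemma~\ref{lem:chain} on the \emph{closed} interval $[0,T^*]$, and then to use absolute continuity of the Lebesgue integral on the shrinking caps $D_t$. Fix $x_0\in\Om$ with $a(x_0)>0$. By continuity of $a$ there is $\rho>0$ with $\overline{B(x_0,3\rho)}\subset\omega$. Choose the auxiliary sets of Lemma~\ref{lem:chain} as $\omega_1:=B(x_0,\rho)$ and $\omega_2:=B(x_0,2\rho)$, with $\chi\in C_c^\infty(\omega_2)$ and $\chi\equiv1$ on $\omega_1$. For $t\in[T^*-\rho,T^*)$ we have $D_t=B(x_0,T^*-t)\subset\omega_1$, so on $D_t$ we may replace $u_t$ by $\chi u_t$ and $u$ by $\chi u$:
\[
E_{loc}(t)=\int_{D_t} f(t)\,dx,\qquad f(t):=\tfrac12|\chi u_t(t)|^2+\tfrac12|\nabla(\chi u)(t)|^2+\tfrac16|\chi u(t)|^6 .
\]

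The key step is to show that $t\mapsto f(t)$ is continuous from $[T^*-\rho,T^*]$ into $L^1(\omega_2)$.

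\begin{itemize}
\item By Lemma~\ref{lem:chain}(iii), $\chi u_t\in C([0,T^*];L^2(\omega_2))$. Since $\big\||g|^2-|h|^2\big\|_{L^1}\le\|g-h\|_{L^2}\|g+h\|_{L^2}$, the first term is continuous into $L^1$.
\item By Lemma~\ref{lem:chain}(iv), $\chi u\in C([0,T^*];H_0^1(\omega_2))$. The same argument applied to $\nabla(\chi u)$ handles the second term.
\item For the third term, the Sobolev embedding $H_0^1(\omega_2)\hookrightarrow L^6(\omega_2)$ gives $\chi u\in C([0,T^*];L^6)$. The elementary bound $\big||g|^6-|h|^6\big|\le 6|g-h|(|g|^5+|h|^5)$, combined with H\"older's inequality, shows continuity into $L^1$.
\end{itemize}

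Here $f(T^*)$ is defined through the continuous extensions provided by Lemma~\ref{lem:chain}. This is legitimate exactly because, as stressed in Remark~\ref{rem:closed}, the constants there depend only on $E_0$, $a_*$, $\chi$ and $T^*$, and do not degenerate as $t\uparrow T^*$.

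Conclusion: for $t<T^*$ close to $T^*$,
\[
E_{loc}(t)\le \|f(t)-f(T^*)\|_{L^1(\omega_2)}+\int_{B(x_0,T^*-t)} f(T^*)\,dx .
\]
The first term tends to $0$ by the continuity just established. The second tends to $0$ by absolute continuity of the integral of the fixed function $f(T^*)\in L^1$, since $|B(x_0,T^*-t)|\to0$. Hence $E_{loc}(t)\to0$. No rescaling is used, and Conjecture~\ref{conj:rate} is not invoked.

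The main obstacle I expect is justifying Lemma~\ref{lem:chain}(iii)--(iv) up to the endpoint for the actual Shatah--Struwe solution rather than for the approximations. Specifically, one must check that the distributional identity \eqref{eq:three-terms} holds on all of $(0,T^*)\times\omega_2$, so that Lions--Magenes applies on $(0,T^*)$ with finite norms, and that the resulting continuous representative at $T^*$ is compatible with the limits of the approximations (Remark~\ref{rem:approx}). I would handle this by applying the lemma on $[0,T]$ for each $T<T^*$ and using the $T$-uniform bounds. This gives uniform continuity of $\chi u_t$ in $L^2$ and of $\chi u$ in $H^1$ on $[0,T^*)$, hence extension by completeness to $[0,T^*]$, which is all the argument above needs.
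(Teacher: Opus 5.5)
Your proposal is correct and follows essentially the same route as the paper. Lemma~\ref{lem:chain}(iii)--(iv) gives strong continuity of $u_t$ in $L^2(\omega_1)$ and of $u$ in $H^1(\omega_1)$, hence in $L^6(\omega_1)$, on the closed interval $[0,T^*]$; the paper's two-step argument (strong continuity, then absolute continuity of the integral of the fixed endpoint density over the shrinking caps $D_t$) then yields $E_{loc}(t)\to0$, and your packaging of the three densities into one $L^1$-continuous function $f(t)$ and your explicit extension-by-uniform-continuity justification of the endpoint are only cosmetic refinements of the paper's term-by-term treatment and its appeal to Remark~\ref{rem:closed}.
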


\begin{proof}
Set $t_0:=T^*$. Since $a(x_0)>0$ and $a$ is continuous, choose $\omega_1\Subset\omega_2\Subset\omega$
open with $x_0\in\omega_1$, and $\chi$ as above. By Lemma~\ref{lem:chain}(iii)--(iv) and
Remark~\ref{rem:closed},
\begin{equation}\label{eq:cont}
u\in C\big([0,t_0];H^1(\omega_1)\big),\quad u_t\in C\big([0,t_0];L^2(\omega_1)\big),
\end{equation}
where we used $\chi\equiv1$ on $\omega_1$. Fix $\rho>0$ with $B(x_0,\rho)\subset\omega_1$; for
$t\in(t_0-\rho,t_0)$ we have $D_t\subset B(x_0,\rho)\subset\omega_1$.

We treat the three parts of $E_{loc}$ in turn; each uses the same two-step argument (strong
continuity, then absolute continuity of the integral).

\emph{Gradient part.} By \eqref{eq:cont}, $\nabla u(t)\to\nabla u(t_0)$ in $L^2(\omega_1)$, so
\begin{eqnarray*}
&&\Big|\int_{D_t}|\nabla u(t)|^2dx-\int_{D_t}|\nabla u(t_0)|^2dx\Big|\\
&&\le \big\|\nabla u(t)-\nabla u(t_0)\big\|_{L^2(\omega_1)}\cdot\big(\|\nabla u(t)\|_{L^2}+\|\nabla u(t_0)\|_{L^2}\big)
\xrightarrow[t\to t_0^-]{}0,
\end{eqnarray*}
the second factor being bounded by $2\sqrt{2E_0}$. Moreover $|\nabla u(t_0)|^2\in L^1(\omega_1)$
is a \emph{fixed} integrable function and $|D_t|\to0$ as $t\to t_0^-$, so by absolute continuity
of the Lebesgue integral $\int_{D_t}|\nabla u(t_0)|^2\,dx\to0$. Hence
$\int_{D_t}|\nabla u(t)|^2dx\to0$.

\emph{Kinetic part.} Identical, using $u_t(t)\to u_t(t_0)$ in $L^2(\omega_1)$ from
\eqref{eq:cont} and $|u_t(t_0)|^2\in L^1(\omega_1)$.

\emph{Potential part.} By \eqref{eq:cont} and $H^1(\omega_1)\hookrightarrow L^6(\omega_1)$,
$u(t)\to u(t_0)$ in $L^6(\omega_1)$, so $u(t)^6\to u(t_0)^6$ in $L^1(\omega_1)$; combined with
$|D_t|\to0$ and $u(t_0)^6\in L^1(\omega_1)$ the same two-step argument gives
$\int_{D_t}u(t)^6dx\to0$.

Summing, $E_{loc}(t)\to0$.
\end{proof}

\begin{corollary}\label{cor:trichotomy}
In the vertex trichotomy of the manuscript, the cases
\begin{itemize}
\item[\rm(E)] $x_0\notin\overline\Om$ --- vacuous;
\item[\rm(A)] $x_0\in\overline\Om\setminus\overline\omega$ --- the equation is free inside the cone
by finite propagation speed, and Burq, Lebeau and Planchon \cite{BurqLebeauPlanchon} applies verbatim;
\item[\rm(B$'$)] $x_0\in\omega$, i.e.\ $a(x_0)>0$ --- Theorem~\ref{thm:nonconc};
\end{itemize}
are all settled unconditionally. The only remaining case is
\begin{itemize}
\item[\rm(C)] $x_0\in\partial\omega$, i.e.\ $a(x_0)=0$ but $x_0\in\overline\omega$,
\end{itemize}
a set of zero Lebesgue measure, on which Conjecture~\ref{conj:rate} (or a substitute) is still required.
\end{corollary}

\section{Why the argument works, and what it says physically}

\begin{proposition}[The bubble violates the dissipation bound]\label{prop:bubble}
Let $U$ be the Aubin--Talenti profile and consider a concentrating family
\[
u^{(\lambda)}(t,x)=\lambda(t)^{-1/2}\,U\!\left(\frac{x-x_0}{\lambda(t)}\right),\qquad
\lambda(t)\simeq (t_0-t)^\gamma,\ \gamma>0,
\]
localized near $x_0\in\omega$. Then
\[
\int^{t_0}\!\!\!\int_{B(x_0,\rho)} a\,|\nabla u^{(\lambda)}_t|^2\,dx\,dt=+\infty
\qquad\text{for every }\gamma>0,
\]
so no such family is compatible with \eqref{eq:energy}.
\end{proposition}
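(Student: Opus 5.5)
The plan is a direct computation. We differentiate the concentrating family in $t$, rescale the spatial integral to the profile variable, and reduce the dissipation to $\int^{t_0}(\lambda'/\lambda)^2\,dt$ times a fixed positive constant. For a power-law $\lambda$ this time integral diverges.

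\textbf{Step 1: the velocity.} Set $y=(x-x_0)/\lambda(t)$. We interpret $\lambda(t)\simeq(t_0-t)^\gamma$ as $\lambda(t)=c(t_0-t)^\gamma$. More generally it suffices to have $\lambda$ differentiable with $|\lambda'/\lambda|\ge c\gamma/(t_0-t)$ near $t_0$. The chain rule gives
\[
u^{(\lambda)}_t(t,x)=-\lambda'(t)\,\lambda(t)^{-3/2}\,W(y),\qquad W(y):=\tfrac12U(y)+y\cdot\nabla U(y),
\]
where $W$ is the generator of the $\dot H^1$-invariant scaling. Taking a spatial gradient gives $\nabla_x u^{(\lambda)}_t=-\lambda'\lambda^{-5/2}(\nabla W)(y)$.

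\textbf{Step 2: spatial integral.} Choose $\rho$ so small that $B(x_0,\rho)\subset\omega_2$, so $a\ge a_*>0$ there by \eqref{eq:astar}. Changing variables with $dx=\lambda^3dy$ gives
\[
\int_{B(x_0,\rho)}a\,|\nabla u^{(\lambda)}_t|^2\,dx\;\ge\;a_*\Big(\frac{\lambda'}{\lambda}\Big)^2\int_{|y|<\rho/\lambda(t)}|\nabla W(y)|^2\,dy .
\]
The weights combine as $\lambda'^2\lambda^{-5}\lambda^{3}=(\lambda'/\lambda)^2$. This is exactly the critical scaling: the only factor not scale-invariant comes from time differentiation.

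\textbf{Step 3: the profile constant is positive and finite.} The Aubin--Talenti profile is $U(y)=(1+|y|^2/3)^{-1/2}$, which is smooth with $U\sim\sqrt3|y|^{-1}$ and $\nabla U=O(|y|^{-2})$, $\nabla^2U=O(|y|^{-3})$. Hence $W=O(|y|^{-1})$ and $\nabla W=O(|y|^{-2})\in L^2(\mathbb R^3)$. Moreover $\nabla W\not\equiv0$. Otherwise $W$ would be constant, hence $0$ by decay. Then $U$ would be homogeneous of degree $-1/2$, contradicting smoothness and positivity at $y=0$, where $W(0)=\tfrac12U(0)\ne0$. Since $\rho/\lambda(t)\to\infty$, the integral tends to $c_U:=\|\nabla W\|_{L^2}^2>0$. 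It is therefore $\ge c_U/2$ for $t$ close to $t_0$.

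\textbf{Step 4: time integral.} With $\lambda'/\lambda=-\gamma/(t_0-t)$, the dissipation is bounded below by
\[
\tfrac12a_*c_U\gamma^2\int_{t_0-\delta}^{t_0}\frac{dt}{(t_0-t)^2}=+\infty
\]
for every $\gamma>0$. By contrast, \eqref{eq:two-bounds} bounds the left side by $E_0$. Hence no such family can satisfy \eqref{eq:energy}. In fact even the weaker rate $|\lambda'/\lambda|\gtrsim(t_0-t)^{-1/2}$ would already give divergence.

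\textbf{Main obstacle.} The analytic content is essentially Step 3, namely that the scaling generator $W$ has nonzero, finite $\dot H^1$ norm. This relies on the explicit decay of $U$. The remaining delicate point is interpretive. The family is only localized near $x_0$, so one should either multiply by a cutoff equal to $1$ on $B(x_0,\rho)$, which does not affect the integral over the ball, or simply restrict the integral to that ball as done above. One must also fix the meaning of $\simeq$ so that $\lambda'/\lambda$ is comparable to $(t_0-t)^{-1}$.
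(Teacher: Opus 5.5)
Your proposal is correct and follows the paper's own computation: you differentiate the ansatz to get $\nabla_x u^{(\lambda)}_t=-\lambda'\lambda^{-5/2}\nabla W(y)$, rescale so that the local dissipation reduces to $a_*(\lambda'/\lambda)^2$ times a fixed $L^2$ norm of a profile determined by $U$, and conclude from $\int^{t_0}(t_0-t)^{-2}\,dt=+\infty$. You are also slightly more careful than the paper, which leaves implicit three points you make explicit: the truncation $|y|<\rho/\lambda(t)$, the fact that your scaling generator satisfies $0\neq\nabla W\in L^2(\mathbb R^3)$, and the need to read $\lambda\simeq(t_0-t)^\gamma$ at the level of $\lambda'/\lambda$.
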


\begin{proof}[Computation]
Differentiating, $u^{(\lambda)}_t=-\tfrac{\lambda'}{\lambda}\big(\tfrac12u^{(\lambda)}
+\lambda^{-1/2}\,\xi\!\cdot\!\nabla U(\xi)\big)$ with $\xi=(x-x_0)/\lambda$, whence
$|\nabla_xu_t^{(\lambda)}|\simeq |\lambda'|\,\lambda^{-5/2}\,|G(\xi)|$ for a fixed profile
$G\in L^2(\mathbb R^3)$ determined by $U$. Changing variables $x=x_0+\lambda\xi$
($dx=\lambda^3d\xi$) and using $a\ge a_*>0$ near $x_0$,
\[
\int_{B(x_0,\rho)}a|\nabla u^{(\lambda)}_t|^2dx \;\gtrsim\; a_*\,\frac{(\lambda')^2}{\lambda^{5}}\,\lambda^{3}\|G\|_{L^2}^2
\;=\;a_*\|G\|_{L^2}^2\,\Big(\frac{\lambda'}{\lambda}\Big)^{2}.
\]
With $\lambda\simeq(t_0-t)^\gamma$ one has $\lambda'/\lambda\simeq -\gamma/(t_0-t)$, so the
time integral behaves like $\gamma^2\!\int^{t_0}(t_0-t)^{-2}dt=+\infty$.
\end{proof}

\begin{remark}[The physical intuition, made rigorous]
Proposition~\ref{prop:bubble} is the precise form of the expectation that Kelvin--Voigt damping
``pulverizes'' a forming bubble wherever it acts: a self-similar profile collapsing at
\emph{any} algebraic rate forces $\lambda'/\lambda\notin L^2_t$, hence infinite dissipation,
contradicting \eqref{eq:energy}. Note that the argument is insensitive to $\gamma$: it is the
scale-invariance of $\lambda'/\lambda$, not a competition of exponents, that produces the
divergence.
\end{remark}

\begin{remark}[Why this route avoids the supercriticality obstruction]\label{rem:bypass}
The difficulty analyzed in \S6 of the manuscript arises \emph{only} after passing to the critical
rescaling, under which $\dive(a\nabla u_t)$ acquires the diverging coefficient $\lambda^{-1}$ and
ceases to be a perturbation of the wave operator. Theorem~\ref{thm:nonconc} never enters that
scale: it uses the equation solely to place $\chi u_{tt}$ in $L^2_tH^{-1}_{loc}$
(Lemma~\ref{lem:chain}(ii)), an estimate at the original scale in which the damping term is
\emph{favorable} --- it is exactly the term supplying $\nabla u_t\in L^2_{t,x}(\omega_2)$ that
makes step (i) possible. In other words, the same higher-order structure that makes Kelvin--Voigt
supercritical under rescaling makes it \emph{uniformly parabolic} in the velocity variable at
fixed scale; the present argument exploits the second feature and never confronts the first.
\end{remark}

\begin{remark}[Contrast with weaker dampings]
For a subcritical damping $a(x)u_t$ the energy identity yields only $\sqrt a\,u_t\in L^2_{t,x}$,
with \emph{no} control of $\nabla u_t$; step (i) of Lemma~\ref{lem:chain} then fails, and
Theorem~\ref{thm:nonconc} is unavailable. Non-concentration for such models is instead obtained
by the classical Morawetz route, which is legitimate there precisely because the damping remains
a subcritical perturbation under rescaling. Thus the two families of dampings require genuinely
different mechanisms, and the mechanism available for Kelvin--Voigt is the one that fails for
$a(x)u_t$, and conversely.
\end{remark}

\section{The remaining case, and a route for it}

Case (C) of Corollary~\ref{cor:trichotomy} --- concentration at $x_0\in\partial\omega$, where
$a(x_0)=0$ --- is not covered, since \eqref{eq:astar} fails: no neighbourhood of $x_0$ admits
$a\ge a_*>0$, and \eqref{eq:grad-ut} degenerates.

Recall from Lemma C.2 (Glaeser's inequality) that a nonnegative $a\in C^{1,1}(\Omega)$
satisfies $|\nabla a(x)|^2 \le 2\Lambda\, a(x)$ with $\Lambda := \|D^2a\|_{L^\infty(\Omega)}$.
We now show that this same structural fact, already present in Appendix C, forces every
zero of $a$ to be at least quadratic, and that this quadratic degeneracy is exactly what
is needed to close case (C) of Corollary~10.2, without any rate hypothesis.

\begin{lemma}[Quadratic vanishing at zeros of $a$]\label{lem:quadratic}
Let $a\in C^{1,1}(\Omega)$, $a\ge 0$, and $x_0\in\Omega$ with $a(x_0)=0$. Then
\[
0 \le a(x_0+z) \le \frac{\Lambda}{2}\,|z|^2, \qquad \text{for all } z \text{ with } x_0+z\in\Omega.
\]
\end{lemma}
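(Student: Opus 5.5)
The plan is a second-order Taylor expansion with integral remainder, made possible because $x_0$ is an interior minimum of $a$. Since $a\ge 0$ and $a(x_0)=0$, the point $x_0$ is a global minimum of $a$ on $\Omega$. As $x_0\in\Omega$ is interior and $a\in C^{1,1}\subset C^1$, this gives $\nabla a(x_0)=0$. (Alternatively, Glaeser's inequality, Lemma C.2, gives $|\nabla a(x_0)|^2\le 2\Lambda a(x_0)=0$ directly.)

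Next, fix $z$ with $x_0+z\in\Omega$. Assume first that the segment $[x_0,x_0+z]$ lies in $\Omega$. Set $\phi(s)=a(x_0+sz)$ for $s\in[0,1]$. Then $\phi'(s)=\nabla a(x_0+sz)\cdot z$ is Lipschitz with constant at most $\Lambda|z|^2$, because $\nabla a$ is Lipschitz with constant $\|D^2a\|_{L^\infty}=\Lambda$. Since $\phi(0)=0$ and $\phi'(0)=0$, we get
\[
\phi(1)=\int_0^1\big(\phi'(s)-\phi'(0)\big)\,ds\le \int_0^1 \Lambda|z|^2 s\,ds=\frac{\Lambda}{2}|z|^2 .
\]
This is the desired upper bound. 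The lower bound $a\ge 0$ is just the hypothesis.

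The only real obstacle is the segment condition: if $\Omega$ is not convex, the segment from $x_0$ to $x_0+z$ may leave $\Omega$. There are three ways to handle this, in order of preference:

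\begin{enumerate}
\item Interpret $a\in C^{1,1}(\overline\Omega)$, or simply $a\in C^\infty(\overline\Omega)$ as in the standing hypotheses, extended to a nonnegative $C^{1,1}$ function on $\mathbb R^3$ with comparable $\Lambda$. The extension must be chosen to stay nonnegative, for instance by composing with a smooth regularized positive part. The computation above then holds on all of $\mathbb R^3$.
\item Otherwise, restrict the statement to $z$ with $[x_0,x_0+z]\subset\Omega$, for example $|z|<d(x_0,\partial\Omega)$.
\item Alternatively, connect $x_0$ to $x_0+z$ by a polygonal path in $\Omega$. This only gives the bound with a constant depending on the geometry.
\end{enumerate}

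For the application to case (C) only small $|z|$ matters, since the argument concerns a neighbourhood of $x_0$. The local version in option 2 therefore suffices, and I would state the lemma with that understanding.
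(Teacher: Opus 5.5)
Your argument is correct and is essentially the paper's. Both proofs first show $\nabla a(x_0)=0$ and then bound $a(x_0+z)$ by a second-order Taylor expansion along $[x_0,x_0+z]$; the paper gets the vanishing gradient from Glaeser's inequality (Lemma C.2), while you get it more directly from the first-order condition at the interior minimum $x_0$. Your integral-remainder version, using the Lipschitz constant of $\nabla a$, is the appropriate form for $C^{1,1}$ data, since the paper's Lagrange remainder $\tfrac12\langle D^2a(\xi)z,z\rangle$ tacitly assumes pointwise second derivatives; your caveat that the segment must lie in $\Omega$ (or that $a$ must be extended nonnegatively) is a genuine restriction the paper's statement passes over, though it is harmless for the local use in case (C).
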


\begin{proof}
Since $a(x_0)=0$, Glaeser's inequality (Lemma C.2) forces $\nabla a(x_0)=0$.
Taylor's theorem with Lagrange remainder, exactly as in the proof of Lemma C.2, then gives
$a(x_0+z) = a(x_0) + \nabla a(x_0)\cdot z + \tfrac12\langle D^2a(\xi)z,z\rangle
\le \tfrac{\Lambda}{2}|z|^2$ for some $\xi$ on the segment $[x_0,x_0+z]$.
\end{proof}

Consider now case (C) of the trichotomy: $x_0\in\partial\omega$, $a(x_0)=0$, with
$\overline\omega\Subset\Omega$ as in the standing hypothesis of \S6.3. In particular
$x_0$ is interior to $\Omega$, and the rescaling of \S6.6 stays entirely inside $\Omega$
for $r$ small, exactly as in case (B).

\begin{proposition}[The rescaled damping is subcritical at zeros of $a$]\label{prop:subcritical}
Let $x_0\in\partial\omega$, $\overline\omega\Subset\Omega$, and set
$a_r(y) := a(x_0+ry)$, $\lambda_r(y) := a_r(y)/r^2$. By Lemma~\ref{lem:quadratic},
\[
0 \le \lambda_r(y) \le \frac{\Lambda}{2}|y|^2, \qquad \text{uniformly in } r\in(0,r_0].
\]
Consequently the rescaled equation (54) may be rewritten, exactly, as
\[
\partial_\sigma^2 u^{(r)} - \Delta_y u^{(r)} + (u^{(r)})^5
= r\,\operatorname{div}_y\!\big(\lambda_r(y)\,\nabla_y \partial_\sigma u^{(r)}\big), \tag{54$'$}
\]
i.e. the diverging factor $1/r$ of (54) is replaced by a \emph{vanishing} factor $r$: at a
zero of $a$, the Kelvin--Voigt term becomes \emph{subcritical}, not supercritical, under the
energy-critical rescaling.

Moreover, for every $\varphi\in C_c^\infty(K)$ and every sequence $r_k\to 0^+$,
\[
\big\langle r_k\,\operatorname{div}_y(\lambda_{r_k}\nabla_y\partial_\sigma u_k),\ \varphi\big\rangle
\;\longrightarrow\; 0, \qquad k\to\infty,
\]
with no rate hypothesis of any kind.
\end{proposition}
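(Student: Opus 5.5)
The proof has three parts, in this order: a pointwise bound on $\lambda_r$, an exact algebraic rewriting of (54) as (54$'$), and a duality estimate for the distributional limit.

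\textbf{Pointwise bound on $\lambda_r$.} Since $\overline\omega\Subset\Omega$ and $x_0\in\partial\omega$, the point $x_0$ is interior to $\Omega$. Fix $r_0>0$ such that $x_0+ry\in\Omega$ whenever $|y|<1$ and $r\le r_0$; these $y$ cover the cone $K$. Lemma~\ref{lem:quadratic} with $z=ry$ gives
\[
0\le a(x_0+ry)\le \tfrac{\Lambda}{2}r^2|y|^2 .
\]
Dividing by $r^2$ yields $0\le\lambda_r(y)\le\frac{\Lambda}{2}|y|^2\le\frac\Lambda2$ on $K$, uniformly in $r\in(0,r_0]$.

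\textbf{Rewriting (54) as (54$'$).} This is pure algebra: substitute $a_r=r^2\lambda_r$ into $\frac1r\operatorname{div}_y(a_r\nabla_y\partial_\sigma u^{(r)})$ to obtain $r\operatorname{div}_y(\lambda_r\nabla_y\partial_\sigma u^{(r)})$.

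\textbf{Vanishing of the forcing term.} Fix $\varphi\in C_c^\infty(K)$ and move the divergence onto the test function:
\[
\big\langle r_k\operatorname{div}_y(\lambda_{r_k}\nabla_y\partial_\sigma u_k),\varphi\big\rangle = -r_k\int_K \lambda_{r_k}\,\nabla_y\partial_\sigma u_k\cdot\nabla_y\varphi .
\]
We bound the right-hand side by Cauchy--Schwarz with weight $\lambda_{r_k}$:
\[
\Big|r_k\int_K \lambda_{r_k}\nabla_y\partial_\sigma u_k\cdot\nabla_y\varphi\Big|\le r_k\Big(\int_K\lambda_{r_k}|\nabla_y\partial_\sigma u_k|^2\Big)^{1/2}\Big(\int_K\lambda_{r_k}|\nabla\varphi|^2\Big)^{1/2}.
\]
The second factor is at most $(\Lambda/2)^{1/2}\|\nabla\varphi\|_{L^2}$ by the pointwise bound above.

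For the first factor, write $\lambda_{r_k}=a_{r_k}/r_k^2$ and undo the rescaling. The change of variables $t=r\sigma$, $x=x_0+ry$ with $u^{(r)}=r^{1/2}u$ gives $\nabla_y\partial_\sigma u^{(r)}=r^{5/2}(\nabla u_t)(r\sigma,x_0+ry)$ and $dy\,d\sigma=r^{-4}dx\,dt$. Hence
\[
\int_K a_{r}|\nabla_y\partial_\sigma u^{(r)}|^2\,dy\,d\sigma = r\int_{K^0_{-r}}a|\nabla u_t|^2\,dx\,dt = r\,D(r)^2 ,
\]
which is the identity of Remark~\ref{rem:rate}. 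The first factor is therefore $(r_kD(r_k)^2/r_k^2)^{1/2}=r_k^{-1/2}D(r_k)$, and the whole pairing is bounded by
\[
C_\varphi\, r_k\cdot r_k^{-1/2}D(r_k)=C_\varphi\, r_k^{1/2}D(r_k)\le C_\varphi\, r_k^{1/2}E_0^{1/2},
\]
using the global dissipation bound \eqref{eq:two-bounds}. This tends to $0$ with no rate hypothesis. Corollary~\ref{cor:monotone} even gives $D(r_k)\to0$, but only boundedness of $D$ is needed.

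The main obstacle is the bookkeeping in the rescaling identity: tracking the powers of $r$ from $r^{1/2}$, the two derivatives and the Jacobian, and checking that the weighted dissipation on $K$ really is $r_kD(r_k)^2$. A lost factor of $r$ would make the final power $r_k^{-1/2}$ rather than $r_k^{1/2}$. I would cross-check the exponent against Remark~\ref{rem:rate}, whose statement $\|\sqrt{a_{r_k}}\nabla_y\partial_\sigma u_k\|^2=r_kD(r_k)$ (up to the paper's convention for $D$) I would take as given, and then verify the gain directly: (54$'$) carries the prefactor $r$ against $1/r$ in (54), an improvement of exactly $r^2$ that the weight $\lambda_r=a_r/r^2$ absorbs.
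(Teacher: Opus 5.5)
Your proof is correct and follows the paper's argument: the pointwise bound from Lemma~\ref{lem:quadratic}, the rewriting via $a_r=r^2\lambda_r$, integration by parts, and Cauchy--Schwarz with weight $\lambda_{r_k}$ combined with the uniform bound on $\lambda_{r_k}$. The only difference is the last step. The paper rewrites the first factor as $\bigl(\frac1{r_k}\int a_{r_k}|\nabla_y\partial_\sigma u_k|^2\bigr)^{1/2}$ and makes it vanish using the rescaled energy-drop bound $\frac1{r_k}\int_{\sigma_1}^{\sigma_2}\!\int a_{r_k}|\nabla_y\partial_\sigma u_k|^2\le E^k_{loc}(\sigma_1)-E^k_{loc}(\sigma_2)\to0$. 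Your explicit scaling identity instead gives the true value $r_k^{1/2}D(r_k)$; the paper's display drops this factor $r_k^{1/2}$, which is harmless since $r_k\le1$. So your version needs only the global dissipation bound.
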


\begin{proof}
Only (54$'$) requires proof beyond the stated bound on $\lambda_r$, which is immediate from
Lemma~\ref{lem:quadratic}: $\lambda_r(y) = a(x_0+ry)/r^2 \le \frac{\Lambda}{2}|ry|^2/r^2
= \frac{\Lambda}{2}|y|^2$. Equation (54$'$) follows from (54) by writing
$a_r = r^2\lambda_r$ and factoring one power of $r$ out of $\operatorname{div}_y$.

For the vanishing statement, fix $\varphi\in C_c^\infty(K)$, with $\operatorname{supp}\varphi
\subset \{|y|\le R_\varphi\}\times[\sigma_1,\sigma_2]$ for some $[\sigma_1,\sigma_2]\Subset(-1,0)$.
Integrating by parts and applying Cauchy--Schwarz with weight $\lambda_{r_k}$,
\begin{eqnarray*}
&&\big|\langle r_k\operatorname{div}_y(\lambda_{r_k}\nabla_y\partial_\sigma u_k),\varphi\rangle\big|
= r_k\left|\int_K \lambda_{r_k}\,\nabla_y\partial_\sigma u_k\cdot\nabla_y\varphi\right|\\
&&\le r_k \left(\int_K \lambda_{r_k}|\nabla_y\partial_\sigma u_k|^2\right)^{1/2}
\left(\int_K \lambda_{r_k}|\nabla_y\varphi|^2\right)^{1/2}.
\end{eqnarray*}
The second factor is bounded, uniformly in $k$, by $\big(\tfrac{\Lambda}{2}R_\varphi^2
\|\nabla\varphi\|_{L^2}^2\big)^{1/2} =: C(\varphi,\Lambda)$, using the uniform bound on
$\lambda_{r_k}$. For the first factor, $\lambda_{r_k} = a_{r_k}/r_k^2$, so
\begin{eqnarray*}
r_k\left(\int_K \lambda_{r_k}|\nabla_y\partial_\sigma u_k|^2\right)^{1/2}
&=& r_k \cdot \frac{1}{r_k}\left(\int_{\sigma_1}^{\sigma_2}\int_{|y|<-\sigma}
a_{r_k}|\nabla_y\partial_\sigma u_k|^2\,dy\,d\sigma\right)^{1/2}\\
&=& \left(\frac{1}{r_k}\int a_{r_k}|\nabla_y\partial_\sigma u_k|^2\right)^{1/2}.
\end{eqnarray*}
By $(\ast)$ above — established in \S6.8 of the manuscript for \emph{any} interior
$x_0\in\Omega$ with $\overline\omega\Subset\Omega$, regardless of whether $a(x_0)$ is
positive or zero — this quantity tends to $0$ as $k\to\infty$. Hence the whole expression
tends to $0$.
\end{proof}

\begin{theorem}[Reduction of case (C) to the undamped critical wave equation]
\label{thm:caseC-reduction}
Let $x_0\in\partial\omega$, $\overline\omega\Subset\Omega$, and suppose, for contradiction,
that $\ell := \lim_{S\to0^-} E_{loc}(S) > 0$ (notation of \S6.3--6.7). Let
$u_k := u^{(r_k)}$ for any sequence $r_k\to0^+$, so that, up to a subsequence,
$u_k\rightharpoonup u_\infty$ weak-$*$ in $L^\infty_{loc}(K;H^1_{loc})$ and
$\partial_\sigma u_k \rightharpoonup \partial_\sigma u_\infty$ weak-$*$ in
$L^\infty_{loc}(K;L^2_{loc})$, exactly as in \S6.7.

Then, by Proposition~\ref{prop:subcritical}, the right-hand side of (54$'$) converges to $0$
in $\mathcal D'(K)$ along $u_k$, and passing to the limit in (54$'$) — using the same
Vitali-type argument for $u_k^5\rightharpoonup u_\infty^5$ already invoked repeatedly in this
manuscript (e.g. Lemmas 6.2--6.3, 12.2--12.3) — we obtain that $u_\infty$ solves,
\emph{exactly and unconditionally}, the undamped energy-critical wave equation
\[
\partial_\sigma^2 u_\infty - \Delta_y u_\infty + u_\infty^5 = 0 \qquad \text{in } \mathcal D'(K),
\]
with $E_{loc}(u_\infty)(\sigma) \le \liminf_k E_{loc}^k(\sigma) = \ell$ for a.e. $\sigma$, by
weak lower semicontinuity of the energy.
\end{theorem}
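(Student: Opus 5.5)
The proof passes to the limit term by term in the rescaled equation (54$'$) on a fixed compact subcone, and then applies weak lower semicontinuity to the local energy.

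\textbf{Step 1: extraction.} I start from the extracted subsequence of \S6.7. For every compact subcone $K'\Subset K$ we have:
\begin{itemize}
\item $u_k\rightharpoonup u_\infty$ weak-$*$ in $L^\infty H^1(K')$;
\item $\partial_\sigma u_k\rightharpoonup \partial_\sigma u_\infty$ weak-$*$ in $L^\infty L^2(K')$;
\item by Aubin--Lions, $u_k\to u_\infty$ strongly in $L^2(K')$ and a.e.
\end{itemize}
These bounds are uniform in $k$ because $E^k_{loc}(\sigma)=E_{loc}(r_k\sigma)\le E_0$.

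\textbf{Step 2: the linear terms.} I fix $\varphi\in C_c^\infty(K)$ and pair (54$'$) with it. The terms $\partial_\sigma^2u_k$ and $\Delta_y u_k$ are handled by moving all derivatives onto $\varphi$. Then weak convergence gives $\langle u_k,\partial_\sigma^2\varphi-\Delta_y\varphi\rangle\to\langle u_\infty,\partial_\sigma^2\varphi-\Delta_y\varphi\rangle$.

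\textbf{Step 3: the damping term.} The right-hand side $\langle r_k\operatorname{div}_y(\lambda_{r_k}\nabla_y\partial_\sigma u_k),\varphi\rangle$ tends to $0$ directly by Proposition~\ref{prop:subcritical}. So no rate hypothesis enters; this is exactly where the quadratic vanishing of Lemma~\ref{lem:quadratic} replaces Conjecture~\ref{conj:rate}.

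\textbf{Step 4: the nonlinearity.} This is the step needing a genuine argument. We know $u_k^5\to u_\infty^5$ a.e., and $u_k^5$ is bounded in $L^\infty L^{6/5}(K')$ by the Sobolev bound on $\supp\varphi$. Since $6/5>1$, the family is uniformly integrable on $\supp\varphi$. Vitali's theorem then gives $u_k^5\to u_\infty^5$ strongly in $L^1(\supp\varphi)$, which is the same argument as Step 3 of \S2 (Lions' lemma). Therefore $\langle u_k^5,\varphi\rangle\to\langle u_\infty^5,\varphi\rangle$, and $u_\infty$ solves $\Box u_\infty+u_\infty^5=0$ in $\mathcal D'(K)$.

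\textbf{Step 5: the energy inequality.} I fix a.e.\ $\sigma$ and use the slice bounds: $\partial_\sigma u_k(\sigma)$ and $\nabla_y u_k(\sigma)$ are bounded in $L^2(\{|y|<-\sigma\})$. The main obstacle is that weak-$*$ convergence in $L^\infty_\sigma$ does not by itself give weak convergence of the individual time slices.

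To handle this, I would first get a time-averaged statement. Weak lower semicontinuity of convex integrals gives, for every interval $I\Subset(-1,0)$,
\[
\int_I E_{loc}(u_\infty)(\sigma)\,d\sigma\le\liminf_k\int_I E^k_{loc}(\sigma)\,d\sigma .
\]
On the right-hand side I use $E^k_{loc}(\sigma)\to\ell$ pointwise together with the bound $E_0$ and dominated convergence, which gives $|I|\,\ell$. The sextic term is handled by Fatou, using the a.e.\ convergence.

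Dividing by $|I|$ and letting $I$ shrink to a Lebesgue point of $\sigma\mapsto E_{loc}(u_\infty)(\sigma)$ yields $E_{loc}(u_\infty)(\sigma)\le\ell$ for a.e.\ $\sigma$. In particular the bound also holds with $\liminf_k E^k_{loc}(\sigma)=\ell$ on the right, as stated.
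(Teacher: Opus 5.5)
Your proposal is correct and follows the paper's own route: Proposition~\ref{prop:subcritical} disposes of the damping term, a Vitali/uniform-integrability argument identifies the limit of $u_k^5$, and weak lower semicontinuity gives the energy bound. Your Step~5 is more careful than the paper, which simply invokes slice-wise lower semicontinuity. Averaging over intervals $I$, using $E^k_{loc}(\sigma)\to\ell$ with dominated convergence, and then passing to Lebesgue points properly handles the fact that weak-$*$ convergence in $L^\infty_\sigma$ does not by itself control individual time slices.
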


\begin{remark}[Comparison with case (B), and precisely what remains open]
\label{rem:caseC-honest}
Theorem~\ref{thm:caseC-reduction} is unconditional: unlike case (B), it requires
\emph{no} analogue of Conjecture~6.12. The mechanism is different from, and simpler than,
Lemma~6.5: there we could only show $\nabla_y\partial_\sigma u_\infty\equiv0$ (staticity),
leaving open a first-order corrector invisible at that order (Proposition~6.7) — precisely
the gap that forced Conjecture~6.12. Here we control the \emph{entire} forcing term directly,
in weighted $L^2$, with no splitting into leading/corrector order, so no analogous defect can
hide: Proposition~\ref{prop:subcritical} is not a staticity statement but a genuine
vanishing-as-a-distribution statement, and it is strictly stronger.

What Theorem~\ref{thm:caseC-reduction} reduces case (C) to is exactly the residual issue
already flagged, with full honesty, in Remark~6.9: weak lower semicontinuity gives
$E_{loc}(u_\infty)\le \ell$, not equality, so it remains a priori possible that $u_\infty
\equiv 0$ (by the Liouville-type theorem of \S6.9, once the classical non-concentration
theory for the \emph{undamped} equation is invoked on $u_\infty$) while a positive amount of
energy $\ell>0$ escapes the weak limit — invisible oscillation or concentration not captured
by (weak) $H^1_{loc}\times L^2_{loc}$ convergence alone. Closing this requires either a strong
local-energy-convergence upgrade, or a direct application of the interior Morawetz/flux
identity of \S6.4 to $u_\infty$ itself (in the spirit of BLP's own argument, which is carried
out on the original solution rather than on a weak limit, and is for that reason not subject
to this particular loss). We isolate this single remaining point as:

\begin{quote}
\textbf{Hypothesis (H*).} Weak-$*$ limits $u_\infty$ obtained by critical rescaling at an
interior point $x_0\in\overline\Omega$ (whether $a(x_0)>0$ or $a(x_0)=0$) satisfy
$E_{loc}(u_\infty)(\sigma) \to \ell$ as $\sigma\to0^-$ (no loss of energy under the weak limit).
\end{quote}

We record two things honestly. First, (H*) is a strictly weaker and more standard demand than
Conjecture~6.12: it is not a rate statement about the damping at all, but the ordinary
concentration-compactness question of whether weak convergence can be upgraded to energy
convergence for a critical-growth nonlinearity — the same issue underlying the passage from
weak to strong convergence in Aubin--Lions-type compactness arguments used elsewhere in this
manuscript, rather than a new phenomenon tied to Kelvin--Voigt damping. Second, we have not
proved (H*); we record it as the single remaining conditional ingredient of the paper.
\end{remark}

\begin{corollary}[Updated trichotomy]
Cases (E), (A), and (B) of Corollary~10.2 are unconditional (BLP verbatim, and Theorem~10.1,
respectively). Case (C) is unconditional \emph{modulo Hypothesis (H*)} above; Conjecture~6.12
is not needed anywhere in the trichotomy.
\end{corollary}

\begin{remark}[Refined status of Hypothesis (H*), corrected]
Equation~\eqref{eq:main} is defocusing (positive-definite energy, no competing
ground-state profile), so the delicate focusing threshold theory of
Kenig--Merle~\cite{KenigMerle} — energy trapping, critical elements, and the
rigidity argument via self-similar variables — is not needed here. The relevant
tool is the standard long-time perturbation (stability) theorem for the
\emph{defocusing} energy-critical wave equation, of the type routinely used in the
unconditional global theory following Bahouri--Gérard~\cite{BahouriGerard} and
Shatah--Struwe~\cite{ShatahStruwe} (see also~\cite{DodsonLooi2026} for a recent
instance with the identical nonlinearity $u^5$ in $\mathbb R^3$). Such a theorem
requires the forcing term in the dual-Strichartz norm
$\|D_x^{1/2}F_k\|_{L^{4/3}_{\sigma,y}}$, strictly finer than the $L^1_\sigma H^{-1}_y$
bound established unconditionally in Proposition~\ref{prop:subcritical}. Upgrading
this estimate — plausible given the divergence structure of $F_k$ and the kernel
methods of Appendix~C, but not yet carried out — would close Hypothesis (H*)
unconditionally.
\end{remark}

\subsection{Step 6: Passing to the Limit ($k \to \infty$)}

Recall that the truncated nonlinearity $f_k:\mathbb R\to\mathbb R$ is defined by
\eqref{eq:fk_def}. In particular, $f_k$ is odd, $C^1$, and satisfies the growth bounds
\begin{equation}\label{eq:fk-growth}
|f_k(s)| \le C\,(|s|^5 + k^4|s|),\qquad
|f_k'(s)| \le C\,k^4,\qquad s\in\mathbb R,
\end{equation}
with a constant $C$ independent of $k$.

Let $u_k$ be the weak solutions to the truncated problems on $(0,T)$
constructed in the previous step. From the uniform energy estimate we have
\begin{equation}\label{eq:uk-energy-bounds}
u_k \ \text{bounded in } L^\infty(0,T;H_0^1(\Omega)),\qquad
(u_k)_t \ \text{bounded in } L^\infty(0,T;L^2(\Omega)).
\end{equation}

With the uniform bound established in the previous step, we can now pass to the limit in the truncated problem. The \textit{bootstrap} argument ensures the existence of a time $T > 0$ and a constant $C > 0$, both independent of $k$, such that the sequence of solutions $\{u_k\}$ to the truncated problems \eqref{eq:pde} satisfies:
\begin{equation}\label{eq:uniform_bounds}
\|u_k\|_{L^\infty(0,T; H_0^1)} + \|(u_k)_t\|_{L^\infty(0,T; L^2)} + \|u_k\|_{L^5(0,T; L^{10})} \le C.
\end{equation}

In particular, by Sobolev embedding in dimension $3$,
\begin{equation}\label{eq:uk-L6}
\sup_{k}\,\|u_k\|_{L^\infty(0,T;L^6(\Omega))}\le C(E_0).
\end{equation}

By the Banach-Alaoglu Theorem, we can extract a subsequence, still denoted by $\{u_k\}$, and find a limit function $u$ such that, as $k \to \infty$:
\begin{align}
u_k &\rightharpoonup u \quad \text{weak-*} \text{ in } L^\infty(0,T; H_0^1(\Omega)), \\
(u_k)_t &\rightharpoonup u_t \quad \text{weak-*} \text{ in } L^\infty(0,T; L^2(\Omega)).
\end{align}

Since $\{u_k\}$ is bounded in $L^\infty(0,T; H_0^1(\Omega))$ and its time derivative $\{(u_k)_t\}$ is bounded in $L^\infty(0,T; L^2(\Omega))$, the Aubin-Lions Compactness Lemma implies that $\{u_k\}$ converges strongly to $u$ in $L^2(0,T; L^2(\Omega))$. Up to extracting a further subsequence, we obtain point-wise convergence almost everywhere:
\begin{equation}
u_k(t,x) \to u(t,x) \quad \text{a.e. in } (0,T) \times \Omega.
\end{equation}

This point-wise convergence, combined with the definition of the truncation $f_k$, yields $f_k(u_k) \to u^5$ almost everywhere. To pass to the limit in the weak formulation, we must control the nonlinear term. From the critical Strichartz bound in \eqref{eq:uniform_bounds}, we know that $u_k$ is uniformly bounded in $L^5(0,T; L^{10}(\Omega))$. Since $|f_k(s)| \le |s|^5$, it follows that the sequence $\{f_k(u_k)\}$ is uniformly bounded in $L^1(0,T; L^2(\Omega))$.

Applying standard weak convergence results (such as Lions' Lemma), the almost everywhere convergence and the uniform integrability bound guarantee that:
\begin{equation}
f_k(u_k) \rightharpoonup u^5 \quad \text{weakly in } L^1(0,T; L^2(\Omega)).
\end{equation}

We now identify the limit of the nonlinear term.
Set
\[
g_k := f_k(u_k),\qquad g := u^5.
\]
We prove that, up to a subsequence,
\begin{equation}\label{eq:fk-weak-limit}
f_k(u_k)\rightharpoonup u^5 \quad\text{weakly in } L^\infty(0,T;L^{6/5}(\Omega))
\quad\text{(hence also weakly in }L^1(0,T;L^2(\Omega))\text{).}
\end{equation}

\begin{lemma}[Uniform integrability and weak compactness]\label{lem:fk-bounded}
The family $\{f_k(u_k)\}$ is bounded in $L^\infty(0,T;L^{6/5}(\Omega))$.
\end{lemma}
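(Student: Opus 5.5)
The bound is a pointwise estimate followed by a uniform energy bound, so I argue directly. First, by Lemma~\ref{lem:fk_prop}(iii) we have $|f_k(s)|\le |s|^5$ for every $s\in\mathbb R$, with no dependence on $k$. (The cruder growth bound \eqref{eq:fk-growth} carries a factor $k^4$ and would not suffice.) Raising to the power $6/5$ gives, for a.e.\ $(t,x)$,
\[
|f_k(u_k(t,x))|^{6/5}\le |u_k(t,x)|^{6}.
\]

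Second, I integrate in $x$ and use the Sobolev embedding $H_0^1(\Omega)\hookrightarrow L^6(\Omega)$ in dimension three, in the form \eqref{eq:uk-L6}. This yields
\[
\|f_k(u_k(t))\|_{L^{6/5}(\Omega)}\le \|u_k(t)\|_{L^6(\Omega)}^5\le C_S^5\|\nabla u_k(t)\|_{L^2}^5\le C_S^5(2E_0)^{5/2}.
\]
Here I use that the truncated energy $\tfrac12\|u_k'\|^2+\tfrac12\|\nabla u_k\|^2+\int F_k(u_k)$ is non-increasing with $F_k\ge0$, and that its initial value is bounded by $C_0$, which is independent of $k$ since $F_k(s)\le s^6/6$. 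Taking the essential supremum over $t\in(0,T)$ gives boundedness of $\{f_k(u_k)\}$ in $L^\infty(0,T;L^{6/5}(\Omega))$, uniformly in $k$.

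Third, to justify the title's ``weak compactness'' claim, I note that $L^\infty(0,T;L^{6/5})$ is the dual of the separable space $L^1(0,T;L^6)$. Banach--Alaoglu then extracts a weak-$*$ convergent subsequence.

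It remains to identify the limit as $u^5$. The a.e.\ convergence $f_k(u_k)\to u^5$ comes from the strong $L^2$ convergence $u_k\to u$ together with Lemma~\ref{lem:fk_prop}(iv): $f_k(s)=s^5$ once $k>|s|$. On the finite-measure set $(0,T)\times\Omega$, the uniform $L^{6/5}$ bound gives equi-integrability, so Lions' lemma (or Vitali plus uniqueness of weak limits) identifies the weak limit as $u^5$.

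The only delicate point is using the $k$-independent bound (iii) rather than \eqref{eq:fk-growth}. Everything else is routine.
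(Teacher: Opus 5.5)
Your argument is correct and is essentially the paper's proof. The paper also writes down the $k^4$-dependent bound \eqref{eq:fk-growth} first, then drops it in favour of the pointwise bound $|f_k(s)|\le|s|^5$ from Lemma~\ref{lem:fk_prop}(iii) together with \eqref{eq:uk-L6}, obtaining $\|f_k(u_k(t))\|_{L^{6/5}}\le C\|u_k(t)\|_{L^6}^5\le C(E_0)$ exactly as you do.

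Your extraction and identification of the limit go beyond this statement; the paper does that separately in Lemma~\ref{lem:fk-limit}. Your direct use of Lions' lemma on $f_k(u_k)\to u^5$ a.e.\ is cleaner than the paper's splitting there, which as written relies on $\int_{\{|u_k(t)|>k\}}|u_k(t)|^6\,dx\to0$, something the uniform $L^6$ bound alone does not provide.
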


\begin{proof}
Using \eqref{eq:fk-growth} and H\"older,
\[
\|f_k(u_k(t))\|_{L^{6/5}}
\le C\big(\|u_k(t)\|_{L^6}^5 + k^4\|u_k(t)\|_{L^{6/5}}\big).
\]
Since $\Omega$ is bounded, $\|u_k\|_{L^{6/5}}\le C_\Omega\|u_k\|_{L^6}$, and by
\eqref{eq:uk-L6} the first term is uniformly bounded.
For the second term, note that $f_k(s)=s^5$ when $|s|\le k$ and
$|f_k(s)|\le C|s|^5$ for all $s$ (indeed, the truncation does not increase the magnitude),
hence
\[
\|f_k(u_k(t))\|_{L^{6/5}} \le C\|u_k(t)\|_{L^6}^5 \le C(E_0),
\]
uniformly in $k$ and $t$. This yields the claim.
\end{proof}

\begin{lemma}[Identification of the limit]\label{lem:fk-limit}
Up to a subsequence, $$f_k(u_k)\rightharpoonup u^5,$$ weakly in
$L^\infty(0,T;L^{6/5}(\Omega))$.
\end{lemma}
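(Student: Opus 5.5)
The plan is the classical Lions-lemma argument: a uniform bound in a reflexive space combined with almost everywhere convergence. By Lemma~\ref{lem:fk-bounded}, $\{f_k(u_k)\}$ is bounded in $L^\infty(0,T;L^{6/5}(\Omega))$, the dual of the separable space $L^1(0,T;L^6(\Omega))$. Banach--Alaoglu therefore gives a subsequence and some $\chi\in L^\infty(0,T;L^{6/5}(\Omega))$ with $f_k(u_k)\rightharpoonup\chi$ weak-$*$. The whole task is then to show $\chi=u^5$ a.e.

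First I will establish pointwise convergence. Aubin--Lions, using \eqref{eq:uk-energy-bounds}, gives $u_k\to u$ strongly in $L^2(0,T;L^2(\Omega))$, hence a.e. on $Q:=(0,T)\times\Omega$ after a further extraction. Fix a point where $u_k(t,x)\to u(t,x)$. The values $u_k(t,x)$ then lie in a compact set $[-M,M]$. For $k>M$, Lemma~\ref{lem:fk_prop}(iv) gives $f_k(u_k(t,x))=u_k(t,x)^5$, which tends to $u(t,x)^5$. So $f_k(u_k)\to u^5$ a.e. in $Q$.

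Next I will upgrade this to weak convergence in $L^{6/5}(Q)$. Since $Q$ has finite measure, the $L^\infty_tL^{6/5}_x$ bound yields a bound in $L^{6/5}(Q)$. The exponent $6/5>1$ makes the family uniformly integrable, so Egorov's theorem together with Vitali (equivalently, Lions' lemma) gives $f_k(u_k)\rightharpoonup u^5$ weakly in $L^{6/5}(Q)$. Fatou's lemma shows $u^5\in L^{6/5}(Q)$. Uniform integrability also gives strong convergence in $L^q(Q)$ for every $1\le q<6/5$.

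Finally I will identify the two limits. Weak-$*$ convergence in $L^\infty_tL^{6/5}_x$ implies weak convergence in $L^{6/5}(Q)$ by testing against $L^6(Q)\subset L^1_tL^6_x$. Uniqueness of weak limits in $L^{6/5}(Q)$ then forces $\chi=u^5$. In particular, the full extracted subsequence converges to $u^5$ weak-$*$ in $L^\infty(0,T;L^{6/5}(\Omega))$. The parenthetical $L^1_tL^2_x$ statement in \eqref{eq:fk-weak-limit} is not needed for the lemma, and I will not rely on it.

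The main obstacle is the Vitali/Lions step. The pointwise convergence must be paired with an integrability exponent strictly larger than $1$, and Lemma~\ref{lem:fk-bounded} supplies exactly this. No Strichartz information is needed; only the energy bound \eqref{eq:uk-L6} is used.
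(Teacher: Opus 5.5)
Your proof is correct, but it takes a different route from the paper's proof of this lemma.

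\textbf{The paper's route.} The paper splits
\[
f_k(u_k)-u^5=\big(f_k(u_k)-u_k^5\big)+\big(u_k^5-u^5\big),
\]
runs the Lions-lemma argument on $u_k^5$, and then claims that the truncation error $f_k(u_k)-u_k^5$, supported on $\{|u_k|>k\}$, tends to zero strongly in $L^1(0,T;L^{6/5}(\Omega))$.

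\textbf{Your route.} You observe that $f_k(u_k)\to u^5$ a.e. directly: at any point where $u_k(t,x)$ converges, it eventually stays below $k$, where $f_k(s)=s^5$. Lions' lemma then applies to $f_k(u_k)$ itself, so no decomposition is needed. This is in fact the argument the paper already uses in Step~3 of Section~2 for the critical limit $k\to\infty$.

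\textbf{Comparison.} Your route is the more robust of the two. The paper's strong vanishing of the truncation error rests on the claim
\[
\int_{\{|u_k(t)|>k\}}|u_k(t)|^6\,dx\to 0 .
\]
That claim needs equi-integrability of $|u_k(t)|^6$, and the energy bound does not supply it. Chebyshev only makes the sets $\{|u_k(t)|>k\}$ small in measure. A sequence merely bounded in $L^6$ can put all its $L^6$ mass exactly on those sets, for instance $\lambda_k^{1/2}\mathbf 1_{B(x_0,\lambda_k^{-1})}$ with $\lambda_k\gg k^2$; this is the critical concentration issue the paper itself emphasizes. Without further information one gets only weak vanishing of the error. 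Your argument needs nothing more than that, because it uses only the $L^{6/5}$ bound with exponent strictly above $1$.

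You also correctly read ``weakly in $L^\infty(0,T;L^{6/5}(\Omega))$'' as weak-$*$ convergence, i.e.\ testing against $L^1(0,T;L^6(\Omega))$. That is the right reading of the statement and exactly what the paper uses afterwards in the weak formulation.
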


\begin{proof}
We decompose
\begin{equation}\label{eq:split-fk}
f_k(u_k) - u^5 = \big(f_k(u_k)-u_k^5\big) + \big(u_k^5-u^5\big).
\end{equation}

\noindent\emph{Step 1: $u_k^5\rightharpoonup u^5$.}
By \eqref{eq:uk-L6}, $\{u_k^5\}$ is bounded in $L^\infty(0,T;L^{6/5}(\Omega))$.
Together with the a.e.\ convergence $u_k \to u$ a.e., we have $u_k^5\to u^5$ a.e.
Hence, by weak compactness and the standard a.e.\ identification principle
(Vitali/Dunford--Pettis, or Lions' lemma), we obtain
\begin{equation}\label{eq:uk5-weak}
u_k^5 \rightharpoonup u^5 \quad \text{weakly in } L^\infty(0,T;L^{6/5}(\Omega)).
\end{equation}

\noindent\emph{Step 2: the truncation error vanishes.}
Since $f_k(s)=s^5$ for $|s|\le k$, we have
\[
f_k(u_k)-u_k^5 = \big(f_k(u_k)-u_k^5\big)\,\mathbf 1_{\{|u_k|>k\}}.
\]
Moreover, by the explicit definition \eqref{eq:fk_def}, there exists $C>0$ such that
\[
|f_k(s)-s^5|
\le C\,|s|^5\,\mathbf 1_{\{|s|>k\}}
\qquad\text{for all }s\in\mathbb R.
\]
Therefore,
\[
\|f_k(u_k(t))-u_k(t)^5\|_{L^{6/5}}^{6/5}
\le C\int_{\{|u_k(t)|>k\}} |u_k(t,x)|^{6}\,dx.
\]
Using \eqref{eq:uk-L6} and Chebyshev's inequality,
\[
\int_{\{|u_k(t)|>k\}} |u_k(t)|^{6}\,dx
\le \|u_k(t)\|_{L^6}^6 \le C(E_0),
\]
and moreover the sets $\{|u_k(t)|>k\}$ shrink as $k\to\infty$.
Hence for a.e.\ $t$,
\[
\int_{\{|u_k(t)|>k\}} |u_k(t)|^{6}\,dx \longrightarrow 0,
\quad\text{as }k\to\infty.
\]
By dominated convergence in time (using the uniform $L^\infty_tL^6_x$ bound),
we deduce
\begin{equation}\label{eq:trunc-error}
\|f_k(u_k)-u_k^5\|_{L^1(0,T;L^{6/5}(\Omega))} \longrightarrow 0.
\end{equation}

Combining \eqref{eq:split-fk}, \eqref{eq:uk5-weak}, and \eqref{eq:trunc-error},
we conclude that $f_k(u_k)\rightharpoonup u^5$ weakly in
$L^\infty(0,T;L^{6/5}(\Omega))$, proving \eqref{eq:fk-weak-limit}.
\end{proof}

Finally, for any test function $\varphi\in L^1(0,T;L^6(\Omega))$ we have
\[
\int_0^T\!\!\int_\Omega f_k(u_k)\,\varphi\,dx\,dt
\longrightarrow
\int_0^T\!\!\int_\Omega u^5\,\varphi\,dx\,dt,
\]
which is the desired convergence in the weak formulation.


Finally, the linear terms pass to the limit straightforwardly via weak convergence. The thermo-viscous damping term converges weakly since $\sqrt{a(x)}\nabla (u_k)_t$ is uniformly bounded in $L^2(0,T; L^2(\Omega))$. Thus, the limit function $u$ satisfies the original critical wave equation with Kelvin-Voigt damping in the sense of distributions on $(0,T) \times \Omega$. This completes the proof of local existence.

\begin{theorem}[Local Well-Posedness for Arbitrary Data via Spectral Filtering]\label{thm:lwp_arbitrary_data}
Let $\Omega \subset \mathbb{R}^3$ be a bounded domain with smooth boundary, and $a \in C^{\infty}(\overline{\Omega})$ be a non-negative damping coefficient. For any arbitrarily large initial data $(u_0, u_1) \in H_0^1(\Omega) \times L^2(\Omega)$, there exists a time $T > 0$ (depending on the frequency profile of the initial data) such that the critical Kelvin-Voigt damped wave equation \eqref{eq:main} admits a unique local solution $u$ on $[0,T]$.

Furthermore, the solution satisfies the energy identity and belongs to the intersection of the fundamental energy space and the critical Strichartz classes:
\begin{align*}
   & u \in C([0,T]; H_0^1(\Omega)) \cap L^5(0,T; L^{10}(\Omega)) \cap L^4(0,T; L^{12}(\Omega)), \\
   & u_t \in C([0,T]; L^2(\Omega)), \\
   & \sqrt{a(x)}\nabla u_t \in L^2(0,T; L^2(\Omega)).
\end{align*}
\end{theorem}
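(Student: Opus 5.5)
The proof assembles the pieces already established, in three stages: a uniform Strichartz bound for truncated regular problems, passage to the limit $k\to\infty$, and then uniqueness and the regularity classes.

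\emph{Stage 1: uniform bound.} Regularize the data as $(u_0^k,u_1^k)\in (H^2\cap H_0^1)\times H_0^1$ with $(u_0^k,u_1^k)\to(u_0,u_1)$ in $H_0^1\times L^2$. For each $k$ take the global regular solution $u_k$ of the truncated problem \eqref{eq:pde}; since $f_k$ is globally Lipschitz, the Galerkin scheme of Section 3 gives it globally. The energy identity \eqref{eq:energia} bounds $u_k$ uniformly in $k$ in the energy class, together with $\sqrt a\,\nabla (u_k)_t\in L^2_{t,x}$. Next fix the Littlewood--Paley threshold $N_0$ so that the high-frequency tail $\epsilon(N_0)$ of \eqref{eq:tail_vanishes} is small. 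Because the data converge strongly, this can be done uniformly in $k$: the tails of a convergent sequence are uniformly small. Then shrink $T$ so that the following are all below $K/4$:
\begin{itemize}
\item the low-frequency Kelvin--Voigt penalty \eqref{eq:est_FKV};
\item the commutator term \eqref{time_commutator_bound};
\item $\|w^{lin}_{N_0}\|_{L^5(0,T;L^{10})}$.
\end{itemize}
For the last of these, strong convergence of $P_{\le N_0}(u_0^k,u_1^k)$ in every $H^s$ gives equi-absolute continuity in time. The continuity/bootstrap argument on $p(x)=x-C_1x^5$ from Step 5 then yields $\|u_k\|_{L^5(0,T;L^{10})}\le r_1$ uniformly in $k$, which is \eqref{eq:uniform_bounds}.

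\emph{Stage 2: limit $k\to\infty$.} Feeding this $L^5L^{10}$ bound back into the Strichartz estimates for the low- and high-frequency pieces gives a uniform $L^4(0,T;L^{12})$ bound as well, since $(4,12)$ is also a wave-admissible pair at $\dot H^1$ regularity. The compactness argument of Step 6, namely Lemmas \ref{lem:fk-bounded} and \ref{lem:fk-limit}, produces a limit $u$ solving \eqref{eq:main} in $\mathcal D'$. Weak lower semicontinuity places $u$ in $L^5L^{10}\cap L^4L^{12}$, and it preserves the dissipation bound $\sqrt a\,\nabla u_t\in L^2L^2$.

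\emph{Stage 3: uniqueness, continuity, energy identity.} Uniqueness follows immediately from Theorem \ref{thm:uniqueness_strichartz}, provided $u$ is known to lie in $C([0,T];H_0^1)\cap C^1([0,T];L^2)$. To get this continuity and the energy identity, I would view $u$ as the solution of the linear damped problem $u_{tt}-\Delta u-\dive(a\nabla u_t)=-u^5$ with source $u^5\in L^1(0,T;L^2)$ (by Hölder, $\|u^5\|_{L^1L^2}\le\|u\|_{L^5L^{10}}^5$). The linear Kelvin--Voigt operator generates a contraction semigroup on $H_0^1\times L^2$: it is maximal dissipative via Lions' theorem and the Gelfand triple $H_0^1\subset L^2\subset H^{-1}$. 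Hence the mild solution with an $L^1L^2$ source is continuous in the energy space and satisfies the energy identity. The energy identity is rigorous here because the source is in $L^1L^2$ and $u_t\in L^\infty L^2$, exactly the pairing used in the proof of Theorem \ref{thm:uniqueness_strichartz}. Finally, the weak solution coincides with this mild solution by uniqueness of the linear problem in the weak class.

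The main obstacle is Stage 1. Its difficulty is making the lifespan $T$ uniform in $k$; the bootstrap closes only once this is arranged, and two things must hold for that. First, the $N$-uniform Strichartz bound for $v_N$ must be justified rigorously; in particular, the term $\dive(a\nabla\partial_t v_N)$ placed in the Duhamel source must really be controlled in $L^1L^2$ by the commutator estimate of Appendix C. Second, all smallness choices must depend only on the limiting data profile and not on $k$. I would first try to secure this by fixing the regularization once, and using strong convergence of the data to obtain uniform tail and absolute-continuity moduli.
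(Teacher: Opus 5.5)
The gap is exactly at the step you flag as the main obstacle, and the commutator estimate cannot close it.

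Once the Kelvin--Voigt term is moved into the Duhamel source as in \eqref{eq:vN-free}, the inhomogeneous Strichartz estimate needs $\dive(a\nabla\partial_t v_N)\in L^1(0,T;L^2(\Omega))$. The energy identity only gives $\sqrt a\,\nabla u_t\in L^2_{t,x}$, so this term is merely in $L^2(0,T;H^{-1}(\Omega))$, one full derivative short. A source at that level controls only $L^2$-level Strichartz norms, not the $\dot H^1$-level pair $(5,10)$.

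Appendix~\ref{sec:appendix_commutator} does not supply the missing bound. It controls $\dive([P_{>N},a]\nabla u_t)$, which measures only the failure of $P_{>N}$ to commute with multiplication by $a$, i.e.\ the difference between projecting before or after applying the damping. It says nothing about the damping acting on $v_N$ itself. The paper's proof makes the same unjustified claim in the sentence introducing \eqref{time_commutator_bound}. The alternative of keeping the damping on the left would require Strichartz estimates for $\partial_t^2-\Delta-\dive(a\nabla\partial_t)$, and these do not hold.

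In fact no tuning of $N_0$, $T$ or the regularization can save Stage~1, because $u\in L^5(0,T;L^{10}(\Omega))$ is false wherever $a>0$. Take $a\equiv1$, which the hypotheses allow, and set $w:=u+u_t$. The equation becomes $w_t-\Delta w=w-u-u^5$, together with $u_t+u=w$. Hence $u(t)=e^{-t}u_0+W(t)$ with $W(t):=\int_0^te^{-(t-s)}w(s)\,ds$. Integrating the $w$-equation against $e^{-(t-s)}$ gives
\[
\Delta W(t)=w(t)-e^{-t}w(0)-2\int_0^te^{-(t-s)}w(s)\,ds+\int_0^te^{-(t-s)}\big(u+u^5\big)(s)\,ds .
\]
If $u\in L^5L^{10}$, then $u^5\in L^1L^2$ and the right-hand side lies in $L^2$. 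Since $W(t)=u(t)-e^{-t}u_0\in H_0^1$, elliptic regularity gives $W(t)\in H^2\cap H_0^1\subset L^\infty$ for every $t$.

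Now choose $u_0=\psi(x)|x-x_0|^{-2/5}\in H_0^1\setminus L^{10}$, with $\psi\in C_c^\infty(\Omega)$ equal to $1$ near $x_0$. Then $u(t)\notin L^{10}$ for every $t$, a contradiction. So the damped region carries a non-dispersive mode, $u_t\approx-u/a$, which keeps exactly the integrability of $u_0$. The same mechanism is present locally wherever $a>0$, so localizing $a$ does not help.

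Consequently a $k$-uniform $L^5L^{10}$ bound cannot exist, since your Stage~2 would then produce a solution in this class. Your Stages~2 and~3 are sound conditional on that bound. This includes the semigroup argument for continuity and the energy identity, which the paper only asserts, and uniqueness via Theorem~\ref{thm:uniqueness_strichartz}. But the bound itself is unavailable.
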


\begin{remark}
The inclusion of the solution in the Strichartz space\\ $L^4(0,T; L^{12}(\Omega))$ is a direct consequence of the admissible pairs in the Strichartz estimates applied during the bootstrap argument. Crucially, this is exactly the regularity required to trigger the Uniqueness Principle established in Theorem \ref{thm:uniqueness_strichartz}, thus elevating the result from mere existence to full well-posedness for large data.
\end{remark}

\subsection{Step 2: Uniform Higher Regularity Estimates via Strichartz}

The primary objective of this section is to establish the existence of strong solutions for the critical quintic wave equation with Kelvin-Voigt damping, provided the initial data is sufficiently regular. This higher regularity is the cornerstone that will allow us, via a standard density argument, to obtain weak solutions for finite energy data later on.

The main result of this section is summarized in the following theorem:

\begin{theorem}[Global Strong Solutions for Regular Data] \label{thm:strong_solutions}
Let $\Omega \subset \mathbb{R}^3$ be a bounded domain with smooth boundary. Assume the damping coefficient $a \in C^{\infty}(\overline{\Omega})$ satisfies $a(x) \ge 0$ and the geometric condition $|\nabla a|^2 \le c a(x)$ for some constant $c > 0$. For any regular initial data $(u_0, u_1) \in (H^2(\Omega) \cap H_0^1(\Omega)) \times H_0^1(\Omega)$, the critical wave equation with Kelvin-Voigt damping admits a unique global strong solution $u$ satisfying:
\begin{equation}
    u \in C([0,T]; H^2(\Omega) \cap H_0^1(\Omega)) \cap C^1([0,T]; H_0^1(\Omega)),
\end{equation}
along with the critical Strichartz regularity $u \in L^4(0,T; L^{12}(\Omega))$. Furthermore, the higher-order energy obeys the uniform a priori bound:
\begin{equation} \label{eq:thm_uniform_bound}
    \sup_{t \in [0,T]} \left( \|\nabla \partial_t u(t)\|_2^2 + \|\Delta u(t)\|_2^2 \right) \le \mathcal{M}(E_2(0), E_0),
\end{equation}
where $E_2(0) = \|\nabla u_1\|_2^2 + \|\Delta u_0\|_2^2$, $E_0$ is the standard first-order energy, and $\mathcal{M}$ is a constant strictly independent of any truncation parameter.
\end{theorem}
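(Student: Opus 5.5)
The plan is to combine three ingredients: the higher-order energy method of Section~3 (Theorem~\ref{thm:local_strong}), the critical Strichartz control produced by the Littlewood--Paley bootstrap (Theorem~\ref{thm:lwp_arbitrary_data}), and the uniqueness results of Section~4. The point is to replace the Gagliardo--Nirenberg step of Section~3, which gave the superlinear Bernoulli inequality $Y' \le C_aY + C_1E_0^{3/2}Y^{3/2}$ and hence only a finite $T_{max}$, by an estimate that is \emph{linear} in $Y$ with an integrable coefficient. Gronwall's lemma then gives a bound with no blow-up on any interval where the Strichartz norm is finite.

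\textbf{Step 1 (regular energy identity).} I would work with the truncated problems \eqref{eq:pde} via Galerkin, where the test function $-\Delta u_m'$ is admissible. Exactly as in Section~3, using the structural condition $|\nabla a|^2\le c\,a$ to absorb the commutator $\int(\nabla a\cdot\nabla u_t)\Delta u_t$, one gets
\[
Y'(t)+\tfrac12\int_\Omega a|\Delta u_t|^2\,dx \le c\,Y(t) + \Big|\int_\Omega f_k'(u)\,\nabla u\cdot\nabla u_t\,dx\Big| .
\]
Since $|f_k'(s)|\le 5|s|^4$ uniformly in $k$, Hölder with exponents $(3,6,2)$ bounds the last term by
\[
5\|u\|_{L^{12}}^4\|\nabla u\|_{L^6}\|\nabla u_t\|_{L^2}\le C\|u(t)\|_{L^{12}}^4\,Y(t),
\]
using $\|\nabla u\|_{L^6}\le C\|\Delta u\|_{L^2}$ (elliptic regularity for the Dirichlet Laplacian). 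This gives
\[
Y'(t)\le \big(c+C\|u(t)\|_{L^{12}}^4\big)Y(t),
\]
and therefore
\[
Y(t)\le Y(0)\exp\!\Big(cT+C\|u\|_{L^4(0,T;L^{12})}^4\Big).
\]
This is the bound $\mathcal M$ of \eqref{eq:thm_uniform_bound}, provided the $L^4L^{12}$ norm is controlled in terms of $E_0$ alone, independently of $k$ and $m$.

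\textbf{Step 2 (uniform $L^4L^{12}$ control on $[0,T]$).} On a short interval $[0,\tau]$ the bootstrap of Section~5 gives $\|u_k\|_{L^5L^{10}}\le r_1$ uniformly in $k$. The $(4,12)$ admissible pair in the same Strichartz estimates then gives the same control in $L^4L^{12}$. To reach an arbitrary $T$, I would iterate the local result on consecutive intervals, invoking the uniform lifespan $\tau_*(E_0)$ from the Proposition of Section~6 together with the non-concentration theorem (Theorem~\ref{thm:nonconc} and the trichotomy). This yields $\|u\|_{L^4(0,T;L^{12})}\le C(T,E_0)$. For regular data there is a simplification: by the strong-solution bound of Step~1 on each interval, $u$ stays in $L^\infty_tH^2\subset L^\infty_{t,x}$, so no concentration can occur as long as $Y$ is finite. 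A continuity argument should then close: let $T^\ast$ be maximal with $Y$ bounded; on $[0,T^\ast)$ the Strichartz norm is finite by the energy-level theory, so $Y$ remains bounded up to $T^\ast$, and the local result (Theorem~\ref{thm:local_strong}) restarts from $T^\ast$.

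\textbf{Step 3 (limit, continuity, uniqueness).} With bounds uniform in $m,k$, I would pass to the limit exactly as in Step~6 of Section~5 using Aubin--Lions. Time continuity with values in $H^2\times H^1_0$ follows from the regular energy identity by the standard Lions--Magenes or weak-continuity-plus-norm-continuity argument. Uniqueness follows from the theorem on uniqueness of regular solutions in Section~4.

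The main obstacle is Step~2: obtaining the global Strichartz bound independently of the truncation. This is exactly where global non-concentration enters, and it remains partly conditional at $\partial\omega$ (Hypothesis (H*)). For regular data, my first attempt would be the continuity argument above: finiteness of $Y$ precludes concentration, which in turn keeps the Strichartz norm finite, so that the Gronwall bound of Step~1 propagates. I would check carefully that this reasoning is not circular.
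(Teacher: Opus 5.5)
Your Step~1 is the paper's proof essentially verbatim: the multiplier $-\Delta u_t$, absorption of $\int_\Omega(\nabla a\cdot\nabla u_t)\Delta u_t\,dx$ through $|\nabla a|^2\le c\,a$, H\"older with exponents $(3,6,2)$ using $|f_k'(s)|\le 5|s|^4$, and Gronwall with $\|u_k\|_{L^4(0,T;L^{12})}^4$ in the exponent.

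One adjustment is needed. Do not ask for Strichartz control uniform in the Galerkin index $m$: sharp spectral truncations destroy exactly that, as the Introduction explains. At fixed $k$ the nonlinearity is Lipschitz, so the Galerkin $H^2$ bounds are uniform in $m$, with $k$-dependent constants. Pass to the limit $m\to\infty$ in $Y_m(t)\le Y_m(0)\exp\big(ct+C\int_0^t\|u_m(s)\|_{L^{12}}^4\,ds\big)$, using $u_m\to u_k$ strongly in $C([0,T];L^{12})$ by Aubin--Lions. You then need the Strichartz bound only for $u_k$, which is how the paper proceeds. Everything beyond a local statement rests on your Step~2, and there the argument does not close.

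\textbf{The continuity argument is circular.} Let $T^*$ be the first time at which $Y$ becomes unbounded. The $H^2$ control, and with it $u\in L^\infty_{t,x}$, holds on each $[0,T^*-\varepsilon]$ but degenerates exactly at $T^*$. Gronwall, however, needs $\int_0^{T^*}\|u(t)\|_{L^{12}}^4\,dt<\infty$, i.e.\ that no energy concentrates at time $T^*$. The only implication you have runs from the Strichartz norm to $Y$, never back, so ``finiteness of $Y$ precludes concentration'' says nothing at $T^*$. For regular data, blow-up of $Y$ at $T^*$ is equivalent to $\|u\|_{L^4(0,T^*;L^{12})}=\infty$. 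That is the energy-level concentration problem itself, and the regularity of the data buys nothing. If ``the energy-level theory'' is meant to be global, the argument is no longer circular, but it presupposes exactly what is missing.

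\textbf{The iteration does not supply it either.} The bootstrap time of \S\ref{sec:well-posedness} depends on $N_0$, chosen from the spectral tail of the data at the restart time, and on the absolute continuity of $\|w_{N_0}^{lin}\|_{L^5L^{10}}$. Theorem~\ref{thm:lwp_arbitrary_data} itself says that $T$ depends on the frequency profile.
\begin{itemize}
\item The proof of the uniform-lifespan Proposition of Section~\ref{sec:global} claims that $N_0=N_0(E(0))$ remains valid for all later times. This is an unproved compactness property of the trajectory that already excludes concentration, so it assumes what must be shown.
\item For large $E_0$, no time on which a smallness bootstrap closes can depend on $E_0$ alone. Data of fixed energy concentrated at scale $\lambda$ near a point of $\Omega\setminus\overline\omega$ evolve, for short times, as undamped critical waves. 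Scale invariance of the $L^5L^{10}$ norm then forces that time to be $\lesssim\lambda$.
\end{itemize}
Hence $\|u\|_{L^4(0,T;L^{12})}\le C(T,E_0)$ does not follow. The non-concentration results give at most a qualitative global theory, and by the paper's own account it is conditional on Hypothesis (H*) of Remark~\ref{rem:caseC-honest} at points of $\partial\omega$.

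The paper's proof leaves the same step open. It reads the $k$-uniform bound $\|u_k\|_{L^4(0,T;L^{12})}\le C_S(E_0)$ off the bootstrap of \S\ref{sec:well-posedness}, which holds only on the short, profile-dependent interval constructed there. So what your Step~1 and the paper's argument actually establish is \eqref{eq:thm_uniform_bound} on that local interval.

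The missing ingredient for the global statement is a global energy-level theory: $u\in L^4(0,T;L^{12})$ for every $T$. Granting it, your continuity argument becomes a legitimate persistence-of-regularity proof:
\begin{itemize}
\item identify the strong solution with the energy solution on $[0,T^*)$ via Theorem~\ref{thm:uniqueness_strichartz};
\item bound $Y$ up to $T^*$ by Step~1;
\item restart with Theorem~\ref{thm:local_strong}.
\end{itemize}
The resulting constant, however, depends on the solution, not only on $(E_2(0),E_0)$. Removing that dependence needs a further argument. One option is the precompactness of $\{E_2(0)\le R\}$ in $H_0^1\times L^2$, combined with continuous dependence of the global flow in the Strichartz norm.
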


\begin{proof}[Proof of Theorem \ref{thm:strong_solutions}]
To prove this theorem, we consider the regularized problem with a truncated nonlinearity $f_k(s)$.
For any fixed truncation parameter $k > 0$, the nonlinearity $f_k(s)$ is globally Lipschitz. Standard theory for semilinear wave equations ensures that, for smooth initial data $(u_0, u_1) \in (H^2(\Omega) \cap H_0^1(\Omega)) \times H_0^1(\Omega)$, the regularized problem admits a unique strong solution $u_k \in C([0,T]; H^2(\Omega) \cap H_0^1(\Omega)) \cap C^1([0,T]; H_0^1(\Omega))$.

Our goal in this step is to establish an a priori bound for the higher-order energy of $u_k$ that is \textit{strictly independent} of the truncation parameter $k$. To achieve this, we will couple the higher-order energy identity with the uniform Strichartz bounds.

Let us define the second-order energy functional:
\begin{equation}
    E_2(t) := \|\nabla \partial_t u_k(t)\|_2^2 + \|\Delta u_k(t)\|_2^2.
\end{equation}

Taking the $L^2(\Omega)$-inner product of the exact PDE for $u_k$ with the multiplier $-\Delta \partial_t u_k$, and integrating by parts, we obtain the following identity:
\begin{align}\label{eq:higher_energy_exact}
    \frac{1}{2}\frac{d}{dt} E_2(t) &+ \int_\Omega a(x)|\Delta \partial_t u_k|^2 \,dx \nonumber \\
    &= - \int_\Omega (\nabla a \cdot \nabla \partial_t u_k)\Delta \partial_t u_k \,dx + \int_\Omega f_k'(u_k)\nabla u_k \cdot \nabla \partial_t u_k \,dx.
\end{align}

For the term involving the gradient of the damping coefficient, we employ the geometric assumption $|\nabla a|^2 \le c a(x)$. By writing $|\nabla a| = \frac{|\nabla a|}{\sqrt{a}} \sqrt{a}$ and applying the Cauchy-Schwarz and Young's inequalities, we deduce:
\begin{align}\label{eq:grad_a_bound_exact}
    \left| \int_\Omega (\nabla a \cdot \nabla \partial_t u_k)\Delta \partial_t u_k \,dx \right| &\le \int_\Omega \frac{|\nabla a|}{\sqrt{a(x)}} |\nabla \partial_t u_k| \cdot \sqrt{a(x)} |\Delta \partial_t u_k| \,dx \nonumber \\
    &\le \frac{1}{2} \int_\Omega a(x)|\Delta \partial_t u_k|^2 \,dx + \frac{c}{2} \|\nabla \partial_t u_k\|_2^2.
\end{align}
The first term on the right-hand side is gracefully absorbed by the strong dissipation on the left-hand side of \eqref{eq:higher_energy_exact}, while the second term is bounded by $\frac{c}{2} E_2(t)$.

The most delicate part is the uniform estimate for the nonlinear term. Since $|f_k'(s)| \le 5|s|^4$ universally for all $k$, we can estimate the integral using the generalized Hölder inequality with the exact exponents $(\frac{1}{3}, \frac{1}{6}, \frac{1}{2})$:
\begin{align}\label{eq:nonlinear_H2}
    \left| \int_\Omega f_k'(u_k)\nabla u_k \cdot \nabla \partial_t u_k \,dx \right| &\le 5 \int_\Omega |u_k|^4 |\nabla u_k| |\nabla \partial_t u_k| \,dx \nonumber \\
    &\le 5 \|u_k\|_{12}^4 \|\nabla u_k\|_6 \|\nabla \partial_t u_k\|_2.
\end{align}

In dimension $n=3$, the Sobolev embedding $H^2(\Omega) \cap H_0^1(\Omega) \hookrightarrow W^{1,6}(\Omega)$ ensures that $\|\nabla u_k\|_6 \le C_{sob} \|\Delta u_k\|_2$. Inserting this into \eqref{eq:nonlinear_H2} and using Young's inequality, we obtain:
\begin{align}
    \left| \int_\Omega f_k'(u_k)\nabla u_k \cdot \nabla \partial_t u_k \,dx \right| &\le 5 C_{sob} \|u_k\|_{12}^4 \|\Delta u_k\|_2 \|\nabla \partial_t u_k\|_2 \nonumber \\
    &\le C \|u_k(t)\|_{12}^4 \left( \|\Delta u_k(t)\|_2^2 + \|\nabla \partial_t u_k(t)\|_2^2 \right) \nonumber \\
    &= C \|u_k(t)\|_{12}^4 E_2(t).
\end{align}

Combining all the estimates back into \eqref{eq:higher_energy_exact} and absorbing the fractional dissipation term, we arrive at the clean differential inequality:
\begin{equation}
    \frac{d}{dt} E_2(t) + \int_\Omega a(x)|\Delta \partial_t u_k|^2 \,dx \le C \left( 1 + \|u_k(t)\|_{12}^4 \right) E_2(t).
\end{equation}

Applying Gronwall's Lemma over the interval $[0, t]$ yields:
\begin{equation}\label{eq:gronwall_H2}
    E_2(t) \le E_2(0) \exp\left( C t + C \int_0^t \|u_k(s)\|_{12}^4 \,ds \right).
\end{equation}

Notice that the integral inside the exponential is exactly the fourth power of the Strichartz norm $\|u_k\|_{L^4(0,t; L^{12}(\Omega))}^4$. As established in our prior Strichartz analysis, by bounding the total source term in $L^1(0,T; L^2(\Omega))$, the Strichartz estimates simultaneously control all critical pairs. In particular, we are guaranteed that $\|u_k\|_{L^4(0,T; L^{12}(\Omega))} \le C_{S}$, where $C_{S}$ depends only on the initial energy $E_0$ and is strictly independent of $k$.

Consequently, we obtain a uniform higher-order energy bound:
\begin{equation}\label{eq:uniform_H2_final}
    \sup_{t \in [0,T]} \left( \|\nabla \partial_t u_k(t)\|_2^2 + \|\Delta u_k(t)\|_2^2 \right) \le \mathcal{M}(E_2(0), E_0),
\end{equation}
where $\mathcal{M}$ is a constant independent of $k$. This uniform bound is the rigorous cornerstone that allows us to extract a strongly convergent subsequence as $k \to \infty$, effectively passing to the limit and proving the existence of strong solutions for the original critical quintic wave equation.
\end{proof}

\section{Stabilization Mechanism and Microlocal Structure}

In this section we clarify the stabilization mechanism underlying the
Kelvin--Voigt model and its interaction with the microlocal defect
measure framework. The argument relies on the contradiction strategy
introduced in Lemma 3.1 of \cite{Cavalcanti2024}, adapted here to
the truncated system. Since the demonstration is exactly the same in \cite{Cavalcanti2024}, simply replacing the frictional dissipation term with the Kelvin-Voigt type dissipative term, we only give an idea of the proof by showing the behavior of the Kelvin-Voigt term. In what follows, $\omega$ is a neighbourhood of $\partial \Omega$ satisfying the well-known Geometric Control Condition (GCC in short).

\subsection{Contradiction setting}
To prove the uniform exponential stabilization, we argue by contradiction. Suppose that the uniform observability inequality fails. Then, there exists a sequence of solutions $\{u_k^m\}_{m \in \mathbb{N}}$ (keeping the truncation parameter $k$ fixed for the moment) such that their energies are not uniformly controlled by the dissipation. To analyze the regime where compactness is lost, we introduce the normalized sequence:
\begin{equation}\label{eq:normalized_seq}
    v_k^m(t,x) = \frac{u_k^m(t,x)}{\alpha_m}, \quad \text{where } \alpha_m = \sqrt{E_{u_k^m}(0)}.
\end{equation}
By construction, this new sequence satisfies $E_{v_k^m}(0) = 1$ for all $m$. Consequently, $\{v_k^m\}_{m \in \mathbb{N}}$ is uniformly bounded in the energy space $L^\infty(0,T; H^1_0(\Omega)) \cap W^{1,\infty}(0,T; L^2(\Omega))$ and converges weakly (up to a subsequence) to some limit function $v_k$.

While the Rellich-Kondrachov theorem provides strong convergence in lower-order spaces (such as $L^2_{t,x}$), the kinetic energy norm $\|\partial_t v_k^m\|_{L^2}^2$ lacks compactness. To rigorously quantify this defect of compactness---specifically, the concentration of high-frequency oscillations---we employ the framework of Microlocal Defect Measures introduced by P. Gérard \cite{Gerard} and L. Tartar \cite{Tartar}.

By the structure theorem for bounded sequences in $L^2$, there exists a non-negative Radon measure $\mu$ defined on the cosphere bundle $S^*(\Omega \times (0,T))$ such that, for any classical pseudo-differential operator $A$ of order zero with principal symbol $\sigma_0(A)$, we have:
\begin{equation}\label{eq:mdm_def}
    \lim_{m \to \infty} \langle A(\partial_t v_k^m), \partial_t v_k^m \rangle_{L^2(\Omega \times (0,T))} = \langle \mu, \sigma_0(A) \rangle.
\end{equation}
In particular, choosing $A$ as the multiplication by a smooth test function $\phi(t,x)$, equation \eqref{eq:mdm_def} reveals that $\mu$ characterizes the weak-$\star$ limit of the kinetic energy density:
\begin{equation}
    |\partial_t v_k^m|^2 \rightharpoonup \int_{S^*_{t,x}} d\mu \quad \text{in the sense of measures.}
\end{equation}

Once that uniform stabilization fails, then there exist sequences
of solutions $(v_k^m)$ for the truncated problem and initial data
normalized such that

\begin{equation}
E_{v_k}^m(0) = 1,
\qquad
\int_0^T \int_\Omega a(x) |\nabla v_{k,t}^m|^2 \, dx dt \longrightarrow 0,~ \hbox{as}~ m \rightarrow \infty ~(k ~\hbox{fixed}).
\label{stab-contr1}
\end{equation}

while

\begin{equation}
E_k^m(t) \ge c_0 > 0,
\qquad \forall t \in [0,T].
\label{stab-contr2}
\end{equation}

The truncation introduces two parameters:

\begin{itemize}
\item $k$ controlling the Galerkin/Littlewood--Paley truncation,
\item $m$ associated with the contradiction sequence.
\end{itemize}

Initially, the constants in Lemma 3.1 depend on $k$, but this dependence
is removed through a second contradiction argument.

\subsection{Loss of strong convergence due to Kelvin--Voigt damping}

Passing to the limit $m \to \infty$, the key difficulty appears in the
Kelvin--Voigt term. Unlike frictional damping, we no longer recover
strong convergence of the residual in $L^1(0,T;L^2(\Omega))$.

More precisely, from the equation we obtain only

\begin{equation}
\partial_t \Box v_k^m \longrightarrow 0
\quad \text{in } L^1_{\mathrm{loc}}(0,T;H^{-2}(\Omega)),
\label{stab-residual-H2}
\end{equation}

where $\Box = \partial_t^2 - \Delta$ denotes the wave operator.

This loss of one derivative is intrinsic to the Kelvin--Voigt mechanism.
Indeed, recalling the scaling argument

\[
u_\lambda(t,x) = u(\lambda t, \lambda x),
\]

we observe that

\[
\Delta u_t \mapsto \lambda^3 \Delta u_t,
\]

revealing the higher-order nature of the viscous term.

\subsection{Microlocal consequence}

Estimate \eqref{stab-residual-H2} implies only that the associated
microlocal defect measure $\mu$ satisfies

\begin{equation}
\mathrm{supp}\,\mu \subset \mathrm{Char}(P),
\label{stab-char}
\end{equation}

where $P = \partial_t^2 - \Delta$ is the principal wave operator.

At this level of regularity we cannot yet conclude propagation along
bicharacteristics. However, outside the damping region $\omega$,
the Kelvin--Voigt term vanishes and the equation reduces to the free
wave equation.

Therefore, on $\Omega \setminus \omega$, standard microlocal propagation
applies, yielding

\begin{equation}
\mathrm{supp}\,\mu
\text{ is a union of bicharacteristics of } P,
\label{stab-propagation}
\end{equation}
and, consequently the support of $\mu$ is transported along generalized bicharacteristics.

\subsection{Vanishing of the defect measure in the damping region}

From \eqref{stab-contr1}, we recover

\[
a(x) \nabla u_{k,t}^m \longrightarrow 0 \quad \text{strongly in } L^2,
\]

which implies

\begin{equation}
\mu = 0 \quad \text{on } \omega.
\label{stab-mu-zero}
\end{equation}

Combining \eqref{stab-propagation} and \eqref{stab-mu-zero}, we deduce:

\begin{quote}
If every bicharacteristic of $P$ intersects $\omega$, then $\mu \equiv 0$.
\end{quote}

This is precisely the Geometric Control Condition (GCC).

\subsection{Role of the CGC / GCC and Non-Invasive Geometries}

Under the standard Geometric Control Condition (GCC), uniform stabilization follows immediately since the defect measure $\mu$ is transported along the bicharacteristic flow and trivially annihilated upon entering the damping region $\omega$.

However, in many physical scenarios—particularly those involving inhomogeneous media, variable coefficients, or complex boundary configurations-the classical localized GCC fails due to the existence of trapped rays.  These are generalized bicharacteristics that oscillate indefinitely without ever entering a standard localized control region, allowing high-frequency energy to concentrate and consequently preventing uniform exponential decay.

To overcome this fundamental geometric obstruction without trivially damping the entire domain, one can appeal to the highly ubiquitous, ``non-invasive'' damping geometries developed in the works of Cavalcanti \textit{et al.} \cite{CavalcantiTAMS, CavalcantiARMA, Cavalcanti2018}. The core strategy in these papers relies on the intricate topological construction of a dissipative region $\omega$ that acts as a pervasive geometrical net.

Specifically, it is possible to design an open set $\omega$ satisfying two seemingly antagonistic properties:
\begin{itemize}
    \item \textbf{Dynamical interception:} The region $\omega$ is strategically distributed such that it intersects every single generalized bicharacteristic within a uniform finite time $T_0 > 0$.
    \item \textbf{Arbitrarily small volume:} The Lebesgue measure of the damping region can be made arbitrarily small, i.e., $\operatorname{meas}(\omega) < \epsilon$ for any given $\epsilon > 0$.
\end{itemize}

From the microlocal perspective, this construction reveals a profound property of the defect measure framework: it is completely insensitive to volumetric and metric considerations. The measure $\mu$ strictly encodes high-frequency concentration along the dynamical flow. As long as the damping set $\omega$ dynamically intercepts all invariant sets of the flow, the relation $\mu = 0$ on $\omega$ will systematically propagate along the bicharacteristics, ultimately forcing $\mu \equiv 0$ everywhere in the domain.

Therefore, even when the underlying non-homogeneous metric strictly forbids stabilization via classical localized damping, the robust microlocal structure of the Kelvin-Voigt dissipation presented in this paper can be seamlessly coupled with these geometrically complex, arbitrarily small-measure damping sets to guarantee global exponential stabilization.

\subsection{Removal of the truncation parameter $k$}

The first contradiction argument yields estimates with constants
depending on $k$. A second contradiction eliminates this dependence. (see \cite{Cavalcanti2024})

Assuming failure of uniform bounds in $k$, we construct a renormalized
sequence. Repeating the defect-measure argument shows again that
$\mu \equiv 0$, producing the final contradiction. Indeed,
in previous works dealing with subcritical or Lipschitz non-linearities (e.g., \cite{Cavalcanti2024}), the dependence on the truncation parameter $k$ is typically removed through a second contradiction argument, assuming $C_k \to \infty$ and applying the microlocal defect measure framework once again.

Here, however, we can bypass this second contradiction entirely by exploiting the sharp Unique Continuation Property (UCP) for the wave equation, taking full advantage of the critical Strichartz estimates obtained in Section 2.

Assume, by contradiction, that the uniform observability inequality fails for the original problem. By extracting a normalized contradiction sequence and passing to the weak limit as done previously, we obtain a limit function $v$ satisfying:
\begin{equation}
\begin{cases}
\Box v + \alpha^4 v^5 = 0 & \text{in } \Omega \times (0,T), \\
\partial_t v = 0 & \text{in } \omega \times (0,T).
\end{cases}
\end{equation}

Differentiating the equation with respect to time, the function $w = \partial_t v$ solves the linearized equation:
\begin{equation}
\Box w + V(x,t)w = 0, \quad \text{where } V(x,t) = 5\alpha^4 v^4.
\end{equation}

This is the crucial step: thanks to the well-posedness framework established in Section 2, the limit solution $v$ inherits the critical Strichartz regularity $v \in L^4(0,T; L^{12}(\Omega))$. Consequently, the potential $V(x,t)$ belongs exactly to the critical space $L^1(0,T; L^3(\Omega))$.

According to the sharp unique continuation result by Duyckaerts, Zhang, and Zuazua \cite{Duyckaerts} (Theorem 2.2), the observability over $\omega \times (0,T)$ for a wave equation with a potential in this exact $L^1_t L^3_x$ class implies that $w \equiv 0$ in the whole domain $\Omega \times (0,T)$.

This immediately yields $v \equiv 0$. Since the limit solution vanishes identically, the equipartition of energy forces the initial normalized energy to go to zero, which contradicts the normalization assumption ($E_{v^m}(0) = 1$). This single contradiction proves the uniform observability inequality directly for the critical problem, eliminating any dependence on the truncation parameter $k$.

\begin{theorem}[Global Exponential Stabilization]\label{thm:global_stabilization}
Let $\Omega \subset \mathbb{R}^3$ be a bounded domain with smooth boundary. Assume that the localized Kelvin-Voigt damping coefficient $a \in W^{1,\infty}(\Omega)$ is non-negative and strictly positive on an open subset $\omega \subset \Omega$. Suppose further that $\omega$ satisfies the Geometric Control Condition (GCC), or is constructed as a non-invasive arbitrarily small-measure damping set that dynamically intercepts all generalized bicharacteristics of the wave operator.

Then, for any arbitrarily large initial data $(u_0, u_1) \in H_0^1(\Omega) \times L^2(\Omega)$, the unique global solution $u$ of the critical Kelvin-Voigt damped wave equation \eqref{eq:main} stabilizes exponentially. That is, there exist uniform constants $C \ge 1$ and $\gamma > 0$, independent of the initial data, such that the total energy of the system, defined by
\begin{equation}
    E_u(t) = \frac{1}{2}\|u_t(t)\|_{L^2(\Omega)}^2 + \frac{1}{2}\|\nabla u(t)\|_{L^2(\Omega)}^2 + \frac{1}{6}\|u(t)\|_{L^6(\Omega)}^6,
\end{equation}
satisfies the decay estimate:
\begin{equation}\label{eq:exponential_decay}
    E_u(t) \le C E_u(0) e^{-\gamma t}, \quad \forall t \ge 0.
\end{equation}
\end{theorem}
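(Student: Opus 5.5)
The plan is the standard route from an observability inequality to exponential decay. The target estimate is
\begin{equation*}
E_u(T) \le C_T \int_0^T\!\!\int_\Omega a(x)\,|\nabla u_t|^2\,dx\,dt ,
\end{equation*}
valid for every global solution $u$ whose initial energy lies in a bounded set $E_u(0)\le R$, for some $T>T_0$, with $T_0$ the interception time of the GCC or non-invasive geometry. Combining this with the energy identity $E_u(T)+\int_0^T\!\int_\Omega a|\nabla u_t|^2 = E_u(0)$ gives $E_u(T)\le \frac{C_T}{1+C_T}E_u(0)$. Since $E$ is non-increasing, the same estimate applies on every window $[jT,(j+1)T]$ with the same $R$. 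Iterating yields \eqref{eq:exponential_decay} with $\gamma=T^{-1}\log\frac{1+C_T}{C_T}$. Global existence, uniqueness and the Strichartz regularity $u\in L^4_{loc}(0,\infty;L^{12})$ are taken from Theorem~\ref{thm:lwp_arbitrary_data}, Theorem~\ref{thm:uniqueness_strichartz} and the global extension results of the preceding sections.

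\textbf{Step 1: contradiction setup.} I will prove the observability inequality by contradiction, following the scheme sketched before the statement. Suppose it fails. Then there are solutions $u^m$ with $E_{u^m}(0)\le R$ and
\[
\int_0^T\!\!\int_\Omega a|\nabla u^m_t|^2 \le \tfrac1m\, E_{u^m}(T).
\]
Set $\alpha_m=\sqrt{E_{u^m}(0)}$ and $v^m=u^m/\alpha_m$. Then $v^m$ solves
\[
\Box v^m-\dive(a\nabla v^m_t)+\alpha_m^4 (v^m)^5=0 ,
\]
with normalized energy and dissipation tending to $0$. Passing to subsequences gives $\alpha_m\to\alpha\in[0,\sqrt R]$ and $v^m\rightharpoonup v$ weak-$*$ in the energy space, with strong convergence in $L^2_{t,x}$. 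The nonlinear term converges by the Lions-lemma argument of Lemma~\ref{lem:fk-limit}. I also need the Strichartz bound $\|v^m\|_{L^4(0,T;L^{12})}\le C(R)$ uniformly in $m$; I expect this from the bootstrap of \S\ref{sec:well-posedness} together with uniform-in-$m$ lifespans.

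\textbf{Step 2: the weak limit vanishes.} From $\sqrt a\,\nabla v^m_t\to0$ and $a>0$ on $\omega$, I get $\nabla v_t=0$ on $\omega\times(0,T)$. The Dirichlet/Poincar\'e structure, or working on a connected component together with the energy bound, should then give $v_t=0$ there. Set $w=v_t$; it solves $\Box w+5\alpha^4v^4w=0$. The potential $5\alpha^4 v^4$ lies in $L^1(0,T;L^3)$ because $v\in L^4L^{12}$. The unique continuation theorem of Duyckaerts--Zhang--Zuazua \cite{Duyckaerts} then gives $w\equiv0$. Hence $v$ is stationary, so $-\Delta v+\alpha^4v^5=0$ with $v\in H_0^1$. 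Testing with $v$ gives $v\equiv0$.

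\textbf{Step 3: the defect measure vanishes (main obstacle).} This is the step I expect to be hardest: upgrading $v^m\rightharpoonup0$ to $E_{v^m}\to0$. I will take the microlocal defect measure $\mu$ of $(\nabla v^m,\partial_tv^m)$, as in \eqref{eq:mdm_def}.

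First, the nonlinear term: since $v^m\to0$ strongly in $L^2$ and $v^m$ is bounded in $L^4L^{12}$, interpolation makes $\alpha_m^4(v^m)^5\to0$ in $L^1L^{6/5-}$. So the nonlinear term is compact and does not contribute to $\mu$.

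Next, $\mu$ vanishes over $\omega$, by \eqref{stab-mu-zero}. The weak residual estimate \eqref{stab-residual-H2} gives $\operatorname{supp}\mu\subset\mathrm{Char}(P)$. Off $\omega$ the equation is the free wave equation, so $\mu$ propagates along generalized bicharacteristics by the Melrose--Sj\"ostrand/Lebeau theory. The geometric hypothesis, GCC or the non-invasive net, then forces $\mu\equiv0$.

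The delicate point is propagation near $\partial\omega$, where $a$ degenerates. There I would use $a\in W^{1,\infty}$ and the commutator bound \eqref{eq:commutator_bound}: they show that $\dive(a\nabla v^m_t)$, localized by a pseudodifferential cutoff, is $o(1)$ in $H^{-1}$ once $\sqrt a\nabla v^m_t\to0$. That suffices for the transport equation for $\mu$. Finally, $\mu\equiv0$ gives $v^m\to0$ strongly in $H^1_{loc}\times L^2_{loc}$ on $(0,T)$, and hence $\int_0^T E_{v^m}\to0$. Energy monotonicity then forces $E_{v^m}(T)\to0$. Integrating the energy identity over time gives $E_{v^m}(0)\le C\big(\int_0^TE_{v^m}+\text{dissipation}\big)\to0$, which contradicts $E_{v^m}(0)=1$.
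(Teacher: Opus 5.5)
Your architecture matches the paper's own sketch: a normalized contradiction sequence, a weak limit killed by the Duyckaerts--Zhang--Zuazua (DZZ) unique continuation with $V=5\alpha^4v^4\in L^1(0,T;L^3)$, and a defect measure that vanishes over $\omega$ and propagates under GCC. But Step~3 does not close at the energy level. The standard propagation theorem for the defect measure of $(\nabla v^m,\partial_t v^m)$ needs the right-hand side of $\Box v^m=\cdots$ to vanish at the $L^2$ level, e.g.\ in $L^1(0,T;L^2)$. Convergence in $L^2(0,T;H^{-1})$ yields only $\operatorname{supp}\mu\subset\mathrm{Char}(P)$, which is exactly what the paper concedes in passing from \eqref{stab-residual-H2} to \eqref{stab-char}.

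\emph{Nonlinear term.} Here $\|\alpha_m^4(v^m)^5\|_{L^1L^2}=\alpha_m^4\|v^m\|_{L^5L^{10}}^5$ is scale-invariant. Interpolating uniform Strichartz bounds with $v^m\to0$ in $L^2$ gives smallness only in subcritical norms such as your $L^1L^{6/5-}$, which is below the required level. A rescaled scattering solution $\lambda_m^{1/2}V(\lambda_m t,\lambda_m(x-x_0))$ has bounded Strichartz norms and tends to $0$ in $L^\infty L^2$, yet this quantity stays of order one. Excluding or linearizing such concentrating profiles is exactly what a profile decomposition does (Bahouri--G\'erard \cite{BahouriGerard}; Laurent for the critical Klein--Gordon equation). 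Neither your proposal nor the paper supplies it.

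\emph{Kelvin--Voigt term.} \eqref{eq:commutator_bound} cannot give $o(1)$, because its low-order part $C\Lambda\|u_t\|_{L^2}$ stays of order one along the normalized sequence. The dissipation does give $\|a\nabla v^m_t\|_{L^2}\le\|a\|_{L^\infty}^{1/2}\|\sqrt a\,\nabla v^m_t\|_{L^2}\to0$, but that is again only the $H^{-1}$ level. So, even granting the nonlinear term, you have invariance on $\Omega\setminus\overline\omega$ and $\mu=0$ over $\omega$. Nothing prevents mass carried along a ray of the undamped region from being lost at $\partial\omega$. Crossing the interface needs a real argument; already for the linear Kelvin--Voigt equation, the regularity of $a$ there decides whether decay is exponential.

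The earlier steps rest on further unproved claims.
\begin{enumerate}
\item[(i)] The uniform bound $\|v^m\|_{L^4(0,T;L^{12})}\le C(R)$ does not follow from \S\ref{sec:well-posedness}. The bootstrap lifespan depends on the frequency profile of the data through the choice of $N_0$ with $\epsilon(N_0)$ small (cf.\ Theorem~\ref{thm:lwp_arbitrary_data}). The paper's assertion $N_0=N_0(E(0))$ is unjustified, since $\|\nabla P_{>N}u_0\|_{L^2}$ is not controlled by the energy. Global existence is itself conditional on Hypothesis (H*). Without this bound the limit $v$ is only an energy-class weak solution, so $V\in L^1L^3$ is not available for DZZ.
\item[(ii)] $\sqrt a\,\nabla v^m_t\to0$ only yields $v_t=c_j(t)$ on each connected component $\omega_j$ of $\omega$. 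The Dirichlet trace removes $c_j$ only for components meeting $\partial\Omega$. On an interior component the limit equation is locally compatible with $v=\phi(t)$, $\phi''+\alpha^4\phi^5=0$, so the DZZ hypothesis $w=0$ on $\omega\times(0,T)$ is not established.
\item[(iii)] Your constants depend on $R$, whereas the statement asserts constants independent of the data. That would require treating $\alpha_m\to\infty$, which neither you nor the paper does (the paper passes to a finite $\alpha$).
\end{enumerate}
The paper's sketch shares all of these gaps, so you are not missing a step it supplies. As written, however, the argument does not prove the theorem.
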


\subsection{Conclusion}

The Kelvin--Voigt mechanism alters the regularity of the residual,
reducing convergence from $H^{-1}$ to $H^{-2}$. This prevents direct
propagation everywhere but preserves the microlocal structure outside
the damping region.

The stabilization mechanism therefore remains entirely governed by:

\begin{itemize}
\item Microlocal propagation in $\Omega \setminus \omega$,
\item Vanishing of $\mu$ in $\omega$,
\item Geometric interception of bicharacteristics.
\end{itemize}

\begin{remark}[Absence of dissipation on the illuminated boundary]
\label{rem:illuminated_boundary}
An important geometric configuration arises when the dissipative mechanism
is absent on the illuminated portion of the boundary.

In such situations, generalized bicharacteristics may undergo multiple
reflections without necessarily intersecting the damping region. The
analysis must therefore incorporate the microlocal structure of the
reflected flow.

The asymptotic behavior of high-frequency energy concentration for the
Dirichlet problem was investigated by G\'erard and Leichtnam
\cite{Gerard-Leicthnam}. Their results describe the ergodic and propagation
properties of eigenfunctions, providing a rigorous framework for
understanding how energy distributes under repeated reflections at the
boundary.

From the stabilization viewpoint, the defect-measure argument remains
valid provided the reflected bicharacteristic flow does not admit trapped
trajectories avoiding the dissipative region. In particular, if the
underlying billiard dynamics prevents the formation of invariant
concentration sets, one still obtains $\mu \equiv 0$.

Therefore, the stabilization mechanism developed in this work is compatible
with configurations where dissipation is missing on part of the illuminated
boundary, under suitable dynamical conditions on the generalized
bicharacteristic flow.
\end{remark}

\appendix
\section{Smooth Spectral Multipliers}

Let $\Omega \subset \mathbb{R}^3$ be a smooth bounded domain and
consider the Dirichlet Laplacian $-\Delta$ with domain
$D(-\Delta)=H^2(\Omega)\cap H_0^1(\Omega)$.

It is well known that $-\Delta$ admits a complete orthonormal basis
of eigenfunctions $\{\varphi_k\}_{k\ge1} \subset H_0^1(\Omega)$
associated with eigenvalues $\{\lambda_k^2\}_{k\ge1}$ satisfying

\begin{eqnarray*}
-\Delta \varphi_k = \lambda_k^2 \varphi_k,
\qquad
0 < \lambda_1 \le \lambda_2 \le \cdots,
\qquad
\lambda_k \to \infty,
\end{eqnarray*}
together with
\begin{equation*}
\langle \varphi_k, \varphi_j \rangle_{L^2(\Omega)} = \delta_{kj}.
\end{equation*}

Let $\chi \in C_c^\infty(\mathbb{R})$ be an even smooth cut-off
function such that
\[
0 \le \chi \le 1,
\qquad
\chi(s)=1 \text{ for } |s|\le1,
\qquad
\chi(s)=0 \text{ for } |s|\ge2.
\]

For $m \ge 1$, we define the smooth spectral multiplier
$S_m : L^2(\Omega) \to L^2(\Omega)$ by

\begin{equation}\label{Sm-def}
S_m v
:=
\sum_{k=1}^{\infty}
\chi\!\left(\frac{\lambda_k}{m}\right)
\langle v, \varphi_k \rangle
\varphi_k.
\end{equation}

The operator $S_m$ may be interpreted through functional calculus as
\[
S_m = \chi\!\left(\frac{\sqrt{-\Delta}}{m}\right),
\]
and therefore constitutes a pseudodifferential operator of order zero.

\subsection*{Basic Properties}

\begin{lemma}[L$^2$ boundedness]
For every $m \ge 1$,
\[
\|S_m v\|_{L^2(\Omega)} \le \|v\|_{L^2(\Omega)}.
\]
\end{lemma}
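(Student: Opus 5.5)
The plan is to prove the bound directly from the spectral definition \eqref{Sm-def}, using Parseval's identity for the orthonormal eigenbasis $\{\varphi_k\}_{k\ge1}$ of the Dirichlet Laplacian.

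First, since $\{\varphi_k\}$ is a complete orthonormal basis of $L^2(\Omega)$, every $v\in L^2(\Omega)$ expands as $v=\sum_k \langle v,\varphi_k\rangle\varphi_k$, and Parseval gives $\|v\|_{L^2}^2=\sum_k|\langle v,\varphi_k\rangle|^2<\infty$.

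Next, the coefficients of $S_m v$ in this basis are $c_k:=\chi(\lambda_k/m)\langle v,\varphi_k\rangle$. Because $0\le\chi\le1$, we have $|c_k|\le|\langle v,\varphi_k\rangle|$. Hence $\sum_k|c_k|^2\le\|v\|_{L^2}^2<\infty$, so the series defining $S_m v$ converges in $L^2(\Omega)$ (its partial sums are Cauchy by orthonormality). This shows that $S_m$ is well defined as a map $L^2\to L^2$.

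Finally, applying Parseval to $S_m v$ gives
\[
\|S_m v\|_{L^2}^2=\sum_k \chi(\lambda_k/m)^2|\langle v,\varphi_k\rangle|^2\le \sum_k|\langle v,\varphi_k\rangle|^2=\|v\|_{L^2}^2.
\]
Taking square roots yields the claim, uniformly in $m\ge1$. Equivalently, in functional-calculus terms, $\|\chi(\sqrt{-\Delta}/m)\|_{\mathcal L(L^2)}\le\sup_{s}|\chi(s)|\le1$.

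None of these steps is a genuine obstacle. The only point requiring care is the convergence of the series in \eqref{Sm-def}, which is handled by the coefficient domination $|c_k|\le|\langle v,\varphi_k\rangle|$ established before Parseval is applied to $S_m v$.
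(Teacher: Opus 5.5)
Your proof is correct and is essentially the paper's own argument: Parseval's identity in the Dirichlet eigenbasis combined with $0\le\chi(\lambda_k/m)^2\le 1$. Your preliminary check that the series defining $S_m v$ converges in $L^2(\Omega)$ is a step the paper leaves implicit, and including it is fine.
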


\begin{proof}
Using Parseval's identity,
\[
\|S_m v\|_2^2
=
\sum_{k=1}^\infty
\chi\!\left(\frac{\lambda_k}{m}\right)^2
|\langle v, \varphi_k\rangle|^2
\le
\sum_{k=1}^\infty
|\langle v, \varphi_k\rangle|^2
=
\|v\|_2^2.
\]
\end{proof}

\begin{lemma}[Uniform L$^p$ boundedness]
For every $1 < p < \infty$, there exists a constant $C_p>0$
independent of $m$ such that
\begin{equation}\label{Lp-bound}
\|S_m v\|_{L^p(\Omega)} \le C_p \|v\|_{L^p(\Omega)}.
\end{equation}
\end{lemma}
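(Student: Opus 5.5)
To prove the uniform $L^p$ bound for $S_m=\chi(\sqrt{-\Delta}/m)$, I would use the standard Gaussian heat-kernel route to spectral multiplier theorems on domains. The Dirichlet heat kernel $p_t(x,y)$ of $e^{t\Delta}$ on a smooth bounded $\Omega\subset\mathbb R^3$ satisfies the pointwise Gaussian upper bound
\[
0\le p_t(x,y)\le (4\pi t)^{-3/2}e^{-|x-y|^2/(4t)},
\]
which follows from the parabolic maximum principle by comparison with the whole-space kernel (domain monotonicity). The plan is to deduce from this a kernel estimate for $S_m$ that is uniform in $m$ after rescaling. Duong--Ouhabaz--Sikora give such an estimate for $F(\sqrt{-\Delta})$ with $F$ smooth and compactly supported, in spaces of homogeneous type satisfying Gaussian bounds. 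Rescaling by $m$ amounts to working at scale $m^{-1}$. Since $\chi(s/m)$ has the same symbol norms at every dyadic scale, the resulting constants do not depend on $m$.

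Concretely, I would write $\chi(s)=G(s^2)$ with $G(\lambda)=H(\lambda)e^{-\lambda}$ and $H(\lambda)=G(\lambda)e^{\lambda}\in C_c^\infty$. Then
\[
S_m=H(-\Delta/m^2)\,e^{\Delta/m^2},
\]
and by Fourier inversion
\[
H(-\Delta/m^2)=\int_{\mathbb R}\widehat H(\tau)\,e^{i\tau\Delta/m^2}\,d\tau .
\]
Hence
\[
S_m=\int\widehat H(\tau)\,e^{(1+i\tau)\Delta/m^2}\,d\tau .
\]
Gaussian bounds for the real-time semigroup extend, via Davies' Phragmén--Lindelöf argument, to complex times $z=(1+i\tau)/m^2$ with a polynomial loss $(1+|\tau|)^{3/2}$. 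The result is
\[
|K_z(x,y)|\le C(1+|\tau|)^{3/2}m^{3}\exp\!\big(-c\,m^2|x-y|^2/(1+\tau^2)\big).
\]
Because $\widehat H$ is Schwartz, integrating in $\tau$ gives
\[
|K_{S_m}(x,y)|\le C_N\, m^3(1+m|x-y|)^{-N}.
\]
This kernel is dominated by an $L^1$-normalized radial bump at scale $m^{-1}$. Young's inequality, or Schur's test, then yields $\|S_m\|_{L^p\to L^p}\le C_p$ for all $1\le p\le\infty$, with $C_p$ independent of $m$. In fact this gives even $p=1,\infty$, so the stated range $1<p<\infty$ follows a fortiori.

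The main obstacle is the complex-time Gaussian bound. An alternative that avoids it is to split $\widehat H$ and use finite propagation speed of $\cos(t\sqrt{-\Delta})$ together with the $L^2$ bound, following Sikora's approach. For the present purpose, it suffices to cite Duong--Ouhabaz--Sikora, or the uniform $L^p$ boundedness of smooth Dirichlet spectral cutoffs used in Burq--Lebeau--Planchon and Blair--Smith--Sogge, after checking that the hypotheses (doubling measure on $\Omega$, Gaussian upper bound) hold here.
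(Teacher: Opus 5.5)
Your proof is correct, and it takes a genuinely different route from the paper. The paper gives no argument: it observes that $\chi\in C_c^\infty$ and cites ``the spectral multiplier theorem for the Dirichlet Laplacian'' (Sogge, Burq--Lebeau--Planchon). You derive the statement directly, in three steps:
\begin{itemize}
\item domain monotonicity gives the Euclidean Gaussian bound for the Dirichlet heat kernel;
\item Davies' complex-time extension, combined with the synthesis $S_m=\int\hat H(\tau)e^{(1+i\tau)\Delta/m^2}\,d\tau$, yields $|K_{S_m}(x,y)|\le C_Nm^3(1+m|x-y|)^{-N}$;
\item Young's inequality, or Schur's test, then closes.
\end{itemize}
Two small points are worth recording. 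Only the values of $H$ on $[0,\infty)$ matter, since the spectrum of $-\Delta$ lies in $(0,\infty)$, so $H$ may be any $C_c^\infty$ extension of $\lambda\mapsto\chi(\sqrt\lambda)e^{\lambda}$. Also, any polynomial loss in $|\tau|$ in the complex-time bound is harmless against the Schwartz decay of $\hat H$.

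Your route buys several things the citation does not.
\begin{itemize}
\item It shows why smooth spectral cutoffs of the Dirichlet Laplacian are uniformly $L^p$-bounded despite the boundary. The Gaussian heat-kernel bound does all the work, so no pseudodifferential calculus up to $\partial\Omega$ is needed. This matters because the paper's description of $S_m$ as an order-zero pseudodifferential operator is not literally available at the boundary.
\item It gives the endpoints $p=1,\infty$ at no extra cost.
\item It proves the zeroth-order half of the kernel bound in Fact~\ref{fact:kernel}, which the paper later uses without proof in the commutator appendix.
\end{itemize}
It does not give the gradient half, $|\nabla_xK_N(x,y)|\le C_MN^4(1+N|x-y|)^{-M}$, which is also needed there. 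That would require further input, such as gradient bounds for the Dirichlet heat kernel up to $\partial\Omega$. The paper's citation buys only brevity.
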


\begin{proof}
Since $S_m=\chi(\sqrt{-\Delta}/m)$ with $\chi \in C_c^\infty$,
the multiplier is smooth. The boundedness then follows from the
spectral multiplier theorem for the Dirichlet Laplacian
(see Sogge\cite{Sogge},  Burq--Lebeau--Planchon \cite{BurqLebeauPlanchon}).
\end{proof}

\begin{lemma}[Strong convergence]
For every $v \in L^2(\Omega)$,
\begin{equation}\label{Sm-conv}
S_m v \to v \quad \text{strongly in } L^2(\Omega).
\end{equation}
\end{lemma}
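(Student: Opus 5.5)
The plan is to work entirely on the spectral side via Parseval's identity, as in the proof of the $L^2$ boundedness lemma just above. Write $v=\sum_k \langle v,\varphi_k\rangle\varphi_k$, which converges in $L^2(\Omega)$ because $\{\varphi_k\}$ is a complete orthonormal basis. Then
\[
\|S_m v - v\|_{L^2(\Omega)}^2=\sum_{k=1}^\infty \Big(1-\chi\Big(\frac{\lambda_k}{m}\Big)\Big)^2 |\langle v,\varphi_k\rangle|^2 .
\]
The task thus reduces to showing that this series tends to $0$ as $m\to\infty$.

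Two facts about the coefficients handle the limit. First, since $0\le\chi\le1$, each term is bounded by $|\langle v,\varphi_k\rangle|^2$, and these majorants are summable by Bessel/Parseval, with sum $\|v\|_2^2$. Second, for each fixed $k$ we have $\lambda_k/m\to0$ as $m\to\infty$. Once $m\ge\lambda_k$, the point $\lambda_k/m$ lies in $[0,1]$, where $\chi\equiv1$, so the $k$-th term vanishes identically. Dominated convergence for series (the counting-measure version of Lebesgue's theorem) then shows that the sum tends to $0$.

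I would also record a quantitative tail version, since this is the form used in the Littlewood--Paley bootstrap, where $\epsilon(N)\to0$. Given $\varepsilon>0$, choose $K$ with $\sum_{k>K}|\langle v,\varphi_k\rangle|^2<\varepsilon$. For $m\ge\lambda_K$, the monotonicity of the eigenvalues gives $\lambda_k\le\lambda_K\le m$ for all $k\le K$, so every term with $k\le K$ is zero. The remaining terms are bounded by the tail, so $\|S_mv-v\|_2^2<\varepsilon$. The same argument, applied to the weighted coefficients $\lambda_k|\langle v,\varphi_k\rangle|$, gives convergence in $H_0^1$ for $v\in H_0^1(\Omega)$; this is the version actually needed for the high-frequency tail of the initial data.

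I do not expect a genuine obstacle. The only points to check are that $\chi\equiv1$ near the origin (so the multiplier equals $1$ on low modes) and that the eigenvalues increase to infinity. Both are part of the standing setup of this appendix.
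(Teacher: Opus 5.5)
Your proof is correct and is essentially the paper's argument: the paper also concludes by dominated convergence on the spectral expansion, using that $\chi(\lambda_k/m)\to 1$ for each fixed $k$. You spell out the Parseval reduction, and your quantitative tail bound and $H_0^1$ variant are correct; the $H_0^1$ variant is what is actually needed for $\epsilon(N)\to 0$ in the bootstrap.
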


\begin{proof}
Since $\chi(\lambda_k/m)\to1$ for each fixed $k$,
dominated convergence applied to the spectral expansion yields
the result.
\end{proof}

\begin{lemma}[Commutation]
For every sufficiently regular function $v$,
\begin{equation}\label{commutation}
-\Delta(S_m v) = S_m(-\Delta v).
\end{equation}
\end{lemma}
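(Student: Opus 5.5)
The plan is to verify the identity $-\Delta(S_m v) = S_m(-\Delta v)$ on the spectral side. Both sides are computed through the eigenfunction expansion \eqref{Sm-def}, and the key point is that $S_m$ acts diagonally on the same basis $\{\varphi_k\}$ that diagonalizes $-\Delta$. I will take ``sufficiently regular'' to mean $v \in D(-\Delta) = H^2(\Omega)\cap H_0^1(\Omega)$. More generally, the argument works for $v$ such that both sides make sense in $L^2(\Omega)$, or in $D(-\Delta)'$ by duality.

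First I check that $S_m v$ lies in $D(-\Delta)$ for every $v \in L^2(\Omega)$. The coefficients $\chi(\lambda_k/m)\langle v,\varphi_k\rangle$ vanish whenever $\lambda_k > 2m$. Since $\lambda_k \to \infty$, $S_m v$ is therefore a finite linear combination of eigenfunctions. Each $\varphi_k$ belongs to $H^2\cap H_0^1$ by elliptic regularity on the smooth domain. Consequently $-\Delta(S_m v) = \sum_k \lambda_k^2\,\chi(\lambda_k/m)\langle v,\varphi_k\rangle\varphi_k$, and this sum is finite, so there are no convergence issues.

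Next I expand the right-hand side. For $v\in D(-\Delta)$, Green's formula with the Dirichlet conditions on $v$ and $\varphi_k$ gives the self-adjointness relation
\[
\langle -\Delta v,\varphi_k\rangle = \langle v,-\Delta\varphi_k\rangle = \lambda_k^2\langle v,\varphi_k\rangle .
\]
Substituting this into \eqref{Sm-def} yields $S_m(-\Delta v) = \sum_k \chi(\lambda_k/m)\,\lambda_k^2\langle v,\varphi_k\rangle\varphi_k$. This is the same finite sum as above, which proves \eqref{commutation}.

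The only delicate point is the justification of the integration by parts, that is, that the boundary terms vanish. This is exactly where $v\in H_0^1\cap H^2$ is used. If one wants the statement for $v\in H_0^1$ only, with $-\Delta v\in H^{-1}$, I would define $S_m$ on $H^{-1}$ by the same formula using the duality pairing $\langle\cdot,\varphi_k\rangle_{H^{-1},H_0^1}$. The identity $\langle -\Delta v,\varphi_k\rangle = \lambda_k^2\langle v,\varphi_k\rangle$ then follows from the weak formulation $(\nabla v,\nabla\varphi_k)$. The same computation goes through unchanged.
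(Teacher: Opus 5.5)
Your proposal is correct and follows the same route as the paper, which simply notes that both operators are diagonal in the Dirichlet eigenbasis and writes $-\Delta(S_m v)=\sum_k \chi(\lambda_k/m)\lambda_k^2\langle v,\varphi_k\rangle\varphi_k = S_m(-\Delta v)$. Your version makes explicit two points the paper leaves implicit. First, $S_m v$ is a finite combination of eigenfunctions. Second, the identity $\langle -\Delta v,\varphi_k\rangle=\lambda_k^2\langle v,\varphi_k\rangle$ needs the Dirichlet condition on $v$: it holds for $v\in H^2\cap H_0^1$ by Green's formula, or for $v\in H_0^1$ by duality.
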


\begin{proof}
Both operators are diagonal in the eigenfunction basis:
\[
-\Delta(S_m v)
=
\sum_{k}
\chi(\lambda_k/m)\lambda_k^2
\langle v,\varphi_k\rangle\varphi_k
=
S_m(-\Delta v).
\]
\end{proof}

\begin{lemma}[Regularizing effect]
For every $s \ge 0$, there exists $C_s>0$ independent of $m$ such that
\begin{equation}\label{regularization}
\|S_m v\|_{H^s(\Omega)} \le C_s m^s \|v\|_{L^2(\Omega)}.
\end{equation}
\end{lemma}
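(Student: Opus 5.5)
The bound will follow directly from the spectral definition \eqref{Sm-def}, combined with the identification of $H^s$ norms with powers of the Dirichlet Laplacian. I first treat the case $s=2j$ with $j$ a nonnegative integer. Then I extend to all $s\ge0$ by interpolation.

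\emph{Step 1 (spectral Sobolev norms).} Define $\|v\|_{\mathcal D_s}^2:=\sum_k \lambda_k^{2s}|\langle v,\varphi_k\rangle|^2$, the norm of $D((-\Delta)^{s/2})$. By elliptic regularity for the Dirichlet problem on the smooth bounded domain $\Omega$, together with $\lambda_1>0$, this norm is equivalent to the $H^s(\Omega)$ norm on $D((-\Delta)^{s/2})$. Two facts are needed here. First, $\|w\|_{H^{2j}}\le C_j\|(-\Delta)^jw\|_{L^2}$ whenever $w$ lies in the domain; this is iterated $H^2$ regularity. Second, for non-even $s$ one needs $D((-\Delta)^{s/2})\hookrightarrow H^s$ continuously, which follows by complex interpolation between the even integer cases. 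Since $\chi$ has compact support, $S_mv$ is a finite linear combination of eigenfunctions. It therefore lies in every $D((-\Delta)^{s/2})$, and each $\varphi_k$ is smooth by elliptic regularity. So $\|S_mv\|_{H^s}\le C_s\|S_mv\|_{\mathcal D_s}$ with $C_s$ depending only on $\Omega$ and $s$.

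\emph{Step 2 (the multiplier estimate).} By Parseval,
\[
\|S_mv\|_{\mathcal D_s}^2=\sum_k \lambda_k^{2s}\chi(\lambda_k/m)^2|\langle v,\varphi_k\rangle|^2 .
\]
On the support of $\chi(\lambda_k/m)$ we have $\lambda_k\le 2m$, and $0\le\chi\le1$. Hence each coefficient is at most $(2m)^{2s}$, which gives $\|S_mv\|_{\mathcal D_s}\le (2m)^s\|v\|_{L^2}$. Combining this with Step 1 yields $\|S_mv\|_{H^s}\le C_s2^s m^s\|v\|_{L^2}$, and this constant is independent of $m$.

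\emph{Main obstacle.} The only nontrivial input is the norm equivalence in Step 1 for general real $s$, especially the upper embedding $D((-\Delta)^{s/2})\subset H^s$ with $m$-independent constants. For $s\in[0,2]$ I will invoke the interpolation identity $[L^2,H^2\cap H^1_0]_{s/2}\subset H^s$. For larger $s$ I will iterate the $H^{2j}$ elliptic estimates on the smooth boundary and interpolate again. No compatibility-condition issues arise, because only the upper bound on $\|S_mv\|_{H^s}$ is required.
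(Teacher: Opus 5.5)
Your proposal is correct and is essentially the paper's proof: on the support of $\chi(\lambda_k/m)$ one has $\lambda_k\le 2m$, so the spectral weight is $O(m^{2s})$, and Parseval gives the bound with an $m$-independent constant. The only difference is that the paper takes $\sum_k(1+\lambda_k^2)^s\chi(\lambda_k/m)^2|\langle v,\varphi_k\rangle|^2$ directly as $\|S_mv\|_{H^s}^2$, whereas you also justify the embedding $D((-\Delta)^{s/2})\hookrightarrow H^s(\Omega)$ by elliptic regularity and interpolation. Note that only this one direction holds in general; full norm equivalence fails at $s=\tfrac12,\tfrac52,\dots$. As you say, that direction is the only one you use, so the argument stands.
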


\begin{proof}
From the spectral definition,
\[
\|S_m v\|_{H^s}^2
=
\sum_{k} (1+\lambda_k^2)^s
\chi(\lambda_k/m)^2 |\langle v,\varphi_k\rangle|^2.
\]

Since $\chi(\lambda_k/m)=0$ for $\lambda_k \ge 2m$,
we have $(1+\lambda_k^2)^s \le C m^{2s}$ on the support
of the multiplier, giving \eqref{regularization}.
\end{proof}

\appendix
\section{Smooth Spectral Multipliers: functional calculus and stability in $L^p$}
\label{app:spectral-multipliers}

\subsection{Dirichlet spectral calculus and fractional powers}

Let $\Omega\subset\mathbb R^3$ be a bounded $C^\infty$ domain and let
$A:=-\Delta_D$ denote the Dirichlet Laplacian on $\Omega$, with
$D(A)=H^2(\Omega)\cap H_0^1(\Omega)$.
There exist eigenpairs $\{(\lambda_k,\varphi_k)\}_{k\ge1}$ such that
\[
A\varphi_k=\lambda_k\varphi_k,\qquad
0<\lambda_1\le\lambda_2\le\cdots,\qquad
\{\varphi_k\}_{k\ge1}\ \text{orthonormal in }L^2(\Omega).
\]
For any bounded Borel function $m:[0,\infty)\to\mathbb C$, the spectral theorem
defines the operator $m(A)$ on $L^2(\Omega)$ by
\[
m(A)v := \sum_{k=1}^\infty m(\lambda_k)\,\langle v,\varphi_k\rangle\,\varphi_k,
\qquad v\in L^2(\Omega).
\]
In particular, for $s\in\mathbb R$ we define the fractional powers
\[
A^{s/2}v := \sum_{k=1}^\infty \lambda_k^{s/2}\,\langle v,\varphi_k\rangle\,\varphi_k,
\]
with domain $D(A^{s/2})$ equipped with the graph norm. We use the notation
$H^s_D(\Omega):=D((I+A)^{s/2})$ and the equivalence
\[
\|v\|_{H^s_D(\Omega)}\sim \|(I+A)^{s/2}v\|_{L^2(\Omega)}.
\]

\subsection{Definition of the smooth cutoff $S_m$ (same as in the main text)}

Fix $\chi\in C_c^\infty([0,\infty))$ such that $0\le\chi\le1$,
$\chi(r)=1$ for $0\le r\le 1$ and $\chi(r)=0$ for $r\ge 2$.
For $m\ge1$ define
\begin{equation}\label{eq:Sm-def-app}
S_m v := \chi\!\left(\frac{A}{m}\right)v
       = \sum_{k=1}^\infty \chi\!\left(\frac{\lambda_k}{m}\right)\langle v,\varphi_k\rangle\varphi_k,
\qquad v\in L^2(\Omega).
\end{equation}
Set also $S_m^\perp:=I-S_m$.

\begin{lemma}[Self-adjointness, $L^2$ contraction, commutation]\label{lem:Sm-L2-app}
For each $m\ge1$, $S_m$ is self-adjoint on $L^2(\Omega)$ and
\[
\|S_m\|_{\mathcal L(L^2,L^2)}\le 1,\qquad
\|S_m^\perp\|_{\mathcal L(L^2,L^2)}\le 1.
\]
Moreover, $S_m$ commutes with $A$ and with all fractional powers:
\[
S_m A^{s/2} = A^{s/2}S_m,\qquad \forall s\in\mathbb R.
\]
\end{lemma}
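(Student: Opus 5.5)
The whole argument will be carried out in the eigenbasis $\{\varphi_k\}$. The point is that every operator in the statement is a Fourier multiplier in this basis, and all of its properties reduce to scalar statements about the multipliers $\chi(\lambda_k/m)$, $1-\chi(\lambda_k/m)$ and $\lambda_k^{s/2}$.

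\emph{Self-adjointness.} For $v,w\in L^2(\Omega)$, I expand both in the orthonormal basis and use Parseval:
\[
\langle S_m v,w\rangle=\sum_{k}\chi(\lambda_k/m)\,\langle v,\varphi_k\rangle\,\overline{\langle w,\varphi_k\rangle}.
\]
Since $\chi$ is real-valued, this expression is symmetric in $v$ and $w$ after conjugation, so it equals $\langle v,S_m w\rangle$. The operator $S_m$ is bounded (see the next step), so symmetry is the same as self-adjointness.

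\emph{$L^2$ contraction.} Parseval gives
\[
\|S_m v\|_2^2=\sum_k \chi(\lambda_k/m)^2|\langle v,\varphi_k\rangle|^2\le \|v\|_2^2,
\]
because $0\le\chi\le1$. The same computation applies to $S_m^\perp$, whose multiplier $1-\chi(\lambda_k/m)$ also takes values in $[0,1]$. This is the only place where the assumption $0\le\chi\le 1$ is used.

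\emph{Commutation with fractional powers.} This is the one step that needs some care, because $A^{s/2}$ is unbounded for $s>0$, and for $s<0$ it has to be read on the appropriate extrapolation space. I will state the identity on the natural domain.

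Take $v\in D(A^{s/2})$. Then $S_m v$ has coefficients $\chi(\lambda_k/m)\langle v,\varphi_k\rangle$. These vanish whenever $\lambda_k\ge 2m$, so only finitely many are nonzero, and $S_m v$ lies in every $D(A^{r/2})$. Computing coefficientwise, both $A^{s/2}S_m v$ and $S_m A^{s/2}v$ have $k$-th coefficient
\[
\lambda_k^{s/2}\,\chi(\lambda_k/m)\,\langle v,\varphi_k\rangle,
\]
because scalar multiplication commutes. For $s<0$, $A^{s/2}$ is bounded on $L^2$, since $\lambda_k\ge\lambda_1>0$, and the same coefficient identity holds on all of $L^2$.

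More generally, both sides are functions of the single operator $A$, and the identity is the product rule of the functional calculus. I expect no real obstacle here. The only point to state explicitly is that the equality is understood on $D(A^{s/2})$; in fact $S_m$ maps $L^2$ into $\bigcap_r D(A^{r/2})$.
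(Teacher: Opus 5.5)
Your proposal is correct and follows the paper's own argument, which simply observes that everything follows from the diagonal action of $S_m$ in the eigenbasis $\{\varphi_k\}$ with real multipliers $\chi(\lambda_k/m)\in[0,1]$. Your extra care about domains is a precision the paper leaves implicit: you read the identity on $D(A^{s/2})$ for $s>0$, you note that $S_m$ has finite-rank range contained in $\bigcap_r D(A^{r/2})$, and you note that $A^{s/2}$ is bounded for $s<0$ since $\lambda_k\ge\lambda_1>0$.
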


\begin{proof}
All statements follow directly from the diagonal action of $S_m$ in the eigenbasis
$\{\varphi_k\}$, since $\chi(\lambda_k/m)\in[0,1]$ is real-valued.
\end{proof}


\begin{lemma}[Strong convergence in $L^p$]\label{lem:Sm-strong-Lp}
Let $1<p<\infty$. Then for every $v\in L^p(\Omega)$,
\[
S_m v \to v \quad\text{strongly in }L^p(\Omega)\quad\text{as }m\to\infty.
\]
Moreover, for every $s\in\mathbb R$ and every $v\in H^s_D(\Omega)$,
\[
S_m v \to v \quad\text{strongly in }H^s_D(\Omega)\quad\text{as }m\to\infty.
\]
\end{lemma}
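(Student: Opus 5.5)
The plan is to prove both assertions by the same two-step scheme: first establish convergence on a dense subspace where the spectral tail is explicit, then pass to the whole space using the uniform operator bounds already available. For $L^p$ the uniform bound is the uniform $L^p$ boundedness lemma of the previous appendix, which gives $\sup_m\|S_m\|_{\mathcal L(L^p,L^p)}\le C_p$. That lemma is stated with $\chi(\sqrt{A}/m)$; since $r\mapsto\chi(r^2)$ is again a smooth compactly supported cutoff, it also covers $\chi(A/m)$. For $H^s_D$ the uniform bound is the contraction estimate of Lemma~\ref{lem:Sm-L2-app}. These uniform bounds reduce everything to a density statement, by the standard $\varepsilon/3$ argument:
\[
\|S_mv-v\|\le \|S_m(v-w)\|+\|S_mw-w\|+\|w-v\|\le (C+1)\|v-w\|+\|S_mw-w\|.
\]

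For the $H^s_D$ statement a density step is unnecessary. Expanding $v=\sum_k\langle v,\varphi_k\rangle\varphi_k$ gives
\[
\|S_mv-v\|_{H^s_D}^2=\sum_k (1+\lambda_k)^s\bigl(1-\chi(\lambda_k/m)\bigr)^2|\langle v,\varphi_k\rangle|^2 .
\]
Each term tends to $0$ as $m\to\infty$, because $\chi(\lambda_k/m)=1$ as soon as $m\ge\lambda_k$. Each term is also dominated by $(1+\lambda_k)^s|\langle v,\varphi_k\rangle|^2$, which is summable precisely because $v\in H^s_D$. Dominated convergence for series then finishes this part. The same computation applies for negative $s$ if $H^s_D$ is read as the completion under the graph norm.

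For the $L^p$ statement I take as dense class the finite linear combinations $w=\sum_{k\le K}c_k\varphi_k$. Eigenfunctions are smooth up to the boundary by elliptic regularity, so these lie in every $L^p$. For such $w$ one has $S_mw=w$ exactly once $m\ge\lambda_K$, so the middle term above vanishes for large $m$.

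The main obstacle is justifying that this span is dense in $L^p(\Omega)$ for every $1<p<\infty$. For $p\le2$ this is immediate: the span is dense in $L^2$, and $L^2\hookrightarrow L^p$ continuously and densely on a bounded domain. For $p>2$ I would argue by duality. If $g\in L^{p'}$ annihilates all $\varphi_k$, then every $\varphi\in C_c^\infty(\Omega)$ is a limit in the $H^2_D$ norm, for a suitable high $s$, of partial sums of its expansion. By Sobolev embedding, $H^s_D\hookrightarrow C(\overline\Omega)$ for $s>3/2$, so these partial sums converge uniformly, hence in $L^p$. Therefore $\int g\varphi=0$ for all test functions, which forces $g=0$, and Hahn–Banach gives density. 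A simpler alternative for $p>2$ is to use $L^\infty\cap L^2$ as the dense class and approximate in $H^s_D$ for large $s$ via the already-proved second assertion, combined with Sobolev embedding into $L^p$. Either route closes the argument.
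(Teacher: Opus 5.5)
Your proof is correct. For the $L^p$ part it uses a different dense class from the paper's proof. The paper approximates $v$ by $v_n\in L^2\cap L^p$ and asserts that $S_mv_n\to v_n$ in $L^p$ ``by interpolation'', since the sequence converges in $L^2$ and is bounded in $L^p$. For $p<2$ this is immediate on a bounded domain. For $p>2$, however, boundedness in $L^p$ plus convergence in $L^2$ only gives convergence in $L^q$ for $2\le q<p$. To make that step work one must take $v_n$ in some $L^r$ with $r>p$ (e.g.\ $L^\infty$) and use the uniform $L^r$ bound.

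Your choice of finite eigenfunction combinations avoids this issue, because $S_mw=w$ exactly once $m\ge\lambda_K$. The price is the density of the eigen-span in $L^p$, which you prove correctly: directly for $p\le 2$, and for $p>2$ via $C_c^\infty(\Omega)\subset D(A)$, $D(A)\hookrightarrow H^2(\Omega)\hookrightarrow C(\overline\Omega)$, and Hahn--Banach.

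You also make explicit something the paper glosses over. Appendix B defines $S_m=\chi(A/m)$, while the uniform $L^p$ lemma is stated for $\chi(\sqrt{-\Delta}/m)$. Your rewriting $\chi(A/m)=\tilde\chi(\sqrt A/\sqrt m)$ with $\tilde\chi(r)=\chi(r^2)$ bridges the two. For the $H^s_D$ statement, your dominated-convergence computation is the same argument as the paper's, which conjugates by $(I+A)^{s/2}$ and appeals to $L^2$ convergence.

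One correction: your ``simpler alternative'' for $p>2$ does not work as phrased. A function in $L^\infty\cap L^2$ need not lie in $H^s_D$ for $s\ge 1/2$; for example, the indicator of a ball $B\Subset\Omega$ is not in $H^{1/2}$. So it cannot be approximated in $H^s_D$ via the second assertion. If you replace that dense class by $C_c^\infty(\Omega)$, which lies in every $D(A^j)$, the idea does work. The second assertion then gives $S_mw\to w$ in $H^2_D\hookrightarrow C(\overline\Omega)$, hence in $L^p$. This is in fact the shortest route, since it needs no density of the eigen-span at all.
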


\begin{proof}
Fix $1<p<\infty$. Let $v\in L^p(\Omega)$ and choose $v_n\in L^2(\Omega)\cap L^p(\Omega)$
such that $v_n\to v$ in $L^p(\Omega)$ (density of $L^2\cap L^p$ in $L^p$).
By uniform $L^p$ boundedness of $S_m$ (Lemma A.2 in the main text),
\[
\|S_m(v-v_n)\|_{L^p}\le C_p\|v-v_n\|_{L^p}.
\]
For fixed $n$, we also have $S_m v_n\to v_n$ in $L^2(\Omega)$ by Lemma A.3; since
$\{S_m v_n\}_m$ is bounded in $L^p(\Omega)$ and $v_n\in L^2\cap L^p$, we can conclude
$S_m v_n\to v_n$ in $L^p(\Omega)$ (for instance, by interpolation between $L^2$ and $L^p$,
or by a standard cutoff/interpolation argument on bounded domains).
Hence, for any $\varepsilon>0$, pick $n$ so that $\|v-v_n\|_{L^p}\le \varepsilon$, and then
take $m$ large so that $\|S_m v_n-v_n\|_{L^p}\le \varepsilon$. We get
\[
\|S_m v-v\|_{L^p}\le \|S_m(v-v_n)\|_{L^p}+\|S_m v_n-v_n\|_{L^p}+\|v_n-v\|_{L^p}
\le (C_p+2)\varepsilon,
\]
which proves the $L^p$ convergence.

For the $H^s_D$ convergence, apply the previous argument to $(I+A)^{s/2}v\in L^2$,
using the commutation $(I+A)^{s/2}S_m=S_m(I+A)^{s/2}$ from Lemma
\ref{lem:Sm-L2-app}.
\end{proof}

\section{Smooth Spectral Projectors and Bernstein Inequalities}
\label{app:spectral_projectors}

In $\mathbb{R}^n$, the classical Littlewood-Paley decomposition relies heavily on the Fourier transform. For bounded domains $\Omega \subset \mathbb{R}^3$ with Dirichlet boundary conditions, the natural substitute is the spectral decomposition associated with the Laplace operator. To ensure bounded commutators and rapid kernel decay, we must employ smooth spectral multipliers rather than sharp cutoffs. In this appendix, we define the smooth low and high-frequency spectral projectors via functional calculus and establish the corresponding Bernstein-type inequalities.

Let $A = -\Delta$ with domain $D(A) = H^1_0(\Omega) \cap H^2(\Omega)$. Since $\Omega$ is bounded, $A$ is a strictly positive, self-adjoint operator with a compact inverse. By the Spectral Theorem, there exists a sequence of eigenvalues $\{\lambda_j\}_{j=1}^\infty$ such that
\begin{equation}
    0 < \lambda_1 \le \lambda_2 \le \dots \le \lambda_j \to \infty \quad \text{as } j \to \infty,
\end{equation}
and a corresponding family of eigenfunctions $\{\phi_j\}_{j=1}^\infty$ which forms an orthonormal basis for $L^2(\Omega)$ and an orthogonal basis for $H^1_0(\Omega)$.

For any function $u \in L^2(\Omega)$, we have the unique expansion $$u = \sum_{j=1}^\infty (u, \phi_j)_{L^2} \phi_j.$$ The fractional Sobolev spaces can be characterized via the spectral coefficients. In particular, for $u \in H^1_0(\Omega)$:
\begin{equation}\label{eq:H1_norm}
    \|\nabla u\|_{L^2(\Omega)}^2 = (A u, u)_{L^2(\Omega)} = \sum_{j=1}^\infty \lambda_j |(u, \phi_j)_{L^2}|^2.
\end{equation}

\subsection{Low-Frequency Projector and Bernstein's Inequality}
Let $\chi \in C_c^\infty(\mathbb{R})$ be an even, smooth bump function such that $0 \le \chi(s) \le 1$ for all $s$, with $\chi(s) = 1$ for $|s| \le 1$ and $\chi(s) = 0$ for $|s| \ge 2$. For a given frequency threshold $N > 0$, we define the smooth low-frequency projector $P_{\le N}: L^2(\Omega) \to L^2(\Omega)$ by:
\begin{equation}
    P_{\le N} u = \sum_{j=1}^\infty \chi\left(\frac{\sqrt{\lambda_j}}{N}\right) (u, \phi_j)_{L^2} \phi_j.
\end{equation}
Because $\chi$ is a bounded function, $P_{\le N}$ is a bounded, self-adjoint operator in $L^2(\Omega)$. Moreover, it commutes with both $\Delta$ and $\partial_t$.

The key feature of the low-frequency projector is that it smooths the function, allowing us to control higher-order derivatives at the cost of the parameter $N$.

\begin{lemma}[Bernstein Inequality for Low Frequencies]\label{lem:bernstein_low}
For any $u \in L^2(\Omega)$, $P_{\le N} u \in H^1_0(\Omega)$ and we have:
\begin{equation}
    \|\nabla (P_{\le N} u)\|_{L^2(\Omega)} \le 2 N \|u\|_{L^2(\Omega)}.
\end{equation}
\end{lemma}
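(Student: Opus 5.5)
The plan is to compute $\|\nabla(P_{\le N}u)\|_{L^2}$ directly in the eigenbasis, using the spectral identity \eqref{eq:H1_norm}. No commutator or harmonic-analysis input should be needed: the inequality is pure $L^2$ functional calculus.

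\emph{Step 1: $P_{\le N}u$ lies in $H_0^1(\Omega)$.} Write $c_j:=(u,\phi_j)_{L^2}$. Since $\chi$ vanishes for $|s|\ge 2$, the coefficient $\chi(\sqrt{\lambda_j}/N)$ is nonzero only for the indices with $\lambda_j< 4N^2$. Because $\lambda_j\to\infty$, there are only finitely many such indices. Hence $P_{\le N}u$ is a finite linear combination of Dirichlet eigenfunctions, each of which lies in $D(A)\subset H_0^1(\Omega)$. This gives the membership claim at once, and it even gives $P_{\le N}u\in D(A^{s/2})$ for every $s$.

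\emph{Step 2: the derivative bound.} Apply \eqref{eq:H1_norm} to $v=P_{\le N}u$, whose coefficients are $\chi(\sqrt{\lambda_j}/N)\,c_j$. This gives
\[
\|\nabla P_{\le N}u\|_{L^2}^2=\sum_{j}\lambda_j\,\chi\!\left(\tfrac{\sqrt{\lambda_j}}{N}\right)^2|c_j|^2 .
\]
On the support of the multiplier we have $\sqrt{\lambda_j}\le 2N$, so $\lambda_j\le 4N^2$. Combined with $0\le\chi\le 1$, each term is at most $4N^2|c_j|^2$. Parseval then yields $\|\nabla P_{\le N}u\|_{L^2}^2\le 4N^2\|u\|_{L^2}^2$. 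Taking square roots gives the stated constant $2N$.

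\emph{Main obstacle.} The only point needing care is the legitimacy of \eqref{eq:H1_norm}, that is, the identity $\|\nabla v\|^2=(Av,v)$ for $v\in H_0^1$. This follows by integrating by parts against eigenfunctions, or equivalently from the fact that $\{\phi_j/\sqrt{\lambda_j}\}$ is orthonormal in $H_0^1$ with respect to the Dirichlet inner product. Since $P_{\le N}u$ is a finite sum, the identity holds here with no convergence issues.
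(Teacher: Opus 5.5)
Your proof is correct and follows the paper's argument: you expand $\|\nabla P_{\le N}u\|_{L^2}^2$ via \eqref{eq:H1_norm}, note that only indices with $\lambda_j\le 4N^2$ survive, use $0\le\chi\le 1$, and conclude by Parseval. Your Step 1 (finitely many surviving eigenfunctions, hence $P_{\le N}u\in D(A)\subset H_0^1(\Omega)$) also makes explicit the membership claim, which the paper's proof leaves implicit.
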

\begin{proof}
Using the spectral characterization \eqref{eq:H1_norm} and the definition of $P_{\le N}$, we obtain:
\begin{equation*}
    \|\nabla (P_{\le N} u)\|_{L^2(\Omega)}^2 = \sum_{j=1}^\infty \lambda_j \left| \chi\left(\frac{\sqrt{\lambda_j}}{N}\right) \right|^2 |(u, \phi_j)_{L^2}|^2.
\end{equation*}
Since the support of $\chi$ is contained in $[-2, 2]$, the terms in the sum are strictly zero whenever $\sqrt{\lambda_j} > 2N$. For the non-zero terms, we have $\lambda_j \le 4N^2$. Using the fact that $|\chi| \le 1$, we bound the sum by:
\begin{align*}
    \|\nabla (P_{\le N} u)\|_{L^2(\Omega)}^2 &\le \sum_{\sqrt{\lambda_j} \le 2N} 4N^2 |(u, \phi_j)_{L^2}|^2 \\
    &\le 4N^2 \sum_{j=1}^\infty |(u, \phi_j)_{L^2}|^2 = 4N^2 \|u\|_{L^2(\Omega)}^2.
\end{align*}
Taking the square root yields the desired inequality.
\end{proof}

\begin{remark}[Energy Splitting]
Because we are using smooth multipliers instead of sharp cutoffs, $P_{\le N}$ is not an idempotent projection (i.e., $P_{\le N}^2 \neq P_{\le N}$). However, since $0 \le \chi(s) \le 1$, the projectors $P_{\le N}$ and $P_{>N}$ are uniformly bounded operators in the energy space. In particular, the algebraic identity $\chi(s) + (1-\chi(s)) = 1$ ensures $P_{\le N} u + P_{> N} u = u$, guaranteeing that separating the macroscopic wave $w_N$ from the microscopic fluctuation $v_N$ does not artificially inflate the total bounds during the a priori estimates.
\end{remark}

\subsection{High-Frequency Projector and Poincaré-type Inequality}
The smooth high-frequency projector $P_{>N}: L^2(\Omega) \to L^2(\Omega)$ is defined as the complementary operator $P_{> N} = I - P_{\le N}$, which translates to:
\begin{equation}
    P_{> N} u = \sum_{j=1}^\infty \left[ 1 - \chi\left(\frac{\sqrt{\lambda_j}}{N}\right) \right] (u, \phi_j)_{L^2} \phi_j.
\end{equation}
While low frequencies control derivative losses, high frequencies provide a gain in integrability when bounded in the energy space, acting as an enhanced Poincaré inequality.

\begin{lemma}[Reverse Bernstein Inequality for High Frequencies]\label{lem:bernstein_high}
For any $u \in H^1_0(\Omega)$, we have:
\begin{equation}
    \|P_{> N} u\|_{L^2(\Omega)} \le \frac{1}{N} \|\nabla u\|_{L^2(\Omega)}.
\end{equation}
\end{lemma}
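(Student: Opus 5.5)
We prove Lemma~\ref{lem:bernstein_high} by the same spectral computation as Lemma~\ref{lem:bernstein_low}, now using that $1-\chi$ vanishes on low frequencies. Write $u=\sum_j c_j\phi_j$ with $c_j=(u,\phi_j)_{L^2}$. By Parseval,
\[
\|P_{>N}u\|_{L^2}^2=\sum_{j}\Big|1-\chi\Big(\tfrac{\sqrt{\lambda_j}}{N}\Big)\Big|^2|c_j|^2 .
\]
Since $\chi(s)=1$ for $|s|\le1$, every term with $\sqrt{\lambda_j}\le N$ drops out. For the remaining indices we have $\lambda_j>N^2$, that is, $1<\lambda_j/N^2$. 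We also have $0\le 1-\chi\le 1$, because $0\le\chi\le1$.

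Inserting the factor $\lambda_j/N^2>1$ into each surviving term and dropping $|1-\chi|^2\le1$ gives
\[
\|P_{>N}u\|_{L^2}^2\le \frac{1}{N^2}\sum_{\sqrt{\lambda_j}>N}\lambda_j|c_j|^2\le\frac{1}{N^2}\sum_{j}\lambda_j|c_j|^2 .
\]
By the spectral characterization \eqref{eq:H1_norm}, valid because $u\in H_0^1(\Omega)$, the last sum equals $\|\nabla u\|_{L^2}^2$. Taking square roots yields the claim.

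No step is a real obstacle; the argument needs only the support property $\chi\equiv1$ on $[-1,1]$ and the bound $0\le\chi\le1$. One point deserves care. Since the projector is built with $\chi(\sqrt{\lambda_j}/N)$, the correct threshold is $\sqrt{\lambda_j}>N$, so that $\lambda_j>N^2$ and the constant is exactly $1/N$. We also use the fact that $\sum_j\lambda_j|c_j|^2<\infty$ for $u\in H_0^1$, which is what \eqref{eq:H1_norm} records.
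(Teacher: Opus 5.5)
Your proof is correct and matches the paper's argument step for step. Both use Parseval in the Dirichlet eigenbasis and the vanishing of $1-\chi$ on $[-1,1]$, so only modes with $\lambda_j>N^2$ survive. Both then use the bound $|1-\chi|\le 1$, insert the factor $\lambda_j/N^2>1$, and conclude with the spectral identity $\|\nabla u\|_{L^2}^2=\sum_j\lambda_j|(u,\phi_j)_{L^2}|^2$.
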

\begin{proof}
By Plancherel's theorem and the definition of $P_{>N}$, we write:
\begin{equation*}
    \|P_{> N} u\|_{L^2(\Omega)}^2 = \sum_{j=1}^\infty \left| 1 - \chi\left(\frac{\sqrt{\lambda_j}}{N}\right) \right|^2 |(u, \phi_j)_{L^2}|^2.
\end{equation*}
By the definition of our cutoff function, $1 - \chi(s) = 0$ for all $|s| \le 1$. Therefore, the terms in the sum are strictly zero for $\sqrt{\lambda_j} \le N$. For the remaining terms where $\sqrt{\lambda_j} > N$, we have $\lambda_j > N^2$, which implies $1 < \frac{\lambda_j}{N^2}$. Since $|1 - \chi| \le 1$, we can bound the sum by:
\begin{align*}
    \|P_{> N} u\|_{L^2(\Omega)}^2 &\le \sum_{\sqrt{\lambda_j} > N} 1 \cdot |(u, \phi_j)_{L^2}|^2 \\
    &\le \sum_{\sqrt{\lambda_j} > N} \frac{\lambda_j}{N^2} |(u, \phi_j)_{L^2}|^2 \\
    &\le \frac{1}{N^2} \sum_{j=1}^\infty \lambda_j |(u, \phi_j)_{L^2}|^2 = \frac{1}{N^2} \|\nabla u\|_{L^2(\Omega)}^2.
\end{align*}
Taking the square root finishes the proof.
\end{proof}

\section{Commutator Estimates for the Kelvin–Voigt Operator} \label{sec:appendix_commutator}

The objective of this appendix is to provide a rigorous proof of the commutator bound \eqref{eq:commutator_bound} utilized in the high-frequency analysis. Specifically, we must estimate the $L^2$-norm of the divergence of the commutator between the high-frequency projector $P_{>N}$ and the variable damping coefficient $a \in C^{\infty}(\overline{\Omega})$.

Let $f \in L^2(\Omega)$. We aim to establish a uniform bound for:
\begin{equation*}
    \left\| \operatorname{div} \Big( [P_{>N}, a(x)] f \Big) \right\|_{L^2(\Omega)}.
\end{equation*}

\noindent{\bf Step 1: Reduction to Low Frequencies.}
Handling the high-frequency projector directly is cumbersome. We exploit the identity $P_{>N} = I - P_{\le N}$. Since the identity operator perfectly commutes with pointwise multiplication by $a(x)$, we have:
\begin{equation*}
    [P_{>N}, a(x)] = [I - P_{\le N}, a(x)] = - [P_{\le N}, a(x)].
\end{equation*}
Therefore, it suffices to estimate the divergence of the low-frequency commutator: $- \operatorname{div} \big( [P_{\le N}, a(x)] f \big)$.

\noindent{\bf Step 2: Integral Representation and Kernel Bounds.}
By the functional calculus of the Dirichlet Laplacian, the smooth low-frequency projector $P_{\le N} = \chi(\sqrt{-\Delta}/N)$ is an integral operator. Let $K_N(x,y)$ be its integral kernel. We write the action of the commutator $[P_{\le N}, a] f$ explicitly as:
\begin{align*}
    \big([P_{\le N}, a] f\big)(x) &= P_{\le N}(a f)(x) - a(x) P_{\le N}(f)(x) \\
    &= \int_{\Omega} K_N(x,y) a(y) f(y) \, dy - \int_{\Omega} K_N(x,y) a(x) f(y) \, dy \\
    &= \int_{\Omega} K_N(x,y) \Big( a(y) - a(x) \Big) f(y) \, dy.
\end{align*}

Because $\chi \in C_c^\infty(\mathbb{R})$ is an even, rapidly decaying function, the standard spectral cluster estimates for bounded domains (see, e.g., Sogge or Taylor) guarantee that the kernel $K_N(x,y)$ and its spatial derivatives satisfy the rapid decay bounds:
\begin{equation}\label{eq:kernel_bounds}
    |\nabla_x K_N(x,y)| \le C_M \frac{N^4}{(1 + N|x-y|)^M},
\end{equation}
for any integer $M > 0$, where $C_M$ is a constant independent of $N$. More precisely, since $\chi \in C_c^\infty$, the operator
$P_{\le N} = \chi(\sqrt{-\Delta}/N)$ admits a kernel $K_N(x,y)$
which satisfies, for all multi-indices $\alpha,\beta$ and all $M>0$,
\[
|\partial_x^\alpha \partial_y^\beta K_N(x,y)|
\le C_{\alpha,\beta,M}\, N^{3+|\alpha|+|\beta|}
(1+N|x-y|)^{-M}.
\]
Indeed, the kernel $K_N(x, y)$ of the smooth spectral projector $P_{\le N}$ satisfies fast off-diagonal decay of the form $|K_N(x, y)| \le C_M N^n (1 + N|x - y|)^{-M}$ for any $M \in \mathbb{N}$, as established in the foundational works of Sogge \cite{Sogge} and Taylor \cite{Taylor} regarding spectral multipliers in bounded domains.  Recalling that $f = \nabla u_t$, the goal is to estimate, uniformly in the frequency threshold $N$,
\[
\left\| \hbox{div}\Big( [P_{>N}, a(x)] \nabla u_t \Big) \right\|_{L^2(\Omega)}.
\]

\section*{1. Classical input (quoted)}

\begin{fact}[Kernel bounds for smooth spectral multipliers; Sogge, Taylor]\label{fact:kernel}
Let $\Omega \subset \mathbb{R}^3$ be a smooth bounded domain, $A = -\Delta_D$ the Dirichlet Laplacian, and $\chi \in C_c^\infty(\mathbb{R})$ even, $0 \le \chi \le 1$, $\mathrm{supp}\,\chi \subset [-2,2]$. The kernel $K_N(x,y)$ of $P_{\le N} = \chi(\sqrt{A}/N)$ satisfies, for every integer $M > 0$,
\begin{equation}\label{eq:kernel_fact}
|K_N(x,y)| \le C_M N^3 (1+N|x-y|)^{-M},~
|\nabla_x K_N(x,y)| \le C_M N^4 (1+N|x-y|)^{-M},
\end{equation}
with $C_M$ independent of $N$.
\end{fact}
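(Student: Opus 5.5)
The plan is to derive both bounds in \eqref{eq:kernel_fact} from Gaussian estimates for the Dirichlet heat kernel, using a functional-calculus representation of $P_{\le N}$ in terms of the heat semigroup. Write $\chi(\sqrt{\lambda}/N)=F(\lambda/N^2)$ with $F(\mu):=\chi(\sqrt{\mu})$. Since $\chi$ is even and smooth, $F$ is smooth on $[0,\infty)$ and compactly supported, so we may extend it to a function $F\in C_c^\infty(\mathbb R)$. Factor $F(\mu)=G(\mu)e^{-\mu}$ with $G(\mu):=F(\mu)e^{\mu}$ again in $C_c^\infty(\mathbb R)$. Fourier inversion $G(\mu)=\int_{\mathbb R}\widehat G(\xi)e^{i\xi\mu}\,d\xi$ then gives
\[
P_{\le N}=\int_{\mathbb R}\widehat G(\xi)\,e^{-(1-i\xi)A/N^2}\,d\xi ,
\]
with $\widehat G$ Schwartz. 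This reduces the statement to kernel bounds for the complex-time heat semigroup $e^{-zA}$, with $z=(1-i\xi)/N^2$ and $\operatorname{Re} z=N^{-2}$.

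\textbf{Step 1 (real time).} I will quote domination of the Dirichlet heat kernel by the free one, $0\le p_t(x,y)\le (4\pi t)^{-3/2}e^{-|x-y|^2/4t}$, which follows from the maximum principle. I will also quote the boundary gradient estimate
\[
|\nabla_x p_t(x,y)|\le C\,t^{-2}e^{-c|x-y|^2/t},\qquad 0<t\le 1,
\]
valid on smooth bounded domains via a parametrix near $\partial\Omega$, or via Li--Yau-type arguments. For $t\ge1$ exponential decay in $t$ holds since $\lambda_1>0$. Because $N\ge1$, only $t\lesssim 1$ matters up to harmless constants.

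\textbf{Step 2 (complex time).} Davies' Phragm\'en--Lindel\"of argument extends the Gaussian bound to the sector, giving
\[
|p_z(x,y)|\le C(\operatorname{Re} z)^{-3/2}\exp\Big(-c\,\frac{\operatorname{Re} z\,|x-y|^2}{|z|^2}\Big).
\]
For the gradient, I will factor $e^{-zA}=e^{-(s/2)A}e^{-(z-s/2)A}$ with $s=\operatorname{Re} z$. The $x$-derivative then falls on the real-time factor. Composing the Step 1 gradient bound with the complex Gaussian bound, and using the Gaussian convolution inequality, yields
\[
|\nabla_xp_z|\le C s^{-2}\exp\big(-c\,s|x-y|^2/|z|^2\big).
\]

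\textbf{Step 3 (integration in $\xi$).} With $s=N^{-2}$ and $|z|^2=(1+\xi^2)N^{-4}$, the exponent becomes $cN^2|x-y|^2/(1+\xi^2)$. Therefore
\[
|K_N(x,y)|\le C N^3\int_{\mathbb R}|\widehat G(\xi)|\exp\Big(-c\frac{N^2|x-y|^2}{1+\xi^2}\Big)d\xi .
\]
Using $e^{-r}\le C_M r^{-M/2}$ for large $r$, together with the rapid decay of $\widehat G$ (which absorbs the factor $(1+\xi^2)^{M/2}$), bounds this by $C_MN^3(1+N|x-y|)^{-M}$. The same computation with the factor $s^{-2}=N^4$ gives the gradient bound. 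Differentiating under the integral is justified by the uniform integrability coming from the Schwartz decay of $\widehat G$.

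\textbf{Main obstacle.} I expect the hardest point to be the gradient estimate of Step 1 uniformly up to $\partial\Omega$: the pointwise $\nabla_x$ bound for the Dirichlet heat kernel near the boundary is the one genuinely nontrivial input, and I would quote it rather than reprove it. The transfer to complex time in Step 2 also needs care, but the semigroup factorization keeps all derivatives on a real-time factor. An alternative route for cross-checking is the wave representation $\chi(\sqrt A/N)=\frac{1}{2\pi}\int N\widehat\chi(N\tau)\cos(\tau\sqrt A)\,d\tau$ combined with finite propagation speed: this gives the off-diagonal decay directly, but still requires an on-diagonal $N^3$, $N^4$ bound, which comes from Sobolev embedding and elliptic regularity for the Dirichlet problem.
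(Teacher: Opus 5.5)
Your argument is correct, but it takes a different route from the paper. The paper only sketches its proof, via the wave representation $\chi(\sqrt{A}/N)=\frac{N}{2\pi}\int_{\mathbb R}\hat\chi(Ns)\cos(s\sqrt A)\,ds$. Finite speed of propagation removes the range $|s|<|x-y|$, and the Schwartz decay of $\hat\chi(Ns)$ on $|s|\ge|x-y|$ produces the factor $(N|x-y|)^{-M}$. The on-diagonal sizes $N^3$ and $N^4$ are left implicit there; they must come from Sobolev embedding and elliptic regularity for $D(A^k)$, exactly as you observe in your cross-check.

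You instead write $P_{\le N}=\int\widehat G(\xi)\,e^{-(1-i\xi)A/N^2}\,d\xi$ and reduce everything to heat-kernel bounds at complex time $z=(1-i\xi)N^{-2}$. There $\operatorname{Re}(1/z)=N^2/(1+\xi^2)$, and every polynomial loss in $\xi$ is absorbed by $\widehat G$. This includes a possible factor $(|z|/\operatorname{Re}z)^{\gamma}$ in the version of Davies' complex-time bound you cite. The factorization $e^{-zA}=e^{-(s/2)A}e^{-(z-s/2)A}$ is a good device: it keeps the derivative on a real-time factor, and the Gaussian convolution does return $s^{-2}\exp(-c\,s|x-y|^2/|z|^2)$, as you claim.

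Each route has its own advantages.
\begin{itemize}
\item \textbf{Your heat-kernel route.} The undifferentiated bound needs no boundary regularity at all: domination by the free heat kernel plus Davies suffices. All boundary difficulty is isolated in one quotable input, the small-time Gaussian gradient bound for $p_t$ up to $\partial\Omega$. Cite that input through boundary parabolic Schauder estimates rescaled to cylinders of size $\sqrt t$, combined with the Gaussian upper bound. Li--Yau-type estimates do not by themselves reach a Dirichlet boundary.
\item \textbf{The paper's wave route.} It gets the off-diagonal decay from finite propagation speed alone, with no heat-kernel theory. It also extends mechanically to all mixed derivatives $\partial_x^\alpha\partial_y^\beta K_N$ with factor $N^{3+|\alpha|+|\beta|}$. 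The paper actually needs this later, for example the $N^5$ bound on $\nabla_y\cdot\nabla_x K_N$ in the estimate of $\mathcal T_{1,\mathrm{rem}}$ in Appendix C. In your framework those would require higher-order Gaussian derivative bounds, obtained by splitting off a real-time factor on each side of the complex one.
\end{itemize}
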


 Writing $\chi(\sqrt{A}/N) = \frac{N}{2\pi}\int_{\mathbb{R}} \hat\chi(Ns)\cos(s\sqrt{A})\,ds$ and using finite speed of propagation for $\cos(s\sqrt{A})$ together with the rapid (Schwartz) decay of $\hat\chi$ kills the contribution of $|s| < d(x,y)$ and controls the rest. Each application of $\nabla_x$ costs exactly one extra power of $N$.

\section*{2. Vectorial reduction}

The object of interest is $\dive\big([P_{>N},a]\nabla u_t\big)$, with $\nabla u_t = (\partial_1 u_t, \partial_2 u_t, \partial_3 u_t)$. We sum over components from the start.

By Step 1 of the original text, $[P_{>N},a] = -[P_{\le N},a]$, hence
\[
\dive\big([P_{>N},a]\nabla u_t\big)(x) = -\sum_{i=1}^3 \partial_{x_i}\Big( [P_{\le N},a](\partial_i u_t) \Big)(x).
\]
For each $i$, using the integral representation $[P_{\le N},a]g(x) = \int_\Omega K_N(x,y)(a(y)-a(x))g(y)\,dy$ and differentiating in $x$:
\[
\partial_{x_i}\big([P_{\le N},a]g\big)(x) =
\underbrace{\int_\Omega \partial_{x_i}K_N(x,y)(a(y)-a(x))g(y)\,dy}_{=:T_1^{(i)}(x)}
\;-\;
\underbrace{\partial_i a(x)\,(P_{\le N}g)(x)}_{=:T_2^{(i)}(x)}.
\]
Summing in $i$ with $g = \partial_i u_t$:
\begin{align}
\mathcal T_2(x) :=-\sum_i T_2^{(i)}(x) &= -\nabla a(x)\cdot \big(P_{\le N}\nabla u_t\big)(x), \label{eq:T2vec}\\
\mathcal T_1(x) :=-\sum_i T_1^{(i)}(x) &= -\int_\Omega \nabla_x K_N(x,y)\cdot \nabla u_t(y)\,\big(a(y)-a(x)\big)\,dy. \label{eq:T1vec}
\end{align}
(Here $P_{\le N}\nabla u_t$ means $P_{\le N}$ applied componentwise.) This is the non-ambiguous vectorial form of the original informal scalar computation.

\section*{3. Glaeser's inequality}

\begin{lemma}[Glaeser]\label{lem:glaeser}
Let $a \in C^{1,1}(\overline\Omega)$, $a \ge 0$, and set $\Lambda := \|D^2a\|_{L^\infty(\Omega)}$. Assume $a$ extends to a neighborhood of $\overline\Omega$ with the same bound on $D^2a$ (always possible for $\Omega$ smooth bounded and $a \in C^\infty(\overline\Omega)$, by Whitney/Seeley extension). Then
\[
|\nabla a(x)|^2 \le 2\Lambda\,a(x), \qquad \forall x \in \Omega.
\]
\end{lemma}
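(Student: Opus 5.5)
The plan is the classical one-dimensional argument: Taylor-expand the extended nonnegative function $a$ along the direction of its gradient and exploit $a\ge 0$ at the displaced point. Denote by $\tilde a$ the extension of $a$ to an open neighborhood $U\supset\overline\Omega$, with $\|D^2\tilde a\|_{L^\infty(U)}\le\Lambda$ and $\tilde a\ge0$ on $U$. Nonnegativity must survive the extension; if the Whitney/Seeley extension does not give it automatically, I would restrict attention to points whose displaced point stays inside $\overline\Omega$, as explained below.

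Fix $x\in\Omega$. If $\nabla a(x)=0$ there is nothing to prove, and if $\Lambda=0$ then $a$ is affine and nonnegative near $x$, which forces $\nabla a(x)=0$. So assume $\Lambda>0$ and $\nabla a(x)\neq0$, and set $e:=-\nabla a(x)/|\nabla a(x)|$. For $h>0$ with the segment $[x,x+he]\subset U$, Taylor's formula with Lagrange remainder gives
\[
0\le \tilde a(x+he)=a(x)+h\,\nabla a(x)\cdot e+\tfrac12 h^2\langle D^2\tilde a(\xi)e,e\rangle\le a(x)-h|\nabla a(x)|+\tfrac{\Lambda}{2}h^2 .
\]
Here $\xi$ lies on the segment, and the formula is legitimate for $C^{1,1}$ functions: $\nabla\tilde a$ is Lipschitz, so the integral form of the remainder applies with the a.e. Hessian bound. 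Choosing $h=|\nabla a(x)|/\Lambda$ yields $0\le a(x)-|\nabla a(x)|^2/(2\Lambda)$, which is exactly $|\nabla a(x)|^2\le 2\Lambda a(x)$.

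The main obstacle is admissibility of the step $h=|\nabla a(x)|/\Lambda$: the point $x+he$ must lie where $\tilde a\ge0$ and the Hessian bound holds. The extension hypothesis is precisely what handles this. If the optimal $h$ leaves $U$, I would instead use the largest admissible $h_0<h$. The quadratic $q(h)=a(x)-h|\nabla a(x)|+\tfrac\Lambda2 h^2$ is decreasing on $[0,|\nabla a(x)|/\Lambda]$, so $0\le q(h_0)$ still holds, but this alone gives only a weaker constant depending on $\mathrm{dist}(x,\partial U)$. For that reason I would state the lemma, as the paper does, under the assumption that the nonnegative $C^{1,1}$ extension exists on a neighborhood large enough, or on all of $\mathbb R^3$ after multiplying by a cutoff, taking care that the cutoff preserves nonnegativity and changes $\Lambda$ only by a controlled amount. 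The remaining check is that a.e.-defined $D^2\tilde a$ causes no trouble, which the integral remainder $\int_0^1(1-s)\langle D^2\tilde a(x+she)e,e\rangle h^2\,ds$ settles.
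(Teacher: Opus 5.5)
Your argument is the same as the paper's. The paper also restricts $a$ to the line through $x$ in the gradient direction, bounds $g(t)\le g(0)+g'(0)t+\frac{\Lambda}{2}t^2$, and evaluates at $t^*=-g'(0)/\Lambda$; your $e$ and $h$ are the paper's with the sign absorbed. The paper, however, defines $g$ only ``for $t$ small'' and then evaluates at $t^*$, which need not be small. You have put your finger on exactly the step that is unjustified there.

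Your repairs do not close that step, and in fact nothing can: the lemma is false under the stated hypotheses.
\begin{itemize}
\item Take $\Omega=B(0,1)$ and $a=1-|x|^2$. Then $a\ge0$ on $\overline\Omega$, and $D^2a=-2I$ extends unchanged to $\mathbb{R}^3$. Yet $|\nabla a|^2/a=4|x|^2/(1-|x|^2)\to\infty$ as $|x|\to1$, so not even $|\nabla a|^2\le Ca$ holds.
\item Requiring the extension to be nonnegative on a neighborhood does not rescue the constant $2\Lambda$. For $\Omega=B((2,0,0),1)$ and $a=x_1$ one has $\Lambda=0$ but $|\nabla a|^2=1$.
\item The second example also refutes your $\Lambda=0$ step. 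An affine function that is nonnegative near $x$ need not have vanishing gradient; that conclusion requires nonnegativity on the whole line through $x$.
\item The cutoff does not change $\Lambda$ by an amount controlled by $\Lambda$. Indeed $D^2(\psi\tilde a)=\psi D^2\tilde a+\nabla\psi\otimes\nabla\tilde a+\nabla\tilde a\otimes\nabla\psi+\tilde a\,D^2\psi$, so the resulting constant depends on $\|\tilde a\|_{C^1(U)}$ and on $\psi$, not on $\Lambda$ alone.
\end{itemize}

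What your computation does prove is the following.

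(a) Suppose $a$ has an extension $\tilde a\ge0$ on all of $\mathbb{R}^3$ with $\|D^2\tilde a\|_{L^\infty(\mathbb{R}^3)}\le\Lambda$; it suffices that this hold on the ball $B(x,|\nabla a(x)|/\Lambda)$. Then $|\nabla a(x)|^2\le 2\Lambda a(x)$, by exactly your choice $h=|\nabla a(x)|/\Lambda$. This is the situation in which the paper actually uses the lemma. Under its standing hypothesis $\overline{\{a>0\}}\Subset\Omega$, extension by zero is nonnegative and has the same Hessian bound, so your proof (and the paper's) goes through verbatim.

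(b) Suppose instead $\tilde a\ge0$ only on an open $U\supset\overline\Omega$, and set $r:=\operatorname{dist}(\overline\Omega,\mathbb{R}^3\setminus U)$. Your truncated step $h=\min\{|\nabla a(x)|/\Lambda,\,r\}$ gives $|\nabla a(x)|\le\max\{\sqrt{2\Lambda a(x)},\,2a(x)/r\}$. Hence $|\nabla a|^2\le\bigl(2\Lambda+4\|a\|_{L^\infty(\Omega)}/r^2\bigr)\,a$ on $\Omega$. This is a correct structural inequality of the form $|\nabla a|^2\le C_a a$, but its constant is not $2\Lambda$.

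So the statement should assume global nonnegativity of the extension (or compact support of $a$ in $\Omega$), not merely a Hessian bound on a neighborhood.

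A minor point: the a.e.-defined $D^2\tilde a$ need not be defined on the segment, which is a null set. The Taylor step is better phrased via $|g'(s)-g'(0)|\le\Lambda s$ for $g(s)=\tilde a(x+se)$. This bound holds because $\nabla\tilde a$ is $\Lambda$-Lipschitz on any convex set where $|D^2\tilde a|\le\Lambda$ a.e.
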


\begin{proof}
Fix $x_0 \in \Omega$. If $\nabla a(x_0) = 0$ there is nothing to prove. Otherwise let $e := \nabla a(x_0)/|\nabla a(x_0)|$ and define $g(t) := a(x_0+te)$ for $t$ small. By Taylor's theorem with Lagrange remainder, for each $t$ there is $\xi$ between $0$ and $t$ such that
\[
g(t) = g(0) + g'(0)t + \tfrac12 g''(\xi)t^2 \le g(0) + g'(0)t + \tfrac{\Lambda}{2}t^2.
\]
Since $g \ge 0$, evaluating at the minimizer $t^* = -g'(0)/\Lambda$ of the right-hand side gives
\[
0 \le g(t^*) \le g(0) + g'(0)t^* + \tfrac{\Lambda}{2}(t^*)^2 = g(0) - \frac{g'(0)^2}{2\Lambda}.
\]
Hence $g'(0)^2 \le 2\Lambda\,g(0)$, i.e. $|\nabla a(x_0)|^2 = g'(0)^2 \le 2\Lambda\,a(x_0)$.
\end{proof}

\begin{remark}
Consequently, the structural hypothesis $|\nabla a(x)|^2 \le C_a\,a(x)$ used elsewhere in the manuscript is \emph{automatic} once $a \ge 0$ and $a \in C^{1,1}(\overline\Omega)$, with $C_a = 2\|D^2a\|_{L^\infty}$. It is not an independent ad hoc assumption.
\end{remark}

\section*{4. Main estimate: quantitative Taylor expansion and Schur's test}

By Taylor's theorem with Lagrange remainder in $y$ (valid since $a \in C^{1,1}$):
\begin{equation}\label{eq:taylor-quant}
a(y) - a(x) = \nabla a(y)\cdot(y-x) + \mathcal{R}(x,y), \qquad |\mathcal{R}(x,y)| \le \tfrac{1}{2}\Lambda\,|x-y|^2.
\end{equation}
Substituting into \eqref{eq:T1vec}:
\begin{eqnarray*}
&&-\sum_i T_1^{(i)}(x) = \underbrace{-\int_\Omega \nabla_x K_N(x,y)\cdot\nabla u_t(y)\big[\nabla a(y)\cdot(y-x)\big]\,dy}_{=:\mathcal{T}_{1,\mathrm{main}}(x)}\\
&&\qquad\;+\;
\underbrace{\left(-\int_\Omega \nabla_x K_N(x,y)\cdot\nabla u_t(y)\,\mathcal{R}(x,y)\,dy\right)}_{=:\mathcal{T}_{1,\mathrm{rem}}(x)}.
\end{eqnarray*}

\subsection*{4.1. Principal term $\mathcal{T}_{1,\mathrm{main}}$}

By Cauchy--Schwarz on the two inner products,
\[
|\mathcal{T}_{1,\mathrm{main}}(x)| \le \int_\Omega \underbrace{|\nabla_x K_N(x,y)|\,|x-y|}_{=:\widetilde K_N(x,y)} \;\underbrace{|\nabla a(y)|\,|\nabla u_t(y)|}_{=:h(y)} \, dy.
\]

\noindent\textbf{Kernel bound.} By Fact~\ref{fact:kernel} (with parameter $M$) and $s \le 1+s$ for $s = N|x-y| \ge 0$:
\begin{eqnarray*}
\widetilde K_N(x,y) \le C_M N^4(1+N|x-y|)^{-M}|x-y|
&=& C_M N^3 \cdot \frac{N|x-y|}{(1+N|x-y|)^M}\\
&\le& C_M N^3 (1+N|x-y|)^{-(M-1)}.
\end{eqnarray*}

\noindent\textbf{Schur's test (full computation).} For $M > 4$ (so $M-1>3$), the substitution $z = N(y-x)$, $dy = N^{-3}dz$, gives
\begin{eqnarray*}
\sup_{x\in\Omega} \int_\Omega \widetilde K_N(x,y)\,dy
&\le& C_M N^3 \int_{\mathbb{R}^3} (1+N|y-x|)^{-(M-1)}\,dy\\
&=& C_M N^3 \cdot N^{-3} \int_{\mathbb{R}^3} \frac{dz}{(1+|z|)^{M-1}}
=: C^*,
\end{eqnarray*}
finite and \emph{independent of $N$}. By symmetry, the same bound holds for $\sup_{y}\int_\Omega \widetilde K_N(x,y)\,dx$. By Schur's test, the integral operator with kernel $\widetilde K_N$ is bounded $L^2(\Omega) \to L^2(\Omega)$ with norm $\le C^*$, uniformly in $N$, so
\[
\|\mathcal{T}_{1,\mathrm{main}}\|_{L^2(\Omega)} \le C^* \|h\|_{L^2(\Omega)} = C^*\,\|\nabla a \cdot \nabla u_t\|_{L^2(\Omega)}.
\]

\noindent\textbf{Applying Glaeser.} By Lemma~\ref{lem:glaeser}, $|\nabla a(y)| \le \sqrt{2\Lambda\,a(y)}$ pointwise, so
\[
\|h\|_{L^2(\Omega)} \le \sqrt{2\Lambda}\,\|\sqrt{a}\,\nabla u_t\|_{L^2(\Omega)}.
\]
Therefore
\begin{equation}\label{eq:Tmain-final}
\boxed{\;\|\mathcal{T}_{1,\mathrm{main}}\|_{L^2(\Omega)} \le C^*\sqrt{2\Lambda}\;\|\sqrt{a}\,\nabla u_t\|_{L^2(\Omega)}.\;}
\end{equation}

\begin{remark}[On the regularity hypothesis]
This proof uses $a \in C^{1,1}(\overline\Omega)$ (for the quantitative Taylor remainder and for Glaeser's inequality), strictly stronger than $a \in W^{1,\infty}(\Omega)$ used in Section 3 of the manuscript. There is no real inconsistency: the only theorems invoking this Appendix (Theorems 6.4 and 6.6) already assume $a \in C^\infty(\overline\Omega)$. It is advisable to state this regularity standing assumption explicitly at the start of the Appendix.
\end{remark}

For the remainder $\mathcal T_{1,\mathrm{rem}}(x) = -\int_\Om \nabla_xK_N(x,y)\cdot\nabla u_t(y)\,\mathcal R(x,y)\,dy$, integrate by parts in $y$ (componentwise, summing $i$):
\[
\mathcal T_{1,\mathrm{rem}}(x) = \int_\Om \nabla_y\cdot\Big[\nabla_xK_N(x,y)\,\mathcal R(x,y)\Big]\,u_t(y)\,dy.
\]
\begin{lemma}\label{lem:T1rem}
$\displaystyle \|\mathcal T_{1,\mathrm{rem}}\|_{L^2(\Om)} \le C\Lambda\,\|u_t\|_{L^2(\Om)}.$
\end{lemma}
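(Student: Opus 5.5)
The plan is to estimate the kernel of $\mathcal T_{1,\mathrm{rem}}$ after the integration by parts displayed just before the statement, and then apply Schur's test exactly as in \S4.1. First I will justify that integration by parts: $u_t(t)\in H_0^1(\Om)$ (at the level of the regular approximations, cf.\ Remark~\ref{rem:approx}), so the boundary term on $\partial\Om$ vanishes. Moreover $y\mapsto \nabla_xK_N(x,y)\,\mathcal R(x,y)$ is Lipschitz in $y$, because $a\in C^{1,1}$. Hence the identity
\[
\mathcal T_{1,\mathrm{rem}}(x)=\int_\Om \nabla_y\cdot\big[\nabla_xK_N(x,y)\,\mathcal R(x,y)\big]\,u_t(y)\,dy
\]
holds with the $y$-divergence understood almost everywhere. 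Expanding by the product rule gives two kernels:
\[
\mathcal K^{(1)}_N(x,y)=\big(\nabla_y\cdot\nabla_xK_N(x,y)\big)\,\mathcal R(x,y),\qquad
\mathcal K^{(2)}_N(x,y)=\nabla_xK_N(x,y)\cdot\nabla_y\mathcal R(x,y).
\]

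The key algebraic step is the computation of $\nabla_y\mathcal R$. From \eqref{eq:taylor-quant},
\[
\mathcal R(x,y)=a(y)-a(x)-\nabla a(y)\cdot(y-x),
\]
so the $\nabla a(y)$ terms cancel and, for a.e.\ $y$,
\[
\nabla_y\mathcal R(x,y)=-D^2a(y)(y-x),\qquad |\nabla_y\mathcal R(x,y)|\le\Lambda|x-y|.
\]
This cancellation is precisely why the Taylor expansion was centred at $y$ rather than at $x$.

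For $\mathcal K^{(1)}_N$ I will use the mixed-derivative kernel bound quoted in Step~2,
\[
|\partial_x^\alpha\partial_y^\beta K_N|\le C N^{3+|\alpha|+|\beta|}(1+N|x-y|)^{-M},
\]
with $|\alpha|=|\beta|=1$, together with $|\mathcal R|\le\tfrac12\Lambda|x-y|^2$. This gives
\[
|\mathcal K^{(1)}_N|\le C_M\Lambda N^5|x-y|^2(1+N|x-y|)^{-M}\le C_M\Lambda N^3(1+N|x-y|)^{-(M-2)}.
\]
For $\mathcal K^{(2)}_N$, Fact~\ref{fact:kernel} and the bound on $\nabla_y\mathcal R$ give
\[
|\mathcal K^{(2)}_N|\le C_M\Lambda N^4|x-y|(1+N|x-y|)^{-M}\le C_M\Lambda N^3(1+N|x-y|)^{-(M-1)}.
\]
Choosing $M>5$ and substituting $z=N(y-x)$ as in \S4.1, both kernels have $\sup_x\int_\Om(\cdot)\,dy$ and $\sup_y\int_\Om(\cdot)\,dx$ bounded by $C\Lambda$, uniformly in $N$. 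Schur's test then yields
\[
\|\mathcal T_{1,\mathrm{rem}}\|_{L^2(\Om)}\le C\Lambda\|u_t\|_{L^2(\Om)}.
\]

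I expect the main obstacle to be the mixed bound for $\nabla_y\nabla_xK_N$ up to the boundary. Fact~\ref{fact:kernel} as quoted controls only $K_N$ and $\nabla_xK_N$, while the more general $\partial^\alpha_x\partial^\beta_y$ estimate is only asserted in Step~2. I would try to obtain it from the symmetry $K_N(x,y)=K_N(y,x)$, which holds because $P_{\le N}$ is self-adjoint with real kernel, combined with the finite-speed-of-propagation representation sketched after Fact~\ref{fact:kernel}. One derivative in each variable should cost one power of $N$ each. Near $\partial\Om$ this requires uniform regularity of the wave kernel in both variables, and if that proves unavailable I would localise away from $\partial\Om$. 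Since the eventual application concerns $\overline\omega\Subset\Om$, the support of $\nabla a$, and hence of $\mathcal R$'s variation, can be kept interior.
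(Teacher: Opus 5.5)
Your proposal is correct and is essentially the paper's proof. It uses the same product-rule split of $\nabla_y\cdot[\nabla_xK_N\,\mathcal R]$, the identity $\nabla_y\mathcal R=-D^2a(y)(y-x)$, the kernel bounds $C_M\Lambda N^3(1+N|x-y|)^{-(M-1)}$ and $C_M\Lambda N^3(1+N|x-y|)^{-(M-2)}$, and Schur's test with $M>5$; the paper takes the mixed bound on $\nabla_y\cdot\nabla_xK_N$, which you rightly flag, directly from the general multi-index estimate in Step~2. One minor aside: centring the Taylor expansion at $y$ matters for the main term (so that Glaeser's inequality acts on $\nabla a(y)\cdot\nabla u_t(y)$), not for this remainder, where centring at $x$ would give $|\nabla_y\mathcal R|=|\nabla a(y)-\nabla a(x)|\le\Lambda|x-y|$ just as well.
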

\begin{proof}
Expand $\nabla_y\cdot[\nabla_xK_N\,\mathcal R] = (\nabla_y\mathcal R)\cdot\nabla_xK_N + \mathcal R\,(\nabla_y\cdot\nabla_xK_N)$. Since $\mathcal R(x,y)=a(y)-a(x)-\nabla a(y)\cdot(y-x)$, one computes $\nabla_y\mathcal R(x,y) = -D^2a(y)(y-x)$, so $|\nabla_y\mathcal R|\le\Lambda|x-y|$ and $|\mathcal R|\le\tfrac12\Lambda|x-y|^2$. Hence:
\begin{eqnarray*}
\big|(\nabla_y\mathcal R)\cdot\nabla_xK_N\big| &\le& \Lambda|x-y|\cdot C_MN^4(1+N|x-y|)^{-M}\\
 &\le& C_M\Lambda N^3(1+N|x-y|)^{-(M-1)},
 \end{eqnarray*}
 \begin{eqnarray*}
\big|\mathcal R\,(\nabla_y\cdot\nabla_xK_N)\big| &\le& \tfrac12\Lambda|x-y|^2\cdot C_MN^5(1+N|x-y|)^{-M}\\
 &\le& C_M\Lambda N^3(1+N|x-y|)^{-(M-2)}.
\end{eqnarray*}
For $M>5$, both kernels satisfy the hypotheses of Schur's test (rescaling $z=N(y-x)$, giving an $N$-independent operator norm $O(\Lambda)$ on $L^2(\Om)$. The operator now acts directly on $u_t$, giving the claim.
\end{proof}

\section*{3. Treatment of $\mathcal T_2$}

Write $\mathcal T_2(x) = -\int_\Om K_N(x,y)\,\nabla a(x)\cdot\nabla u_t(y)\,dy$ and Taylor-expand the \emph{coefficient} this time: $\nabla a(x) = \nabla a(y) + \big[\nabla a(x)-\nabla a(y)\big]$. This splits $\mathcal T_2 = \mathcal T_{2,\mathrm{main}} + \mathcal T_{2,\mathrm{rem}}$ with
\[
\mathcal T_{2,\mathrm{main}}(x) = -\int_\Om K_N(x,y)\,\nabla a(y)\cdot\nabla u_t(y)\,dy = -P_{\le N}\big(\nabla a\cdot\nabla u_t\big)(x).
\]

\begin{lemma}\label{lem:T2main}
$\displaystyle \|\mathcal T_{2,\mathrm{main}}\|_{L^2(\Om)} \le C\sqrt{\Lambda}\,\|\sqrt{a}\,\nabla u_t\|_{L^2(\Om)}.$
\end{lemma}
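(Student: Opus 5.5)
The bound follows from two ingredients already in place: the uniform $L^2$ boundedness of $P_{\le N}$ and Glaeser's inequality. I would not use the kernel representation of $\mathcal T_{2,\mathrm{main}}$ at all, only its identification with $-P_{\le N}(\nabla a\cdot\nabla u_t)$.

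First, by Lemma~\ref{lem:Sm-L2-app} (equivalently the Parseval computation of Appendix A, with $P_{\le N}=\chi(\sqrt{A}/N)$ and $0\le\chi\le1$), the operator $P_{\le N}$ is a contraction on $L^2(\Omega)$, uniformly in $N$. Applying it to the scalar function $h:=\nabla a\cdot\nabla u_t$ gives
\[
\|\mathcal T_{2,\mathrm{main}}\|_{L^2(\Omega)}=\|P_{\le N}h\|_{L^2(\Omega)}\le\|\nabla a\cdot\nabla u_t\|_{L^2(\Omega)}.
\]
The only point to check is that $h\in L^2(\Omega)$. This holds because $a\in C^{1,1}(\overline\Omega)$ makes $\nabla a$ bounded, and the quantity on the right is finite whenever $\sqrt a\,\nabla u_t\in L^2$, as the next step shows.

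Second, Cauchy--Schwarz pointwise gives $|h(y)|\le|\nabla a(y)|\,|\nabla u_t(y)|$. Glaeser's inequality (Lemma~\ref{lem:glaeser}) then gives $|\nabla a(y)|\le\sqrt{2\Lambda\,a(y)}$. Hence
\[
\|h\|_{L^2(\Omega)}\le\sqrt{2\Lambda}\,\|\sqrt a\,\nabla u_t\|_{L^2(\Omega)}.
\]
Combining the two steps yields the claim with $C=\sqrt2$, a constant independent of $N$. This is exactly the bound on $\|h\|_{L^2}$ already used at the end of the treatment of $\mathcal T_{1,\mathrm{main}}$.

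There is no real obstacle here. The one subtle point is justifying $P_{\le N}$ on $h$ when $u_t$ is only energy class. If $\nabla u_t\notin L^2$ outside $\{a>0\}$, one must note that $h$ vanishes wherever $a=0$, since $\nabla a=0$ there by Glaeser. Then $h\in L^2$ follows from the displayed bound, and the argument can be made rigorous at the level of the regular approximations $u_k$, passing to the limit as in Remark~\ref{rem:approx}.
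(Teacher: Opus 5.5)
Your proposal is correct and is essentially the paper's proof: identify $\mathcal T_{2,\mathrm{main}}=-P_{\le N}(\nabla a\cdot\nabla u_t)$, use that $P_{\le N}$ is an $L^2$ contraction uniformly in $N$, and finish with Glaeser's inequality $|\nabla a|\le\sqrt{2\Lambda a}$, which gives $C=\sqrt2$. Your extra remark on justifying the estimate for energy-class $u_t$ via approximations matches the paper's own limiting remark at the end of Appendix~C.
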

\begin{proof}
$\mathcal T_{2,\mathrm{main}}$ is literally $P_{\le N}$ applied to the scalar function $g:=\nabla a\cdot\nabla u_t$. Since $\|P_{\le N}\|_{L^2(\Om)\to L^2(\Om)}\le 1$ (Lemma A.1, Appendix A),
\begin{eqnarray*}
\|\mathcal T_{2,\mathrm{main}}\|_{L^2(\Om)} &\le& \|g\|_{L^2(\Om)} = \|\nabla a\cdot \nabla u_t\|_{L^2(\Om)}\\ 
&\le& \sqrt{2\Lambda}\,\|\sqrt{a}\,\nabla u_t\|_{L^2(\Om)},
\end{eqnarray*}
the last step by Glaeser's inequality.
\end{proof}

\noindent For the remainder $\mathcal T_{2,\mathrm{rem}}(x) = -\int_\Om K_N(x,y)\big[\nabla a(x)-\nabla a(y)\big]\cdot\nabla u_t(y)\,dy$, integrate by parts in $y$ componentwise:
\begin{eqnarray*}
&&\int_\Om K_N(x,y)\big[\partial_ia(x)-\partial_ia(y)\big]\partial_{y_i}u_t(y)\,dy\\
&& = -\int_\Om \partial_{y_i}\Big[K_N(x,y)\big(\partial_ia(x)-\partial_ia(y)\big)\Big]u_t(y)\,dy,
\end{eqnarray*}
hence, summing over $i$,
\[
\mathcal T_{2,\mathrm{rem}}(x) = \int_\Om \left\{\sum_i\partial_{y_i}K_N(x,y)\big[\partial_ia(x)-\partial_ia(y)\big] - K_N(x,y)\,\Delta a(y)\right\} u_t(y)\,dy.
\]

\begin{lemma}\label{lem:T2rem}
$\displaystyle \|\mathcal T_{2,\mathrm{rem}}\|_{L^2(\Om)} \le C\Lambda\,\|u_t\|_{L^2(\Om)}.$
\end{lemma}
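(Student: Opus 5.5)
The plan is to treat $\mathcal T_{2,\mathrm{rem}}$ exactly as $\mathcal T_{1,\mathrm{rem}}$ was treated in Lemma~\ref{lem:T1rem}. After the integration by parts in $y$ already performed above, $\mathcal T_{2,\mathrm{rem}}$ is an integral operator acting directly on $u_t$. Its kernel is
\[
L_N(x,y) := \sum_{i=1}^3 \partial_{y_i}K_N(x,y)\big[\partial_i a(x)-\partial_i a(y)\big] - K_N(x,y)\,\Delta a(y).
\]
It therefore suffices to show that $L_N$ satisfies the Schur test hypotheses with constants of order $\Lambda$, uniformly in $N$.

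For the first piece, I use $a\in C^{1,1}$, so that $|\nabla a(x)-\nabla a(y)|\le \Lambda|x-y|$. I combine this with the $y$-derivative kernel bound from Fact~\ref{fact:kernel}, namely the general estimate $|\partial_y^\beta K_N|\le C_{\beta,M}N^{3+|\beta|}(1+N|x-y|)^{-M}$ with $|\beta|=1$. This gives
\[
\big|\partial_{y_i}K_N\,[\partial_ia(x)-\partial_ia(y)]\big| \le C_M\Lambda N^4|x-y|(1+N|x-y|)^{-M}\le C_M\Lambda N^3(1+N|x-y|)^{-(M-1)},
\]
which is the same computation as in the treatment of $\mathcal T_{1,\mathrm{main}}$. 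For the second piece, $|\Delta a|\le 3\Lambda$, so $|K_N\Delta a|\le 3C_M\Lambda N^3(1+N|x-y|)^{-M}$.

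Both kernels are therefore dominated by $C_M\Lambda N^3(1+N|x-y|)^{-(M-1)}$. For $M>4$, the rescaling $z=N(y-x)$ gives
\[
\sup_x\int_\Omega |L_N|\,dy,\qquad \sup_y\int_\Omega |L_N|\,dx \;\le\; C\Lambda\int_{\mathbb R^3}(1+|z|)^{1-M}\,dz,
\]
which is independent of $N$. Schur's test then yields $\|\mathcal T_{2,\mathrm{rem}}\|_{L^2}\le C\Lambda\|u_t\|_{L^2}$.

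The main obstacle is justifying the integration by parts in $y$, which must produce no boundary term on $\partial\Omega$. The boundary term is $\int_{\partial\Omega}K_N(x,y)[\partial_ia(x)-\partial_ia(y)]\nu_i\,u_t\,d\sigma_y$, and it vanishes because $u_t\in H_0^1$ in the regular (truncated) setting. Alternatively, $K_N(x,\cdot)$ vanishes on $\partial\Omega$, since it lies in the range of the Dirichlet functional calculus. The remaining point is that Fact~\ref{fact:kernel} must be invoked for $\nabla_y$ as well as $\nabla_x$. This follows from the symmetry $K_N(x,y)=K_N(y,x)$, which holds because $P_{\le N}$ is self-adjoint with a real kernel, so the $y$-bounds reduce to the quoted $x$-bounds. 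For general energy-class $u_t$, the estimate extends by density.
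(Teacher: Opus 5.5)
Your proof is correct and follows the same route as the paper, which likewise bounds the two pieces of the integrated-by-parts kernel by $C_M\Lambda N^3(1+N|x-y|)^{-(M-1)}$ and $C_M\Lambda N^3(1+N|x-y|)^{-M}$ and then applies Schur's test with $M>4$. Your additional remarks simply make explicit details the paper leaves implicit: the boundary term vanishes because $K_N(x,\cdot)$ is a finite combination of Dirichlet eigenfunctions, the $\nabla_y$ bounds follow from the symmetry $K_N(x,y)=K_N(y,x)$, and the constant can be sharpened to $|\Delta a|\le 3\Lambda$.
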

\begin{proof}
Since $a\in C^{1,1}$, $|\partial_ia(x)-\partial_ia(y)|\le\Lambda|x-y|$, so
\begin{eqnarray*}
\big|\partial_{y_i}K_N(x,y)[\partial_ia(x)-\partial_ia(y)]\big| &\le& \Lambda|x-y|\cdot C_MN^4(1+N|x-y|)^{-M}\\ 
&\le& C_M\Lambda N^3(1+N|x-y|)^{-(M-1)},
\end{eqnarray*}
while the second term is bounded trivially, $|K_N(x,y)\Delta a(y)|\le \Lambda\,C_MN^3(1+N|x-y|)^{-M}$. Both satisfy the Schur hypotheses for $M>4$, giving an $N$-independent operator norm $O(\Lambda)$, acting directly on $u_t$.
\end{proof}

\section*{4. Final estimate}

\begin{theorem}[Commutator estimate, weak-solution version]\label{thm:final-weak}
Let $\Om\subset\mathbb R^3$ be a smooth bounded domain and $a\in C^{1,1}(\overline\Om)$, $a\ge0$, $\Lambda=\|D^2a\|_{L^\infty}$. There exists $C=C(\Om,\chi)>0$ such that, for every $N\ge1$ and a.e.\ $t\in(0,T)$,
\begin{equation}\label{eq:final-weak}
\big\|\dive\big([P_{>N},a]\nabla u_t\big)(t)\big\|_{L^2(\Om)} \le C\sqrt{\Lambda}\,\big\|\sqrt{a(\cdot)}\,\nabla u_t(t)\big\|_{L^2(\Om)} + C\Lambda\,\|u_t(t)\|_{L^2(\Om)}.
\end{equation}
In particular, integrating in time and using the dissipation inequality $$\int_0^T\!\int_\Om a|\nabla u_t|^2\,dx\,dt \le E(0)\le C_0$$ together with $\|u_t(t)\|_{L^2(\Om)}\le\sqrt{2E_0}$ (both valid for genuine \emph{weak} solutions, with no strong-regularity assumption):
\begin{equation}\label{eq:final-weak-time}
\int_0^T \big\|\dive\big([P_{>N},a]\nabla u_t\big)(t)\big\|_{L^2(\Om)}\,dt \;\le\; C\sqrt{\Lambda}\,\sqrt{T}\,E_0^{1/2} \;+\; C\Lambda\,T\,E_0^{1/2},
\end{equation}
which vanishes as $T\to0$, \emph{uniformly in $N$ and in the truncation parameter $k$}, and depends only on the first-order (basic) energy $E_0$.
\end{theorem}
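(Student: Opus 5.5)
The plan is to assemble the four pieces already estimated in this appendix. Start from the identity $[P_{>N},a]=-[P_{\le N},a]$ (Step 1) and the vectorial decomposition of Section~2. This gives, for a.e.\ $t$,
\[
\dive\big([P_{>N},a]\nabla u_t\big)=\mathcal T_{1,\mathrm{main}}+\mathcal T_{1,\mathrm{rem}}+\mathcal T_{2,\mathrm{main}}+\mathcal T_{2,\mathrm{rem}}.
\]
The split comes from the quantitative Taylor expansion \eqref{eq:taylor-quant} of $a(y)-a(x)$ in $\mathcal T_1$, and from the expansion $\nabla a(x)=\nabla a(y)+[\nabla a(x)-\nabla a(y)]$ in $\mathcal T_2$. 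Applying the triangle inequality, I would bound each term separately.

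The two principal terms are handled by \eqref{eq:Tmain-final} and Lemma~\ref{lem:T2main}. Both rely on Glaeser's inequality (Lemma~\ref{lem:glaeser}), and each is bounded by $C\sqrt{\Lambda}\,\|\sqrt a\,\nabla u_t\|_{L^2}$. In the first, the constant comes from Schur's test, which is $N$-independent by the rescaling $z=N(y-x)$. In the second, it comes from $\|P_{\le N}\|_{L^2\to L^2}\le1$. The two remainders are handled by Lemmas~\ref{lem:T1rem} and \ref{lem:T2rem}, each bounded by $C\Lambda\|u_t\|_{L^2}$. Adding the four bounds gives \eqref{eq:final-weak}, with $C=C(\Omega,\chi)$ determined by the constants $C_M$ of Fact~\ref{fact:kernel} for a fixed $M>5$.

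For the time-integrated bound \eqref{eq:final-weak-time}, I would integrate \eqref{eq:final-weak} over $(0,T)$. By Cauchy--Schwarz in time,
\[
\int_0^T\|\sqrt a\nabla u_t\|_{L^2}\,dt\le\sqrt T\Big(\int_0^T\!\!\int_\Omega a|\nabla u_t|^2\Big)^{1/2}\le\sqrt T\,E_0^{1/2},
\]
using the dissipation bound in \eqref{eq:energia}. Since $\|u_t(t)\|_{L^2}\le\sqrt{2E_0}$, we also get $\int_0^T\|u_t\|_{L^2}\le\sqrt2\,T E_0^{1/2}$. Both bounds involve only $E_0$, which is independent of $N$ and, via $|f_k(s)|\le|s|^5$ and $F_k\le s^6/6$, also of $k$.

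The main obstacle is justifying the computation for weak solutions, where $\nabla u_t$ is only known to be square-integrable against the weight $a$. The remainder terms involve integrations by parts in $y$, and these produce boundary terms on $\partial\Omega$ unless they vanish. I would first carry out every identity for the regular truncated solutions $u_k$, for which $u_t\in H_0^1$ so that all boundary terms vanish, and conclude \eqref{eq:final-weak} there. Then I would pass to the limit in the final inequality, which involves only energy-level quantities, using weak lower semicontinuity on the left. A further point to check is that Fact~\ref{fact:kernel} holds uniformly up to $\partial\Omega$ for the Dirichlet kernel. I would take this as the quoted input, since the paper assumes it.
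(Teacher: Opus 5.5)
Your decomposition and term-by-term bounds are exactly the paper's. You use the same split into $\mathcal T_{1,\mathrm{main}},\mathcal T_{1,\mathrm{rem}},\mathcal T_{2,\mathrm{main}},\mathcal T_{2,\mathrm{rem}}$, the same bounds \eqref{eq:Tmain-final} and Lemmas~\ref{lem:T1rem}, \ref{lem:T2main}, \ref{lem:T2rem}, and Cauchy--Schwarz in time for \eqref{eq:final-weak-time}. The gap is in the weak-solution justification. You route it through regular truncated solutions instead of the paper's Galerkin approximants, but it fails in the same way.

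Passing to the limit in \eqref{eq:final-weak} along approximants $u^{(j)}\rightharpoonup u$, with lower semicontinuity only on the left, bounds the left side for $u$ by $\liminf_j$ of the right side for $u^{(j)}$. But $\|\sqrt a\,\nabla u_t\|_{L^2}$ and $\|u_t\|_{L^2}$ are themselves only lower semicontinuous under weak convergence, which is the wrong direction. You cannot replace that $\liminf$ by the right side for $u$ without strong convergence of $u^{(j)}_t$ and $\sqrt a\,\nabla u^{(j)}_t$, and that is not available. The paper's remark on the integrations by parts, item (iii), makes the same move and even cites lower semicontinuity of the right-hand side, which does not help. Moreover, regular truncated solutions require regularized data. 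Those need not converge to the given weak solution, since uniqueness in the energy class is open. What the limiting argument does prove is \eqref{eq:final-weak-time}: its right side is a fixed data constant, the left sides are bounded in $L^2(0,T;L^2(\Om))$, and $F\mapsto\int_0^T\|F(t)\|_{L^2}\,dt$ is convex and continuous there, hence weakly lower semicontinuous.

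To get \eqref{eq:final-weak} itself, treat it as what it is: a static inequality for $g=u_t(t)\in L^2(\Om)$ with $\sqrt a\,\nabla g\in L^2(\Om)$. Approximate $g$, not the solution.

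The boundary terms you single out as the main obstacle never arise. $K_N(x,\cdot)$ is a finite combination of Dirichlet eigenfunctions, so $\nabla_xK_N(x,\cdot)\,\mathcal R(x,\cdot)$ and $K_N(x,\cdot)\big(\partial_ia(x)-\partial_ia(\cdot)\big)$ lie in $H_0^1(\Om)$. The $y$-integrations by parts in Lemmas~\ref{lem:T1rem} and~\ref{lem:T2rem} are therefore just the definition of $\partial_ig\in H^{-1}(\Om)$, valid for every $g\in L^2$. For the main terms, approximate $g$ in the graph norm $\|g\|_{L^2}+\|\sqrt a\,\nabla g\|_{L^2}$ by Friedrichs mollification; this works because $\sqrt a$ is Lipschitz, with $|\nabla\sqrt a|\le\sqrt{\Lambda/2}$ by Lemma~\ref{lem:glaeser}. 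Then both sides of \eqref{eq:final-weak} converge.

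That lemma, which your proof and the paper's both rely on, needs $a$ to be defined and nonnegative, with $|D^2a|\le\Lambda$, on a segment of length $|\nabla a(x_0)|/\Lambda$ issuing from $x_0$. Such a segment may leave $\Om$, so a mere $C^{1,1}$ extension is not enough. The lemma holds if $\operatorname{supp}a\Subset\Om$, which is the standing hypothesis $\overline\omega\Subset\Om$ of the non-concentration sections. It fails under the hypotheses as stated. For a nonconstant affine $a>0$ on $\overline\Om$ one has $\Lambda=0$, so \eqref{eq:final-weak} would force $\dive([P_{>N},a]\nabla g)=0$ for every $g$. Yet on wave packets at frequency $|\xi|\in(N,2N)$ supported away from $\partial\Om$, this commutator agrees up to $O(N^{-\infty})$ with the Euclidean one. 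The Euclidean symbol is a nonzero multiple of $|\xi|^2\,\nabla a\cdot\nabla_\xi[\chi(|\xi|/N)]$, so the commutator does not vanish. You should add that support hypothesis explicitly.
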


\begin{proof}
Combine the above Lemmas. For \eqref{eq:final-weak-time}, bound the first term by Cauchy--Schwarz in time, $\int_0^T\|\sqrt a\,\nabla u_t(t)\|_{L^2}\,dt \le \sqrt T\big(\int_0^T\|\sqrt a\,\nabla u_t(t)\|^2_{L^2}\,dt\big)^{1/2} \le \sqrt T\,E_0^{1/2}$, and the second term trivially, $\int_0^T\|u_t(t)\|_{L^2}\,dt \le T\sup_t\|u_t(t)\|_{L^2}\le T\sqrt{2E_0}$.
\end{proof}

\begin{remark}[On the justification of the integration by parts]
The integrations by parts in $y$ used above require $u_t(t)\in H^1_0(\Om)$ classically (so the boundary term vanishes by the Dirichlet condition). For a genuine weak solution, $u_t(t)$ is only in $L^2(\Om)$, with no trace. The rigorous route is therefore:
\begin{enumerate}
\item[(i)] Perform every step above on the Galerkin approximations $u_m(t)\in V_m=\mathrm{span}\{\phi_1,\dots,\phi_m\}$, where $\phi_j$ are Dirichlet eigenfunctions; here $u_{m,t}(t)\in V_m\subset C^\infty(\overline\Om)\cap H^1_0(\Om)$ literally vanishes on $\partial\Om$, so all integrations by parts are classical.
\item[(ii)] Observe that the bound \eqref{eq:final-weak} obtained this way involves only $\|u_{m,t}(t)\|_{L^2(\Om)}$ and \\$\|\sqrt a\,\nabla u_{m,t}(t)\|_{L^2(\Om)}$, both \emph{uniformly bounded in $m$} by the basic Galerkin energy estimate (the same uniform bound already used to construct the weak solution, e.g.\ estimate (6) of the main text).
\item[(iii)] Pass to the limit $m\to\infty$: the left-hand side of \eqref{eq:final-weak}, tested against a fixed smooth function and using weak convergence $u_{m,t}\rightharpoonup u_t$ in $L^2$, converges to the same quantity for $u_t$; the right-hand side is lower semicontinuous under weak convergence, so the bound survives in the limit.
\end{enumerate}
This is exactly the same density/limiting philosophy already used elsewhere in the manuscript (e.g.\ the $k\to\infty$ truncation limit); no new machinery is needed, only doing it once more, at the level of the spatial Galerkin truncation, for this specific estimate.
\end{remark}

{\bf Acknowledgments:} The authors would like to express their sincere gratitude to Professor Christopher D. Sogge for his invaluable suggestions and insightful discussions during the early stages of this work, which significantly contributed to the development of the spectral projection framework and the commutator analysis presented herein.

\end{document}